\theoremstyle{plain}
	\newtheorem{thm}{Theorem}[section]
	\newtheorem{prop}[thm]{Proposition}
	\newtheorem{cor}[thm]{Corollary}
\theoremstyle{definition}
	\newtheorem{remark}[thm]{Remark}
\theoremstyle{example}
	\newtheorem{example}{Example}
\theoremstyle{remark}
	\numberwithin{equation}{section}
\def\cA{\mathcal{A}}\def\cB{\mathcal{B}}\def\cC{\mathcal{C}}\def\cD{\mathcal{D}}\def\cF{\mathcal{F}}\def\cH{\mathcal{H}}\def\cO{\mathcal{O}}\def\cP{\mathcal{P}}\def\cT{\mathcal{T}}\def\cW{\mathcal{W}}\def\cZ{\mathcal{Z}}
  \def\CC{\mathbb{C}}               \def\RR{\mathbb{R}}        \def\ZZ{\mathbb{Z}}  
      \def\fg{\mathfrak{g}} \def\fh{\mathfrak{h}}                 
\def\fgl{\mathfrak{gl}}  \def\fsl{\mathfrak{sl}}
\def\ad{\mathrm{ad}} 
\def\Card{\mathrm{Card}} 
\def\cc{\mathbf{c}}
\def\dim{\mathrm{dim}} 
\def\End{\mathrm{End}} 
\def\ext{\textrm{ext}}
\def\id{\mathrm{id}}  
\def\Ind{\mathrm{Ind}}
\def\ring{\mathring}
\def\sgn{\mathrm{sgn}}
\def\Tr{\mathrm{Tr}} 
\def\vep{\varepsilon}
\def\half{\hbox{$\frac12$}}
\def\<{\langle}	\def\>{\rangle}
\newcommand{\comment}[1]{}
\definecolor{dred}{rgb}{.65, 0, 0.15}
\definecolor{gray}{rgb}{0.6, .6, .6}
\tikzstyle over=[draw=white,double=black,line width=2pt, double distance=.5pt]
\tikzstyle{B}=[draw, fill=black, circle, inner sep=0pt, outer sep=0pt, minimum size=5pt]
\tikzstyle{V}=[draw, fill =black, circle, inner sep=0pt, minimum size=2pt]
\tikzstyle{BoxArr}=[xscale = .2, yscale=-.2]
\tikzstyle{WhiteCircle}=[draw,circle,white, fill=white]
\tikzstyle{bV}=[draw, fill =black, circle, inner sep=0pt, minimum size=3.5pt]
\tikzstyle{cV}=[draw, fill =white, circle, inner sep=0pt, minimum size=3.5pt]
\newcommand\Cont[2]{\node at (#1) {\small #2};}
\def\Over[#1,#2][#3,#4]{ 
	\draw[style=over]   (#1,#2) .. controls ++(0,#4*.5-#2*.5) and ++(0,-#4*.5+#2*.5) .. (#3,#4);}
\def\Under[#1,#2][#3,#4]{ 
	\draw  (#1,#2) .. controls ++(0,#4*.5-#2*.5) and ++(0,-#4*.5+#2*.5) .. (#3,#4);}
\def\Cross[#1,#2][#3,#4]{
	\Under[#3,#2][#1,#4]\Over[#1,#2][#3,#4]}
\def\Tops[#1][#2][#3]{
	\foreach\x in {#1}{
		\draw (\x+.15,#2) -- (\x+.15,#2+.1) (\x-.15,#2) -- (\x-.15,#2+.1) ;
		\draw (\x+.15,#2+.1) arc (0:360:1.5mm and .75mm);}
	\foreach \x in {1,...,#3} {\draw (\x,#2)  to (\x,#2+.05); \filldraw [black] (\x,#2+.05) circle (2pt);}
	}
\def\Bottoms[#1][#2][#3]{
	\foreach\x in {#1}{
		\draw (\x+.15,#2) -- (\x+.15,#2-.1) (\x-.15,#2) -- (\x-.15,#2-.1) ;
		\draw (\x+.15,#2-.1) arc (0:-180:1.5mm and .75mm);}
	\foreach \x in {1,...,#3} {\draw (\x,#2)  to (\x,#2-.05); \filldraw [black] (\x,#2-.05) circle (2pt);}
	}
\def\Caps[#1][#2,#3][#4]{
	\Tops[#1][#3][#4]
	\Bottoms[#1][#2][#4]
	}
\def\Pole[#1][#2,#3]{
	\shade[left color=white,right color=white] (#1+.15,#2) rectangle (#1-.15,#3);
	\draw[over] (#1+.15,#2) to (#1+.15,#3) (#1-.15,#2) to (#1-.15,#3) ;}
\def\Label[#1,#2][#3][#4]{
	\node[above] at (#3,#2+.1) {#4};
	\node[below] at (#3,#1-.1) {#4};		}
\def\Nodes[#1][#2]{
	 \foreach \x in {1,...,#2} {\filldraw [black] (\x,#1) circle (2pt);	}
	}
\def\PoleTwist[#1,#2]{
	\foreach \x/\y in {-1/1L, -.7/1R, 0/2L, .3/2R}{\coordinate(T\y) at (\x,#2); \coordinate(B\y) at (\x,#1);}
	\draw[thin] (B1R) .. controls ++(0,#2*.5-#1*.5-.1) and ++(0,-#2*.5+#1*.5-.1) ..  (T2R)
			(B1L)   .. controls ++(0,#2*.5-#1*.5+.1) and ++(0,-#2*.5+#1*.5+.1) ..    (T2L) ;
	\draw[line width=2pt, white]
			(.15,#1)  .. controls +(0,#2*.5-#1*.5) and +(0,-#2*.5+#1*.5) ..   (-.85,#2) ;
	\draw[thin,over] 
		(B2R) .. controls ++(0,#2*.5-#1*.5+.1) and ++(0,-#2*.5+#1*.5+.1) ..  (T1R) 
			(B2L)  .. controls +(0,#2*.5-#1*.5-.1) and +(0,-#2*.5+#1*.5-.1) ..   (T1L) ;
			}
\newcounter{r}
\newcommand\Part[1]{
        \setcounter{r}{1}
	 \foreach \x in {#1}{
 	{\ifnum\value{r}=1
		\draw (0,\value{r}-1)--(\x,\value{r}-1); 
		\fi}
	\draw (0,\value{r}) to (\x,\value{r});
   	\foreach \y in {0, ..., \x} {\draw (\y,\value{r})--(\y,\value{r}-1);}
	\addtocounter{r}{1}
 }}
  \def\PartUNIT{.175}
\newcommand{\BOX}[2]{
	\draw[fill=white] (#1) to ++(1,0) to ++(0, 1) to ++(-1, 0) to ++(0, -1);
	\begin{scope}[xshift=.5cm, yshift=.5cm]\node at (#1) {\tiny #2};\end{scope}
	}
\newcommand{\DOT}[1]{
\filldraw [red] (#1) circle (4pt);
}
\newcommand{\bDOT}[1]{
\filldraw [red] (#1) circle (6pt);
}
\title{Two boundary Hecke Algebras and \\combinatorics of type $C$}
\author{
Zajj Daugherty\\
Department of Mathematics\\
The City College of New York\\
NAC 8/133\\
New York, NY 10031\\
zdaugherty@ccny.cuny.edu
\and
Arun Ram \\
Department of Mathematics and Statistics \\
University of Melbourne \\
Parkville VIC 3010 Australia \\
aram@unimelb.edu.au }
\date{}
\begin{document}
\maketitle 

\begin{abstract} 
This paper gives a Schur-Weyl duality approach to the representation theory of the 
affine Hecke algebras of type C with unequal parameters.  The first step is to realize the affine braid group
of type $C_k$ as the group of braids on $k$ strands with two poles.  Generalizing familiar methods from 
the one pole (type A) case, this provides commuting actions
of the quantum group $U_q\fg$ and the affine braid group of type $C_k$ on a tensor space $M\otimes N \otimes V^{\otimes k}$.
Special cases provide Schur-Weyl pairings 
between the affine Hecke algebra of type $C_k$ and the quantum group of type $\fgl_n$, resulting in
natural labelings of many representations of the affine Hecke algebras of type C by partitions.  
Following an analysis
of the structure of weights of affine Hecke algebra representations (extending the one parameter
case to the three parameter case necessary for affine Hecke algebras of type C), we provide an explicit identification
of the affine Hecke algebra representations that appear in tensor space (essentially by identifying their Langlands parameters).

\smallskip

\emph{AMS 2010 subject classifications:}  	20C08 (17B10, 17B37, 05E10)

\end{abstract}

\setcounter{tocdepth}{2}
\tableofcontents

\section{Introduction}

This paper explores a Schur-Weyl duality approach to the representations of the affine Hecke algebras of type C with unequal parameters.  Following Kazhdan-Lusztig \cite{KL}, the irreducible representations of the affine Hecke algebra are usually constructed via the K-theory of generalized Springer fibers. This method works well when an algebraic group is available, which is only for special cases of the three parameters $t, t_0, t_k$ of the affine Hecke algebras of type C.  

G.\ Lusztig gave a general approach to the unequal parameter case using Kazhdan-Lusztig bases and
cells.  In \cite{Lu2}, the challenges for pushing this method through in type C are outlined in a set of 
conjectures, many of which have now been settled in work of Geck, Bonnaf\'e, and others (see \cite{Ge, Bo, Gu} and references there). Another analytic approach, closer to the original classification and construction of Kazhdan-Lusztig, is given by Opdam and Solleveld (see \cite{OS} and \cite{So} and the references there).  In the type C case, Kato \cite{Kt} explained that the ``exotic nilpotent cone" can be used to replace the Kazhdan-Lusztig geometry and obtain a complete geometric classification of the irreducible representations of affine Hecke algebras (with mild restrictions on parameters).

In the type A case, there is a powerful alternative to the geometric method via
Schur-Weyl duality (see for example \cite{AS, OR, VV}).
In this paper we provide an analogue of this Schur-Weyl duality approach for the type C case, with unequal
parameters.  This is a generalization of the degenerate case studied by Daugherty \cite{Da}.

The method is the following:
Let $U_q\fgl_n$ be the Drinfeld-Jimbo quantum group corresponding to the general linear Lie algebra,
and let $V=\CC^n$ be the standard representation of $U_q\fgl_n$. Write $L(\lambda)$ for the irreducible polynomial representation of $U_q\fgl_n$ indexed by the partition $\lambda$, let $M=L((a^c))$ and $N=L((b^d))$ be irreducible representations of $U_q\fgl_n$ indexed by $a \times c$ and $b \times d$ rectangles.  
There is an action of an extension of the affine Hecke algebra 
 of type $C_k$, denoted $H_k^{\mathrm{ext}}$, with parameters
$$t^{\frac12} = q, \qquad t_0^{\frac12} = -iq^{b+d}, \qquad \text{ and } \qquad t_k^{\frac12} = -iq^{a+c} 
\qquad\hbox{(where $i=\sqrt{-1}$)},$$
such that 
$$M\otimes N\otimes V^{\otimes k} \qquad \hbox{is a $(U_q\fgl_n, H_k^{\mathrm{ext}})$-bimodule.}$$
We show that the commuting actions of $U_q\fgl_n$ and $H_k^{\mathrm{ext}}$ provide a Schur-Weyl duality, which can be used to derive the representation theory of $H_k^{\mathrm{ext}}$ from the quantum group $U_q\fgl_n$.  We work out the combinatorics of this correspondence, relating the natural indexing of $H_k^{\mathrm{ext}}$-modules coming from the Schur-Weyl duality to the other indexings, by describing the weights for the action of the polynomial part (generated by Bernstein generators) on each irreducible module.

A significant portion of the work in identifying the centralizer of the $U_q\fgl_n$ action on $M\otimes N\otimes V^{\otimes k}$ as 
an extended affine Hecke algebra of type C is in relating Coxeter and Bernstein presentations, and 
putting the parameter conversions into focus. 
The relationships between these presentations are given in Theorem \ref{thm:BraidPres} for the affine braid group of type C, 
and in Theorem \ref{thm:2bdryHeckePresentation} for the affine Hecke algebra of type C. 
Sections 3, 4 and 5 could, perhaps have stood as papers on their own.
In Section \ref{sec:CalibratedRepsOfHk}, we give the combinatorics of local regions and standard tableaux 
for the case of type C with unequal parameters (following the equal parameter case done in \cite{Ra2}).
The main result of Section 3, Theorem \ref{thm:calibconst}, 
provides a classification and a construction of all irreducible calibrated $H_k^{\mathrm{ext}}$-modules.
As in \cite{Ra2}, this classification is via \emph{skew local regions}, whose precise definition of skew local regions depends on the careful analysis of the
structure of the irreducible representations of rank two affine Hecke algebras. This analysis was done in the single parameter case in \cite{Ra1}.  
Since the corresponding analysis for \emph{three distinct parameters in the type $C_2$ case} is, to our knowledge, not available in the literature, we have provided it in Section 4.  
This will ensure that our classification of calibrated irreducible representations for $H_k^{\mathrm{ext}}$ with distinct parameters, as given in Theorem \ref{thm:calibconst}, is on firm footing.
The construction of the action of $H_k^{\mathrm{ext}}$ on tensor space is completed in 
Theorems \ref{thm:BraidRep} and \ref{thm:HeckeActionOnTensorSpace}. 
Finally, armed with these tools we prove the main result, Theorem \ref{thm:partitions-to-SLRs},
which determines exactly which representations of $H_k^{\mathrm{ext}}$ appear in tensor space,  
comparing the natural indexing from the highest weight theory for $\fgl_n$  to 
the combinatorics of the weights of the action of the polynomial part of $H_k^{\mathrm{ext}}$.  

Following the schematic from \cite{OR}, one would like to generalize the analysis in this paper by replacing finite-dimensional $M$ and $N$ with, for example, other modules from category $\cO$. In the finite-dimensional case, the key is that $R$-matrices for $M\otimes V$ and  $N \otimes V$ have only two eigenvalues. This strongly restricts the choices for $M$ and $N$. Non-finite-dimensional choices of modules $M$ and $N$ that satisfy these conditions exist
in category $\cO$,  but additional work toward understanding the combinatorics of $M\otimes N\otimes V^{\otimes k}$ in these cases is needed. This understanding would yield an interesting generalization of the work in this paper.

The seeds of the idea for this paper were sown during conversations of A.\ Ram with P.\ Pyatov and V.\ Rittenberg in Bonn in 2005.  They suggested that one should analyze two boundary spin chains by $R$-matrices, thus implying the possibility for  Schur-Weyl duality approach to representations of affine braid groups of type C. This idea was completed in the degenerate case in \cite{Da}, and significant information was obtained in the Temperley-Lieb case in \cite{GN} (see also references there). 
In \cite{DR} we shall complete the connection to  the statistical mechanics by using the results of this paper to identify the representations of the two boundary Temperley-Lieb algebra given, in a diagrammatic form, by de Gier and Nichols in \cite{GN}.

\smallskip\noindent
\textbf{Acknowledgements.}  We thank the Australian Research Council and the National Science Foundation for support of our research under grants DP130100674 and DMS-1162010. Much of the research for this paper was completed during residency at the special semester on ``Automorphic forms, Combinatorial representation theory, and Multiple Dirichlet series" at ICERM in 2013. We thank ICERM, all the ICERM staff and the organizers of the special semester for providing a wonderful and stimulating working environment.

\section{The two boundary Hecke algebra}

In this section we define the two boundary braid group and Hecke algebras and establish multiple presentations of each.
The conversion between presentations is important for matching the algebraic approach to the representation theory 
with the Schur-Weyl duality approach that we give in Section 5.

\medskip

For generators $g_i, g_j$, encode relations graphically by
\begin{equation}\label{braidlengths}
\begin{array}{cl}
\begin{tikzpicture}
	\draw[fill=white] (0,0) circle (2.5pt) node[above=1pt] {\small $g_i$};
	\draw[fill=white] (1,0) circle (2.5pt) node[above=1pt] {\small $g_j$}; 
\end{tikzpicture}
&\hbox{means $g_ig_j =g_jg_i$,} 
\\ \\
\begin{tikzpicture}
	\draw (0,0)--(1,0);
	\draw[fill=white] (0,0) circle (2.5pt) node[above=1pt] {\small $g_i$};
	\draw[fill=white] (1,0) circle (2.5pt) node[above=1pt] {\small $g_j$}; 
\end{tikzpicture}
&\hbox{means $g_ig_jg_i = g_jg_ig_j$, and} \\ \\
\begin{tikzpicture}
	\draw[double distance = 2pt] (0,0)--(1,0);
	\draw[fill=white] (0,0) circle (2.5pt) node[above=1pt] {\small $g_i$};
	\draw[fill=white] (1,0) circle (2.5pt) node[above=1pt] {\small $g_j$}; 
\end{tikzpicture}
&\hbox{means $g_ig_jg_ig_j = g_jg_ig_jg_i$.} 
\end{array}
\end{equation}
For example, the group of signed permutations,
\begin{equation}
\label{eq:Weyl-group}
\cW_0 = \left\{ 
\begin{matrix}
\hbox{bijections $w\colon \{-k, \ldots, -1, 1, \ldots, k\} \to \{ -k, \ldots, -1, 1, \ldots, k\}$} \\
\hbox{such that $w(-i) = -w(i)$ for $i=1, \ldots, k$}
\end{matrix} \right\},
\end{equation}
has a presentation by generators $s_0, s_1,\ldots, s_{k-1},$ with relations
\begin{equation}
\begin{tikzpicture}
	\foreach \x in {0,1, 2, 4,5}{
		\draw (\x,0) circle (2.5pt);
		\node(\x) at (\x,0){};
		}
	\foreach \x in {1, 2}{
		\node[label=above:$s_{\x}$] at (\x,0){};
		\node[label=above:$s_{k-\x}$] at (6-\x,0){};}
	\node[label=above:$s_0$] at (0){};
	\draw[double distance = 2pt] (0)--(1);
	\draw (1)--(2) (4)--(5);
	\draw[dashed] (2) to (4);
\end{tikzpicture}
\qquad\hbox{and}\qquad s_i^2=1\ \hbox{for $i=0,1,2,\ldots, k-1$.}
\label{W0defn}
\end{equation}

\subsection{The two boundary braid group}

The \emph{two boundary braid group} is the group $\cB_k$ generated by 
$\bar{T}_0, \bar{T}_1, \ldots, \bar{T}_k$,
with relations
\begin{equation}\label{Bdefrels}
\begin{matrix}
\begin{tikzpicture}
	\foreach \x in {0, 1, 2, 4,5,6}{
		\draw (\x,0) circle (2.5pt);
		\node(\x) at (\x,0){};
		}
	\foreach \x in {0, 1, 2}
		{\node[label=above:$\bar{T}_\x$] at (\x){};}
	\node[label=above:$\bar{T}_{k-2}$] at (4){};
	\node[label=above:$\bar{T}_{k-1}$] at (5){};
	\node[label=above:$\bar{T}_{k}$] at (6){};
	\draw[double distance = 2pt] (0)--(1) (5)--(6);
	\draw (1)--(2) (4)--(5);
	\draw[dashed] (2) to (4);
\end{tikzpicture}
\end{matrix}\ .
\end{equation}
\noindent Pictorially, the generators of $\cB_k$ are identified with the braid diagrams
$$
{\def\TOP{2}\def\K{6}
\bar{T}_k=
\begin{matrix}
\begin{tikzpicture}[scale=.5]
\Pole[.15][0,2]
\Under[\K,0][\K+1.3,1]
\Pole[\K+.85][0,1][\K]
\Pole[\K+.85][1,2][\K]
\Over[\K+1.3,1][\K,2]
 \foreach \x in {1,...,5} {
	 \draw[thin] (\x,0) -- (\x,\TOP);
	 }
\Caps[.15,\K+.85][0,\TOP][\K]
\end{tikzpicture}\end{matrix},
	\qquad 
\bar{T}_0=
\begin{matrix}
\begin{tikzpicture}[scale=.5]
	\Pole[\K+.85][0,2][\K]
	\Pole[.15][0,1]
	\Over[1,0][-.3,1]
	\Under[-.3,1][1,2]
	\Pole[.15][1,2]
	 \foreach \x in {2,...,\K} {
	 \draw[thin] (\x,0) -- (\x,\TOP);
	 }
\Caps[.15,\K+.85][0,\TOP][\K]
\end{tikzpicture}\end{matrix},\qquad  \text{and}
}$$
\begin{equation}\label{LRpics}
{\def\TOP{2} \def\K{6}
\bar{T}_i=
\begin{matrix}
\begin{tikzpicture}[scale=.5]
	\Pole[\K+.85][0,2][\K]
	\Pole[.15][0,2]
	\Under[3,0][4,2]
	\Over[4,0][3,2]
	 \foreach \x in {1,2,5,\K} {
		 \draw[thin] (\x,0) -- (\x,\TOP);
		 }
	\Caps[.15,\K+.85][0,\TOP][\K]
	\Label[0,\TOP][3][\footnotesize $i$]
	\Label[0,\TOP][4][\footnotesize$i$+1]
\end{tikzpicture}\end{matrix}
\qquad \text{for $i=1, \dots, k-1$,}
}
\end{equation}
and the multiplication of braid diagrams is given by placing one diagram on top of another.

To make explicit the Schur-Weyl duality approach to representations of $\cB_k$ appearing in Section \ref{sec:braids-on-tensor-space}, it is useful to move the 
rightmost  pole to the left by conjugating by the diagram
\begin{equation}\label{eq:sigma}
\sigma = 
\begin{matrix}
\begin{tikzpicture}[scale=.5]
	\draw (-.7,1) .. controls (-.7,.15) .. (0,.15) -- (6,.15) .. controls (7,.15) .. (7,-1);
	\draw (-1,1) .. controls (-1,-.15) .. (0,-.15)-- (6,-.15) .. controls (7-.3,-.15) .. (7-.3,-1);
\Pole[.15][-1,1]
 \foreach \x in {1,...,6} {\draw[style=over] (\x,-1) -- (\x,1);}
\Tops[.15, -.85][1][6]
\Bottoms[.15, 6+.85][-1][6]
\end{tikzpicture}\end{matrix}\ .
\end{equation}
Define
\begin{equation}\label{DefnTiY1}
{\def\TOP{2}\def\K{6}
T_i= \sigma \bar{T}_i \sigma^{-1}=
\begin{matrix}
\begin{tikzpicture}[scale=.5]
	\Pole[-.85][0,2]
	\Pole[.15][0,2]
	\Under[3,0][4,2]
	\Over[4,0][3,2]
	 \foreach \x in {1,2,5,\K} {
		 \draw[thin] (\x,0) -- (\x,\TOP);
		 }
	\Caps[.15,-.85][0,\TOP][\K]
	\Label[0,\TOP][3][\footnotesize $i$]
	\Label[0,\TOP][4][\footnotesize$i$+1]
\end{tikzpicture}\end{matrix}\ , \qquad 
Y_1= \sigma \bar{T}_0 \sigma^{-1} =
\begin{matrix}
\begin{tikzpicture}[scale=.5]
	\Pole[-.85][0,2][\K]
	\Pole[.15][0,1]
	\Over[1,0][-.3,1]
	\Under[-.3,1][1,2]
	\Pole[.15][1,2]
	 \foreach \x in {2,...,\K} {
	 \draw[thin] (\x,0) -- (\x,\TOP);
	 }
	\Caps[.15,-.85][0,\TOP][\K]
\end{tikzpicture}\end{matrix}\ ,}
\end{equation}
and
\begin{equation}\label{DefnX1}
{\def\TOP{2} \def\K{6}
X_1 =   T_1^{-1} T_2^{-1} \cdots T^{-1}_{k-1} \sigma \bar{T}_k \sigma^{-1} T_{k-1} \cdots T_1 =   
\begin{matrix}
\begin{tikzpicture}[scale=.5]
	\Pole[-.85][0,1]
	\Over[1,0][-1.3,1]
	\Under[-1.3,1][1,2]
	\Pole[-.85][1,2]
	\Pole[.15][0,2]
	 \foreach \x in {2,...,\K}  {
		 \draw[thin] (\x,0) -- (\x,\TOP);
		 }
	\Caps[.15,-.85][0,\TOP][\K]
	\end{tikzpicture}\end{matrix}\ .
}\end{equation}
\noindent 
Define 
\begin{equation}\label{BraidMurphy}{
\def\TOP{2}\def\K{6}
Z_1=X_1Y_1\quad\hbox{and}\quad
		 Z_i = T_{i-1} T_{i-2} \cdots T_1 X_1 Y_1 T_1 \cdots T_{i-1} =
\begin{matrix}
\begin{tikzpicture}[scale=.5]
		\Pole[-.85][0,1]
		\Pole[.15][0,1]
		 \foreach \x in {1,2} {\draw[thin] (\x,0) -- (\x,1);}
		\Over[3,0][-1.3,1]
		\Under[-1.3,1][3,2]
		\Pole[-.85][1,2]
		\Pole[.15][1,2]
		\foreach \x in {1,2} { \draw[thin, style=over] (\x,1) -- (\x,2); }
		\foreach \x in {4,...,\K} {\draw[thin] (\x,0) -- (\x,\TOP);}
		\Caps[.15,-.85][0,\TOP][\K]
		\Label[0,\TOP][3][{\footnotesize $i$}]
\end{tikzpicture}
\end{matrix}\ ,}\end{equation}
for $i=2, \ldots, k$.

\begin{thm}\label{thm:BraidPres}
The two boundary braid group $\cB_k$ is presented in the following three ways, using the notation defined in \eqref{braidlengths}.

\begin{enumerate}[(a)]
\item $\cB_k$ is presented by generators $X_1, Y_1, Z_1, T_1, \dots, T_{k-1}$ and relations 
\begin{equation}\label{eq:BraidRels-X}
\begin{matrix}
\begin{tikzpicture}
	\draw (-.25,0) circle (2.5pt); \node(0) at (-.25,0){};
	\foreach \x in {1, 2, 4,5}{\draw (\x,0) circle (2.5pt); \node(\x) at (\x,0){};}
	\foreach \x in {1, 2}
		{\node[above] at (\x){$T_\x$};}
	\node[above] at (0){$X_1$};
	\node[above] at (4){$T_{k-2}$};
	\node[above] at (5){$T_{k-1}$};
	\draw[double distance = 2pt] (0)--(1);
	\draw (1)--(2) (4)--(5);
	\draw[dashed] (2) to (4);
\end{tikzpicture}\end{matrix}
\tag{a1}
\end{equation}
\begin{equation}\label{eq:BraidRels-Y}
\begin{matrix}
\begin{tikzpicture}
	\draw (-.25,0) circle (2.5pt); \node(0) at (-.25,0){};
	\foreach \x in {1, 2, 4,5}{\draw (\x,0) circle (2.5pt); \node(\x) at (\x,0){};}
	\foreach \x in {1, 2}
		{\node[above] at (\x){$T_\x$};}
	\node[above] at (0){$Y_1$};
	\node[above] at (4){$T_{k-2}$};
	\node[above] at (5){$T_{k-1}$};
	\draw[double distance = 2pt] (0)--(1);
	\draw (1)--(2) (4)--(5);
	\draw[dashed] (2) to (4);
\end{tikzpicture}\end{matrix}
\tag{a2}
\end{equation}
\begin{equation}\label{eq:BraidRels-Z}
\begin{matrix}
\begin{tikzpicture}
	\draw (-.25,0) circle (2.5pt); \node(0) at (-.25,0){};
	\foreach \x in {1, 2, 4,5}{\draw (\x,0) circle (2.5pt); \node(\x) at (\x,0){};}
	\foreach \x in {1, 2}
		{\node[above] at (\x){$T_\x$};}
	\node[above] at (0){$Z_1$};
	\node[above] at (4){$T_{k-2}$};
	\node[above] at (5){$T_{k-1}$};
	\draw[double distance = 2pt] (0)--(1);
	\draw (1)--(2) (4)--(5);
	\draw[dashed] (2) to (4);
\end{tikzpicture}\end{matrix}
\tag{a3}
\end{equation}
and 
\begin{equation}\label{eq:BraidRels-A}
Z_1=X_1Y_1.
\tag{a4}
\end{equation}

\item $\cB_k$ is presented by generators $X_1, Y_1, T_1, \dots, T_{k-1}$ and relations \eqref{eq:BraidRels-X}, \eqref{eq:BraidRels-Y}, 
and 
\begin{equation}\label{eq:BraidRels-B}
(T_1 X_1 T_1^{-1}) Y_1 = Y_1 (T_1 X_1 T_1^{-1}).	
\tag{b3}
\end{equation}

\item $\cB_k$ is presented by generators  $Z_1$, $\dots$, $Z_k$, $Y_1$, $T_1$, $\dots$, $T_{k-1}$, and relations \eqref{eq:BraidRels-Y}, 
\begin{equation}\label{eq:BraidRels-C1}
	Z_i Z_j = Z_j Z_i  \qquad \text{ for } i,j = 1, \dots, k,	\tag{c1}
\end{equation}
\begin{equation}\label{eq:BraidRels-C2}
	Y_1 Z_i = Z_i Y_1 \qquad \text{ for } i = 2, \dots, k, \text{ and } 	\tag{c2}
\end{equation}
\begin{equation}\label{eq:BraidRels-C3}
	T_i Z_j = Z_j T_i \qquad \text{ for } j \neq i, i+1, \text{ with } i=1, \dots, k-1, \text{ and } j = 1, \dots, k, 
	\tag{c3}
\end{equation}
and
\begin{equation}\label{eq:BraidRels-C4}
	Z_{i+1} = T_i Z_i T_i \qquad \text{ for } i=1, \dots, k-1.
	\tag{c4}
\end{equation}
\end{enumerate}
\end{thm}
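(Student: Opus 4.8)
The plan is to prove the equivalence of the three presentations by exhibiting, in each case, explicit mutually inverse group homomorphisms between $\cB_k$ (as originally defined by generators $\bar T_0,\dots,\bar T_k$ and the Coxeter-type diagram \eqref{Bdefrels}) and each candidate presentation, and then chaining these. Since $\sigma$ is an invertible braid diagram, conjugation by $\sigma$ is an automorphism of $\cB_k$, so the generators $T_i = \sigma\bar T_i\sigma^{-1}$, $Y_1 = \sigma\bar T_0\sigma^{-1}$, and $X_1$ (defined in \eqref{DefnX1} as a specific word in the $T_i$ and $\sigma\bar T_k\sigma^{-1}$), together with the $Z_i$ from \eqref{BraidMurphy}, all live inside $\cB_k$. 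So one direction — that these elements satisfy the stated relations — is in principle a finite check; the content is that the stated relations suffice to recover all of \eqref{Bdefrels}.

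First I would establish (b). Starting from \eqref{Bdefrels}, rewrite everything after conjugating by $\sigma$: the relations among $\bar T_1,\dots,\bar T_{k-1}$ become exactly the type-$A$ braid relations among $T_1,\dots,T_{k-1}$ (the connected chain in \eqref{eq:BraidRels-X}/\eqref{eq:BraidRels-Y} minus the node for $X_1$ or $Y_1$), the relation $\bar T_0\bar T_1\bar T_0\bar T_1 = \bar T_1\bar T_0\bar T_1\bar T_0$ becomes the double-bond relation between $Y_1$ and $T_1$ in \eqref{eq:BraidRels-Y}, commutation of $\bar T_0$ with $\bar T_2,\dots,\bar T_{k-1}$ gives the rest of \eqref{eq:BraidRels-Y}, and similarly the $\bar T_k$ relations give \eqref{eq:BraidRels-X} once one re-expresses $\sigma\bar T_k\sigma^{-1}$ in terms of $X_1$ via \eqref{DefnX1}. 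The one genuinely new relation is \eqref{eq:BraidRels-B}: I would derive it by a diagrammatic isotopy argument — $T_1X_1T_1^{-1}$ is the diagram that wraps the second strand around the left pole, and it visibly commutes with $Y_1$, which wraps the first strand around the same pole but "inside" — or, algebraically, deduce it from the far-commutation of the original $\bar T_0$ and $\bar T_k$ (which are supported near opposite poles in \eqref{Bdefrels}) pushed through the conjugation by $\sigma$ and the word \eqref{DefnX1}. Conversely, given the presentation in (b), one recovers $\bar T_i$ by conjugating back by $\sigma^{-1}$ and checks the original relations hold; the only subtlety is recovering $\bar T_k$-relations from \eqref{eq:BraidRels-X} and \eqref{eq:BraidRels-B}, which is where \eqref{eq:BraidRels-B} does its work.

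Next, (a) follows from (b) by a routine Tietze transformation: \eqref{eq:BraidRels-A} defines $Z_1 = X_1Y_1$, so $Z_1$ is a redundant generator; one must check that \eqref{eq:BraidRels-Z} is then a consequence of \eqref{eq:BraidRels-X}, \eqref{eq:BraidRels-Y}, \eqref{eq:BraidRels-B} (e.g. $Z_1$ commutes with $T_2,\dots,T_{k-1}$ because both $X_1$ and $Y_1$ do, and $Z_1$ satisfies the double-bond relation with $T_1$ — this last is the crux and should come out of \eqref{eq:BraidRels-B} together with the two double-bond relations for $X_1$ and $Y_1$), and conversely that \eqref{eq:BraidRels-B} follows from \eqref{eq:BraidRels-X}–\eqref{eq:BraidRels-A}. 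For (c), I would use \eqref{eq:BraidRels-C4} to define $Z_2,\dots,Z_k$ inductively from $Z_1$ and the $T_i$, making them redundant generators, so (c) reduces to showing \eqref{eq:BraidRels-C1}, \eqref{eq:BraidRels-C2}, \eqref{eq:BraidRels-C3} are together equivalent (modulo \eqref{eq:BraidRels-Y}) to the presentation in (a) with $X_1$ eliminated in favor of $X_1 = Z_1Y_1^{-1}$. The key points are: \eqref{eq:BraidRels-C3} for the single index $j=1$ recovers commutation of $Z_1$ with $T_2,\dots,T_{k-1}$; \eqref{eq:BraidRels-C2} at $i=2$ recovers part of the content of \eqref{eq:BraidRels-B}; and the double-bond relation between $Z_1$ and $T_1$ from \eqref{eq:BraidRels-Z} is equivalent, given \eqref{eq:BraidRels-C4} at $i=1$ ($Z_2 = T_1Z_1T_1$), to the commutation $Z_1Z_2 = Z_2Z_1$ in \eqref{eq:BraidRels-C1} — indeed $Z_1(T_1Z_1T_1) = (T_1Z_1T_1)Z_1$ rearranges to $T_1^{-1}Z_1T_1Z_1T_1 = Z_1T_1Z_1T_1^{-1}\cdot T_1\cdots$, and unwinding shows this is exactly $Z_1T_1Z_1T_1 = T_1Z_1T_1Z_1$. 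The higher relations \eqref{eq:BraidRels-C1}–\eqref{eq:BraidRels-C3} for $i,j\ge 2$ are then formal consequences of conjugating the $i,j\le 2$ cases by the $T$'s using \eqref{eq:BraidRels-C4} and the braid relations.

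The main obstacle, I expect, is the verification of relation \eqref{eq:BraidRels-B} and its interaction with the double-bond relation for $Z_1$ in \eqref{eq:BraidRels-Z}: keeping precise track of how $\sigma$-conjugation and the long word \eqref{DefnX1} transform the "far commutation" of $\bar T_0$ and $\bar T_k$ into \eqref{eq:BraidRels-B} is fiddly, and one must be careful that no extra relations are silently introduced or lost when passing between the pole-on-the-right and pole-on-the-left pictures. A clean way to sidestep some of the bookkeeping is to do everything diagrammatically: interpret each generator as an explicit two-pole braid (as in \eqref{LRpics}–\eqref{BraidMurphy}) and verify each relation by an isotopy of diagrams, appealing to the fact — which one should note is what makes \eqref{Bdefrels} a \emph{presentation} and not merely a set of relations — that the two boundary braid group is already known to be the affine braid group of type $C_k$, so that counting shows no further relations are needed.
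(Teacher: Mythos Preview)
Your overall architecture---chain the equivalences (original) $\leftrightarrow$ (b) $\leftrightarrow$ (a) $\leftrightarrow$ (c) via explicit Tietze moves, and verify the ``forward'' relations diagrammatically---is essentially what the paper does. The paper's chain is (a)$\leftrightarrow$(b)$\leftrightarrow$(o)$\leftrightarrow$(c), where (o) is the $\sigma$-conjugated original presentation with generators $T_0,\dots,T_k$; your chain differs only in linking (c) to (a) rather than to (o). The (a)$\leftrightarrow$(b) argument you sketch (the $Z_1T_1Z_1T_1=T_1Z_1T_1Z_1$ relation packaged with \eqref{eq:BraidRels-B}) matches the paper's computation closely, and your observation that $Z_1Z_2=Z_2Z_1$ is literally the double-bond relation for $Z_1$ once $Z_2=T_1Z_1T_1$ is exactly right.

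The gap is in the direction (c) $\Rightarrow$ (a) (equivalently (c) $\Rightarrow$ (b) or (c) $\Rightarrow$ (o)). You recover all of \eqref{eq:BraidRels-Z} from (c), and you gesture at recovering \eqref{eq:BraidRels-B} from \eqref{eq:BraidRels-C2} at $i=2$, but you never explain how to recover \eqref{eq:BraidRels-X}---in particular the double-bond relation $X_1T_1X_1T_1=T_1X_1T_1X_1$---from the relations of (c) alone. Writing $X_1=Z_1Y_1^{-1}$ and attempting this directly is not pleasant, and your claim that the higher (c1)--(c3) relations are ``formal consequences'' of the low-index ones runs in the wrong direction for this purpose: you need those relations as \emph{hypotheses} to squeeze out the $X_1$-relation, not as consequences. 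This is precisely the hard step in the paper, and the paper handles it by passing through (o) rather than (a): it introduces the element
\[
T_{s_\varphi}=T_{k-1}\cdots T_1Y_1T_1\cdots T_{k-1},
\]
observes (from \eqref{eq:BraidRels-Y} alone) that $T_{s_\varphi}$ commutes with $Y_1$ and with $T_1,\dots,T_{k-2}$, writes $T_k=Z_kT_{s_\varphi}^{-1}$, and then manipulates $T_{k-1}Z_kZ_{k-1}=Z_kZ_{k-1}T_{k-1}$ (an instance of \eqref{eq:BraidRels-C1} and \eqref{eq:BraidRels-C3}) to extract $T_kT_{k-1}T_kT_{k-1}=T_{k-1}T_kT_{k-1}T_k$. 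That computation is the genuine content you are missing; without it, your (c) $\Rightarrow$ (a) step does not close.
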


\begin{proof}

With $\sigma$ as in \eqref{eq:sigma} let $T_k = \sigma \bar{T}_k \sigma^{-1}$, so that the original generators are the $\sigma$-conjugates of 
\begin{equation}
T_0, T_1, \dots, T_k.
\tag{o}
\label{eq:generators-o}
\end{equation}
Conjugate the relations in \eqref{Bdefrels} by $\sigma$ 
to rewrite them in the form
\begin{equation}
\begin{tikzpicture}
	\foreach \x in {0,1, 2, 4,5}{
		\draw (\x,0) circle (2.5pt);
		\node (\x) at (\x,0){};
		}
	\foreach \x in {1, 2}{
		\node[label=above:$T_\x$] at (\x,0){};
		\node[label=above:$T_{k-\x}$] at (6-\x,0){};}
	\node[label=above:$Y_1$] at (0){};
	\draw[double distance = 2pt] (0)--(1);
	\draw (1)--(2) (4)--(5);
	\draw[dashed] (2) to (4);
\end{tikzpicture},
\qquad
T_kT_{k-1}T_kT_{k-1}=T_{k-1}T_kT_{k-1}T_k,
\tag{o1}\label{eq:BraidRels-o1}
\end{equation}
\begin{equation}
T_kY_1=Y_1T_k,
\qquad\hbox{and}\qquad
T_kT_i = T_iT_k,\ \ \hbox{for $i=1,\ldots, k-2$.}
\tag{o2}\label{eq:BraidRels-o2}
\end{equation}

The conversions between the generators in presentations (a), (b), and (c)
are given in \eqref{DefnTiY1},  \eqref{DefnX1}, and \eqref{BraidMurphy}. 
For generators (a) and (b) in terms of generators \eqref{eq:generators-o}, the key relations are
$$Y_1=T_0, \qquad X_1 = T_1^{-1}\cdots T_{k-1}^{-1}T_kT_{k-1}\cdots T_1
\qquad\hbox{and}\qquad
T_k = T_{k-1}\cdots T_1 X_1 T_1^{-1}\cdots T_{k-1}^{-1}.
$$

\paragraph{Relations (a) from relations (b):}
Relation \eqref{eq:BraidRels-A} is the conversion from generators (b) to generators (a). The relations in  \eqref{eq:BraidRels-Z} then follow from
\begin{align*}
T_iZ_1 = T_iX_1Y_1=X_1T_iY_1=X_1Y_1T_i=Z_1T_i,
\qquad\hbox{for $i=2,\ldots, k-1$,}
\end{align*}
and
\begin{align*}
T_1Z_1T_1Z_1 
&= T_1X_1Y_1T_1X_1Y_1
= T_1X_1(Y_1T_1X_1T_1^{-1})T_1Y_1 
= T_1X_1(T_1X_1T_1^{-1}Y_1)T_1Y_1 \\
&=X_1T_1X_1T_1T_1^{-1}Y_1T_1Y_1
=X_1T_1X_1T_1^{-1}T_1Y_1T_1Y_1
=X_1T_1X_1T_1^{-1}Y_1T_1Y_1T_1 \\
&=X_1Y_1T_1X_1T_1^{-1}T_1Y_1T_1
=Z_1T_1Z_1T_1.
\end{align*}

\paragraph{Relations (b) from relations (a):}
Multiplying
\begin{align*}
T_1X_1(T_1X_1T_1^{-1}Y_1)T_1Y_1
&=X_1T_1X_1T_1T_1^{-1}Y_1T_1Y_1
=X_1T_1X_1T_1^{-1}T_1Y_1T_1Y_1 \\
&=X_1T_1X_1T_1^{-1}Y_1T_1Y_1T_1 
=X_1Y_1T_1X_1T_1^{-1}T_1Y_1T_1
=Z_1T_1Z_1T_1 \\
&=T_1Z_1T_1Z_1 
= T_1X_1Y_1T_1X_1Y_1
= T_1X_1(Y_1T_1X_1T_1^{-1})T_1Y_1
\end{align*}
on the left by $(T_1X_1)^{-1}$ and on the right by $(T_1Y_1)^{-1}$ gives
$T_1X_1T_1^{-1}Y_1 = Y_1T_1X_1T_1^{-1}$, establishing \eqref{eq:BraidRels-B}.

\paragraph{Relations (b) from relations (o):}
The pictorial computations 
$${\def\TOP{3} \def\K{6}
\begin{matrix}\begin{tikzpicture}[scale=.3]
	\Pole[-.85][0,1]
	\Over[1,0][-1.3,1]
	\Under[-1.3,1][1,2]
	\Pole[-.85][1,\TOP]
	\Pole[.15][0,\TOP]
	\foreach \x in {2,5,\K}  { \draw[thin] (\x,0) -- (\x,\TOP);}
	\foreach \x in {3,4}  { \draw[thin] (\x,0) -- (\x,2);}  \draw[thin] (1,2) -- (1,3);
	\Cross[3,3][4,2]
	\Caps[-.85,.15][0,\TOP][\K]
	\Label[0,\TOP][3][\tiny $i$]
\end{tikzpicture}\end{matrix}
=
\begin{matrix}\begin{tikzpicture}[scale=.3]
	\Pole[-.85][0,2]
	\Over[1,1][-1.3,2]
	\Under[-1.3,2][1,3]
	\Pole[-.85][2,\TOP]
	\Pole[.15][0,\TOP]
	\foreach \x in {2,5,\K}  { \draw[thin] (\x,0) -- (\x,\TOP);}
	\foreach \x in {3,4}  { \draw[thin] (\x,1) -- (\x,3);}  \draw[thin] (1,0) -- (1,1);
	\Cross[3,1][4,0]
	\Caps[-.85,.15][0,\TOP][\K]
	\Label[0,\TOP][3][\tiny $i$]
\end{tikzpicture}\end{matrix}}\ , \qquad 
{\def\TOP{4} \def\K{6}
\begin{matrix}\begin{tikzpicture}[scale=.3]
	\Under[-1.3,1][2,2]
	\Under[-.3,3][1,4]
	\Pole[-.85][0,\TOP]
	\Over[2,0][-1.3,1]
	\Pole[.15][0,\TOP]
	\Over[1,2][-.3,3]
	\draw[over](1,0)--(1,2) (2,2)--(2,4);
	\foreach \x in {3,4,5,\K}  { \draw[thin] (\x,0) -- (\x,\TOP);}
	\Caps[-.85,.15][0,\TOP][\K]
\end{tikzpicture}\end{matrix}
=
\begin{matrix}\begin{tikzpicture}[scale=.3]
	\Under[-1.3,3][2,4]
	\Under[-.3,1][1,2]
	\Pole[-.85][0,\TOP]
	\Over[2,2][-1.3,3]
	\Pole[.15][0,\TOP]
	\Over[1,0][-.3,1]
	\draw[over](1,2)--(1,\TOP) (2,0)--(2,2);
	\foreach \x in {3,4,5,\K}  { \draw[thin] (\x,0) -- (\x,\TOP);}
	\Caps[-.85,.15][0,\TOP][\K]
\end{tikzpicture}\end{matrix}}\ , \qquad \text{ and }
$$
$${\def\TOP{4} \def\K{6}
\begin{matrix}\begin{tikzpicture}[scale=.3]
	\Under[-1.3,1][2,2]
	\Under[-1.3,3][1,4]
	\Pole[-.85][0,\TOP]
	\draw[over](1,0)--(1,2) (2,2)--(2,4);
	\Over[2,0][-1.3,1]
	\Over[1,2][-1.3,3]
	\Pole[.15][0,\TOP]
	\foreach \x in {3,4,5,\K}  { \draw[thin] (\x,0) -- (\x,\TOP);}
	\Caps[-.85,.15][0,\TOP][\K]
\end{tikzpicture}\end{matrix}
=
\begin{matrix}\begin{tikzpicture}[scale=.3]
	\Under[-1.3,3][2,4]
	\Under[-1.3,1][1,2]
	\Pole[-.85][0,\TOP]
	\draw[over](1,2)--(1,\TOP) (2,0)--(2,2);
	\Over[2,2][-1.3,3]
	\Over[1,0][-1.3,1]
	\Pole[.15][0,\TOP]
	\foreach \x in {3,4,5,\K}  { \draw[thin] (\x,0) -- (\x,\TOP);}
	\Caps[-.85,.15][0,\TOP][\K]
\end{tikzpicture}\end{matrix}}$$
show that $X_1T_i=T_iX_1$ for $i=1,2,\ldots, k-1$, 
$Y_1T_1X_1T_1^{-1}=T_1X_1T_1^{-1}Y_1$, and  $X_1T_1X_1T_1=T_1X_1T_1X_1$.
Hence the relations \eqref{eq:BraidRels-X} and \eqref{eq:BraidRels-Y} follow from the relations in \eqref{eq:BraidRels-o1} and \eqref{eq:BraidRels-o2}.

\paragraph{Relations (o) from relations (b):}
The first set of relations in \eqref{eq:BraidRels-o1} are the same as the relations in \eqref{eq:BraidRels-Y}. 
Let $A=T_{k-1}\cdots T_1$ and $B=T_{k-1}\cdots T_2$.  Since $X_1$ commutes with $T_i$ for $i=2, \ldots, k-1$, then
$BX_1B^{-1}=X_1$ so that
$${\def\TOP{8} \def\K{6}
ABX_1B^{-1}A^{-1} = 
\begin{matrix}\begin{tikzpicture}[scale=.3]
	\Over[6,8][1,6] 	\Over[5,8][6,7]
	\Under[6,7][2,5] 
	\Under[6,0][1,2] \Over[6,1][5,0]
	\Under[6,1][2,3] 
	\Under[-1.3,4][1,5]
	\Pole[-.85][0,\TOP]
	\Over[1,3][-1.3,4]
	\Pole[.15][0,\TOP]
	\foreach \x in {1,2,3,4}  { 
		\draw[over, rounded corners] (\x,8) to (\x+2,5) to (\x+2,3) to (\x,0);}
	\draw[thin] (1,5)--(1,6) (1,3)--(1,2) (2,5)--(2,3);
	\Caps[-.85,.15][0,\TOP][\K]
\end{tikzpicture}\end{matrix}
=
\begin{matrix}\begin{tikzpicture}[scale=.3]
	\Under[-1.3,2][6,4]
	\Pole[-.85][0,4]
	\Over[-1.3,2][6,0]
	\Pole[.15][0,4]
	\foreach \x in {1,...,5} { \draw[over] (\x,0) to (\x,4);}
	\Caps[-.85,.15][0,4][\K]
\end{tikzpicture}\end{matrix}=T_k,}$$
and
$$ABT_1B^{-1}A^{-1}=
{\def\TOP{7} \def\K{6}
\begin{matrix}\begin{tikzpicture}[scale=.3]
	\Over[6,7][1,5] 	\Over[5,7][6,6]
	\Under[6,6][2,4] 
	\Cross[1,4][2,3]
	\Under[6,0][1,2] \Over[6,1][5,0]
	\Under[6,1][2,3] 
	\foreach \x in {1,2,3,4}  { 
		\draw[over, rounded corners] (\x,7) to (\x+2,4) to (\x+2,3) to (\x,0);}
	\draw[thin] (1,5)--(1,4) (1,3)--(1,2) ;
	\Pole[-.85][0,\TOP]
	\Pole[.15][0,\TOP]
	\Caps[-.85,.15][0,\TOP][\K]
\end{tikzpicture}\end{matrix}}
=
{\def\TOP{3} \def\K{6}
\begin{matrix}\begin{tikzpicture}[scale=.3]
	\Cross[5,3][6,2] \Cross[5,2][6,1] \Cross[6,1][5,0] 
	\foreach \x in {1,...,4} { \draw[over] (\x,0) to (\x,\TOP);}
	\Pole[-.85][0,\TOP] \Pole[.15][0,\TOP]
	\Caps[-.85,.15][0,\TOP][\K]
\end{tikzpicture}\end{matrix}}=
{\def\TOP{3} \def\K{6}
\begin{matrix}\begin{tikzpicture}[scale=.3]
	\Cross[5,2][6,1] 
	\draw[thin] (5,3)--(5,2) (5,1)--(5,0) (6,3)--(6,2) (6,1)--(6,0);
	\foreach \x in {1,...,4} { \draw[over] (\x,0) to (\x,\TOP);}
	\Pole[-.85][0,\TOP] \Pole[.15][0,\TOP]
	\Caps[-.85,.15][0,\TOP][\K]
\end{tikzpicture}\end{matrix}}
=T_{k-1}.$$
 Thus, by conjugation by $AB$, the relation 
 $X_1T_1X_1T_1=T_1X_1T_1X_1$ becomes $T_kT_{k-1}T_kT_{k-1}=T_{k-1}T_kT_{k-1}T_k$, establishing the
 second relation in \eqref{eq:BraidRels-o1}. 
For $i=1,\ldots, k-2$,
\begin{align*}
T_iT_k 
&= T_iT_{k-1}\cdots T_1X_1T_1^{-1}\cdots T_k^{-1} 
= T_{k-1}\cdots T_{i+2}T_iT_{i+1}T_i\cdots T_1X_1T_1^{-1}\cdots T_k^{-1} \\
&= T_{k-1}\cdots T_{i+2}T_{i+1}T_iT_{i+1}T_{i-1}\cdots T_1X_1T_1^{-1}\cdots T_k^{-1} \\
&= T_{k-1}\cdots T_1X_1T_1^{-1}\cdots T_{i-1}^{-1}T_{i+1}T_i^{-1}T_{i+1}^{-1}\cdots T_k^{-1} \\
&= T_{k-1}\cdots T_1X_1T_1^{-1}\cdots T_{i-1}^{-1}T_i^{-1}T_{i+1}^{-1}T_iT_{i+2}^{-1}\cdots T_k^{-1} \\
&= T_{k-1}\cdots T_1X_1T_1^{-1}\cdots \cdots T_k^{-1}T_i = T_kT_i.
\end{align*}
Similarly, \eqref{eq:BraidRels-B} gives 
\begin{align*}
Y_1 T_k 
&=  Y_1T_{k-1} \cdots T_2T_1X_1T_1^{-1} T_2^{-1} \cdots T^{-1}_{k-1} 
=  T_{k-1} \cdots T_2 (Y_1T_1X_1T_1^{-1}) T_2^{-1} \cdots T^{-1}_{k-1} \\
&= T_{k-1} \cdots T_2 (T_1X_1T_1^{-1}Y_1) T_2^{-1} \cdots T^{-1}_{k-1} 
= T_{k-1} \cdots T_2 T_1X_1T_1^{-1} T_2^{-1} \cdots T^{-1}_{k-1}Y_1 
= T_k Y_1,
\end{align*}
giving the relations in \eqref{eq:BraidRels-o2}.

\paragraph{Relations (c) from relations (o):}
The first set of relations in \eqref{eq:BraidRels-o1} are the same as the relations in \eqref{eq:BraidRels-Y}. Relations \eqref{eq:BraidRels-C4} are exactly the definitions in the second part of \eqref{BraidMurphy}.
The pictorial computation
\begin{equation*}\label{Zscommute}{
\def\TOP{2}\def\K{6}
Z_jZ_i=
\begin{matrix}
\begin{tikzpicture}[scale=.3]
		\Pole[-.85][0,1]
		\Pole[.15][0,1]
		 \foreach \x in {1} { \draw[thin] (\x,0) -- (\x,1); }
		\Over[2,0][-1.3,1]
		\Under[-1.3,1][2,2]
		\Pole[-.85][1,2]
		\Pole[.15][1,2]
		 \foreach \x in {1} { \draw[thin, style=over] (\x,1) -- (\x,2); }
		 \foreach \x in {3,...,\K} { \draw[thin] (\x,0) -- (\x,\TOP); }
	\pgftransformyshift{\TOP cm}
		\Pole[-.85][0,1]
		\Pole[.15][0,1]
		\foreach \x in {1,2,3} {\draw[thin] (\x,0) -- (\x,1); }
		\Over[4,0][-2,1]
		\Under[-2,1][4,2]
		\Pole[-.85][1,2]
		\Pole[.15][1,2]
		 \foreach \x in {1,2,3} { \draw[thin, style=over] (\x,1) -- (\x,2); }
		 \foreach \x in {5,...,\K} { \draw[thin] (\x,0) -- (\x,\TOP); }
		\Caps[-.85,.15][-\TOP,\TOP][\K]
		\Label[-\TOP,\TOP][4][{\tiny $j$}]
		\Label[-\TOP,\TOP][2][{\tiny $i$}]
\end{tikzpicture}\end{matrix}
=
\begin{matrix}
\begin{tikzpicture}[scale=.3]
		\Pole[-.85][0,1]
		\Pole[.15][0,1]
		 \foreach \x in {1,2,3} {\draw[thin] (\x,0) -- (\x,1); }
		\Over[4,0][-2,1]
		\Under[-2,1][4,2]
		\Pole[-.85][1,2]
		\Pole[.15][1,2]
		\foreach \x in {1,2,3} {\draw[thin, style=over] (\x,1) -- (\x,2); }
		 \foreach \x in {5,...,\K} { \draw[thin] (\x,0) -- (\x,\TOP); }
	\pgftransformyshift{\TOP cm}
		\Pole[-.85][0,1]
		\Pole[.15][0,1]
		 \foreach \x in {1} {\draw[thin] (\x,0) -- (\x,1);}
		\Over[2,0][-1.3,1]
		\Under[-1.3,1][2,2]
		\Pole[-.85][1,2]
		\Pole[.15][1,2]
		 \foreach \x in {1} { \draw[thin, style=over] (\x,1) -- (\x,2);}
		 \foreach \x in {3,...,\K} {\draw[thin] (\x,0) -- (\x,\TOP);}
		\Caps[-.85,.15][-\TOP,\TOP][\K]
		\Label[-\TOP,\TOP][4][{\tiny $j$}]
		\Label[-\TOP,\TOP][2][{\tiny $i$}]
\end{tikzpicture}
\end{matrix}= Z_iZ_j}
\end{equation*}
give relations \eqref{eq:BraidRels-C1}. Similarly, pictorial computations readily show that $Y_1 Z_i = Z_i Y_1$ for $i>1$ and $T_i Z_j = Z_j T_i$ for $i \neq j, j+1$, proving relations \eqref{eq:BraidRels-C2} and \eqref{eq:BraidRels-C3}. 

\paragraph{Generators (o) from generators (c):}  The key formula for the generator $T_k$ is 
\begin{align*}
T_k 
&= T_{k-1}\cdots T_1(T_1^{-1}\cdots T_{k-1}^{-1}T_kT_{k-1}\cdots T_1)Y_1(T_1\cdots T_{k-1})(T_{k-1}^{-1}\cdots T_1^{-1})Y_1^{-1}(T_1^{-1}\cdots T_{k-1}^{-1}) \\
&= (T_{k-1}\cdots T_1) X_1Y_1(T_1\cdots T_{k-1})T_{s_{\varphi}} =Z_kT_{s_\varphi}^{-1},
\end{align*}
where
$$T_{s_\varphi}=T_{k-1}T_{k-2}\cdots T_1Y_1T_1\cdots T_{k-2}T_{k-1}
	={\def\TOP{2}\def\K{6}
		\begin{matrix}
		\begin{tikzpicture}[scale=.4]
			\Caps[-.85,.15][0,\TOP][\K]
			\Under[-.3,1][\K,\TOP]
			\Pole[-.85][0,\TOP]\Pole[.15][0,\TOP]
			\foreach \x in {1,...,5} { \draw[over] (\x,0) -- (\x,\TOP); }
			\Over[\K,0][-.3,1]
		\end{tikzpicture}
		\end{matrix}}.$$

\paragraph{Relations (o) from relations (c):}
The first set of relations in \eqref{eq:BraidRels-o1} are the same as the relations in \eqref{eq:BraidRels-Y}.
The relations
\begin{equation}
T_{s_\varphi}Y_1 = Y_1T_{s_\varphi} 
\qquad \text{ and } \qquad 
T_{s_\varphi}T_i = T_i T_{s_\varphi}, \qquad \text{ for } i = 1, \dots, k-2,
\label{pastTsphi}
\end{equation}
are verified pictorially by
$${\def\TOP{2}\def\K{6}
	\begin{matrix}
	\begin{tikzpicture}[scale=.3]
		\Caps[-.85,.15][0,2*\TOP][\K]			
			\Under[-.3,3][\K,2*\TOP]
			\Under[-.3,1][1,2]
			\Pole[-.85][0,2*\TOP]\Pole[.15][0,2*\TOP]
			\foreach \x in {1,...,5} { \draw[over] (\x,2*\TOP) -- (\x,\TOP); }
			\foreach \x in {2,3,4,5,6} { \draw[over] (\x,0) -- (\x,\TOP); }
			\Over[-.3,3][\K,2]
			\Over[-.3,1][1,0]
	\end{tikzpicture}
	\end{matrix}=
	\begin{matrix}
	\begin{tikzpicture}[scale=.3]
		\Caps[-.85,.15][0,2*\TOP][\K]
			\Under[-.3,1][\K,\TOP]
			\Under[-.3,3][1,2*\TOP]
			\Pole[-.85][0,2*\TOP]\Pole[.15][0,2*\TOP]
			\foreach \x in {1,...,5} { \draw[over] (\x,0) -- (\x,\TOP); }
			\foreach \x in {2,3,4,5,6} { \draw[over] (\x,2*\TOP) -- (\x,\TOP); }
			\Over[\K,0][-.3,1]
			\Over[1,\TOP][-.3,3]
	\end{tikzpicture}
	\end{matrix}
	\qquad \text{ and } \qquad 
	\begin{matrix}
	\begin{tikzpicture}[scale=.3]
		\Caps[-.85,.15][0,2*\TOP][\K]
			\Under[-.3,3][\K,2*\TOP]
			\Pole[-.85][0,2*\TOP]\Pole[.15][0,2*\TOP]
			\foreach \x in {1,...,5} { \draw[over] (\x,2*\TOP) -- (\x,\TOP); }
			\foreach \x in {1,2,5,6} { \draw[over] (\x,0) -- (\x,\TOP); }
			\Over[-.3,3][\K,2]
			\Cross[3,\TOP][4,0]
	\end{tikzpicture}
	\end{matrix}=
	\begin{matrix}
	\begin{tikzpicture}[scale=.3]
		\Caps[-.85,.15][0,2*\TOP][\K]
			\Under[-.3,1][\K,\TOP]
			\Pole[-.85][0,2*\TOP]\Pole[.15][0,2*\TOP]
			\foreach \x in {1,...,5} { \draw[over] (\x,0) -- (\x,\TOP); }
			\foreach \x in {1,2,5,6} { \draw[over] (\x,2*\TOP) -- (\x,\TOP); }
			\Over[\K,0][-.3,1]
			\Cross[3,2*\TOP][4,\TOP]
	\end{tikzpicture}
	\end{matrix}}.$$
or by direct computation using the relations in \eqref{eq:BraidRels-Y}.

By \eqref{DefnX1} and \eqref{BraidMurphy}, $Z_k = T_k T_{s_\varphi}$ and, 
by \eqref{eq:BraidRels-C3} and \eqref{eq:BraidRels-C2} respectively,
\begin{align}
	T_kT_i &= Z_kT_{s_\varphi}^{-1}T_i = Z_kT_iT_{s_\varphi}^{-1}=T_iZ_kT_{s_\varphi}^{-1} = T_iT_k,
	\qquad\hbox{for $i=1,\ldots, k-2$, and} \label{pastTk} \\
	T_kY_1 &= Z_kT_{s_\varphi}^{-1}Y_1 = Z_kY_1T_{s_\varphi}^{-1}=Y_1Z_kT_{s_\varphi}^{-1}=Y_1T_k,
\nonumber\end{align}
which proves the relations in \eqref{eq:BraidRels-o2}.

By the relations in \eqref{pastTk} and the second set of relations in \eqref{pastTsphi},
	$$(T_{k-1}^{-1}T_{s_\varphi}T_{k-1}^{-1})T_k
		=T_k(T_{k-1}^{-1}T_{s_\varphi}T_{k-1}^{-1})
	\qquad\hbox{and}\qquad
	(T_{k-1}^{-1}T_{s_\varphi}T_{k-1}^{-1})T_{s_\varphi}
		=T_{s_\varphi}(T_{k-1}^{-1}T_{s_\varphi}T_{k-1}^{-1}),$$
so that 
	$(T_{k-1}^{-1}T_{s_\varphi}T_{k-1}^{-1})(T_kT_{s_\varphi})
		=(T_kT_{s_\varphi})(T_{k-1}^{-1}T_{s_\varphi}T_{k-1}^{-1}).$
Using these and the equality
	$$T_{k-1} Z_{k} Z_{k-1}= T_{k-1} Z_{k-1} Z_k = Z_k T_{k-1}^{-1}Z_k = Z_kZ_{k-1}T_{k-1},$$
we have
\begin{align*}
	T_{k-1}Z_kZ_{k-1} 
	&= T_{k-1}Z_k(T_{k-1}^{-1}Z_kT_{k-1}^{-1})
		= T_{k-1}(T_kT_{s_\varphi})T_{k-1}^{-1}(T_kT_{s_\varphi})T_{k-1}^{-1} \\
	&= T_{k-1}T_kT_{k-1}(T_{k-1}^{-1}T_{s_\varphi}T_{k-1}^{-1})T_kT_{s_\varphi}T_{k-1}^{-1} 
		= (T_{k-1}T_kT_{k-1}T_k)(T_{k-1}^{-1}T_{s_\varphi}T_{k-1}^{-1}T_{s_\varphi}T_{k-1}^{-1}) \\
	=Z_kZ_{k-1}T_{k-1} 
	&= Z_k(T_{k-1}^{-1}Z_kT_{k-1}^{-1})T_{k-1} = (T_kT_{s_\varphi})T_{k-1}^{-1}(T_kT_{s_\varphi}) 
		= T_kT_{k-1}(T_{k-1}^{-1}T_{s_\varphi}T_{k-1}^{-1})(T_kT_{s_\varphi}) \\
	&= T_kT_{k-1}(T_kT_{s_\varphi})(T_{k-1}^{-1}T_{s_\varphi}T_{k-1}^{-1}) 
		= (T_k T_{k-1}T_kT_{k-1})(T_{k-1}^{-1}T_{s_\varphi}T_{k-1}^{-1}T_{s_\varphi}T_{k-1}^{-1}).
\end{align*}
Multiplying on the right by
	$(T_{k-1}^{-1}T_{s_\varphi}T_{k-1}^{-1}T_{s_\varphi}T_{k-1}^{-1})^{-1}$
gives $T_kT_{k-1}T_kT_{k-1} = T_{k-1}T_kT_{k-1}T_k$, establishing the last relation in \eqref{eq:BraidRels-o1}.
\end{proof}
\medskip

If
\begin{equation}\label{DefnPhalf}
{\def\TOP{1.5} \def\K{6}
P^{1/2} = 
\begin{matrix}
\begin{tikzpicture}[scale=.5]
		\PoleTwist[0,\TOP]
		\foreach \x in {1,...,\K} {\draw[thin] (\x,0) -- (\x,\TOP);}
		\Caps[-.85,.15][0,\TOP][\K]
\end{tikzpicture}\end{matrix}}
\end{equation}
then 
\begin{equation}\label{PYP}
{\def\TOP{4} \def\K{6}
P^{1/2}Y_1P^{-1/2} = \begin{matrix}
\begin{tikzpicture}[scale=.3]
		\Caps[-.85,.15][0,\TOP][\K]
		\PoleTwist[.67*\TOP,\TOP]
		\foreach \x in {2,...,\K} {\draw[thin] (\x,0) -- (\x,\TOP);}
		{\draw[thin] (1,0) -- (1,.2*\TOP);}
		{\draw[thin] (1,.8*\TOP) -- (1,\TOP);}
		\Pole[-.85][.3*\TOP,.67*\TOP]
		\PoleTwist[0,.33*\TOP]
		\Under[1,.8*\TOP][-.4,.5*\TOP]
		\Pole[.15][.33*\TOP, .67*\TOP]
		\Over[-.4,.5*\TOP][1,.2*\TOP]
		\end{tikzpicture}\end{matrix}
	 = \begin{matrix}
\begin{tikzpicture}[scale=.3]
		\Caps[-.85,.15][0,\TOP][\K]
		\Pole[-.85][0,.5*\TOP]
		\Under[-.3,.83*\TOP][1,.67*\TOP]
		\Under[1,.67*\TOP][-1.3,.5*\TOP]
		\Under[1,.33*\TOP][-.3,.17*\TOP]
		\Pole[-.85][\TOP,.5*\TOP]
		\Over[-1.3,.5*\TOP][1,.33*\TOP]
		\Pole[.15][\TOP,0]
		\Over[1,\TOP][-.3,.83*\TOP]
		\Over[1,0][-.3,.17*\TOP]
		\foreach \x in {2,...,\K} {\draw[thin] (\x,0) -- (\x,\TOP);}
\end{tikzpicture}\end{matrix}= Y_1^{-1}X_1Y_1}
\end{equation}
and
\begin{equation}\label{PXP}
{\def\TOP{4} \def\K{6}
P^{1/2}X_1P^{-1/2} = \begin{matrix}
\begin{tikzpicture}[scale=.3]
		\Caps[-.85,.15][0,\TOP][\K]
		\PoleTwist[.67*\TOP,\TOP]
		\foreach \x in {2,...,\K} {\draw[thin] (\x,0) -- (\x,\TOP);}
		{\draw[thin] (1,0) -- (1,.33*\TOP);}
		{\draw[thin] (1,.7*\TOP) -- (1,\TOP);}
		\Pole[-.85][.5*\TOP,.33*\TOP]
		\PoleTwist[0,.33*\TOP]
		\Under[1,.7*\TOP][-1.3,.5*\TOP]
		\Over[-1.3,.5*\TOP][1,.33*\TOP]
		\Pole[.15][.33*\TOP, .67*\TOP]
		\Pole[-.85][.5*\TOP,.67*\TOP]
\end{tikzpicture}\end{matrix}= 
\begin{matrix}
\begin{tikzpicture}[scale=.3]
		\Caps[-.85,.15][0,\TOP][\K]
		\Pole[-.85][0,\TOP]
		\Under[1,.8*\TOP][-.3,.5*\TOP]
		\Pole[.15][\TOP,.5*\TOP]
		\Pole[.15][.5*\TOP,0]
		\Over[-.3, .5*\TOP][1,.2*\TOP]
		\foreach \x in {2,...,\K} {\draw[thin] (\x,0) -- (\x,\TOP);}
		{\draw[thin] (1,0) -- (1,.2*\TOP);}
		{\draw[thin] (1,.8*\TOP) -- (1,\TOP);}
		\end{tikzpicture}\end{matrix}= Y_1}
\end{equation}
Following these pictorial computations, the \emph{extended affine braid group} is the group $\cB_k^{\mathrm{ext}}$ 
generated by $\cB_k$ and $P$ with the additional relations
\begin{align}
PX_1P^{-1} = Z_1^{-1}X_1Z_1, \qquad
PY_1P^{-1} = Z_1^{-1}Y_1Z_1, 
\label{Pcomm1} 
\\
PZ_1P^{-1} = Z_1,\qquad \text{ and } \qquad PT_iP^{-1} = T_i\ \hbox{for $i=1, \ldots, k-1$.}
\label{Pcomm2} 
\end{align}
Note that the element 
\begin{equation}\label{eq:Z0iscentral}
Z_0 = P Z_1 \cdots Z_k \quad\hbox{is central in $\cB_k^\mathrm{ext}$}
\tag{c0}
\end{equation}
since the group $\cB_k^{\mathrm{ext}}$ is a subgroup of the braid group on $k+2$ strands,
and
$Z_0$ is the generator of the center of the braid group on $k+2$ strands (see \cite[Theorem 4.2]{GM}).
So
$$\hbox{if}\ \cD = \{ Z_0^{j}\ |\ j\in \ZZ\}
\qquad\hbox{then}\qquad
\cB_k^{\mathrm{ext}} = \cD \times \cB_k,
\quad\hbox{with $\cD \cong \ZZ$.}
$$

\subsection{The two boundary Hecke algebra $H_k^{\mathrm{ext}}$}\label{Heckedefnsubsection}

In this subsection we define the two boundary Hecke algebras and relate it to the presentation of the 
affine Hecke algebra of type C that is found, for example, in 
\cite[Proposition 3.6]{Lu1} and \cite[(4.2.4)]{Mac2}.

Fix $a_1, a_2, b_1, b_2, t ^{\frac12}\in \CC^\times$. 
The \emph{extended two boundary Hecke algebra} $H_k^\ext$ is the quotient of $\cB_k^\ext$ by the relations 
\begin{equation}
(X_1 - a_1)(X_1-a_2) = 0,
\quad
(Y_1 - b_1)(Y_1-b_2) = 0,
\quad\hbox{and}\quad 
(T_i - t^{\half})(T_i+ t^{-\half}) = 0,
\label{Heckedefn} \tag{h}
\end{equation}
for $i = 1, \dots, k-1$. 
Let
\begin{equation}\label{eq:ab-to-t0tk}
	t_k^{\frac12} =  a_1^{\frac12}(-a_2)^{-\frac12}
\quad\hbox{and}\quad
t_0^{\frac12} = b_1^{\frac12}(-b_2)^{-\frac12}.
\end{equation}
With $Z_i\in H_k^{\mathrm{ext}}$ as in \eqref{BraidMurphy}, define
\begin{equation}
T_0 = b_1^{-\frac12}(-b_2)^{-\frac12} Y_1, \qquad W_i = -(a_1a_2b_1b_2)^{-\frac12}Z_i \quad 
\hbox{for $i = 1, \dots, k$, and} \label{eq:normalized_gens}
\end{equation}
\begin{equation}
W_0 = PW_1\cdots W_k = (-1)^k(a_1a_2b_1b_2)^{-\frac{k}2}PZ_1\cdots Z_k = (-1)^k(a_1a_2b_1b_2)^{-\frac{k}2}Z_0.
\end{equation} 
Then
\begin{equation}
X_1 = Z_1Y_1^{-1} = a_1^{\frac12}(-a_2)^{\frac12}W_1T_0^{-1}.
\label{X1renorm}
\end{equation}

\begin{thm}\label{thm:2bdryHeckePresentation} Fix $t_0, t_k, t \in \CC^\times$ and use notations for relations as defined in \eqref{braidlengths}. The
extended affine Hecke algebra $H^\ext_k$ defined in 
\eqref{Heckedefn}
is presented by generators, 
$T_0$, $T_1$, \dots, $T_{k-1}$, $W_0$, $W_1$, \dots, $W_k$ and relations
\begin{equation}
W_0\in Z(H_k^{\mathrm{ext}}), \qquad\qquad
\begin{matrix}\begin{tikzpicture}
	\foreach \x in {0,1, 2, 4,5}{
		\draw (\x,0) circle (2.5pt);
		\node (\x) at (\x,0){};
		}
	\foreach \x in {1, 2}{
		\node[label=above:$T_\x$] at (\x,0){};
		\node[label=above:$T_{k-\x}$] at (6-\x,0){};}
	\node[label=above:$T_0$] at (0){};
	\draw[double distance = 2pt] (0)--(1);
	\draw (1)--(2) (4)--(5);
	\draw[dashed] (2) to (4);
\label{PresBraidRelsB1}\tag{B1}
\end{tikzpicture}\end{matrix};
\end{equation}
\begin{equation}
W_iW_j = W_j W_i, \qquad\hbox{for $i,j= 0, 1,\ldots, k$;}
\label{WcommuteB2}\tag{B2}
\end{equation}
\begin{equation}
T_0W_j = W_j T_0,\quad\hbox{for $j\ne 1$;}
\label{T0WcommuteB3}\tag{B3}
\end{equation}
\begin{equation}
T_iW_j = W_j T_i\ \hbox{for $i=1, \ldots, k-1$ and $j=1,\ldots, k$ with $j\ne i, i+1$;}
\label{TiWcommuteB4}\tag{B4}
\end{equation}
\begin{equation}
(T_0- t_0^{\frac12})(T_0 + t_0^{-\frac12}) = 0, \ 
\quad \hbox{and}\quad 
(T_i - t^{\frac12})(T_i +  t^{-\frac12})=0 \ \hbox{for $i = 1, \dots, k-1$.}
\label{quadraticT0andTi}\tag{H}
\end{equation}
For $i=1,\ldots, k-1$,  
\begin{equation}
T_iW_i = W_{i+1}T_i + (t^{\frac12} - t^{-\frac12})\frac{W_i - W_{i+1}}{1-W_iW_{i+1}^{-1}},
\qquad
T_iW_{i+1} = W_iT_i +(t^{\frac12}-t^{-\frac12})\frac{W_{i+1}-W_i}{1-W_iW_{i+1}^{-1}}, 
\label{TipastWi}\tag{C1}
\end{equation}
\begin{align}\text{and}\qquad
T_0W_1 &= W_1^{-1}T_0 + \left((t_0^{\frac12}-t_0^{-\frac12})  + (t_k^{\frac12} -t_k^{-\frac12})W_1^{-1}\right) \frac{W_1 - W_1^{-1}}{1-W_1^{-2}}.
\label{T0pastW1}\tag{C2}
\end{align}
\end{thm}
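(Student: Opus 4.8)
The plan is to derive this presentation from Theorem~\ref{thm:BraidPres}(c) together with the defining relations \eqref{Heckedefn}. Since $\cB_k^{\mathrm{ext}} = \cD\times\cB_k$ with $\cD=\langle Z_0\rangle$ (see \eqref{eq:Z0iscentral}) and $\cB_k$ is presented as in Theorem~\ref{thm:BraidPres}(c), the algebra $H_k^{\mathrm{ext}}$ has a presentation $\mathrm{P}_1$ with generators $Z_0,Z_1,\dots,Z_k,Y_1,T_1,\dots,T_{k-1}$ and relations: ``$Z_0$ central'', the Coxeter-type relations of \eqref{eq:BraidRels-Y}, the relations \eqref{eq:BraidRels-C1}--\eqref{eq:BraidRels-C4}, and the three quadratics \eqref{Heckedefn} with $X_1:=Z_1Y_1^{-1}$ (note that $Y_1$, hence $X_1$ and $Z_1$, are invertible since $b_1b_2,a_1a_2\neq 0$). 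I would then invoke the invertible rescaling of generators $Y_1\leftrightarrow T_0$, $Z_i\leftrightarrow W_i$, $Z_0\leftrightarrow W_0$ of \eqref{eq:normalized_gens} and the line following it, and check that, under this dictionary, $\mathrm{P}_1$ and the target presentation $\mathrm{P}_2$ (relations \eqref{PresBraidRelsB1}--\eqref{T0pastW1}) have interderivable relations. Because the change of generators is by nonzero scalars, every homogeneous relation --- in particular every braid relation --- transports verbatim.

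The ``braid-type'' half is quick. The diagram \eqref{eq:BraidRels-Y} for $Y_1,T_1,\dots,T_{k-1}$ rescales to the diagram of \eqref{PresBraidRelsB1} for $T_0,T_1,\dots,T_{k-1}$, and ``$Z_0$ central'' rescales to ``$W_0$ central'', giving \eqref{PresBraidRelsB1}. Relation \eqref{eq:BraidRels-C1} gives $W_iW_j=W_jW_i$ for $i,j\ge 1$, and centrality of $W_0$ completes \eqref{WcommuteB2}; similarly \eqref{eq:BraidRels-C2} and \eqref{eq:BraidRels-C3} (with centrality of $W_0$) give \eqref{T0WcommuteB3} and \eqref{TiWcommuteB4}. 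The quadratic for $T_i$ is the third relation of \eqref{Heckedefn}, and substituting $Y_1=b_1^{\frac12}(-b_2)^{\frac12}T_0$ into $(Y_1-b_1)(Y_1-b_2)=0$ and using \eqref{eq:ab-to-t0tk} gives $(T_0-t_0^{\frac12})(T_0+t_0^{-\frac12})=0$, so \eqref{quadraticT0andTi} holds; all these steps reverse. For \eqref{TipastWi}: since $W_i,W_{i+1}$ commute one has the polynomial identity $\tfrac{W_i-W_{i+1}}{1-W_iW_{i+1}^{-1}}=-W_{i+1}$, so \eqref{TipastWi} is equivalent to $T_iW_i=W_{i+1}T_i-(t^{\frac12}-t^{-\frac12})W_{i+1}$ and $T_iW_{i+1}=W_iT_i+(t^{\frac12}-t^{-\frac12})W_{i+1}$; using $T_i^{-1}=T_i-(t^{\frac12}-t^{-\frac12})$, the first is exactly $T_iW_iT_i=W_{i+1}$, which is the rescaled relation \eqref{eq:BraidRels-C4}, and the second follows from the first and the $T_i$-quadratic (and conversely). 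Finally the extension generator is recovered as $P=Z_0(Z_1\cdots Z_k)^{-1}=(\text{scalar})\,W_0(W_1\cdots W_k)^{-1}$, so centrality of $W_0$ reproduces \eqref{Pcomm1}--\eqref{Pcomm2} (equivalently one re-invokes $\cB_k^{\mathrm{ext}}=\cD\times\cB_k$).

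The one relation requiring real work is \eqref{T0pastW1}. Here $\tfrac{W_1-W_1^{-1}}{1-W_1^{-2}}=W_1$, so \eqref{T0pastW1} is equivalent to $T_0W_1=W_1^{-1}T_0+(t_0^{\frac12}-t_0^{-\frac12})W_1+(t_k^{\frac12}-t_k^{-\frac12})$; rewriting this via \eqref{eq:normalized_gens}, \eqref{X1renorm}, \eqref{eq:ab-to-t0tk} (and tracking the square roots), it becomes the single identity, purely among the generators of $\mathrm{P}_1$,
\[
Y_1Z_1 = a_1a_2b_1b_2\,Z_1^{-1}Y_1 + (b_1+b_2)Z_1 - (a_1+a_2)b_1b_2 .
\]
To see this holds in $H_k^{\mathrm{ext}}$: the $Y_1$-quadratic gives $Y_1-(b_1+b_2)=-b_1b_2Y_1^{-1}$, while the $X_1$-quadratic, in the form $Z_1Y_1^{-1}Z_1=(a_1+a_2)Z_1-a_1a_2Y_1$ (multiply $(X_1-a_1)(X_1-a_2)=0$ on the right by $Y_1$, using $X_1=Z_1Y_1^{-1}$), gives after left multiplication by $Z_1^{-1}$ the relation $Y_1^{-1}Z_1=(a_1+a_2)-a_1a_2Z_1^{-1}Y_1$; substituting the former into $Y_1Z_1=\bigl((b_1+b_2)-b_1b_2Y_1^{-1}\bigr)Z_1$ and then the latter yields the displayed identity. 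Conversely, in $\mathrm{P}_2$ one runs this backwards: the displayed identity together with the $T_0$-(i.e.\ $Y_1$-)quadratic gives $Y_1^{-1}Z_1=(a_1+a_2)-a_1a_2Z_1^{-1}Y_1$, and left multiplication by $Z_1$ returns $Z_1Y_1^{-1}Z_1=(a_1+a_2)Z_1-a_1a_2Y_1$, i.e.\ the $X_1$-quadratic of \eqref{Heckedefn}. Hence \eqref{T0pastW1} and the $X_1$-relation of \eqref{Heckedefn} are interderivable modulo the relations already matched, so $\mathrm{P}_1$ and $\mathrm{P}_2$ present the same algebra.

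I expect the main obstacle to be bookkeeping rather than algebra: one must handle the square roots $b_1^{\frac12}$, $(-b_2)^{\frac12}$, $(a_1a_2b_1b_2)^{\frac12}$, etc.\ introduced in \eqref{eq:ab-to-t0tk}--\eqref{X1renorm} carefully enough to confirm that the coefficients in \eqref{T0pastW1} come out to precisely $t_0^{\frac12}-t_0^{-\frac12}$ and $t_k^{\frac12}-t_k^{-\frac12}$ (this is where the $i=\sqrt{-1}$ ambiguities enter); the algebraic manipulations themselves are all short.
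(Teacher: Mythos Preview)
Your proposal is correct and is essentially the same argument as the paper's: both start from presentation (c) of Theorem~\ref{thm:BraidPres} together with \eqref{Heckedefn}, transport the braid and commutation relations to \eqref{PresBraidRelsB1}--\eqref{TiWcommuteB4} and \eqref{quadraticT0andTi} via the scalar renormalization \eqref{eq:normalized_gens}, obtain \eqref{TipastWi} from $W_{i+1}=T_iW_iT_i$ and the $T_i$-quadratic, and finally show that \eqref{T0pastW1} is equivalent to the $X_1$-quadratic in \eqref{Heckedefn} modulo the other relations. The only cosmetic difference is in this last step: the paper computes $T_0W_1-W_1^{-1}T_0$ directly using $W_1=a_1^{-\frac12}(-a_2)^{-\frac12}X_1T_0$ and the relation $X_1^{-1}=-a_1^{-1}a_2^{-1}X_1+(a_1^{-1}+a_2^{-1})$, whereas you translate \eqref{T0pastW1} into the identity $Y_1Z_1=a_1a_2b_1b_2\,Z_1^{-1}Y_1+(b_1+b_2)Z_1-(a_1+a_2)b_1b_2$ and derive it from the two quadratics --- the same manipulation in different variables.
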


\begin{proof} The conversion between the different sets of generators of $H_k^{\mathrm{ext}}$ is
provided by \eqref{eq:normalized_gens}.

\paragraph{Equivalence between (\ref{eq:Z0iscentral}--\ref{eq:BraidRels-C4})and the second
and third relations of (\ref{Heckedefn}) with the relations (\ref{PresBraidRelsB1}--\ref{TiWcommuteB4}) and (\ref{quadraticT0andTi}).}
Since $T_0$ and $Y_1$ differ by a constant, and $W_i$ and $Z_i$ differ by a constant, 
the relations in (\ref{eq:Z0iscentral}--\ref{eq:BraidRels-C4}) are equivalent
to the relations in (\ref{PresBraidRelsB1}--\ref{TiWcommuteB4}), respectively.
Since
\begin{align*}
0 &= (Y_1 - b_1)(Y_1 - b_2) 
=b_1^{\frac12}(-b_2)^{\frac12}(T_0 - b_1^{\frac12}(-b_2)^{-\frac12})
b_1^{\frac12}(-b_2)^{\frac12}(T_0 + b_1^{-\frac12}(-b_2)^{\frac12})  \\
&= -b_1 b_2 (T_0- t_0^{\frac12})(T_0 + t_0^{-\frac12}),
\end{align*}
the relations \eqref{quadraticT0andTi} are equivalent to the second and third relations in
\eqref{Heckedefn}.

\smallskip\noindent
\paragraph{Relations (\ref{TipastWi}--\ref{T0pastW1}) from relations (\ref{eq:Z0iscentral}--\ref{eq:BraidRels-C4}) and (\ref{Heckedefn}):}
From \eqref{BraidMurphy} and \eqref{eq:normalized_gens}, $W_{i+1}=T_iW_iT_i$,
and by the last relation in \eqref{Heckedefn}, 
$T_i^{-1} = T_i - (t^{\frac12} - t^{-\frac12})$. So
\begin{align*}
T_iW_i 
&
= W_{i+1}T_i^{-1} 
= W_{i+1}(T_i - (t^{\frac12} - t^{-\frac12})) 
= W_{i+1}T_i + (t^{\frac12} - t^{-\frac12})\frac{W_i - W_{i+1}}{1-W_iW_{i+1}^{-1}} \quad\hbox{and} \\
T_iW_{i+1} &= T_i^2W_iT_i = (t^{\frac12}-t^{-\frac12})W_{i+1}+W_iT_i 
=W_iT_i+ (t^{\frac12}-t^{-\frac12})\frac{W_{i+1}-W_i}{1-W_iW_{i+1}^{-1}},
\end{align*}
which establishes the relations in \eqref{TipastWi}. 

\smallskip\noindent
By the first relation in \eqref{Heckedefn},
$X_1^{-1} = -a_1^{-1}a_2^{-1}X_1+(a_1^{-1}+a_2^{-1})$.
Since $W_1=a_1^{-\frac12}(-a_2)^{-\frac12}X_1T_0$ and
$T_0 - T_0^{-1} = t_0^{\frac12} - t_0^{-\frac12} $,
\begin{align*}
T_0 W_1 - W_1^{-1} T_0
&=a_1^{-\frac12} (-a_2)^{-\frac12}(T_0 X_1 T_0 - a_1 (-a_2) T_0^{-1}X_1^{-1}T_0)\\
&=a_1^{-\frac12} (-a_2)^{-\frac12}(T_0 X_1 T_0 + a_1 a_2 T_0^{-1}(-a_1^{-1}a_2^{-1}X_1+(a_1^{-1}+a_2^{-1}))T_0)\\
&= a_1^{-\frac12} (-a_2)^{-\frac12}((T_0-T_0^{-1}) X_1 T_0 +(a_1- (-a_2)))\\
&=(t_0^{\frac12} - t_0^{-\frac12} )W_1 + (t_k^{\frac12} - t_k^{-\frac12}),
\end{align*}
which establishes \eqref{T0pastW1}.

\paragraph{
The first relation in (\ref{Heckedefn}) from the relations (\ref{PresBraidRelsB1}--\ref{TiWcommuteB4}), (\ref{quadraticT0andTi}) and (\ref{TipastWi}--\ref{T0pastW1}).}

By \eqref{T0pastW1},
\begin{align*}
a_1^{-\frac12} (-a_2)^{-\frac12}&(T_0 X_1 T_0 - a_1 (-a_2) T_0^{-1}X_1^{-1}T_0)
=T_0 W_1 - W_1^{-1} T_0
=(t_0^{\frac12} - t_0^{-\frac12})W_1 + (t_k^{\frac12} - t_k^{-\frac12}) \\
&= a_1^{-\frac12} (-a_2)^{-\frac12}((T_0-T_0^{-1}) X_1 T_0 + (a_1-(-a_2))) \\
&=a_1^{-\frac12} (-a_2)^{-\frac12}(T_0 X_1 T_0 + a_1 a_2 T_0^{-1}(-a_1^{-1}a_2^{-1}X_1+(a_1^{-1}+a_2^{-1}))T_0),
\end{align*}
giving $X_1^{-1} = -a_1^{-1}a_2^{-1}X_1+(a_1^{-1}+a_2^{-1})$, which establishes the first relation in \eqref{Heckedefn}.
\end{proof}

As vector spaces, 
\begin{equation}
H_k^{\mathrm{ext}} 
= \CC[W_0^{\pm 1}, W_1^{\pm1}, \ldots, W_k^{\pm1}] \otimes H_k^{\mathrm{fin}},
\label{translsplitting}
\end{equation}
where $H_k^{\mathrm{fin}}$ is the subalgebra of $H_k^{\mathrm{ext}}$ generated by $T_0, T_1, \ldots, T_{k-1}$.
The algebra $H_k^{\mathrm{fin}}$ is the Iwahori-Hecke algebra of finite type $C_k$.  If 
$s_0, s_1,\ldots, s_{k-1}$ are the generators of $\cW_0$ as given in \eqref{W0defn}, write 
$T_w = T_{s_{i_1}}\cdots T_{s_{i_\ell}}$ for a reduced expression $w = s_{i_1}\cdots s_{i_\ell}$,
so that
$$\{ T_w\ |\ w\in \cW_0\}\qquad\hbox{is a $\CC$-basis of $H_k^{\mathrm{fin}}$.}$$
Thus \eqref{translsplitting} means that any element $h\in H_k^{\mathrm{ext}}$ can be written uniquely as
$$h = \sum_{w\in \cW_0} h_w T_w, \qquad\hbox{with}\quad
h_w\in \CC[W_0^{\pm1}, W_1^{\pm1}, \ldots, W_k^{\pm1}].$$

Let 
\begin{equation}\label{eq:Wlambdadef}
W^\lambda = W_0^{\lambda_0} W_1^{\lambda_1}W_2^{\lambda_2}\cdots W_k^{\lambda_k}
\qquad\hbox{for $\lambda = (\lambda_0, \lambda_1, \ldots, \lambda_k)\in \ZZ^{k+1}$.}
\end{equation}
Relations \eqref{TipastWi} and \eqref{T0pastW1} produce an action of $\cW_0$ on 
$$\CC[W_0^{\pm 1}, W_1^{\pm1}, \ldots, W_k^{\pm1}]
= \mathrm{span}_\CC\{ W^\lambda\ |\ \lambda = (\lambda_0, \lambda_1, \ldots, \lambda_k)\in \ZZ^{k+1}\}.
$$
Namely, for $w\in \cW_0$ and $\lambda\in \ZZ^{k+1}$,
\begin{equation*}
	wW^\lambda = W^{w\lambda}, \qquad \text{where} \qquad s_0\lambda 
		= s_0(\lambda_0, \lambda_1, \ldots, \lambda_k)= (\lambda_0, -\lambda_1, \ldots, \lambda_k),
\quad\hbox{and}
\end{equation*}
\begin{equation}
	s_i\lambda 
	= s_i(\lambda_0, \lambda_1, \ldots, \lambda_k) 
	= (\lambda_0, \lambda_1, \ldots, \lambda_{i-1}, \lambda_{i+1}, \lambda_i, \lambda_{i+2}, \ldots, \lambda_k),
\label{W0actiononW}
\end{equation}
for $i=1,2,\ldots, k-1$ (see \cite[(1.12)]{Ra2}).  With this notation,
for  $\lambda\in \ZZ^{k+1}$, the relations \eqref{TipastWi} and \eqref{T0pastW1} give
\begin{align}
T_iW^\lambda &= W^{s_i\lambda}T_i + (t^{\frac12}-t^{-\frac12})\frac{W^\lambda - W^{s_i\lambda}}{1-W_iW_{i+1}^{-1}}
\qquad\hbox{and} \label{eq:TiW}\\
T_0W^\lambda &= W^{s_0\lambda}T_0 
+ \left((t_0^{\frac12}-t_0^{-\frac12}) + (t_k^{\frac12} -t_k^{-\frac12})W_1^{-1}\right) \frac{W^\lambda - W^{s_0\lambda}}{1-W_1^{-2}},
\label{eq:T0W}
\end{align}
and, replacing $s_i\lambda$ by $\mu$,
\begin{align}
W^{\mu}T_i &= T_i W^{s_i\mu} + (t^{\frac12}-t^{-\frac12})\frac{W^\mu - W^{s_i\mu}}{1-W_iW_{i+1}^{-1}}
\qquad\hbox{and} \label{eq:WTi}\\
W^{\mu}T_0 &= T_0W^{s_0\mu} 
+ \left((t_0^{\frac12}-t_0^{-\frac12}) + (t_k^{\frac12} -t_k^{-\frac12})W_1^{-1}\right) \frac{W^\mu - W^{s_0\mu}}{1-W_1^{-2}}, 
\qquad\hbox{for $\mu\in \ZZ^{k+1}$.}
\label{eq:WT0}
\end{align}

The subalgebra $H_k \subseteq H_k^\ext$ 
generated by $W_1,\ldots, W_k$ and $T_0, \ldots, T_{k-1}$ is the affine Hecke algebra of 
type C considered, for example, in \cite{Lu1}.
The following theorem determines the center of $H_k^\ext$ and shows that, as algebras, 
$H_k^\ext$ is a tensor product of $H_k$ by the algebra of Laurent polynomials in one variable.
It follows that the irreducible representations of $H_k^\ext$ are indexed by $\CC^\times  \times \hat{H}_k$,
where $\hat{H}_k$ is an indexing set for the irreducible representations of $H_k$.

\begin{thm}\label{thm:Center_of_Hext}  Let $H_k$ be the subalgebra of $H_k^\ext$ 
generated by $W_1,\ldots, W_k$ and $T_0, \ldots, T_{k-1}$.
As algebras, 
\begin{equation}
H_k^\ext \cong \CC[W_0^{\pm1}] \otimes H_k, 
\label{Hextasacentralextension}
\end{equation}
The center of $H_k^{\mathrm{ext}}$ is 
$$Z(H_k^{\mathrm{ext}}) 
= \CC[W_0^{\pm1}]\otimes \CC[W_1^{\pm1},\ldots, W_k^{\pm1}]^{\cW_0},$$ 
and $H_k^{\mathrm{ext}}$ is a free module of rank $\Card(\cW_0)^2=2^{2k}(k!)^2$ over
$Z(H_k^{\mathrm{ext}})$.
\end{thm}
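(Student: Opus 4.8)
The plan is to build the isomorphism \eqref{Hextasacentralextension} directly from the presentation in Theorem \ref{thm:2bdryHeckePresentation} and then transport the known description of the center of an affine Hecke algebra of type $C$ across it. First I would observe that $W_0$ is central (relation \eqref{PresBraidRelsB1}) and invertible, so $\CC[W_0^{\pm1}]$ sits inside $Z(H_k^\ext)$; the subalgebra $H_k$ generated by $T_0,\ldots,T_{k-1},W_1,\ldots,W_k$ is, by inspection of the relations, exactly the affine Hecke algebra of type $C_k$ in the Bernstein presentation (matching \cite[Proposition 3.6]{Lu1} and \cite[(4.2.4)]{Mac2}), since the relations \eqref{WcommuteB2}--\eqref{T0pastW1} restricted to $W_1,\ldots,W_k$ are precisely its defining relations. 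There is an obvious surjective algebra map $\CC[W_0^{\pm1}]\otimes H_k \to H_k^\ext$, and to see it is an isomorphism I would exhibit an inverse, or more simply count: by the vector-space decomposition \eqref{translsplitting}, $H_k^\ext = \CC[W_0^{\pm1},\ldots,W_k^{\pm1}]\otimes H_k^{\mathrm{fin}}$, whereas $\CC[W_0^{\pm1}]\otimes H_k = \CC[W_0^{\pm1}]\otimes\CC[W_1^{\pm1},\ldots,W_k^{\pm1}]\otimes H_k^{\mathrm{fin}}$ (the latter factorization being the standard PBW/Bernstein basis statement for $H_k$), and these two descriptions visibly agree as graded vector spaces; since the map is surjective and the sides have equal (graded) dimension, it is an isomorphism.

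Next, with \eqref{Hextasacentralextension} in hand, the center computation reduces to the classical Bernstein center theorem: $Z(H_k) = \CC[W_1^{\pm1},\ldots,W_k^{\pm1}]^{\cW_0}$, the $\cW_0$-invariants under the action \eqref{W0actiononW} (sign changes and permutations of $W_1,\ldots,W_k$). I would cite this from \cite{Lu1} or \cite{Mac2}; its proof is the usual one — using the decomposition $H_k = \CC[W_1^{\pm1},\ldots,W_k^{\pm1}]\otimes H_k^{\mathrm{fin}}$ and the commutation formulas \eqref{eq:TiW}--\eqref{eq:T0W} to show an element $\sum_w h_w T_w$ is central iff $h_1 \in \CC[W_1^{\pm1},\ldots,W_k^{\pm1}]^{\cW_0}$ and $h_w = 0$ for $w\neq 1$. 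Tensoring with the central factor $\CC[W_0^{\pm1}]$ gives $Z(H_k^\ext) = \CC[W_0^{\pm1}]\otimes\CC[W_1^{\pm1},\ldots,W_k^{\pm1}]^{\cW_0}$, as claimed (one should note that the center of a tensor product of algebras is the tensor product of the centers, which holds here since $\CC[W_0^{\pm1}]$ is commutative).

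Finally, for the freeness and rank statement: $\CC[W_1^{\pm1},\ldots,W_k^{\pm1}]$ is a free module of rank $|\cW_0| = 2^k k!$ over its ring of invariants $\CC[W_1^{\pm1},\ldots,W_k^{\pm1}]^{\cW_0}$ — this is the standard fact for a finite reflection group acting on a Laurent polynomial ring (equivalently, $\CC[W_i^{\pm1}]$ is the coordinate ring of a torus on which $\cW_0$ acts freely on a dense open set, and the extension is finite flat of degree $|\cW_0|$; one can also see it via a Steinberg-type basis argument). Then $H_k^\ext = \CC[W_0^{\pm1}]\otimes\CC[W_1^{\pm1},\ldots,W_k^{\pm1}]\otimes H_k^{\mathrm{fin}}$ is free over $Z(H_k^\ext) = \CC[W_0^{\pm1}]\otimes\CC[W_1^{\pm1},\ldots,W_k^{\pm1}]^{\cW_0}$ of rank $|\cW_0|\cdot\dim_\CC H_k^{\mathrm{fin}} = |\cW_0|^2 = 2^{2k}(k!)^2$, since $\dim_\CC H_k^{\mathrm{fin}} = |\cW_0|$ by the $T_w$-basis.

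The main obstacle, I expect, is not any single deep step but assembling the bookkeeping cleanly: one must be careful that the subalgebra $H_k$ as defined here really is the affine Hecke algebra of type $C_k$ with the Bernstein presentation being invoked (the parameter dictionary \eqref{eq:ab-to-t0tk} and \eqref{eq:normalized_gens} must be tracked so that relations \eqref{TipastWi}--\eqref{T0pastW1} match the reference verbatim), and that the freeness of a Laurent polynomial ring over its reflection-invariants is quoted in the right form (it is slightly less standard than the polynomial-ring version, though it follows from it by localization, or from the general fact that for a torus $T$ with finite group $\Gamma$ acting, $\CC[T]$ is free over $\CC[T]^\Gamma$ when $\Gamma$ acts with trivial stabilizers generically and $|\Gamma|$ is invertible). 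Everything else is the standard Bernstein-center argument applied in this extended setting.
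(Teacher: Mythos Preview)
Your proposal is correct and follows essentially the same route as the paper: use centrality of $W_0$ (the paper derives this from centrality of $Z_0$ in \eqref{eq:Z0iscentral}, you from relation \eqref{PresBraidRelsB1}) to obtain the tensor decomposition, then invoke the standard Bernstein center theorem for the affine Hecke algebra $H_k$ of type $C_k$, and finally deduce freeness and rank from \eqref{translsplitting} together with freeness of $\CC[W_1^{\pm1},\ldots,W_k^{\pm1}]$ over its $\cW_0$-invariants. The only difference is in citations: the paper cites \cite[Thm.\ 4.12]{RR} for the center and \cite[Theorem 1.17]{Ra2} for the freeness statement, whereas you cite \cite{Lu1}/\cite{Mac2} and sketch the freeness argument directly; your caution about the Laurent-polynomial version of the invariant-theory fact is well placed, and the paper handles it by appealing to the reference in \cite{Ra2}.
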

\begin{proof} As observed in \eqref{eq:Z0iscentral},  $Z_0$ is central in $\cB_k^\ext$, and therefore $W_0=(-1)^k(a_1 a_2 b_1 b_2)^{k/2} Z_0$ is central in $H_k^\ext$.  Thus
\begin{equation}
H_k^\ext = \CC[W_0^{\pm1}] \otimes H_k.
\label{Hextasacentralextension}
\end{equation}
By the formulas in \eqref{W0actiononW}, the Laurent polynomial ring $\CC[W_1^{\pm1}, \ldots, W_k^{\pm1}]$ is a $\cW_0$-submodule of 
$
\CC[W_0^{\pm1},W_1^{\pm1},\ldots, W_k^{\pm1}]$, 
and
\begin{align}
\CC[W_0^{\pm1}, W_1^{\pm1},\ldots, W_k^{\pm1}]^{\cW_0}= \CC[W_1^{\pm1},\ldots, W_k^{\pm1}]^{\cW_0}\otimes \CC[W_0^{\pm1}].
\label{W0invariants}
\end{align}
The proof that $Z(H_k^\ext) = \CC[W_0^{\pm1},W_1^{\pm1},\ldots,W_k^{\pm1}]^{\cW_0}$ is exactly as in  \cite[Thm. 4.12]{RR}.
The fact that $H_k^{\mathrm{ext}}$ is a free module of rank $\Card(\cW_0)^2$ over $\CC[\CC[W_0^{\pm1},W_1^{\pm1},\ldots,W_k^{\pm1}]^{\cW_0}]$ follows from \eqref{translsplitting} and \cite[Theorem1.17]{Ra2}.
\end{proof}

\subsection{Weights of representations and intertwiners}

Let $t^{\frac12}\in \CC^\times$ be such that $(t^{\frac12})^\ell\ne 1$ for $\ell\in \ZZ$.
All irreducible complex representations $\gamma$ of the algebra
$\CC[W_0^{\pm1}, W_1^{\pm1}, \ldots, W_k^{\pm1}]$ are one-dimensional. 
Identify the sets
\begin{align}
\cC &=\{\hbox{irreducible representations $\gamma$ of $\CC[W_0^{\pm1}, W_1^{\pm1}, \ldots, W_k^{\pm1}]$}\}
 \label{eq:identify_gamma_c}\\
&\qquad \leftrightarrow \quad
\{ \hbox{sequences\quad $(z, \gamma_1, \ldots, \gamma_k)\in (\CC^\times)^{k+1}$ }\} \nonumber \\
&\qquad \leftrightarrow \quad \{ \hbox{sequences\quad $(\zeta, c_1, \ldots, c_k)\in \CC^{k+1}$ }\}  \nonumber
\end{align}
via
\begin{equation}\label{eq:gamma-to-c}
\gamma(W_0) = z = (-1)^k t^{\zeta} \qquad\hbox{and}\qquad
\gamma(W_i) = \gamma_i=-t^{c_i}\ \hbox{for $i=1,\ldots, k$}
\end{equation}
(the choice of sign in the last equation is an artifact of equations \eqref{Weigenvals} and \eqref{W0eigenvals} and an effort to make the 
combinatorics of contents of boxes Section 5 optimally helpful). 
The action of $\cW_0$ from \eqref{W0actiononW} induces an action of $\cW_0$ on $\cC$ by
\begin{equation}
	(w\gamma)(W^\lambda) = \gamma(W^{w^{-1}\lambda}),
		\qquad\hbox{for $w\in \cW_0$ and $\lambda\in \ZZ^{k+1}$.}
\end{equation} 
Equivalently, 
on sequences $(\zeta, c_1, \dots, c_k)$, this action is given by
\begin{align}
w(\zeta, c_1, \ldots, c_k) 
= (\zeta, c_{w^{-1}(1)}, \ldots, c_{w^{-1}(k)}), \qquad\hbox{for $w\in \cW_0$. }
\label{W0actionsiform}
\end{align}

Let $\tilde{H}_k^{\mathrm{ext}}$ be the extensions of ${H}_k^{\mathrm{ext}}$ by the rational functions in $W_1, \ldots, W_k$:
$$
\tilde{H}_k^{\mathrm{ext}} = \CC[W_0^{\pm1}]\otimes \CC(W_1,\ldots, W_k)\otimes H_k^{\mathrm{fin}} ,$$
where $H_k^{\mathrm{fin}}$ is the subalgebra of $H_k^{\mathrm{ext}}$ generated by $T_0, T_1,\ldots, T_{k-1}$.
The \emph{intertwining operators} for $\tilde{H}_k^{\mathrm{ext}}$ are 
\begin{equation}\label{intertwinerdefs}
\tau_0 
= T_0 - \frac{ (t_0^{\frac12} - t_0^{-\frac12}) + (t_k^{\frac12} - t_k^{-\frac12})W_1^{-1} }
{1-W_1^{-2}} 
\qquad\hbox{and}\qquad
\tau_i = T_i - \frac{t^{\frac12} - t^{-\frac12} }{1-W_iW_{i+1}^{-1} }
\end{equation}
for $i=1,2,\ldots, k-1$.
Proposition \ref{prop:intertwiners} shows that these operators satisfy
$\tau_0 W^\lambda = W^{s_0\lambda}\tau_0$ and $\tau_i W^\lambda = W^{s_i\lambda}\tau_i$
so that, for $w\in \cW_0$ and $\lambda = (\lambda_0, \ldots, \lambda_k)\in \ZZ^{k+1}$,
\begin{equation}
\tau_w W^\lambda = W^{w\lambda}\tau_w, \qquad\hbox{where $\tau_w = \tau_{i_1}\ldots \tau_{i_\ell}$}
\label{tauwpastW}
\end{equation}
for a reduced expression $w= s_{i_1}\cdots s_{i_\ell}$.

Each $H_k^{\mathrm{ext}}$-module $M$ can be written as
$\displaystyle{ 
M = \bigoplus_{\gamma\in \cC} M_\gamma^{\mathrm{gen}},}$ where for each $\gamma = (z, \gamma_1, \ldots, \gamma_k) \in \cC$,
\begin{equation}
M_\gamma^{\mathrm{gen}} = \left\{ m\in M\ \left|\ {
\text{there exists 
$N\in \ZZ_{>0}$ such that $(W_0 - z)^N m = 0$}
\atop 
\text{ and $(W_i-\gamma_i)^N m=0$ for $i=1,\ldots, k$}
}\right.\right\}
\label{eq:Mcgen}
\end{equation}
is the \emph{generalized weight space} associated to $\gamma$. 
The intertwiners \eqref{intertwinerdefs} define vector space homomorphisms
\begin{equation}
\tau_0\colon M_\gamma^{\mathrm{gen}}\longmapsto M_{s_0\gamma}^{\mathrm{gen}}
\qquad\hbox{and}\qquad
\tau_i\colon M_\gamma^{\mathrm{gen}}\longmapsto M_{s_i\gamma}^{\mathrm{gen}}
\ \ \hbox{for $i=1,\ldots, k-1$,}
\label{intertwinermaps}
\end{equation}
where 
\begin{align*}
&\text{$\tau_0$ is defined only when $\gamma_1\ne 1$,  so that $(1-W_1^{-1})^{-1}$
is well-defined on $M_{\gamma}^{\mathrm{gen}}$ and}\\
&\text{$\tau_i$ is defined only when
$\gamma_i\ne \gamma_{i+1}$, so that $(1-W_iW_{i+1}^{-1})^{-1}$ is
well-defined on $M_\gamma^{\mathrm{gen}}$}
\end{align*}
for $i=1,\ldots, k-1$.

\begin{prop}\label{prop:intertwiners}
\textbf{(Intertwiner presentation)}  The algebra $\tilde{H}_k^{\mathrm{ext}}$
is generated by $\tau_0, \ldots, \tau_k$, $W_0$, and $\CC(W_1,\ldots, W_k)$ with relations
\begin{equation}
\begin{tikzpicture}
	\foreach \x in {0,1, 2, 4,5}{
		\draw (\x,0) circle (2.5pt);
		}
	\foreach \x in {1, 2}{
		\node[label=above:$\tau_{\x}$] at (\x,0){};
		\node[label=above:$\tau_{k-\x}$] at (6-\x,0){};}
	\node[label=above:$\tau_0$] at (0){};
	\draw[double distance = 2pt] (0)--(1);
	\draw (1)--(2) (4)--(5);
	\draw[dashed] (2) to (4);
\end{tikzpicture}
\label{taubraidrels}
\end{equation}
in the notation of \eqref{braidlengths};
\begin{equation}
\tau_0 W_1 = W_1^{-1}\tau_0 \qquad\hbox{and}\qquad \tau_0W_j=W_j\tau_0\  
\hbox{for $j\ne 1$};
\label{tau0pastW}
\end{equation}
for $i=1,\ldots, k-1$,
\begin{equation}
\tau_i W_i = W_{i+1}\tau_i \quad \text{and} \quad
\tau_i W_{i+1} = W_i\tau_i \quad \text{for }i>0, \qquad\hbox{and}\quad
\tau_iW_j = W_j \tau_i\quad \hbox{for $j \neq i, i+1$};
\label{tauipastW}
\end{equation}
\begin{align}
\tau_0^2
&=
\frac{(1-t_0^{\frac12}t_k^{\frac12}W_1^{-1})}{1-W_1^{-1}}
\frac{(1+t_0^{\frac12}t_k^{-\frac12}W_1^{-1})}{1+W_1^{-1}}
\frac{(1+t_0^{-\frac12}t_k^{\frac12}W_1^{-1})}{1+W_1^{-1}}
\frac{(1-t_0^{-\frac12}t_k^{-\frac12}W_1^{-1})}{1-W_1^{-1}}; 
\label{tau0sq}
\end{align} 
\begin{align}\text{ and } \qquad 
\tau_i^2 
&=\frac{(t^{\frac12} - t^{-\frac12}W_i^{-1}W_{i+1})
(t^{\frac12} - t^{-\frac12}W_{i+1}^{-1}W_i)}
{(1 - W_i^{-1}W_{i+1})(1 - W_{i+1}^{-1}W_i)}
\quad\hbox{for $i = 1,\ldots, k-1$.}
\label{tauisq}
\end{align}
\end{prop}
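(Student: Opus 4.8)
The plan is to show that the intertwiner presentation is exactly the presentation \eqref{PresBraidRelsB1}--\eqref{T0pastW1} of $H_k^{\mathrm{ext}}$ (extended to $\tilde H_k^{\mathrm{ext}}$) transported through the invertible change of generators \eqref{intertwinerdefs}. Concretely, I would let $\tilde A$ be the abstract algebra generated by $\tau_0,\dots,\tau_{k-1}$, $W_0^{\pm1}$ and $\CC(W_1,\dots,W_k)$ subject to \eqref{taubraidrels}--\eqref{tauisq} (with $W_0$ central), and identify $\tilde A$ with $\tilde H_k^{\mathrm{ext}}$ in two steps: first produce a surjection $\tilde A\to\tilde H_k^{\mathrm{ext}}$ by verifying that the stated relations hold in $\tilde H_k^{\mathrm{ext}}$, then show it is injective by a rank count over $R:=\CC[W_0^{\pm1}]\otimes\CC(W_1,\dots,W_k)$.

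For the first step, \eqref{intertwinerdefs} is triangular in the $T_i$: solving it gives $T_i=\tau_i+(t^{\half}-t^{-\half})(1-W_iW_{i+1}^{-1})^{-1}$ and the analogous expression for $T_0$, so $\tilde H_k^{\mathrm{ext}}$ is already generated by the $\tau_i$, $W_0$ and $\CC(W_1,\dots,W_k)$, and there is at most a surjection $\tilde A\to\tilde H_k^{\mathrm{ext}}$. The commutation relations \eqref{tau0pastW}, \eqref{tauipastW} are the cases $\lambda=(0,\dots,1,\dots,0)$ of the identities $\tau_iW^\lambda=W^{s_i\lambda}\tau_i$, which are one-line rational-function reductions of \eqref{eq:TiW}, \eqref{eq:T0W}. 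The quadratic relations \eqref{tauisq}, \eqref{tau0sq} follow by expanding $\tau_i^2$, using $T_i^2=1+(t^{\half}-t^{-\half})T_i$ together with \eqref{eq:TiW}/\eqref{eq:WTi} (resp. $T_0^2=1+(t_0^{\half}-t_0^{-\half})T_0$ together with \eqref{T0pastW1}); since $\tau_i^2$ commutes with every $W^\lambda$ while $T_i$ (resp. $T_0$) does not, the $T$-term must drop out, leaving the stated product of linear factors. Each braid relation in \eqref{taubraidrels} involves at most two of the $\tau$'s, so it reduces to a rank $\le 2$ check: $\tau_i\tau_j=\tau_j\tau_i$ for $|i-j|\ge2$ and $\tau_0\tau_j=\tau_j\tau_0$ for $j\ge2$ are immediate from the commutations and $T_iT_j=T_jT_i$; for $\tau_i\tau_{i+1}\tau_i=\tau_{i+1}\tau_i\tau_{i+1}$ and $\tau_0\tau_1\tau_0\tau_1=\tau_1\tau_0\tau_1\tau_0$ one expands both sides in the $T_i$, pushes every $W^\lambda$ to one side using $\tau_iW^\lambda=W^{s_i\lambda}\tau_i$, and matches the results via the braid relations \eqref{PresBraidRelsB1} among the $T_i$ together with $s_is_{i+1}s_i=s_{i+1}s_is_{i+1}$ and $s_0s_1s_0s_1=s_1s_0s_1s_0$ in $\cW_0$.

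For the second step I work inside $\tilde A$. The relations \eqref{tau0pastW}, \eqref{tauipastW} move any element of $R$ to the left of any word in the $\tau_i$; the braid relations \eqref{taubraidrels} make $\tau_w:=\tau_{i_1}\cdots\tau_{i_\ell}$ well defined for any reduced word $w=s_{i_1}\cdots s_{i_\ell}\in\cW_0$; and the quadratic relations \eqref{tauisq}, \eqref{tau0sq} rewrite non-reduced products. Hence every element of $\tilde A$ is a left $R$-combination of $\{\tau_w\mid w\in\cW_0\}$, so the left $R$-module map $R^{|\cW_0|}\to\tilde A$ sending the $w$-th basis vector to $\tau_w$ is surjective. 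On the other side, by \eqref{translsplitting} the algebra $\tilde H_k^{\mathrm{ext}}=R\otimes_\CC H_k^{\mathrm{fin}}$ is free over $R$ with basis $\{T_w\}$, and expanding the $\tau_w$ in terms of the $T$'s and normal-ordering by induction on $\ell(w)$ gives $\tau_w=T_w+\sum_{v<w}a_{wv}T_v$ with $a_{wv}\in\CC(W_1,\dots,W_k)$, so $\{\tau_w\}$ is also an $R$-basis of $\tilde H_k^{\mathrm{ext}}$. Therefore the composite $R^{|\cW_0|}\to\tilde A\to\tilde H_k^{\mathrm{ext}}$ carries a basis to a basis, hence is an isomorphism; as the first map is already surjective it is bijective, and then so is $\tilde A\to\tilde H_k^{\mathrm{ext}}$, which is the claim.

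The step I expect to be the main obstacle is the $C_2$ braid identity $\tau_0\tau_1\tau_0\tau_1=\tau_1\tau_0\tau_1\tau_0$ in step one: it is the unique relation in which all three parameters $t,t_0,t_k$ enter (through $\tau_0$), so the rational-function bookkeeping is heaviest, and one must check that the correction terms produced by moving $T_0$ and $T_1$ past the various $W^\lambda$ cancel exactly against the braid relation \eqref{PresBraidRelsB1}. By contrast, the $A_2$ braid relation and the quadratic relations are the same computations as in the one-parameter type-$A$ theory, and the remaining relations are immediate; the module-theoretic bookkeeping in step two is routine once one notes that nothing in \eqref{intertwinerdefs} involves $W_0$, so all coefficients that arise stay in $\CC(W_1,\dots,W_k)$ and the normal-ordering terminates.
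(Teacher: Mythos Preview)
Your proposal is correct and in fact more complete than the paper's own proof. The paper only carries out your step one: it verifies directly that $\tau_i W_i = W_{i+1}\tau_i$, $\tau_0 W_1 = W_1^{-1}\tau_0$, and computes $\tau_0^2$ explicitly, while deferring the braid relations \eqref{taubraidrels} and $\tau_i^2$ to \cite[Proposition~2.14]{Ra2}. It does not do your step two at all, so strictly speaking the paper only shows that the listed relations \emph{hold} in $\tilde H_k^{\mathrm{ext}}$ and that the $\tau_i$ together with $R$ generate (this much is immediate from the triangular form of \eqref{intertwinerdefs}); it does not argue that these relations \emph{suffice}. Your rank-count via the triangularity $\tau_w = T_w + \sum_{v<w} a_{wv}T_v$ over $R=\CC[W_0^{\pm1}]\otimes\CC(W_1,\dots,W_k)$ is the standard way to close that gap and is correct as stated. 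Your identification of the $C_2$ braid identity $\tau_0\tau_1\tau_0\tau_1=\tau_1\tau_0\tau_1\tau_0$ as the main obstacle is also consistent with the paper's decision to cite it out to \cite{Ra2} rather than reprove it; the method you sketch (expand in the $T_i$, normal-order the $W^\lambda$ using $\tau_i W^\lambda = W^{s_i\lambda}\tau_i$, and invoke the $T$-braid relation) is exactly the approach taken in that reference.
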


\begin{proof}~
The proof of the relations in \eqref{taubraidrels} is accomplished exactly as in the proof of 
\cite[Proposition 2.14(e)]{Ra2}; relation \eqref{tauisq} is \cite[Proposition 2.14(c)]{Ra2}.  Let us check the relations in \eqref{tauipastW} and \eqref{tau0sq}.

Using \eqref{TipastWi},
\begin{align*}
\tau_i W_i
&= \left(T_i - \frac{t^{\frac12} - t^{-\frac12}}{1-W_iW_{i+1}^{-1} }\right)W_i 
= W_{i+1}T_i + (t^{\frac12}-t^{-\frac12})\frac{W_i - W_{i+1}}{1-W_iW_{i+1}^{-1}}
- (t^{\frac12} - t^{-\frac12})\frac{W_i}{1-W_iW_{i+1}^{-1} } \\
&= W_{i+1}\left(T_i - \frac{t^{\frac12} - t^{-\frac12}}{1-W_iW_{i+1}^{-1} }\right) = W_{i+1}\tau_i.
\end{align*}
Similarly, using \eqref{T0pastW1},
\begin{align*}
\tau_0 W_1
&= \left(T_0 - \frac{ (t_0^{\frac12} - t_0^{-\frac12}) + (t_k^{\frac12} - t_k^{-\frac12})W_1^{-1} }
{1-W_1^{-2}} \right)W_1 \\
&= W_1^{-1}T_0 + (t_0^{\frac12} - t_0^{-\frac12})W_1
	+  (t_k^{\frac12} - t_k^{-\frac12})
	- \frac{ (t_0^{\frac12} - t_0^{-\frac12})W_1 + (t_k^{\frac12} - t_k^{-\frac12})}{1-W_1^{-2}} \\
&= W_1^{-1}\left(T_0 - \frac{ (t_0^{\frac12} - t_0^{-\frac12}) + (t_k^{\frac12} - t_k^{-\frac12})W_1^{-1} }
{1-W_1^{-2}} \right)
= W_1^{-1}\tau_0.
\end{align*}
For $i = 0, \dots, k-1$ and $j \neq i, i+1$,  $\tau_i$ and $W_j$ commute by the second set of relations in 
\eqref{TipastWi}.  These computations establish the relations in \eqref{tau0pastW} and \eqref{tauipastW}.

By the first relation in \eqref{quadraticT0andTi}, $T_0 = T_0^{-1} + (t_0^{\frac12} - t_0^{-\frac12}) $, so that
\begin{align*}
\tau_0
&= T_0 - \frac{(t_0^{\frac12} - t_0^{-\frac12}) + (t_k^{\frac12} - t_k^{-\frac12})W_1^{-1}}{1-W_1^{-2}}  
=  T_0^{-1} + (t_0^{\frac12} - t_0^{-\frac12})
	+ \frac{ (t_0^{\frac12} - t_0^{-\frac12})W_1^2
		+ (t_k^{\frac12}  - t_k^{-\frac12})W_1}{1-W_1^{2}}  \\
&= T_0^{-1} + \frac{ (t_0^{\frac12} - t_0^{-\frac12}) + (t_k^{\frac12} - t_k^{-\frac12})W_1}{1-W_1^{2}}.
\end{align*}
Then
\begin{align*}
\tau_0^2
&= \tau_0 \left(T_0 - \frac{(t_0^{\frac12} - t_0^{-\frac12}) 
+ (t_k^{\frac12} - t_k^{-\frac12})W_1^{-1}}{1-W_1^{-2}}\right) 
= \tau_0T_0 
- \left(\frac{(t_0^{\frac12} - t_0^{-\frac12}) + (t_k^{\frac12} - t_k^{-\frac12})W_1}{1-W_1^2}\right) \tau_0 \\
&=
\left(T_0^{-1} + \frac{ (t_0^{\frac12} - t_0^{-\frac12}) + (t_k^{\frac12} - t_k^{-\frac12})W_1}
{1-W_1^{2}}\right) T_0 \\
&\qquad\qquad
- \left(\frac{ (t_0^{\frac12} - t_0^{-\frac12}) + (t_k^{\frac12} - t_k^{-\frac12})W_1}{1-W_1^{2}}\right) 
\left(T_0 - \frac{(t_0^{\frac12} - t_0^{-\frac12}) + (t_k^{\frac12} - t_k^{-\frac12})W_1^{-1}}{1-W_1^{-2}}\right) \\
&= 1 + \left(
\frac{ (t_0^{\frac12} - t_0^{-\frac12}) + (t_k^{\frac12} - t_k^{-\frac12})W_1}{1-W_1^{2}}\right) 
\left(\frac{ (t_0^{\frac12} - t_0^{-\frac12}) + (t_k^{\frac12} - t_k^{-\frac12})W_1^{-1}}{1-W_1^{-2}} \right) \\
&= 1 - 
\left(\frac{ (t_0^{\frac12} - t_0^{-\frac12})W_1^{-2}
+ (t_k^{\frac12} - t_k^{-\frac12})W_1^{-1}
 }
{1-W_1^{-2}}\right) \left(
\frac{(t_0^{\frac12} - t_0^{-\frac12})
+  (t_k^{\frac12} - t_k^{-\frac12})W_1^{-1}
 }
{1-W_1^{-2}} \right) \\
&=\frac{\left(
\begin{array}{l}
1-2W_1^{-2}+W_1^{-4} - ((t_0^{\frac12}-t_0^{-\frac12})^2+(t_k^{\frac12}-t_k^{-\frac12})^2)W_1^{-2} \\
\quad - (t_0^{\frac12}-t_0^{-\frac12})(t_k^{\frac12}-t_k^{-\frac12})W_1^{-1}
- (t_0^{\frac12}-t_0^{-\frac12})(t_k^{\frac12}-t_k^{-\frac12})W_1^{-3}
\end{array}
\right)
}
{(1-W_1^{-2})^2} \\
&=
\frac{(1-t_0^{\frac12}t_k^{\frac12}W_1^{-1})}{1+W_1^{-1}}
\frac{(1+t_0^{\frac12}t_k^{-\frac12}W_1^{-1})}{1-W_1^{-1}}
\frac{(1+t_0^{-\frac12}t_k^{\frac12}W_1^{-1})}{1-W_1^{-1}}
\frac{(1-t_0^{-\frac12}t_k^{-\frac12}W_1^{-1})}{1+W_1^{-1}}, 
\end{align*} 
establishing \eqref{tau0sq}. 

\end{proof}

\section{Calibrated representations of $H_k^{\mathrm{ext}}$}\label{sec:CalibratedRepsOfHk}

A \emph{calibrated $H_k^{\mathrm{ext}}$-module} is an $H_k^{\mathrm{ext}}$-module $M$ such 
that $W_0, W_1, \ldots, W_k$ are simultaneously diagonalizable as operators on $M$.  In the
context of \eqref{eq:Mcgen}, $M$ is calibrated if
\begin{equation}
M = \bigoplus_{\gamma\in \cC} M_\gamma,
\quad\hbox{where}\quad M_\gamma 
= \{ m\in M\ |\ \hbox{$W_0m = zm$ and $W_im = \gamma_i m$ for 
$i=1,\ldots, k$} \}
\label{calibrateddefn}
\end{equation}
for $\gamma = (z, \gamma_1,\ldots, \gamma_k)\in \cC$.
Another formulation is that $M$ is calibrated if $M$ has a basis of simultaneous eigenvectors for
$W_0, \ldots, W_k$.
This section follows the framework of \cite{Ra2} in developing combinatorial tools for describing
the structure and the classification of irreducible calibrated $H_k^{\mathrm{ext}}$-modules.  In Section 5
we will use this combinatorics to analyze and classify the $H_k^{\mathrm{ext}}$-modules
arising in the Schur-Weyl duality settings.

With notations as in the definition of $\cW_0$ in \eqref{eq:Weyl-group}, 
the \emph{reflection representation} of $\cW_0$ is the action of $\cW_0$ 
on $\fh_\RR = \RR^k$ given by 
$$w(c_1, \ldots, c_k) = (c_{w^{-1}(1)}, \ldots, c_{w^{-1}(k)}),
\qquad\hbox{where $c_{-i} = - c_i$ for $i=1, 2,\ldots, k$.}
$$
The dual space $\fh^*_\RR$ has basis $\vep_1, \ldots, \vep_k$, where
$\vep_i\colon \fh_\RR\to \RR$ is the $\RR$-linear map given by $\vep_i(\gamma_1, \ldots, \gamma_k) = \gamma_i$.
With $\vep_{-i} = -\vep_i$, the action of $\cW_0$ on $\RR^k$
 produces an action on $\fh^*_\RR$ given by $w\vep_i = \vep_{w^{-1}(i)}$. 
 
 Let
\begin{align*}
R^+ 
&= \{ \vep_1, \ldots, \vep _k\} \sqcup
\{ \vep_j - \vep_i, \vep_j+\vep_i\ |\ 1\le i< j\le k\} \\
&= \{ \vep_1, \ldots, \vep _k\} 
\sqcup \{ \vep_j - \vep_i \ |\ 1\le i< j\le k\}
\sqcup \{ \vep_j-\vep_{-i}\ |\ 1\le i< j\le k\} \\
&= \{ \vep_1, \ldots, \vep _k\} 
\sqcup \{ \vep_j - \vep_i \ |\ i,j\in \{-k, \ldots, -1, 1, \ldots, k\}, i < j, i \neq -j\}.
\end{align*}
If $w\in \cW_0$, the \emph{inversion set of $w$} is 
\begin{align}
R(w) 
&= \{ \alpha\in R^+\ |\ w\alpha\not\in R^+\}   \label{R(w)origdefn}\\
&= \{ \vep_i\ |\ \hbox{if $i>0$ and $w(i) < 0$}\}
\sqcup 
\{\vep_j-\vep_i\ |\ \hbox{if $0< i < j$ and $w(i)>w(j)$}\} \\
&\qquad\sqcup
\{\vep_j+\vep_i\ |\ \hbox{if $0< i < j$ and $-w(i)>w(j)$}\}.
\nonumber
\end{align}
The chambers are the connected components of $\fh_\RR \backslash \bigcup_{\alpha\in R^+} \fh^\alpha$, where
$\fh^\alpha = \{ \gamma\in \fh_\RR\ |\ \alpha(\gamma)=0\}$.
The fundamental chamber in $\fh_\RR$ is
\begin{align*}
C &= \{ \cc\in \fh_\RR\ |\ \hbox{$\alpha(\gamma)\in \RR_{>0}$ for $\alpha\in R^+$}\} 
= \{ (c_1, \ldots, c_k)\in \RR^k \ |\ 0< c_1 < c_2 <\cdots < c_k \},
\end{align*}
and the group $\cW_0$ can be identified with the set of chambers  via the bijection
$$\begin{matrix}
\cW_0 &\longleftrightarrow &\{\hbox{chambers}\} \\
w &\longmapsto &w^{-1}C
\end{matrix}.
\qquad\hbox{Since}\quad
w^{-1}C = \left\{ \cc \in \fh_\RR\ \left|\ \begin{array}{l}
\hbox{$\alpha(\cc)\in \RR_{<0}$ if $\alpha\in R(w)$ and} \\
\hbox{ $\alpha(\cc)\in \RR_{>0}$ if $\alpha\in R^+\backslash R(w)$}
\end{array} \right.\right\},
$$ 
the set $R(w)$ determines $w$.

\subsection{Local regions}\label{sec:LocalRegions}

For $\gamma=(\gamma_1, \ldots, \gamma_k)\in (\CC^\times)^k$, define
\begin{align}
Z(\gamma) 
&= \{ \vep_i \ |\ \gamma_i = \pm 1\} 
\sqcup \{\vep_j-\vep_i\ |\ \hbox{$0<i<j$, $\gamma_i\gamma_j^{-1}=1$} \} 
\sqcup \{ \vep_j+\vep_i\ |\ \hbox{$0<i<j$, $\gamma_i\gamma_j=1$} \},
\nonumber
 \\ 
P(\gamma) 
&= \{ \vep_i \ |\ \gamma_i\in \{ (t_0^{\frac12}t_k^{\frac12})^{\pm1}, 
(-t_0^{-\frac12}t_k^{\frac12})^{\pm1}\} \} 
\sqcup \{\vep_j-\vep_i\ |\ 0<i<j, \gamma_i\gamma_j^{-1} = t^{\pm1}\} \nonumber \\
&\quad\qquad
\sqcup \{\vep_j+\vep_i\ |\ 0<i<j, \gamma_i\gamma_j = t^{\pm1}\}.
\label{P(gamma)origdefn}
\end{align}

Using the conversion from $\gamma_i$ to $c_i$ as in \eqref{eq:gamma-to-c}, let
\begin{equation}
\gamma_i = -t^{c_i}, \quad\hbox{and set}\quad
\hbox{$-t^{r_1}$} = -t_k^{\frac12}t_0^{-\frac12} \quad\hbox{ and }\quad \hbox{$-t^{r_2}$}=t_k^{\frac12}t_0^{\frac12},
\label{eq:r1andr2}
\end{equation}
so that $-t^{\pm r_1}$ and $-t^{\pm r_2}$ are the eigenvalues of $W_1$ that cause $\tau_0^2$ to have a nonzero kernel (see \eqref{tau0sq}).
Then, for $\cc =(c_1, \ldots, c_k)\in \CC^k$ let $c_{-i}=-c_i$ and define
\begin{align}
Z(\cc) 
&= \{ \vep_i \ |\ c_i = 0\}
\sqcup \{\vep_j-\vep_i\ |\ \hbox{$0<i<j$ and $c_j-c_i=0$} \} \nonumber \\
&\hspace{1.6in}
\sqcup \{ \vep_j+\vep_i\ |\ \hbox{$0<i<j$ and $c_j+c_i=0$} \},
\label{Z(c)origdefn} \\
P(\cc) 
&= \{ \vep_i \ |\ c_i \in \{ \pm r_1, \pm r_2\} \}
\sqcup \{\vep_j-\vep_i\ |\ 0<i<j\ \hbox{and}\ c_j-c_i = \pm1\} \nonumber\\
&\hspace{1.6in}
\sqcup \{\vep_j+\vep_i\ |\ 0<i<j\ \hbox{and}\ c_j+c_i = \pm1\}.
\label{P(c)origdefn}
\end{align}
A \emph{local region} is a pair $(\cc, J)$ with $\cc\in \CC^k$ and $J\subseteq P(\cc)$.
The set of \emph{standard tableaux} of shape $(\cc,J)$ is 
\begin{equation}
\cF^{(\cc,J)} = \{ w\in \cW_0\ |\ R(w)\cap Z(\cc) = \emptyset,\ R(w)\cap P(\cc) = J\}.
\label{stdtaborigdefn}
\end{equation}

As in \cite[\S 5 and \S 8]{Ra2} the local regions $(\cc, J)$ and standard tableaux $w\in \cF^{(\cc,J)}$ can be converted to
configurations of boxes $\kappa$ and standard tableaux $S$ of shape $\kappa$ similar to those that are familiar in 
the literature on irreducible representations of Weyl groups of classical types.  
 As explained in \cite[\S 5.11]{Ra2}, the definitions of $Z(\cc)$ and
$P(\cc)$ make it possible to view the general case $\cc\in \CC^k$ as pieced together from the 
cases $\cc\in (\ZZ+\beta)^k$ where $\beta$ runs over a set of representatives of the $\ZZ$-cosets in $\CC$.
Below we make the conversion between local regions and configurations of boxes
explicit for the cases when $\cc\in \ZZ^k$ and $\cc\in (\ZZ+\frac12)^k$.   These are the cases that appear in the Schur-Weyl duality
approach to the representations of $H_k^\ext$ explored in Section \ref{sec:braids-on-tensor-space}.  
As in \cite[\S 8]{Ra2}, it is also true that these cases are sufficient to determine the general $\cc\in (\ZZ+\beta)^k$ setting.

Let $(\cc, J)$ be a local region with $\cc=(c_1,\ldots, c_k)$,
\begin{equation}
\cc\in \ZZ^k \quad\hbox{or}\quad \cc\in (\ZZ+\hbox{$\frac12$})^k,
\qquad \hbox{and}\qquad 0\le c_1\le \cdots\le c_k.
\label{boxconfigcases}
\end{equation}
Start with an infinite arrangement of NW to SE diagonals, numbered consecutively from $\ZZ$ or $\ZZ + \half$, 
increasing southwest to northeast (see Example \ref{ex:smallconfiguration}).
The \emph{configuration} $\kappa$ of boxes corresponding to the local region $(\cc,J)$ 
has $2k$ boxes 
(labeled $\mathrm{box}_{-k}, \ldots, \mathrm{box}_{-1}, \mathrm{box}_1, \ldots, \mathrm{box}_k$) 
with the following conditions.
\begin{enumerate}[\quad($\kappa$1)]
\item Location: $\mathrm{box}_i$ is on  diagonal $c_i$, where $c_{-i} = -c_{i}$ for $i\in \{-k, \ldots, -1\}$. 
\item Same diagonals: $\mathrm{box}_i$ is NW of $\mathrm{box}_j$ if
$i<j$ and $\mathrm{box}_i$ and $\mathrm{box}_j$ are on the same diagonal. 
\item Adjacent diagonals: \\
		{\quad} If $\vep_j-\vep_i\in J$, then $\mathrm{box}_j$ is NW (strictly north and weakly west) of
			 $\mathrm{box}_i$:
			\quad $	\begin{matrix}\begin{tikzpicture}[xscale=.4, yscale=-.4]
						\Part{1,1}
						\node at (.5,.5) {\tiny$j$}; \node at (.5,1.5) {\tiny$i$}; 
			\end{tikzpicture}\end{matrix}$\\
		If $\vep_j-\vep_i\in P(\cc)-J$, then $\mathrm{box}_j$ is SE (weakly south and strictly east) of 
			$\mathrm{box}_i$:
			\quad $	\begin{matrix}\begin{tikzpicture}[xscale=.4, yscale=-.4]
						\Part{2}
						\node at (.5,.5) {\tiny$i$}; \node at (1.5,.5) {\tiny$j$}; 
					\end{tikzpicture}\end{matrix}$
\item Markings: There is a marking on each of the diagonals $r_1$, $-r_1$, $r_2$ and $-r_2$. \\
		If $\vep_i\in J$, $\mathrm{box}_i$ is NW of the marking on diagonal $c_i$:  
				\quad $	\begin{matrix}\begin{tikzpicture}[xscale=.4, yscale=-.4]
						\Part{1}
						\filldraw [red] (1,1) circle (4pt);
						\node at (.5,.5) {\tiny$i$};
					\end{tikzpicture}\end{matrix}$\\
		If $\vep_i\in P(\cc) - J$, then $\mathrm{box}_i$ is SE of the marking in diagonal $c_i$ :
				\quad $	\begin{matrix}\begin{tikzpicture}[xscale=.4, yscale=-.4]
					\Part{1}
					\filldraw [red] (0,0) circle (4pt);
					\node at (.5,.5) {\tiny$i$};
			\end{tikzpicture}\end{matrix}$
\end{enumerate}
Condition ($\kappa$1) enables the values $(c_{-k},\ldots, c_{-1}, c_1, \ldots, c_k)$ to be read off of configuration $\kappa$. 
The sets $Z(\cc)$, $P(\cc)$, and $J$ can also be determined from the configuration $\kappa$ since
\begin{align*}
Z(\cc) &= 
\{ \vep_i ~|~ \hbox{$0<i$ and $\mathrm{box}_i$ is in diagonal $0$} \} \\
&\qquad\sqcup
\{\vep_j - \vep_i ~|~ 0<i<j \text{ and $\mathrm{box}_i$ and $\mathrm{box}_j$ are in the same diagonal} \} \\
&\qquad\sqcup
\{\vep_j + \vep_i ~|~ 0<i<j \text{ and $\mathrm{box}_i$ and $\mathrm{box}_j$ are both in diagonal $0$} \},
\\
P(\cc) &= 
\{ \vep_i ~|~ \hbox{$0<i$ and $\mathrm{box}_i$ is in diagonal $r_1$ or $r_2$} \}, \\
&\qquad \sqcup 
\{ \vep_j - \vep_i ~|~ \hbox{$0<i<j$ and $\mathrm{box}_i$ and $\mathrm{box}_j$ are in adjacent diagonals} \}  \\
&\qquad \sqcup 
\{ \vep_j + \vep_i ~|~ \hbox{$0<i<j$ and $\mathrm{box}_{-i}$ and $\mathrm{box}_j$ are in adjacent diagonals} \}, \quad \text{ and }  \\
J &= 
\{\vep_i \in P(\cc) ~|~  \hbox{$\mathrm{box}_i$ is NW of the marking} \} \\
&\qquad \sqcup 
\left\{ \vep_j - \vep_i \in P(\cc) ~|~ \hbox{$\mathrm{box}_j$ is northwest of $\mathrm{box}_i$}\right\} \\
&\qquad \sqcup 
\left\{ \vep_j + \vep_i \in P(\cc) ~|~ \hbox{$\mathrm{box}_j$ is northwest of $\mathrm{box}_{-i}$}\right\}.
\end{align*}

\noindent 

\noindent
A \emph{standard filling} of the boxes of $\kappa$ is a bijective function $S\colon \kappa \to \{-k, \ldots, -1, 1, \ldots k\}$ 
such that
\begin{enumerate}[\quad(S1)]
\item \label{S1} Symmetry: $S(\mathrm{box}_{-i}) = -S(\mathrm{box}_i)$.
\item \label{S2} Same diagonals: \\
If $0<i<j$ and $\mathrm{box}_i$ and $\mathrm{box}_j$ are on the same diagonal then $S(\mathrm{box}_i) <S(\mathrm{box}_j)$. 
\item \label{S3} Adjacent diagonals: \\
If $0<i<j$, $\mathrm{box}_i$ and $\mathrm{box}_j$ are on adjacent diagonals, and $\mathrm{box}_j$ is NW of $\mathrm{box}_i$, then $S(\mathrm{box}_j) <S(\mathrm{box}_i)$. \\
If $0<i<j$, $\mathrm{box}_i$ and $\mathrm{box}_j$ are on adjacent diagonals, and $\mathrm{box}_j$ is SE of $\mathrm{box}_i$, then $S(\mathrm{box}_j) >S(\mathrm{box}_i)$. 
\item \label{S4} Markings: \\
	If $\mathrm{box}_i$ is on a marked diagonal and is SE of the marking, then $S(\mathrm{box}_i) >0$.\\
	If $\mathrm{box}_i$ is on a marked diagonal and is NW of the marking, then $S(\mathrm{box}_i) <0$.
\end{enumerate}
The \emph{identity filling} of a configuration $\kappa$ is the filling $F$ of the boxes of $\kappa$ given by
$F(\mathrm{box}_i) = i$,
for $i=-k,\ldots, -1, 1, \ldots, k$.
The identity filling of $\kappa$ is usually not a standard filling of $\kappa$
(see Example \ref{ex:smallconfiguration}).

\begin{example}\label{ex:smallconfiguration}
Let $k=4$, $r_1 = 1$, and $r_2 = 3$. Consider $\cc = (-3,-2,-2,2,2,3)$. Then 
$$Z(\cc) = \{\vep_2 - \vep_1 \}  \qquad \text{ and } \qquad 
	P(\cc) = \left\{  \begin{matrix}
		\vep_3,\ \vep_3 - \vep_1,\ \vep_3 - \vep_2
		\end{matrix}
		\right\}.$$ 
The box configurations corresponding to $J = \{ \vep_3-\vep_2 \}$ and 
$J = \{ \vep_3, \vep_3-\vep_1, \vep_3-\vep_2 \}$
(filled with their identity fillings) are
\medskip

\centerline{\begin{tabular}{ccc}
\begin{tikzpicture}[xscale=.4, yscale=-.4]
	\draw [step=1cm, very thin, black!20!white] (-3,-3) grid (3,3);
	\draw (-1,2)--(-1,0)--(-2,0)--(-2,2)--(0,2)--(0,1)--(-2,1);
	\draw (1,-2)--(1,0)--(2,0)--(2,-2)--(0,-2)--(0,-1)--(2,-1);
	\filldraw [red] (1,-2) circle (4pt);	\filldraw [red] (-1,2) circle (4pt);
	\draw[dashed]
		(0,-2)--(-1,-3)	(1,-2)--(0,-3)
		(-2,0)--(-3,-1)	(-2,1)--(-3,0);
	\foreach \x in {0, 1, ..., 5} { \draw (\x-3, -3) to +(-.75,-.75) node[fill=white, inner sep=1.5pt]{\small $\x$};}
	\foreach \x in { 1, ..., 5} { \draw (-3, \x-3) to +(-.75,-.75) node[fill=white, inner sep=1.5pt]{\small -$\x$};}
	\node at (.5,-1.5) {\scriptsize $1$}; 	\node at (-.5,1.5) {\scriptsize -$1$}; 
	\node at (1.5,-.5) {\scriptsize $2$}; 	\node at (-1.5,.5) {\scriptsize -$2$}; 
	\node at (1.5,-1.5) {\scriptsize $3$}; 	\node at (-1.5,1.5) {\scriptsize -$3$}; 
\end{tikzpicture}&&
\begin{tikzpicture}[xscale=.4, yscale=-.4]
	\draw [step=1cm, very thin, black!20!white] (-3,-3) grid (3,3);
	\draw (1,-3)--(1,0)--(2,0)--(2,-1)--(0,-1)--(0,-3)--(1,-3)	(0,-2)--(1,-2);
	\draw (-1,3)--(-1,0)--(-2,0)--(-2,1)--(0,1)--(0,3)--(-1,3)	(0,2)--(-1,2);
	\filldraw [red] (1,-2) circle (4pt);	\filldraw [red] (-1,2) circle (4pt);
	\draw[dashed]	(0,-2)--(-1,-3)	(-2,0)--(-3,-1);
	\foreach \x in {0, 1, ..., 5} { \draw (\x-3, -3) to +(-.75,-.75) node[fill=white, inner sep=1.5pt]{\small $\x$};}
	\foreach \x in { 1, ..., 5} { \draw (-3, \x-3) to +(-.75,-.75) node[fill=white, inner sep=1.5pt]{\small -$\x$};}
	\node at (.5,-1.5) {\scriptsize $1$}; 	\node at (-.5,1.5) {\scriptsize -$1$}; 
	\node at (1.5,-.5) {\scriptsize $2$}; 	\node at (-1.5,.5) {\scriptsize -$2$}; 
	\node at (.5,-2.5) {\scriptsize $3$}; 	\node at (-.5,2.5) {\scriptsize -$3$}; 
\end{tikzpicture} \\ 
$\displaystyle J = \left\{ \vep_3-\vep_2 \right\}$ && $\displaystyle J = \left\{ \vep_3, \vep_3-\vep_1, \vep_3-\vep_2\right\}$
\end{tabular}}

\noindent
For both configurations, the identity filling is not a standard filling. Examples of standard fillings of the configuration corresponding to $J = \{\vep_3 - \vep_2\}$ include
$$\begin{matrix}
\begin{tikzpicture}[xscale=.3, yscale=-.3]
	\BOX{0,-2}{$1$} \BOX{1,-2}{$2$} \BOX{1,-1}{$3$}
	\BOX{-1,1}{-$1$} \BOX{-2,1}{-$2$} \BOX{-2,0}{-$3$}
	\filldraw [red] (1,-2) circle (4pt);	\filldraw [red] (-1,2) circle (4pt);
\end{tikzpicture}\end{matrix}, \qquad 
\begin{matrix}
\begin{tikzpicture}[xscale=.3, yscale=-.3]
	\BOX{0,-2}{-$1$} \BOX{1,-2}{$2$} \BOX{1,-1}{$3$}
	\BOX{-1,1}{$1$} \BOX{-2,1}{-$2$} \BOX{-2,0}{-$3$}
	\filldraw [red] (1,-2) circle (4pt);	\filldraw [red] (-1,2) circle (4pt);
\end{tikzpicture}\end{matrix}, \qquad \text{ and } \qquad 
\begin{matrix}
\begin{tikzpicture}[xscale=.3, yscale=-.3]
	\BOX{0,-2}{-$2$} \BOX{1,-2}{$1$} \BOX{1,-1}{$3$}
	\BOX{-1,1}{$2$} \BOX{-2,1}{-$1$} \BOX{-2,0}{-$3$}
	\filldraw [red] (1,-2) circle (4pt);	\filldraw [red] (-1,2) circle (4pt);
\end{tikzpicture}\end{matrix}, \qquad \text{ but not } \qquad 
\begin{matrix}
\begin{tikzpicture}[xscale=.3, yscale=-.3]
	\BOX{0,-2}{-$3$} \BOX{1,-2}{-$2$} \BOX{1,-1}{$1$}
	\BOX{-1,1}{$3$} \BOX{-2,1}{$2$} \BOX{-2,0}{-$1$}
	\filldraw [red] (1,-2) circle (4pt);	\filldraw [red] (-1,2) circle (4pt);
\end{tikzpicture}\end{matrix}.$$
\end{example}

The proof of the following proposition is a straightforward, though slightly tedious, check that the conditions 
$R(w)\cap Z(\cc)=\emptyset$ and $R(w)\cap P(\cc)=J$ from
\eqref{stdtaborigdefn} convert to the conditions (S2), (S3), (S4) on standard fillings of shape $\kappa$. 
The proof is similar to the proof of \cite[Thm.\ 5.9]{Ra2}.

\begin{prop} \label{prop:stdtabbijection}
Let $\kappa$ be a configuration of boxes corresponding to a local region $(\cc, J)$ with $\cc\in \ZZ^k$ or $\cc\in (\ZZ+\frac12)^k$.
For $w\in \cW_0$ let $S_w$ be the filling of 
the boxes of $\kappa$ given by 
$$S_w(\mathrm{box}_i) = w(i),
\quad\hbox{for $i=-k, \ldots, -1, 1, \ldots, k$.}
$$
The map 
$$\begin{matrix}
\cF^{(\cc,J)} &\longrightarrow &\{ \hbox{standard fillings $S$ of the boxes of $\kappa$}\} \\
w &\longmapsto &S_w
\end{matrix}
\qquad\hbox{is a bijection}.
$$
\end{prop}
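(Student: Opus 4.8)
The plan is to show that $w\mapsto S_w$ is a bijection by verifying that it is well defined, injective, and surjective, all three reducing to a single translation between the conditions defining $\cF^{(\cc,J)}$ in \eqref{stdtaborigdefn} and the conditions (S1)--(S4). Injectivity is immediate, since $w$ is recovered from $S_w$ by $w(i)=S_w(\mathrm{box}_i)$. Condition (S1) holds automatically for $S_w$ because $w(-i)=-w(i)$ for $w\in\cW_0$; conversely, given a standard filling $S$, the rule $w(i)=S(\mathrm{box}_i)$ defines a bijection of $\{-k,\dots,-1,1,\dots,k\}$ that by (S1) satisfies $w(-i)=-w(i)$, hence lies in $\cW_0$. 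So $S_w$ is always a bijection $\kappa\to\{-k,\dots,-1,1,\dots,k\}$ with property (S1), and it remains only to prove, for $w\in\cW_0$,
\[
R(w)\cap Z(\cc)=\emptyset \ \text{ and }\ R(w)\cap P(\cc)=J
\quad\Longleftrightarrow\quad
S_w\ \text{ satisfies (S2), (S3) and (S4);}
\]
the forward direction is well-definedness of the map and the backward direction is surjectivity.

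This equivalence is checked family by family over $R^+$, using the descriptions of $Z(\cc)$, $P(\cc)$, and $J$ in terms of the configuration $\kappa$ recorded just before the statement, together with the facts, from \eqref{R(w)origdefn}, that for $0<i<j$ one has $\vep_j-\vep_i\in R(w)\iff S_w(\mathrm{box}_j)<S_w(\mathrm{box}_i)$, $\vep_j+\vep_i\in R(w)\iff S_w(\mathrm{box}_{-i})>S_w(\mathrm{box}_j)$, and $\vep_i\in R(w)\iff S_w(\mathrm{box}_i)<0$. For a root $\vep_j-\vep_i$ with $0<i<j$: membership in $Z(\cc)$ says $\mathrm{box}_i$ and $\mathrm{box}_j$ lie on the same diagonal, membership in $P(\cc)$ says they lie on adjacent diagonals, and (by ($\kappa$3)) membership in $J$ says $\mathrm{box}_j$ is NW of $\mathrm{box}_i$; combining these, excluding $\vep_j-\vep_i$ from $R(w)\cap Z(\cc)$ is precisely the same-diagonal clause of (S2) for this pair, and deciding whether $\vep_j-\vep_i$ lies in $J$ or in $P(\cc)\setminus J$ reproduces the two clauses of (S3). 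This long-root part of the dictionary is identical to the corresponding part of the proof of \cite[Thm.~5.9]{Ra2}.

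The genuinely type $C$ families are the short roots $\vep_i$ and the roots $\vep_j+\vep_i$. For $\vep_i$ with $0<i$: membership in $Z(\cc)$ says $\mathrm{box}_i$, and hence $\mathrm{box}_{-i}$, lies on diagonal $0$, membership in $P(\cc)$ says $\mathrm{box}_i$ lies on a marked diagonal, and (by ($\kappa$4)) membership in $J$ says $\mathrm{box}_i$ is NW of the marking on that diagonal; through $\vep_i\in R(w)\iff S_w(\mathrm{box}_i)<0$ these become the marking clause (S4) at $\mathrm{box}_i$, together with the requirement that a box of positive index on diagonal $0$ receives a positive value (equivalently, by (S1), that the two boxes $\mathrm{box}_{\pm i}$ on diagonal $0$ receive values of the matching signs). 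For $\vep_j+\vep_i$ with $0<i<j$, write $\vep_j+\vep_i=\vep_j-\vep_{-i}$: membership in $Z(\cc)$ says $\mathrm{box}_{-i}$ and $\mathrm{box}_j$ lie on the same diagonal (which, because $\cc$ is arranged as in \eqref{boxconfigcases}, forces both onto diagonal $0$), membership in $P(\cc)$ says they lie on adjacent diagonals, and membership in $J$ says $\mathrm{box}_j$ is NW of $\mathrm{box}_{-i}$; since $\vep_j+\vep_i\in R(w)\iff S_w(\mathrm{box}_{-i})>S_w(\mathrm{box}_j)$, these are exactly the (S2) and (S3) conditions for the pair $(\mathrm{box}_{-i},\mathrm{box}_j)$. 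Running every equivalence forward shows $w\in\cF^{(\cc,J)}$ implies $S_w$ is standard, and running them backward shows every standard filling of $\kappa$ equals some $S_w$; together with injectivity this proves the proposition.

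I expect the main obstacle to be purely organizational. One has to keep the index ranges and the NW/SE orientations straight when passing from a root $\vep_j+\vep_i$ or $\vep_i$ to the box or pair of boxes it governs, to use (S1) to transfer the conditions (S2)--(S4) — which are stated for $0<i<j$ — to the pairs involving a negative index that arise this way, and, most delicately, to treat boxes sitting on diagonal $0$ or on the marked diagonals $\pm r_1,\pm r_2$: reading off the correct sign constraints there from (S1) and (S4), and handling the degenerate possibilities in which a marked diagonal coincides with $0$ or $r_1=r_2$, where one must track whether a given root lies in $Z(\cc)$ or in $P(\cc)$ and where exactly the markings sit. None of this is conceptually hard, but it is where a full write-up spends its length; the long-root skeleton may simply be quoted from \cite{Ra2}.
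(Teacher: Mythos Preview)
Your proposal is correct and follows exactly the approach the paper indicates: the paper's proof is only a one-sentence sketch stating that the conditions $R(w)\cap Z(\cc)=\emptyset$ and $R(w)\cap P(\cc)=J$ translate to (S2), (S3), (S4), with a reference to \cite[Thm.~5.9]{Ra2} for the analogous argument. You have actually carried out that translation root-family by root-family, which is more than the paper itself provides; your identification of the $\vep_j+\vep_i$ and $\vep_i$ cases as the ``genuinely type $C$'' part, with the long-root $\vep_j-\vep_i$ case imported from \cite{Ra2}, matches the paper's intent precisely.
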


\def\ShapeA{
\draw[blue, densely dotted, thick]  (7,6)--(2,1) node[above left]{\scriptsize $\frac12$};
\draw[blue, densely dotted, thick]  (6,6)--(1,1) node[above left]{\scriptsize $-\frac12$};
\draw[blue, densely dotted, thick]  (2,10)--(-2,6) node[left]{\scriptsize $-\frac{17}{2}$};
\draw[blue, densely dotted, thick]  (10,1)--(6,-3) node[above left]{\scriptsize $\frac{17}{2}$};
\draw[blue, densely dotted, thick]  (2,9)--(-2,5) node[above left]{\scriptsize $-\frac{15}{2}$};
\draw[blue, densely dotted, thick]  (10,2)--(6,-2) node[left]{\scriptsize $\frac{15}{2}$};
\filldraw[white] (7,-2) -- (8, -2) -- (8, 0) -- (9,0) -- (9,2) -- (6,2) -- (6,1) -- (7,1) -- (7,-2)
			(2,2) rectangle (6,3)
			(2,4) rectangle (6,5)
			(2,3) rectangle (3,4)
			(5,3) rectangle (6,4)
			(1,9) to ++(-1,0) to ++(0,-2) to ++(-1,0) to ++(0,-2) to ++(3,0) to ++(0,1) to ++(-1,0) to ++(0,3);
\draw(7,-2)--(8,-2) (7,-1)--(8,-1) (7,0)--(9,0) (6,1)--(9,1) (2,2)--(9,2) (2,3)--(6,3) (2,4)--(6,4) (-1,5)--(6,5) (-1,6)--(2,6) (-1,7)--(1,7) (0,8)--(1,8) (0,9)--(1,9);
\draw (-1,5)--(-1,7) (0,5)--(0,9) (1,5)--(1,9) (2,2)--(2,6) (3,2)--(3,5) (4,4)--(4,5) (4,2)--(4,3) (5,2)--(5,5) (6,1)--(6,5) (7,-2)--(7,2) (8,-2)--(8,2) (9,0)--(9,2);
\node[bV,red] at (8,0){};
\node[bV,red!50!blue] at (5,3){};
\node[bV,red!50!blue] at (3,4){};
\node[bV,red] at (0,7){};
	\node (12) at (7.5,-1.5) {}; 	\node (n12) at (.5,8.5) {};
	\node (11) at (8.5,.5) {};	\node (n11) at (-.5,6.5) {};
	\node (10) at (7.5,-.5) {};	\node (n10) at (.5,7.5) {};
	\node (9) at (8.5,1.5) {};	\node (n9) at (-.5,5.5) {};
	\node (8) at (7.5,.5) {};	\node (n8) at (.5,6.5) {};
	\node (7) at (7.5,1.5) {};	\node (n7) at (.5,5.5) {};
	\node (6) at (6.5,1.5) {};	\node (n6) at (1.5,5.5) {};
	\node (5) at (5.5,2.5) {};	\node (n5) at (2.5,4.5) {};
	\node (4) at (5.5,3.5) {};	\node (n4) at (2.5,3.5) {};
	\node (3) at (4.5,2.5) {};	\node (n3) at (3.5,4.5) {};
	\node (2) at (5.5,4.5) {};	\node (n2) at (2.5,2.5) {};
	\node (1) at (3.5,2.5) {};	\node (n1) at (4.5,4.5) {};
}

\begin{example}\label{ex:stdtabbijection}
Let $k=12$, $r_1 = \frac{3}{2}$, $r_2 = \frac{15}{2}$,
$\cc = (\hbox{$\frac{1}{2}, \frac{1}{2}, \frac{3}{2}, \frac{3}{2}, \frac{5}{2}, \frac{9}{2}, 
\frac{11}{2}, \frac{13}{2}, \frac{13}{2}, \frac{15}{2}, \frac{15}{2}, \frac{17}{2}$})$
and
$$
J = \left\{
\begin{array}{l}
\vep_3, \vep_{10}, \vep_3-\vep_2, \vep_4-\vep_2,
\vep_5-\vep_4, \vep_8-\vep_7, \\
\vep_{10}-\vep_8, 
\vep_{10}-\vep_9, \vep_{11}-\vep_9, \vep_{12}-\vep_{10},
\vep_{12}-\vep_{11}
\end{array}
\right\}$$
Let
$$w = \left( \begin{array}{cccccccccccc} 1 &2 &3 &4 &5 &6 &7 &8 &9 &10 &11 &12 \\ 
-9 &10 &-8 &7 &6 &3 &4 &1 &5 &-11 &2 &-12
\end{array}\right)\in \cF^{(\cc,J)}.
$$
Then, for the corresponding configuration of boxes $\kappa$,
the identity filling $F$,  and the standard filling $S_w$ corresponding to $w$ are
$$
F=\begin{matrix}
\begin{tikzpicture}[xscale=.5,yscale=-.5]
\ShapeA
	\node at (12) {\scriptsize $12$}; 	\node at (n12) {\scriptsize -$12$};
	\node at (11) {\scriptsize $11$};	\node at (n11) {\scriptsize -$11$};
	\node at (10) {\scriptsize $10$};	\node at (n10) {\scriptsize -$10$};
	\node at (9) {\scriptsize $9$};	\node at (n9) {\scriptsize -$9$};
	\node at (8) {\scriptsize $8$};	\node at (n8) {\scriptsize -$8$};
	\node at (7) {\scriptsize $7$};	\node at (n7) {\scriptsize -$7$};
	\node at (6) {\scriptsize $6$};	\node at (n6)  {\scriptsize -$6$};
	\node at (5) {\scriptsize $5$};	\node at (n5) {\scriptsize -$5$};
	\node at (4) {\scriptsize $4$};	\node at (n4) {\scriptsize -$4$};
	\node at (3) {\scriptsize $3$};	\node at (n3) {\scriptsize -$3$};
	\node at (2) {\scriptsize $2$};	\node at (n2) {\scriptsize -$2$};
	\node at (1) {\scriptsize $1$};	\node at (n1) {\scriptsize -$1$};
\end{tikzpicture}\end{matrix}
\quad\text{and}\quad
S_w=\begin{matrix}
\begin{tikzpicture}[xscale=.5,yscale=-.5]
\ShapeA
	\node at (12) {\scriptsize -$12$}; 	\node at (n12) {\scriptsize $12$};
	\node at (11) {\scriptsize $2$};	\node at (n11) {\scriptsize -$2$};
	\node at (10) {\scriptsize -$11$};	\node at (n10) {\scriptsize $11$};
	\node at (9) {\scriptsize $5$};	\node at (n9) {\scriptsize -$5$};
	\node at (8) {\scriptsize $1$};	\node at (n8) {\scriptsize -$1$};
	\node at (7) {\scriptsize $4$};	\node at (n7) {\scriptsize -$4$};
	\node at (6) {\scriptsize $3$};	\node at (n6)  {\scriptsize -$3$};
	\node at (5) {\scriptsize $6$};	\node at (n5) {\scriptsize -$6$};
	\node at (4) {\scriptsize $7$};	\node at (n4) {\scriptsize -$7$};
	\node at (3) {\scriptsize -$8$};	\node at (n3) {\scriptsize $8$};
	\node at (2) {\scriptsize $10$};	\node at (n2) {\scriptsize -$10$};
	\node at (1) {\scriptsize -$9$};	\node at (n1) {\scriptsize $9$};
\end{tikzpicture}\end{matrix}.$$
\end{example}

\begin{remark} Borrowing a physical intuition, configurations are invariant under sliding boxes along diagonals like beads on an abacus, so long as boxes that run into each other are not allowed to exchange places, i.e.\ for most $c \in \ZZ$,
$$
\begin{matrix}
	\begin{tikzpicture}[xscale = .3, yscale=-.3]
		\draw[thin, blue] (-.5,-.5) node[inner sep=1pt, above left, blue] {\tiny$c\!+\!1$} to (4,4);
		\draw[thin, blue] (-.5,.5) node[inner sep=1pt, above left, blue] {\tiny$c$} to (3,4);
		\BOX{0,0}{} \BOX{2.5,2.5}{} \BOX{.75,1.75}{} 
		\end{tikzpicture}\end{matrix} 
	=\begin{matrix}
	\begin{tikzpicture}[xscale = .3, yscale=-.3]
		\draw[thin, blue] (-.5,-.5) node[inner sep=1pt, above left, blue] {\tiny$c\!+\!1$} to (4,4);
		\draw[thin, blue] (-.5,.5) node[inner sep=1pt, above left, blue] {\tiny$c$} to (3,4);
		\BOX{.75,.75}{} \BOX{1.75,1.75}{} \BOX{.75,1.75}{} 
		\end{tikzpicture}\end{matrix} 
	\neq
	\begin{matrix}
	\begin{tikzpicture}[xscale = .3, yscale=-.3]
		\draw[thin, blue] (-.5,-.5) node[inner sep=1pt, above left, blue] {\tiny$c\!+\!1$} to (4,4);
		\draw[thin, blue] (-.5,.5) node[inner sep=1pt, above left, blue] {\tiny$c$} to (3,4);
		\BOX{0,0}{} \BOX{1,1}{} \BOX{1.75,2.75}{} 
		\end{tikzpicture}\end{matrix}.
		$$
Then by arranging configurations so that the boxes are packed together, standard fillings of configurations are exactly analogous to standard tableaux for partitions. 

The only exception to this physical intuition is for boxes on the diagonals $\pm \half$. 
Note that if $c_i = \half$, then $\mathrm{box}_i$ and $\mathrm{box}_{-i}$ are on adjacent diagonals. However, since $2\vep_i = \vep_i - \vep_{-i} \notin R^+$
and therefore never in $P(\cc)$, the relative positions of $\mathrm{box}_i$ and $\mathrm{box}_{-i}$ will never be recorded in the set $J$. 
For example, in Figure \ref{fig:rank2nonregulars}, the point where $(c_1, c_2) = (\half, \half)$ has two configurations, each with two boxes overlapping in indication that $\mathrm{box}_i$ and $\mathrm{box}_{-i}$ may ``slide past each other''. The drawing 
$$
\begin{matrix}
	\begin{tikzpicture}[xscale = .3, yscale=-.3]
		\draw[fill=white]	 (0,0)--(.5,0)--(.5,-.5)--(1.5,-.5)--(1.5,.5)--(1,.5)--(1,1)--(0,1)--(0,0);
		\draw[thin, black!30] (.5,0)--(1,.5);
		\BOX{-1,-1}{} \BOX{1.5,.5}{}
		\end{tikzpicture}\end{matrix} 
		\quad\text{ represents the equivalence of }\quad 
		\begin{matrix}
		\begin{tikzpicture}[xscale = .3, yscale=-.3]
		\draw[thin, blue] (-1.5,-1.5) node[inner sep=0pt, above left, blue] {\tiny$-\!\frac12$} to (2.5,2.5);
		\draw[thin, blue] (-.5,-1.5) to (3.5,2.5)  node[inner sep=0pt, below right, blue] {\tiny$\frac12$};
		\BOX{-1,-1}{\tiny{-$2$}}\BOX{0,0}{\tiny{-$1$}}\BOX{1,0}{\tiny{$1$}} \BOX{2,1}{\tiny{$2$}}
		\end{tikzpicture}\end{matrix} 
		\quad\text{ and }\quad
		\begin{matrix}
		\begin{tikzpicture}[xscale = .3, yscale=-.3]
		\draw[thin, blue] (-1.5,-1.5) node[inner sep=0pt, above left, blue] {\tiny$-\!\frac12$} to (1.5,1.5);
		\draw[thin, blue] (-.5,-1.5) to (2.5,1.5)  node[inner sep=0pt, below right, blue] {\tiny$\frac12$};
		\BOX{-1,-1}{\tiny{-$2$}}\BOX{0,0}{\tiny{-$1$}}\BOX{0,-1}{\tiny{$1$}} \BOX{1,0}{\tiny{$2$}}
		\end{tikzpicture}\end{matrix},
		$$
(with boxes filled in the identity filling) where $\mathrm{box}_1$ and $\mathrm{box}_{-1}$ can move freely past each other, and 
$$
\begin{matrix}
	\begin{tikzpicture}[xscale = .3, yscale=-.3]
		\draw[fill=white]	 (0,0)--(.5,0)--(.5,-.5)--(1.5,-.5)--(1.5,.5)--(1,.5)--(1,1)--(0,1)--(0,0);
		\draw[thin, black!30] (.5,0)--(1,.5);
		\BOX{-.5,-1.5}{}	\BOX{1,1}{}
		\end{tikzpicture}\end{matrix} 
		\quad\text{ represents the equivalence of }\quad 
		\begin{matrix}
		\begin{tikzpicture}[xscale = .3, yscale=-.3]
		\draw[thin, blue] (-1.5,-1.5) node[inner sep=0pt, above left, blue] {\tiny$\frac12$} to (2.5,2.5);
		\draw[thin, blue] (-1.5,-.5) to (2.5,3.5)  node[inner sep=0pt, below right, blue] {\tiny$-\!\frac12$};
		\BOX{-1,-1}{\tiny{$2$}}\BOX{0,0}{\tiny{$1$}}\BOX{0,1}{-\tiny{$1$}} \BOX{1,2}{-\tiny{$2$}}
		\end{tikzpicture}\end{matrix} 
		\quad\text{ and }\quad
		\begin{matrix}
		\begin{tikzpicture}[xscale = .3, yscale=-.3]
		\draw[thin, blue] (-1.5,-1.5) node[inner sep=0pt, above left, blue] {\tiny$\frac12$} to (1.5,1.5);
		\draw[thin, blue] (-1.5,-.5) to (1.5,2.5)  node[inner sep=0pt, below right, blue] {\tiny$-\!\frac12$};
		\BOX{-1,-1}{\tiny{$2$}}\BOX{0,0}{\tiny{$1$}}\BOX{-1,0}{-\tiny{$1$}} \BOX{0,1}{-\tiny{$2$}}
		\end{tikzpicture}\end{matrix},
		$$
where $\mathrm{box}_2$ and $\mathrm{box}_{-2}$ can move freely past each other. In these two examples 
$\vep_1-\vep_{-2}\in P(\cc)$ and $\vep_2-\vep_{-1}\in P(\cc)$ and so 
the relative orientation of $\mathrm{box}_2$ and $\mathrm{box}_{-1}$
and the relative orientation of $\mathrm{box}_1$ and $\mathrm{box}_{-2}$
are recorded in $J$. 
Each configuration has exactly two standard fillings. 
\end{remark}

\subsection{Classifying and constructing calibrated representations}

Theorem \ref{thm:calibconst} below provides an indexing of the 
calibrated irreducible $H_k^{\mathrm{ext}}$-modules by skew local regions.
A \emph{skew local region} is a local region $(\cc, J)$, $\cc= (c_1,\ldots, c_k)$, such that \hfil\break
\indent if $w \in \cF^{(\cc,J)}$ then $w\cc = ((w\cc)_1, \ldots, (w\cc)_n)$ satisfies
\begin{equation}
\begin{array}{c}
(w\cc)_1\ne 0, \quad (w\cc)_2\ne 0, \quad (w\cc)_1\ne -(w\cc)_2, \\
\\
(w\cc)_i\ne (w\cc)_{i+1}\ \hbox{for $i=1,\ldots, k-1$,}\quad\hbox{and}\quad
(w\cc)_i\ne (w\cc)_{i+2}\ \hbox{for $i=1,\ldots, k-2$.}
\end{array}
\label{skewlocalregiondefn}
\end{equation}
Theorem \ref{thm:calibconst} is completely analogous to the same theorem for the case 
$t^{\frac12}=t_0^{\frac12}=t_k^{\frac12}$ in \cite[Theorem 3.5]{Ra2}.  As explained in the discussion
and remarks before \cite[Lemma 3.1]{Ra2} in \cite[\S 3]{Ra2},  getting exactly the right definition of
skew local region for the purpose of Theorem \ref{thm:calibconst} is accomplished by a detailed computation of the irreducible representations
in rank two cases.  More specifically, for $I \subseteq \{0, \dots, k\}$, let $H_I$ be the subalgebra of $H^{\mathrm{ext}}_k$
generated by $\{T_i\}_{i \in I}$ and $\CC[W_1^{\pm1}, \ldots, W_k^{\pm1}]$. 
Then the  conditions in \eqref{skewlocalregiondefn} guarantee that for $w\in \cF^{(\cc,J)}$ and $i,j\in \{0,1, \ldots, k-1\}$, 
$$\hbox{there
exists a calibrated $H_{\{i,j\}}$-module $M$ with $M_{w\cc}^{\mathrm{gen}}\ne 0$}.
$$
The cases where $H_{\{i,j\}}$ is of type $A\times A_1$ or of type $A_2$ are
checked in \cite{Ra1}.  However, when $H_{\{i,j\}}$ is of type $C_2$ and there are \emph{three distinct parameters},
we do not know a reference for this. So in the effort to provide a more complete presentation,
we have done the appropriate analysis in Section \ref{sec:H2Classification} for all generic choices of the three parameters
$t^{\frac12}$, $t_0^{\frac12}$, and  $t_k^{\frac12}$, as given in the following theorem (see also \eqref{eq:genericcond})

\begin{thm}\label{thm:calibconst} 
Assume $t^{\frac12}$, $t_0^{\frac12}$, and  $t_k^{\frac12}$ are invertible, $t^{\frac12}$ is not a root of unity, and
$$
t_0^{\frac12}t_k^{\frac12},-t_0^{-\frac12}t_k^{\frac12}\not\in \{1, -1, t^{\pm\frac12}, -t^{\pm\frac12}, t^{\pm1}, -t^{\pm1}\}
\quad \hbox{and}\quad
t_0^{\frac12}t_k^{\frac12}\ne (-t_0^{-\frac12}t_k^{\frac12})^{\pm1}.
$$
\begin{enumerate}[(a)]
\item Let  $(\cc,J)$ be a skew local region and let $z\in \CC^\times$. Define 
\begin{equation}
H_k^{(z,\cc,J)}  = \mathrm{span}_\CC \{ v_w \ |\  w \in \cF^{(\cc,J)}\},
\label{HgammaJ}
\end{equation}
so that the symbols $v_w$ are a labeled basis of the vector space $H_k^{(z,\cc,J)}$.  Let
$$\gamma_i = -t^{c_i}
\quad\hbox{for $i=1,2,\ldots, k$,} \qquad \text{and} \qquad
\gamma_0 = z\gamma_{w^{-1}(1)}^{-1}\cdots \gamma_{w^{-1}(k)}^{-1}.
$$
Then the following formulas make $H_k^{(z,\cc,J)}$ into an irreducible $H^\ext_k$-module:
\begin{align}
PW_1&\cdots W_k v_w = zv_w, \qquad P v_w = \gamma_0 v_w, \qquad
W_i v_w = \gamma_{w^{-1}(i)} v_w, \label{snormActionPW} \\
T_i v_w &= [T_i]_{ww} v_w + \sqrt{-([T_i]_{ww}-t^{\frac12})([T_i]_{ww}+t^{-\frac12})}\  v_{s_iw},
\quad\hbox{for $i=1, \dots,k-1$}, \label{snormActionT}\\
T_0v_w &= [T_0]_{ww}v_w+
			 \sqrt{-([T_0]_{ww}-t_0^{\frac12})([T_0]_{ww}+t_0^{-\frac12})}\  v_{s_0w},
\label{snormActionX}
\end{align}
where $v_{s_iw} = 0$ if $s_iw\not\in \cF^{(\cc,J)}$, and
\begin{equation}
[T_i]_{ww} = \frac{t^{\frac12}-t^{-\frac12}}{1-\gamma_{w^{-1}(i)}\gamma^{-1}_{w^{-1}(i+1)}} 
\quad \text{ and } \quad 
[T_0]_{ww} = \frac{ (t_0^{\frac12}-t_0^{-\frac12}) + (t_k^{\frac12} -t_k^{-\frac12})\gamma^{-1}_{w^{-1}(1)}} 
{1-\gamma^{-2}_{w^{-1}(1)}}.
\end{equation}

\item  
The map
$$
\begin{matrix}
\CC^\times\times\{\hbox{skew local regions $(\cc, J)$}\}
&\longleftrightarrow 
&\{\hbox{irreducible calibrated $H_k^\ext$-modules}\} \\
(z,\cc, J) &\longmapsto &H_k^{(z,\cc, J)}
\end{matrix}
$$
\end{enumerate}
is a bijection.
\end{thm}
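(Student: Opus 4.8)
The plan is to follow the template of \cite[Theorem 3.5]{Ra2}, first using $H_k^{\mathrm{ext}}\cong\CC[W_0^{\pm1}]\otimes H_k$ (Theorem \ref{thm:Center_of_Hext}) to strip off the central parameter $z$ (the scalar by which $W_0$ acts), so that all the real content concerns $H_k$ via the Coxeter/Bernstein presentation of Theorem \ref{thm:2bdryHeckePresentation}. For part (a) I would first check that the scalars $[T_i]_{ww}$, $[T_0]_{ww}$ are well defined: since $t^{\frac12}$ is not a root of unity, $\gamma_a=\gamma_b$ iff $c_a=c_b$, so the inequalities $(w\cc)_i\ne(w\cc)_{i+1}$, $(w\cc)_1\ne 0$, $(w\cc)_1\ne-(w\cc)_2$ in \eqref{skewlocalregiondefn} are exactly what is needed to invert $1-\gamma_{w^{-1}(i)}\gamma_{w^{-1}(i+1)}^{-1}$ and $1-\gamma_{w^{-1}(1)}^{-2}$, while the genericity hypotheses make $\pm r_1,\pm r_2$ four distinct diagonals and make the radicands in \eqref{snormActionT}--\eqref{snormActionX} vanish exactly when $s_iw\notin\cF^{(\cc,J)}$, resp.\ $s_0w\notin\cF^{(\cc,J)}$.

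Next I would verify the defining relations of $H_k^{\mathrm{ext}}$ in the presentation of Theorem \ref{thm:2bdryHeckePresentation}. Centrality of $W_0$ and the commutation relations \eqref{WcommuteB2}--\eqref{TiWcommuteB4} are immediate, because $W_0,\dots,W_k$ act diagonally and $T_i$ (resp.\ $T_0$) mixes $v_w$ only with $v_{s_iw}$ (resp.\ $v_{s_0w}$), which carry the same $W_j$-weight for $j\ne i,i+1$ (resp.\ $j\ne1$); relations \eqref{TipastWi}--\eqref{T0pastW1} are a $2\times2$ computation and in fact are what force the formulas for $[T_i]_{ww}$ and $[T_0]_{ww}$. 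The substantive relations are the quadratic relations \eqref{quadraticT0andTi} and the braid relations \eqref{PresBraidRelsB1}: for each pair $i<j$ in $\{0,\dots,k-1\}$ I would restrict the action to the rank-two parabolic subalgebra $H_{\{i,j\}}$. The role of the skew conditions \eqref{skewlocalregiondefn} (as the text before the theorem explains) is precisely that each weight $w\cc$, $w\in\cF^{(\cc,J)}$, then occurs in an irreducible calibrated $H_{\{i,j\}}$-module; invoking the rank-two classifications --- types $A_1\times A_1$ and $A_2$ from \cite{Ra1}, and type $C_2$ with three independent generic parameters from Section \ref{sec:H2Classification} --- shows that $\mathrm{span}_\CC\{v_w\mid w\in\cF^{(\cc,J)}\}$, restricted to $H_{\{i,j\}}$, is a direct sum of those modules, so every rank-$\le2$ relation holds on it. Since every defining relation of $H_k^{\mathrm{ext}}$ lies inside some $H_{\{i,j\}}$ or among the commuting $W$'s, the formulas \eqref{snormActionPW}--\eqref{snormActionX} define an $H_k^{\mathrm{ext}}$-module. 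For irreducibility: the condition $R(w)\cap Z(\cc)=\emptyset$ forces $w$ to be the minimal-length representative of its coset modulo $\mathrm{Stab}_{\cW_0}(\cc)$, so $w\mapsto w\cc$ is injective on $\cF^{(\cc,J)}$ and each $\CC v_w$ is a full generalized weight space; any nonzero submodule therefore contains some $v_w$, and since $\tau_iv_w$ is a nonzero multiple of $v_{s_iw}$ exactly when $s_iw\in\cF^{(\cc,J)}$, connectivity of $\cF^{(\cc,J)}$ under these moves --- a fact about local regions proved as in \cite[\S 5--\S 8]{Ra2}, and transparent from the box-configuration picture --- forces the submodule to be everything.

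For part (b), let $M$ be an irreducible calibrated $H_k^{\mathrm{ext}}$-module. It is finite-dimensional (Theorem \ref{thm:Center_of_Hext} makes $H_k^{\mathrm{ext}}$ finite over its center), $W_0$ acts by a scalar $z$, and $M$ has a nonzero weight space $M_\gamma$; write $\gamma_i=-t^{c_i}$. Recording which $\tau_i$, $i=0,\dots,k-1$, annihilate $M_\gamma$ attaches a local region $(\cc,J)$ to this weight, and restricting $M$ to each rank-two parabolic and applying the rank-two classifications once more forces $(\cc,J)$ to be skew: a weight of $M$ on any wall of \eqref{skewlocalregiondefn} would render some $H_{\{i,j\}}$-restriction non-calibrated. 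The same rank-two data then identifies the matrix entries of $T_0,\dots,T_{k-1}$ on $M$ up to the normalization freedom in \eqref{snormActionT}--\eqref{snormActionX}, giving $M\cong H_k^{(z,\cc,J)}$. Injectivity is then clear: $H_k^{(z,\cc,J)}$ determines $z$ and its set of $(W_1,\dots,W_k)$-weights $\{w\cc\mid w\in\cF^{(\cc,J)}\}$, and the skew local region is recovered from this set by the combinatorics of \S\ref{sec:LocalRegions}.

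I expect the main obstacle to be the rank-two input for type $C_2$ with three distinct generic parameters: this underlies both the correct formulation of \eqref{skewlocalregiondefn} and the ``restricts to a sum of rank-two modules'' step, and it is exactly what Section \ref{sec:H2Classification} is built to provide, so the genuine work sits there rather than in Theorem \ref{thm:calibconst} itself. A secondary point to watch is the diagonals $\pm\frac12$: there $\mathrm{box}_i$ and $\mathrm{box}_{-i}$ lie on adjacent diagonals, but $2\vep_i\notin R^+$, so it never enters $Z(\cc)$ or $P(\cc)$ and no intertwiner is attached to it --- one only has to remember to omit this pair when checking the adjacent-diagonal conditions, and neither the construction nor the connectivity argument is affected. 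Apart from these inputs, every step is the standard \cite{Ra1,Ra2} bookkeeping.
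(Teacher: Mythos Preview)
Your proposal is correct and follows essentially the same route as the paper: both defer the classification content to \cite[Theorems 3.2 and 3.5]{Ra2} and to the rank-two analysis of Section~\ref{sec:H2Classification}, with the genericity hypotheses ensuring the type $C_2$ input is available. The paper's own proof is terser than yours---it simply cites \cite{Ra2} and then isolates one computation you leave implicit: starting from an irreducible calibrated $M$ with one-dimensional weight spaces, it derives the off-diagonal entry $[T_0]_{s_0w,w}$ by writing the $2\times2$ matrix of $T_0$ on $\mathrm{span}\{v_w,v_{s_0w}\}$, using $\Tr([T_0])=t_0^{1/2}-t_0^{-1/2}$ and $\det([T_0])=-1$ from the quadratic relation to get $-[T_0]_{w,s_0w}[T_0]_{s_0w,w}=([T_0]_{ww}-t_0^{1/2})([T_0]_{ww}+t_0^{-1/2})$, and then choosing the normalization of $v_{s_0w}$ that makes $[T_0]$ symmetric. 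This is the only piece of the paper's argument not already spelled out in your sketch.
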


\begin{proof}  This result follows from \cite[Theorems 3.2 and 3.5]{Ra2}.  It is only necessary to establish that the
formulas in \eqref{snormActionPW}, \eqref{snormActionT}, and \eqref{snormActionX} are correct.  These are derived in a similar manner to
\cite[Proposition 3.3]{Ra2} as follows.  As in \cite[Theorem 3.2]{Ra2}, if $M$ is an irreducible calibrated $H_k^\ext$-module then
$$M = \bigoplus_{w\in \cW_0} M_{w\gamma}^{\mathrm{gen}},
\qquad \hbox{with $\dim(M_{w\gamma}^{\mathrm{gen}})=1$ if $M_{w\gamma}^{\mathrm{gen}}\ne 0$.}
$$
For $w\in \cW_0$, if $M_{w\gamma}^{\mathrm{gen}}\ne 0$, let $v_w$ be a nonzero vector in $M_{w\gamma}^{\mathrm{gen}}$; otherwise if $M_{w\gamma}^{\mathrm{gen}}=0$, let $v_w=0$. By \eqref{intertwinermaps},
$\tau_iv_w = [T_i]_{s_iw,w}v_{s_iw}$ for some constant $[T_i]_{s_iw,w}$ and 
the definition of $\tau_i$ in \eqref{intertwinerdefs} gives that
\begin{equation}
T_iv_w = \frac{t^{\frac12}-t^{-\frac12}}{1-\gamma_{w^{-1}(i)}\gamma^{-1}_{w^{-1}(i+1)}} v_w + [T_i]_{s_iw,w} v_{s_iw} 
\qquad\hbox{for $i=1, \ldots, k$,}
\label{Tpartway}
\end{equation}
and
\begin{equation}
T_0v_\gamma 
= \frac{ (t_0^{\frac12}-t_0^{-\frac12}) + (t_k^{\frac12} - t_k^{-\frac12})\gamma^{-1}_{w^{-1}(1)}} 
{1-\gamma^{-2}_{w^{-1}(1)}} v_w + [T_0]_{s_0w,w} v_{s_0w}.
\label{Xpartway}
\end{equation}
Thus $T_0$ is an operator on the subspace $\mathrm{span}_\CC\{ v_w, v_{s_0w}\}$
satisfying $(T_0-t_0^{\frac12})(T_0+t_0^{-\frac12}) = 0$  by \eqref{quadraticT0andTi}. Restricting to the action on $\mathrm{span}_\CC\{ v_w, v_{s_0w}\}$,   
the formulas in \eqref{snormActionX} now follow from the following argument about general $2\times 2$ matrices.

If a $2\times 2$ matrix $[T_0]$ has eigenvalues $\alpha_1$ and $\alpha_2$,
$$[T_0] = \begin{pmatrix} [T_0]_{ww} & [T_0]_{w,s_0w} \\
[T_0]_{s_0w,w} &[T_0]_{s_0w,s_0w} \end{pmatrix},
\qquad\hbox{then}\qquad ([T_0]-\alpha_1)([T_0]-\alpha_2) = 0$$
is the characteristic polynomial for $[T_0]$, and it follows that
\begin{align*}
\Tr([T_0]) &= [T_0]_{ww} + [T_0]_{s_0w,s_0w} = \alpha_1+\alpha_2,
\qquad\hbox{and} \\
\det([T_0]) &= [T_0]_{ww}[T_0]_{s_0w,s_0w}- [T_0]_{w,s_0w}[T_0]_{s_0w,w} = \alpha_1\alpha_2.
\end{align*}
Thus
\begin{align*}
-[T_0]_{w,s_0w}[T_0]_{s_0w,w}
&=\alpha_1\alpha_2 - [T_0]_{ww}[T_0]_{s_0w,s_0w}
=\alpha_1\alpha_2 - [T_0]_{ww}((\alpha_1+\alpha_2)-[T_0]_{ww}) \\
&=\alpha_1\alpha_2 - (\alpha_1+\alpha_2)[T_0]_{ww}+([T_0]_{ww})^2 
=([T_0]_{ww}-\alpha_1)([T_0]_{ww}-\alpha_2).
\end{align*}
Choosing a normalization of $v_{s_0 w}$ so that the matrix of $[T_0]$ is symmetric, we have $[T_0]_{w,s_0w} = [T_0]_{s_0w,w}$ and
\begin{align*}
[T_0]_{s_0w,w}
=\sqrt{([T_0]_{s_0w,w})^2}
=\sqrt{[T_0]_{w,s_0w}[T_0]_{s_0w,w}}
=\sqrt{-([T_0]_{ww}-\alpha_1)([T_0]_{ww}-\alpha_2)}.
\end{align*}

\end{proof}

\section{Classification of irreducible representations of $H_2$}\label{sec:H2Classification}

In this section we do a complete classification of the irreducible representations of 
the algebra $H_2^{\mathrm{ext}}$.  
An important reason for doing this classification of $H_2^\ext$ representations is to provide a sound basis for the definition of a skew local region (see the remarks immediately
after the definition of skew local region in \eqref{skewlocalregiondefn}).  
The classification and construction of calibrated representations of $H_k^{\mathrm{ext}}$
in terms of skew local regions in Theorem \ref{thm:calibconst} is important for determining the irreducible representations of 
$H_k^{\mathrm{ext}}$ that arise in the Schur-Weyl duality framework (see Theorem \ref{thm:partitions-to-SLRs}).  
We will do the clasification of irreducible $H_2^{\mathrm{ext}}$ representations 
under \textbf{genericity assumptions on the parameters}:
$t^{\frac12}$ is not a root of unity and
\begin{equation}
t_0^{\frac12}t_k^{\frac12},-t_0^{-\frac12}t_k^{\frac12}\not\in \{1, -1, t^{\pm\frac12}, -t^{\pm\frac12}, t^{\pm1}, -t^{\pm1}\}
\quad \hbox{and}\quad
t_0^{\frac12}t_k^{\frac12}\ne (-t_0^{-\frac12}t_k^{\frac12})^{\pm1}.
\label{eq:genericcond}
\end{equation}
More specifically, this condition is used for the (rank 2) computation in equation \eqref{nonKatononreg}.
Similar methods apply to the nongeneric cases but the final classification needs to be stated differently 
and we will not treat the nongeneric cases here.
The nongeneric case $t_0^{\frac12} = t_k^{\frac12}=t^{\frac12}$ is done in  \cite{Ra1, Ra2}
and \cite{Re}; the case where $t_0^{\frac12} = t_k^{\frac12} \neq t^{\frac12}$ appears in \cite{En} (see also \cite{KR}).

The algebra $H_2$ is generated by $W_1^{\pm1}, W_2^{\pm1}, T_0$, and $T_1$, and 
the Weyl group $\cW_0$ is generated by $s_0$ and $s_1$ with relations
$s_i^2 = 1$ and $s_0s_1s_0s_1 = s_1s_0s_1s_0$.
By \eqref{Hextasacentralextension},
$$H_2^{\mathrm{ext}} = \CC[W_0^{\pm}]\otimes H_2
\quad\hbox{as algebras,}
$$
and therefore it is sufficient to do the classification of irreducible representations of $H_2$. This is because all irreducible representations of  $\CC[W_0^{\pm1}]$ are one dimensional and determined by the image of $W_0$; 
and all irreducible representations of $H_2^{\mathrm{ext}}$ are the tensor product of an irreducible representation
of $\CC[W_0^{\pm1}]$ and an irreducible representation of $H_2$.

The group $\cW_0$ acts on $(\CC^\times)^2$ by
\begin{equation}
s_0 (\gamma_1,\gamma_2) = (\gamma_1^{-1},\gamma_2)
\qquad\hbox{and}\qquad s_1 (\gamma_1, \gamma_2) = (\gamma_2, \gamma_1).
\label{H2W0action}
\end{equation}
By \eqref{intertwinerdefs}, the intertwiners are 
$$
\tau_0 
= T_0 - \frac{ (t_0^{\frac12} - t_0^{-\frac12}) + (t_k^{\frac12} - t_k^{-\frac12})W_1^{-1} }
{1-W_1^{-2}} 
\qquad\hbox{and}\qquad
\tau_1 = T_1 - \frac{t^{\frac12} - t^{-\frac12} }{1-W_1W_2^{-1} }.
$$


\subsection{Classification of central characters}\label{H2centcharclass}

Following  \cite[\S 5]{Ra1},
the classification of irreducible $H_k^\ext$-modules begins with a classification of possible
pairs $(Z(\cc), P(\cc))=(Z(\gamma), P(\gamma))$ (where $\gamma$ and $\cc$ are related as in \eqref{eq:gamma-to-c}).  It is straightforward (though slightly tedious) to enumerate all the possibilities
by taking note of the following:
\begin{enumerate}
\item[(0)] Since $(Z(w\gamma), P(w\gamma)) = (wZ(\gamma), wP(\gamma))$, 
it is sufficient to do the analysis for a single representative $\gamma$ of each $\cW_0$-orbit on $(\CC^\times)^k$.
\item[(1)] The $\cW_0$-orbits of roots are $\{\pm \vep_1, \pm \vep_2\}$ and $\{ \pm(\vep_2\pm\vep_1)\}$,
and our preferred representative of the $\cW_0$-orbit will have $\vep_1$ or $\vep_2-\vep_1$ 
in $Z(\gamma)$ if $Z(\gamma)\ne\emptyset$.
\item[(2)] If $Z(\gamma)= \emptyset$ and $P(\gamma)\ne \emptyset$ then our preferred representative of the $\cW_0$-orbit will have $\vep_1$ or $\vep_2-\vep_1$ in $Z(\gamma)$. 
\end{enumerate}
With these preferences, the classification of $(Z(\gamma), P(\gamma))$ is accomplished by noting that
\begin{enumerate}[(a)]
\item if $\gamma\in \{(1,1), (-1,-1)\}$ then $(Z(\gamma), P(\gamma)) = (\{\vep_1, \vep_2, \vep_2\pm \vep_1\}, \emptyset)$;
\item if $\gamma\in \{(1,-1), (-1,1)\}$ then $(Z(\gamma), P(\gamma)) = (\{\vep_1, \vep_2\}, \emptyset)$;
\item $\vep_2-\vep_1\in Z(\gamma)$ if and only if $\gamma = (\gamma_1, \gamma_1)$;
\item $\vep_2+\vep_1\in Z(\gamma)$ if and only if $\gamma = (\gamma_1, \gamma_1^{-1})$;
\item $\vep_1\in Z(\gamma)$ if and only if $\gamma = (1, \gamma_2)$ or $\gamma=(-1, \gamma_2)$;
\item $\vep_2\in Z(\gamma)$ if and only if $\gamma = (\gamma_1, 1)$ or $\gamma=(\gamma_1,-1)$;
\item $\vep_1\in P(\gamma)$ if and only if $\gamma = (\gamma_1, \gamma_2)$ with 
$\gamma_1\in \{ t_0^{\frac12}t_k^{\frac12}, -t_0^{-\frac12}t_k^{\frac12}, -t_0^{\frac12}t_k^{-\frac12}, t_0^{-\frac12}t_k^{-\frac12}\}$;
\item $\vep_2-\vep_1\in P(\gamma)$ if and only if $\gamma = (\gamma_1, \gamma_2)$ with 
$\gamma_2 = \gamma_1t^{\pm1}$;
\item $\vep_2+\vep_1\in P(\gamma)$ if and only if $\gamma = (\gamma_1, \gamma_2)$ with 
$\gamma_1\gamma_2 = t^{\pm1}$.
\end{enumerate}
We shall freely use the conversion between $\gamma=(\gamma_1, \gamma_2)$ and $\cc = (c_1,c_2)$ given by
\eqref{eq:gamma-to-c},
$$\gamma_1 = -t^{c_1}, \quad \gamma_2=-t^{c_2},
\qquad\hbox{and write $(Z(\cc), P(\cc)) = (Z(\gamma), P(\gamma))$.}
$$
Representatives of the 12 possible $(Z(\cc), P(\cc))$ with $Z(\cc)=\emptyset$ are displayed in Figure \ref{fig:rank2regulars}.
Representatives of the 9 possible $(Z(\cc), P(\cc))$ with $Z(\cc)\ne\emptyset$ are displayed in Figure \ref{fig:rank2nonregulars}.
It works out that, in each case, the pair $(Z(\cc), P(\cc))$ is attained by an element $\cc$ that has real coordinates
(the one complex character in the equal parameter case that behaves differently from the real characters, namely the point $t_b$ in \cite[Figure 5.1]{Ra1}, 
does not appear in the generic unequal parameter case assumed in \eqref{eq:genericcond}).

With notation as at the beginning of Section 3, in Figures \ref{fig:rank2regulars} and \ref{fig:rank2nonregulars}, 
the fundamental region $C$ is the shaded area, the solid lines are the hyperplanes
$\fh^{\alpha}$ for $\alpha\in R^+$, and the dotted hyperplanes are labeled by the
equation that defines them.
If $\cc = (c_1, c_2)\in C$, so that $0\le c_1\le c_2$, then
\begin{align*}
Z(\cc) = \{ \hbox{solid hyperplanes through $\cc$}\}
\quad\hbox{and}\quad
P(\cc) = \{ \hbox{dotted hyperplanes through $\cc$}\}.
\end{align*}
The bijection
\begin{equation}
\begin{matrix}
\cW_0 &\leftrightarrow &\{ \hbox{chambers}\} \\
w &\mapsto &w^{-1}C
\end{matrix}
\qquad\hbox{identifies each $\cF^{(\cc, J)}$ with a set of chambers,}
\label{FcJaschambers}
\end{equation}
a \emph{local region} in $\fh_\RR^*$.  As illustrated by the example at the bottom right
of Figures \ref{fig:rank2regulars} and \ref{fig:rank2nonregulars}, $\cF^{(\cc, J)}$ is identified with the
set of chambers that are on the negative side of the hyperplanes in $J$ and
on the positive side of the hyperplanes in $P(\cc)-J$ .
For each $(\cc, J)$ the corresponding configuration of boxes $\kappa$ is displayed
in the local region of chambers corresponding to the elements of $\cF^{(\cc,J)}$ by \eqref{FcJaschambers}.
In Figure \ref{fig:rank2regulars}, only the boxes on positive diagonals are shown, since they determine the entire doubled configuration when $Z(\cc) = \emptyset$. 
The diagram at the bottom right of each figure gives an example of the correspondence between chambers corresponding to 
$\cF{(\cc,J)}$, the elements of  $\cF^{(\cc,J)}$, and the standard fillings of the corresponding configuration of 
boxes $\kappa$: the point $\cc = (r_1-1, r_1)$ in the bottom right of Figure \ref{fig:rank2regulars}, and
the point $\cc = (0,1)$ in the bottom right of Figure \ref{fig:rank2nonregulars}.

In  Figure  \ref{fig:rank2nonregulars}, the small graphs nearby each marked $\cc=(c_1,c_2)$ indicate the structure (generalized
weight spaces and intertwiner maps) of the irreducible modules $M$ of central character $\cc$.   This structure is determined 
below in Section \ref{H2nonregirredconst}.
There is a vertex in the chamber $w^{-1}C$ for each element of a basis of $M_{w\cc}^{\mathrm{gen}}$
and there is an edge if the matrix of $\tau_i$ (or $T_i$ if $\tau_i$ is not defined on $M_{w\cc}^{\mathrm{gen}}$)
is nonzero in the entry corresponding to the two vertices that are connected.

{\def\RTwo{2.6}
\def\ROne{1.2}
\def\ONE{.5}
\def\TOP{3.75}
\begin{figure}\caption{Regular central characters in rank 2. See the description in Section \ref{H2centcharclass}}\label{fig:rank2regulars}
\centerline{\begin{tikzpicture}[scale=3.25, every node/.style={inner sep=3pt}]
	\filldraw[gray!15] (0,\TOP)--(0,0)--(\TOP,\TOP)--(0,\TOP);
	\node[WhiteCircle, inner sep=17.5pt](r1r2) at (\ROne,\RTwo){};		
	\node[B] at (r1r2){};
		\node[above right] at (r1r2) {
			\begin{tikzpicture}[BoxArr]
			\BOX{0,0}{}\BOX{2,0}{}\bDOT{0,0}\bDOT{2,0}
			\end{tikzpicture}
			};		
		\node[above left] at (r1r2) {
			\begin{tikzpicture}[BoxArr]
				\BOX{0,0}{}\BOX{2,0}{}\bDOT{1,1}\bDOT{2,0}
			\end{tikzpicture}
			};
		\node[below right] at (r1r2) {
			\begin{tikzpicture}[BoxArr]
			\BOX{0,0}{}\BOX{2,0}{}\bDOT{0,0}\bDOT{3,1}
			\end{tikzpicture}
			};
		\node[below left] at (r1r2) {
			\begin{tikzpicture}[BoxArr]
				\BOX{0,0}{}\BOX{2,0}{}\bDOT{1,1}\bDOT{3,1}
			\end{tikzpicture}
			};
	\node[WhiteCircle, inner sep=17.5pt](r1-1r1) at (\ROne-\ONE,\ROne){};		
	\node[B] at (r1-1r1){};
		\node[above] at (r1-1r1) {
			\begin{tikzpicture}[BoxArr]
			\BOX{0,0}{}\BOX{1,0}{}\bDOT{1,0}
			\node at (3,1){};
			\end{tikzpicture}
			};	
		\node[above right, inner sep=0pt] at (r1-1r1) {
			\begin{tikzpicture}[BoxArr]
			\BOX{0,0}{}\BOX{0,1}{}\bDOT{0,0}
			\node at (-2.5,2){};
			\end{tikzpicture}
			};	
		\node[below right] at (r1-1r1) {
			\begin{tikzpicture}[BoxArr]
			\BOX{0,0}{}\BOX{0,1}{}\bDOT{1,1}
			\end{tikzpicture}
			};			
		\node[below left, inner sep=0pt] at (r1-1r1) {
			\begin{tikzpicture}[BoxArr]
			\BOX{0,0}{}\BOX{1,0}{}\bDOT{2,1}
			\node at (3.75,-.2){};
			\end{tikzpicture}
			};	
	\node[WhiteCircle, inner sep=17.5pt](r2-1r2) at (\RTwo-\ONE,\RTwo){};		
	\node[B] at (r2-1r2){};
		\node[above] at (r2-1r2) {
			\begin{tikzpicture}[BoxArr]
			\BOX{0,0}{}\BOX{1,0}{}\bDOT{1,0}
			\node at (3,1){};
			\end{tikzpicture}
			};	
		\node[above right, inner sep=0pt] at (r2-1r2) {
			\begin{tikzpicture}[BoxArr]
			\BOX{0,0}{}\BOX{0,1}{}\bDOT{0,0}
			\node at (-2.5,2){};
			\end{tikzpicture}
			};	
		\node[below right] at (r2-1r2) {
			\begin{tikzpicture}[BoxArr]
			\BOX{0,0}{}\BOX{0,1}{}\bDOT{1,1}
			\end{tikzpicture}
			};			
		\node[below left, inner sep=0pt] at (r2-1r2) {
			\begin{tikzpicture}[BoxArr]
			\BOX{0,0}{}\BOX{1,0}{}\bDOT{2,1}
			\node at (3.75,-.2){};
			\end{tikzpicture}
			};
	\node[WhiteCircle, inner sep=17.5pt](r1r1+1)  at (\ROne,\ROne+\ONE)  {};		
	\node[B] at (r1r1+1) {};
		\node[left] at (r1r1+1) {
			\begin{tikzpicture}[BoxArr]
			\BOX{0,0}{}\BOX{1,0}{}\bDOT{1,1}
			\node at (2.5,1){};
			\end{tikzpicture}
			};	
		\node[below left, inner sep=0pt] at (r1r1+1) {
			\begin{tikzpicture}[BoxArr]
			\BOX{0,0}{}\BOX{0,1}{}\bDOT{1,2}
			\node at (1.3,-1.75){};
			\end{tikzpicture}
			};	
		\node[below right] at (r1r1+1) {
			\begin{tikzpicture}[BoxArr]
			\BOX{0,0}{}\BOX{0,1}{}\bDOT{0,1}
			\end{tikzpicture}
			};			
		\node[above right, inner sep=0pt] at (r1r1+1) {
			\begin{tikzpicture}[BoxArr]
			\BOX{0,0}{}\BOX{1,0}{}\bDOT{0,0}
			\node at (-.25,3.5){};
			\end{tikzpicture}
			};	
	\node[WhiteCircle, inner sep=17.5pt](r2r2+1) at (\RTwo,\RTwo+\ONE) {};		
	\node[B] at  (r2r2+1){};	
		\node[left] at (r2r2+1) {
			\begin{tikzpicture}[BoxArr]
			\BOX{0,0}{}\BOX{1,0}{}\bDOT{1,1}
			\node at (2.5,1){};
			\end{tikzpicture}
			};	
		\node[below left, inner sep=0pt] at (r2r2+1) {
			\begin{tikzpicture}[BoxArr]
			\BOX{0,0}{}\BOX{0,1}{}\bDOT{1,2}
			\node at (1.3,-1.75){};
			\end{tikzpicture}
			};	
		\node[below right] at (r2r2+1) {
			\begin{tikzpicture}[BoxArr]
			\BOX{0,0}{}\BOX{0,1}{}\bDOT{0,1}
			\end{tikzpicture}
			};			
		\node[above right, inner sep=0pt] at (r2r2+1) {
			\begin{tikzpicture}[BoxArr]
			\BOX{0,0}{}\BOX{1,0}{}\bDOT{0,0}
			\node at (-.25,3.5){};
			\end{tikzpicture}
			};	
	\node[WhiteCircle, inner sep=14pt](c1c1+1) at (.5*\ROne+.5*\RTwo - .5*\ONE,(.5*\ROne+.5*\RTwo + .5*\ONE) {};		
	\node[B] at (c1c1+1){};
		\node[below right] at (c1c1+1) {
			\begin{tikzpicture}[BoxArr]
			\BOX{0,0}{}\BOX{0,1}{}
			\end{tikzpicture}
			};
		\node[above left] at (c1c1+1) {
			\begin{tikzpicture}[BoxArr]
			\BOX{0,0}{}\BOX{1,0}{}
			\end{tikzpicture}
			};
	\node[WhiteCircle, inner sep=11pt](c1r2) at (.5,\RTwo) {};		
	\node[B] at (c1r2){};
		\node[below] at (c1r2) {
		\begin{tikzpicture}[BoxArr]
			\BOX{0,0}{}\BOX{2,0}{}\bDOT{3,1}
		\end{tikzpicture}
		};
		\node[above] at (c1r2) {
		\begin{tikzpicture}[BoxArr]
			\BOX{0,0}{}\BOX{2,0}{}\bDOT{2,0}
		\end{tikzpicture}
		};
	\node[WhiteCircle, inner sep=11pt](c1r1) at (.2,\ROne){};		
	\node[B] at (c1r1){};
		\node[below] at (c1r1) {
			\begin{tikzpicture}[BoxArr]
				\BOX{0,0}{}\BOX{2,0}{}\bDOT{3,1}
			\end{tikzpicture}
			};
		\node[above] at (c1r1) {
			\begin{tikzpicture}[BoxArr]
			\BOX{0,0}{}\BOX{2,0}{}\bDOT{2,0}
			\end{tikzpicture}
			};
	\node[WhiteCircle, inner sep=11pt](r1c2) at (\ROne,.5*\RTwo+.5*\TOP){};		
	\node[B] at (r1c2) {};
		\node[left] at (r1c2) {
			\begin{tikzpicture}[BoxArr]
				\BOX{0,0}{}\BOX{0,-2}{}\bDOT{1,1}
			\end{tikzpicture}
			};
		\node[right] at (r1c2) {
			\begin{tikzpicture}[BoxArr]
				\BOX{0,0}{}\BOX{0,-2}{}\bDOT{0,0}
			\end{tikzpicture}
			};
	\node[WhiteCircle, inner sep=11pt](r2c2) at (\RTwo,\TOP-.15){};		
	\node[B] at (r2c2){};
		\node[left] at (r2c2) {
			\begin{tikzpicture}[BoxArr]
				\BOX{0,0}{}\BOX{0,-2}{}\bDOT{1,1}
			\end{tikzpicture}
			};
		\node[right] at (r2c2) {
			\begin{tikzpicture}[BoxArr]
				\BOX{0,0}{}\BOX{0,-2}{}\bDOT{0,0}
			\end{tikzpicture}
			};
	\node[WhiteCircle, inner sep=11pt](c1c2) at (.5*\ROne + .5*\RTwo,.5*\TOP+.5*\RTwo){};		
	\node[B] at (c1c2){};
			\node[below, outer sep=2pt] at (c1c2) {
			\begin{tikzpicture}[BoxArr]
				\BOX{0,0}{}\BOX{2,0}{}
			\end{tikzpicture}
			};
	\node[WhiteCircle, inner sep=14pt](c1-c1+1) at (.25*\ONE,.75*\ONE) {};		
	\node[B] at (c1-c1+1){};
		\node[below left] at (c1-c1+1) {
			\begin{tikzpicture}[BoxArr]
			\BOX{0,0}{}\BOX{0,1}{}
			\end{tikzpicture}
			};
		\node[above right] at (c1-c1+1) {
			\begin{tikzpicture}[BoxArr]
			\BOX{0,0}{}\BOX{1,0}{}
			\end{tikzpicture}
			};
	\draw[thick] (0,-.2)--(0,\TOP) node[above] {$c_1=0$};
	\draw[thick] (-.2,-.2)--(\TOP,\TOP) node[above right] {$c_1=c_2$};
	\draw[thick] (-.2,0)--(2,0) node[right] {$c_2=0$};
	\draw[dotted, thick] (\TOP-\ONE,\TOP)--(-.2,-.2+\ONE) node[below left]{\small$c_2=c_1+1$};
	\draw[dotted, thick] (-.2,\ONE+.2)--(\ONE+.2,-.2) node[below right]{\small$c_2=-c_1+1$};
	\draw[dotted, thick] (\ROne+.5,\ROne)--(-.2,\ROne) node[left] {\small$c_2=r_1$};
	\draw[dotted, thick] (\RTwo+.5,\RTwo)--(-.2,\RTwo) node[left] {\small$c_2=r_2$};
	\draw[dotted, thick] (\ROne,\ROne-.5)--(\ROne,\TOP) node[above] {\small$c_1=r_1$};
	\draw[dotted, thick] (\RTwo,\RTwo-.5)--(\RTwo,\TOP) node[above] {\small$c_1=r_2$};
	\node at (3,1){
		\begin{tikzpicture}[scale=1.5]
			\draw[black!25] (-45:-1.2)--(-45:1.2) ;
			\draw[dotted, thick] (45:-2)--(45:2);
			\draw[dotted, thick] (-2,0)--(2,0);
			\draw[black!25]  (0,-1.2)--(0,1.2);
			\node at  (112.5 - 45:.9) {\scriptsize$1$};
			\node at  (112.5 - 45:1.5) {
				\begin{tikzpicture}[xscale = .3, yscale=-.3, every node/.style={ inner sep=1pt}]
					\BOX{0,-2}{1}\BOX{1,-2}{2}  
					\BOX{0,0}{-1} \BOX{-1,0}{-2} 
					\DOT{1,-2} \DOT{0,1}
				\end{tikzpicture}};
			\node at  (112.5 - 2*45:.9) {\scriptsize$s_1$};
			\node at  (112.5 - 2*45:1.5) {
				\begin{tikzpicture}[xscale = .3, yscale=-.3, every node/.style={ inner sep=1pt}]
					\BOX{0,-2}{2}\BOX{0,-3}{1}  
					\BOX{-2,-2}{-2} \BOX{-2,-1}{-1} 
					\DOT{0,-3} \DOT{-1,0}
				\end{tikzpicture}};
			\node at  (112.5 - 3*45:.9) {\scriptsize$s_0 s_1$};
			\node at  (112.5 - 3*45:1.5) {
				\begin{tikzpicture}[xscale = .3, yscale=-.3, every node/.style={ inner sep=1pt}]
				\BOX{0,-2}{2}\BOX{0,-3}{-1}  
				\BOX{-2,-2}{-2} \BOX{-2,-1}{1} 
				\DOT{1,-2} \DOT{-2,-1}
			\end{tikzpicture}};
			\node at  (112.5 - 4*45:.9) {\scriptsize$~s_1s_0s_1$};
			\node at  (112.5 - 4*45:1.5) {
				\begin{tikzpicture}[xscale = .3, yscale=-.3, every node/.style={ inner sep=1pt}]
					\BOX{0,-2}{1}\BOX{0,-3}{-2}  
					\BOX{-2,-2}{-1} \BOX{-2,-1}{2} 
					\DOT{1,-2} \DOT{-2,-1}
				\end{tikzpicture}};
			\node at  (67.5 + 1*45:.9) {\scriptsize$s_0$}; 
			\node at  (67.5 + 1*45:1.5) {
				\begin{tikzpicture}[xscale = .3, yscale=-.3, every node/.style={ inner sep=1pt}]
					\BOX{0,-2}{-1}\BOX{1,-2}{2}  
					\BOX{0,0}{1} \BOX{-1,0}{-2} 
					\DOT{1,-2} \DOT{0,1}
				\end{tikzpicture}}; 
			\node at  (67.5 + 2*45:.9) {\scriptsize$s_1s_0$}; 
			\node at  (67.5 + 2*45:1.5) {
					\begin{tikzpicture}[xscale = .3, yscale=-.3, every node/.style={ inner sep=1pt}]
					\BOX{0,-2}{-2}\BOX{1,-2}{1}  
					\BOX{0,0}{2} \BOX{-1,0}{-1} 
					\DOT{1,-2} \DOT{0,1}
				\end{tikzpicture}
			}; 
			\node at  (67.5 + 3*45:.9) {\scriptsize$s_0s_1s_0$}; 
			\node at  (67.5 + 3*45:1.65) {
				\begin{tikzpicture}[xscale = .3, yscale=-.3, every node/.style={ inner sep=1pt}]
					\BOX{0,-2}{-2}\BOX{1,-2}{-1}  
					\BOX{0,0}{2} \BOX{-1,0}{1} 
					\DOT{2,-1} \DOT{-1,0}
				\end{tikzpicture} }; 
			\node at  (67.5 + 4*45:.9) {\scriptsize$s_1s_0s_1s_0~$}; 
			\node at  (67.5 + 4*45:1.5) {
				\begin{tikzpicture}[xscale = .3, yscale=-.3, every node/.style={ inner sep=1pt}]
					\BOX{0,-2}{-1}\BOX{0,-3}{-2}  \BOX{-2,-2}{1} \BOX{-2,-1}{2} 
					\DOT{1,-2} \DOT{-2,-1}
				\end{tikzpicture}}; 
			\node[B] at (0,0){};
			\foreach \x [count=\c from 1] in {1, ..., 9}{\node[V] (a\x) at (112.5 - \c*45:.5){}; }
			\foreach \x [count=\c from 4] in { 3, 4}{ \draw (a\x)--(a\c);}
			\foreach \x [count=\c from 8] in {7,8}{\draw (a\x)--(a\c);}
			\node at  (90+40:2) {\small$J = \emptyset$};
			\node[above] at  (0:2.5) {\small$J = \{\vep_2-\vep_1\}$};
			\node[below] at  (185:2.2) {\small$J = \{\vep_2\}$};
			\node at  (-90+22.5:2.2) {\small$J = \{\vep_2, \vep_2-\vep_1\}$};
		\end{tikzpicture}};
\end{tikzpicture}}
\end{figure}}

{\def\RTwo{3.25}
\def\ROne{1.6}
\def\ONE{.85}
\def\TOP{3.75}
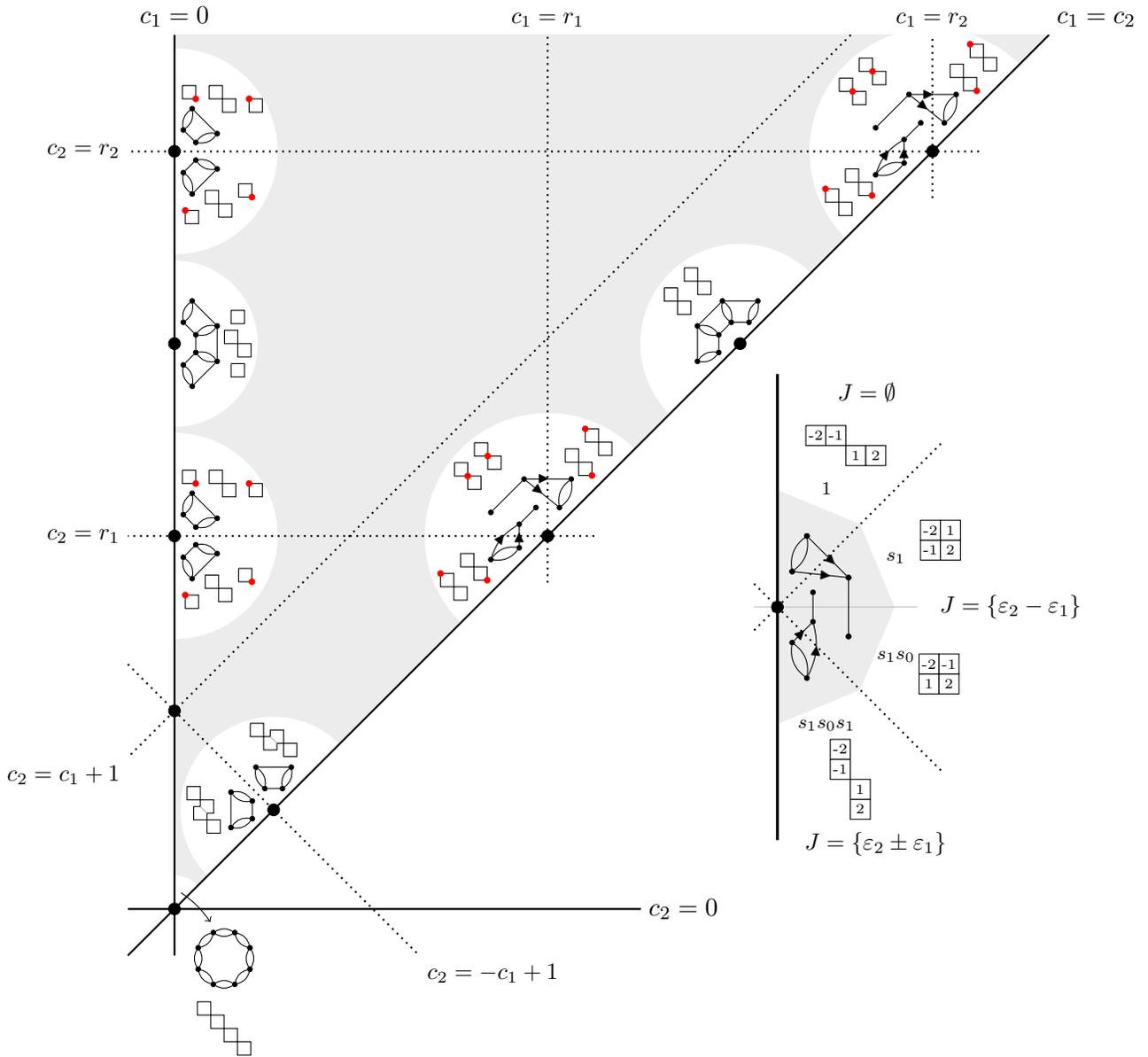
\begin{figure}\caption{Non-regular points}\label{fig:rank2nonregulars}
\centerline{\begin{tikzpicture}[scale=3.5, every node/.style={inner sep=3pt}]
	\filldraw[gray!15] (0,\TOP)--(0,0)--(\TOP,\TOP)--(0,\TOP);
	\coordinate (0r2) at (0,\RTwo);
	\begin{scope}[shift=(0r2), scale=.3]
		\node[WhiteCircle, inner sep=31pt] at (0,0){};		
		\node[B] at (0,0){};
		\foreach \x [count=\c from 1] in {1, ..., 4}{
			\node[V] (a\x) at (112.5 - \c*45:.66){}; 
			\node[V] (b\x) at (112.5 - \c*45:.33){}; }
		\foreach \x in {1, ..., 4}{\draw (b\x) to [bend left=30] (a\x);\draw (b\x)to [bend right=30] (a\x);}
		\draw (b1)--(b2) (a1)--(a2);
		\draw (b3)--(b4) (a3)--(a4);
		\node[right] at (0,.75){
			\begin{tikzpicture}[xscale = .2, yscale=-.2]
				\BOX{-3,-1}{} \BOX{-1,-1}{} \BOX{0,0}{}\BOX{2,0}{}  
				\bDOT{-2,0} \bDOT{2,0}
			\end{tikzpicture}};
		\node[right] at (0,-.75){
			\begin{tikzpicture}[xscale = .2, yscale=-.2]
				\BOX{-2.5,.5}{} \BOX{-1,-1}{} \BOX{0,0}{}\BOX{1.5,-1.5}{}  
				\bDOT{-2.5,.5} \bDOT{2.5,-.5}
			\end{tikzpicture}};
	\end{scope}
	\coordinate (0c2) at (0,.5*\ROne + .5*\RTwo);
	\begin{scope}[shift=(0c2), scale=.3]
		\node[WhiteCircle, inner sep=25pt] at (0,0){};		
		\node[B] at (0,0){};
		\foreach \x [count=\c from 1] in {1, ..., 4}{
			\node[V] (a\x) at (112.5 - \c*45:.66){}; 
			\node[V] (b\x) at (112.5 - \c*45:.33){}; }
		\foreach \x in {1, ..., 4}{\draw (b\x) to [bend left=30] (a\x);\draw (b\x)to [bend right=30] (a\x);}
		\draw (b1)--(b2) (a1)--(a2);
		\draw (b3)--(b2) (a3)--(a2);
		\draw (b3)--(b4) (a3)--(a4);
		\node[right] at (.6,0){
			\begin{tikzpicture}[xscale = .2, yscale=-.2]
				\BOX{-.5,1.5}{} \BOX{-1,-1}{} \BOX{0,0}{}\BOX{-.5,-2.5}{}  
			\end{tikzpicture}
		};
	\end{scope}
	\coordinate (0r1) at (0,\ROne);
	\begin{scope}[shift=(0r1), scale=.3]
		\node[WhiteCircle, inner sep=31pt] at (0,0){};		
		\node[B] at (0,0){};
		\foreach \x [count=\c from 1] in {1, ..., 4}{
			\node[V] (a\x) at (112.5 - \c*45:.66){}; 
			\node[V] (b\x) at (112.5 - \c*45:.33){}; }
		\foreach \x in {1, ..., 4}{\draw (b\x) to [bend left=30] (a\x);\draw (b\x)to [bend right=30] (a\x);}
		\draw (b1)--(b2) (a1)--(a2);
		\draw (b3)--(b4) (a3)--(a4);
		\node[right] at (0,.75){
			\begin{tikzpicture}[xscale = .2, yscale=-.2]
				\BOX{-3,-1}{} \BOX{-1,-1}{} \BOX{0,0}{}\BOX{2,0}{}  
				\bDOT{-2,0} \bDOT{2,0}
			\end{tikzpicture}};
		\node[right] at (0,-.75){
			\begin{tikzpicture}[xscale = .2, yscale=-.2]
				\BOX{-2.5,.5}{} \BOX{-1,-1}{} \BOX{0,0}{}\BOX{1.5,-1.5}{}  
				\bDOT{-2.5,.5} \bDOT{2.5,-.5}
			\end{tikzpicture}};
	\end{scope}
	\coordinate (halfhalf) at (.5*\ONE,.5*\ONE);
	\begin{scope}[shift=(halfhalf), scale=.3]
		\node[WhiteCircle, inner sep=28pt] at (0,0){};		
		\node[B] at (0,0){};
		\foreach \x [count=\c from 1] in {1, ..., 4}{
			\node[V] (a\x) at (22.5 + \c*45:.66){}; 
			\node[V] (b\x) at (22.5 + \c*45:.33){}; }
		\foreach \x in {1,..., 4}{
			\draw (b\x) to [bend left = 30] (a\x);
			\draw (b\x) to [bend right = 30]   (a\x);}
		\draw(a1) to (a2) (b1) to  (b2);
		\draw(a3) to (a4) (b3) to  (b4);
		\node[] at (0,1){
			\begin{tikzpicture}[xscale = .2, yscale=-.2]
				\draw[fill=white]	 (0,0)--(.5,0)--(.5,-.5)--(1.5,-.5)--(1.5,.5)--(1,.5)--(1,1)--(0,1)--(0,0);
				\draw[thin, black!30] (.5,0)--(1,.5);
				 \BOX{-1,-1}{}
				 \BOX{1.5,.5}{}
			\end{tikzpicture}};
		\node[] at (-1,0){
			\begin{tikzpicture}[xscale = .2, yscale=-.2]
				\draw[fill=white]	 (0,0)--(.5,0)--(.5,-.5)--(1.5,-.5)--(1.5,.5)--(1,.5)--(1,1)--(0,1)--(0,0);
				\draw[thin, black!30] (.5,0)--(1,.5);
				 \BOX{-.5,-1.5}{}
				 \BOX{1,1}{}
			\end{tikzpicture}};
	\end{scope}	
	\coordinate (r1r1) at (\ROne,\ROne);
	\begin{scope}[shift=(r1r1), scale=.4]
		\node[WhiteCircle, inner sep=37pt] at (0,0){};		
		\node[B] at (0,0){};
		\foreach \x [count=\c from 1] in {1, ..., 4}{
			\node[V] (a\x) at (22.5 + \c*45:.66){}; 
			\node[V] (b\x) at (22.5 + \c*45:.33){}; }
		\foreach \x in {1, 4}{
			\draw (b\x) to [bend left = 30]  (a\x);
			\draw (b\x) to [bend right = 30]  (a\x);}
		\draw(a1) to node[pos=.6, ]{\tiny$\blacktriangleright$} (a2)
			(b1) to node[pos=.6, sloped]{\tiny$\blacktriangleright$} (a2);
		\draw (a2) to (a3);
		\draw (b2) to  (b3);
		\draw (a4)  to [bend left=20] 
			node[pos=.5, sloped]{\tiny$\blacktriangleright$} (b3)
		 (b4) to 	node[pos=.4, sloped]{\tiny$\blacktriangleright$} (b3);
		\node at  (90-26:1){
			\begin{tikzpicture}[xscale = .2, yscale=-.2]
				\BOX{-.5,-1.5}{} \BOX{.5,-.5}{}
				\BOX{-1,1}{} \BOX{-2,0}{}
				\bDOT{-.5,-1.5} \bDOT{0,2}
			\end{tikzpicture}};
		\node[] at (180+26:1){
			\begin{tikzpicture}[xscale = .2, yscale=-.2]
				\BOX{-.5,-1.5}{} \BOX{.5,-.5}{}
				\BOX{-1,1}{} \BOX{-2,0}{}
				\bDOT{1.5,.5} \bDOT{-2,0}
			\end{tikzpicture}};
		\node[] at (-.75,.75){
			\begin{tikzpicture}[xscale = .2, yscale=-.2]
				\BOX{-.5,-1.5}{} \BOX{.5,-.5}{}
				\BOX{-1,1}{} \BOX{-2,0}{}
				\bDOT{.5,-.5} \bDOT{-1,1}
			\end{tikzpicture}};
		\end{scope}	
	\coordinate (c1c1) at (.5*\ROne + .5*\RTwo,.5*\ROne + .5*\RTwo);
	\begin{scope}[shift=(c1c1), scale=.3]
		\node[WhiteCircle, inner sep=30pt] at (0,0){};		
		\node[B] at (0,0){};
		\foreach \x [count=\c from 1] in {1, ..., 4}{
			\node[V] (a\x) at (22.5 + \c*45:.66){}; 
			\node[V] (b\x) at (22.5 + \c*45:.33){}; }
		\foreach \x in {1, ..., 4}{\draw (b\x) to [bend left=30] (a\x);\draw (b\x)to [bend right=30] (a\x);}
		\draw (b1)--(b2) (a1)--(a2);
		\draw (b3)--(b2) (a3)--(a2);
		\draw (b3)--(b4) (a3)--(a4);
		\node at (-.75,.75){
			\begin{tikzpicture}[xscale = .2, yscale=-.2]
				\BOX{-1.5,1.5}{}\BOX{-2.5,.5}{} \BOX{-1,-1}{} \BOX{0,0}{}
			\end{tikzpicture}
		};
	\end{scope}
	\coordinate (r2r2) at (\RTwo,\RTwo);
	\begin{scope}[shift=(r2r2), scale=.4]
		\node[WhiteCircle, inner sep=37pt] at (0,0){};		
		\node[B] at (0,0){};
		\foreach \x [count=\c from 1] in {1, ..., 4}{
			\node[V] (a\x) at (22.5 + \c*45:.66){}; 
			\node[V] (b\x) at (22.5 + \c*45:.33){}; }
		\foreach \x in {1, 4}{
			\draw (b\x) to [bend left = 30]  (a\x);
			\draw (b\x) to [bend right = 30]  (a\x);}
		\draw(a1) to node[pos=.6, ]{\tiny$\blacktriangleright$} (a2)
			(b1) to node[pos=.6, sloped]{\tiny$\blacktriangleright$} (a2);
		\draw (a2) to (a3);
		\draw (b2) to  (b3);
		\draw (a4)  to [bend left=20] 
			node[pos=.5, sloped]{\tiny$\blacktriangleright$} (b3)
		 (b4) to 	node[pos=.4, sloped]{\tiny$\blacktriangleright$} (b3);
		\node at  (90-26:1){
			\begin{tikzpicture}[xscale = .2, yscale=-.2]
				\BOX{-.5,-1.5}{} \BOX{.5,-.5}{}
				\BOX{-1,1}{} \BOX{-2,0}{}
				\bDOT{-.5,-1.5} \bDOT{0,2}
			\end{tikzpicture}};
		\node[] at (180+26:1){
			\begin{tikzpicture}[xscale = .2, yscale=-.2]
				\BOX{-.5,-1.5}{} \BOX{.5,-.5}{}
				\BOX{-1,1}{} \BOX{-2,0}{}
				\bDOT{1.5,.5} \bDOT{-2,0}
			\end{tikzpicture}};
		\node[] at (-.75,.75){
			\begin{tikzpicture}[xscale = .2, yscale=-.2]
				\BOX{-.5,-1.5}{} \BOX{.5,-.5}{}
				\BOX{-1,1}{} \BOX{-2,0}{}
				\bDOT{.5,-.5} \bDOT{-1,1}
			\end{tikzpicture}};
		\end{scope}	
	\node[WhiteCircle, inner sep=10pt] at (0,0){};
	\node[B] at (0,0){};	
	\draw[->, bend left=10] (90*.75:.075) to +(.125,-.125);
	\coordinate (00a) at (.25*\ONE,-.25*\ONE);
	\begin{scope}[shift=(00a), scale=.4]
			\foreach \x in {1, ..., 8}{
			\node[V] (a\x) at (22.5 + \x*45:.3){};  }
			\foreach \x [count=\c from 2] in {1, ..., 7}{
			\draw[bend left] (a\x)to(a\c);\draw[bend right] (a\x)to(a\c);}
			\draw[bend left] (a8)to(a1);\draw[bend right] (a8)to(a1);
		\node[] at (0,-.75){
			\begin{tikzpicture}[xscale = .2, yscale=-.2]
			\BOX{-2,-2}{} \BOX{-1,-1}{}
				\BOX{0,0}{} \BOX{1,1}{}
			\end{tikzpicture}};
	\end{scope}
	\node[B] at (0,\ONE){}	;	
	\draw[thick] (0,-.2)--(0,\TOP) node[above] {$c_1=0$};
	\draw[thick] (-.2,-.2)--(\TOP,\TOP) node[above right] {$c_1=c_2$};
	\draw[thick] (-.2,0)--(2,0) node[right] {$c_2=0$};
	\draw[dotted, thick] (\TOP-\ONE,\TOP)--(-.2,-.2+\ONE) node[below left]{\small$c_2=c_1+1$};
	\draw[dotted, thick] (-.2,\ONE+.2)--(\ONE+.2,-.2) node[below right]{\small$c_2=-c_1+1$};
	\draw[dotted, thick] (\ROne+.2,\ROne)--(-.2,\ROne) node[left] {\small$c_2=r_1$};
	\draw[dotted, thick] (\RTwo+.2,\RTwo)--(-.2,\RTwo) node[left] {\small$c_2=r_2$};
	\draw[dotted, thick] (\ROne,\ROne-.2)--(\ROne,\TOP) node[above] {\small$c_1=r_1$};
	\draw[dotted, thick] (\RTwo,\RTwo-.2)--(\RTwo,\TOP) node[above] {\small$c_1=r_2$};
	\node at (3.2,1.25){
		\begin{tikzpicture}[scale=1.75]
			\filldraw[gray!15] (0,1)--(45:1)--(1,0)--(-45:1)--(0,-1);
			\draw[dotted, thick] (-.2,.2)--(-45:2) (-.2,-.2)--(45:2);
			\draw[very thick] (0,-2)--(0,2);
			\draw[black!25] (-.2,0)--(1.2,0);
			\node[B] at (0,0){};
			\node at  (112.5 - 45:1.1) {\scriptsize$1$};
			\node at  (112.5 - 2*45:1.1) {\scriptsize$s_1$};
			\node at  (112.5 - 3*45:1.1) {\scriptsize$s_1 s_0$};
			\node at  (112.5 - 4*45:1.1) {\scriptsize$s_1s_0s_1$}; 
			\foreach \x [count=\c from 1] in {1, ..., 4}{
				\node[V] (a\x) at (112.5 - \c*45:.66){}; 
				\node[V] (b\x) at (112.5 - \c*45:.33){}; }
			\node[V] at (b1) {};
			\node[V] at (a1) {};
			\node[V] at (a2) {};
			\node[V] at (a3) {};
			\foreach \x in {1, 4}{
				\draw (b\x) to [bend left = 30]  (a\x);
				\draw (b\x) to [bend right = 30]  (a\x);}
			\draw(a1) to node[pos=.6, sloped]{\tiny$\blacktriangleright$} (a2)
				(b1) to node[pos=.6, sloped]{\tiny$\blacktriangleright$} (a2);
			\draw (a2) to  (a3);
			\draw (b2) to  (b3);
			\draw (a4)  to [bend right=20] 
				node[pos=.5, sloped]{\tiny$\blacktriangleright$} (b3)
	 		(b4) to 	node[pos=.4, sloped]{\tiny$\blacktriangleright$} (b3);
			\node at  (112.5 - 45:1.5) {
			\begin{tikzpicture}[xscale = .3, yscale=-.3]
				\BOX{-2,-1}{-2} \BOX{-1,-1}{-1} \BOX{0,0}{1}\BOX{1,0}{2}
			\end{tikzpicture}};
			\node at  (112.5 - 2*45:1.5) {
			\begin{tikzpicture}[xscale = .3, yscale=-.3]  
				\BOX{-1,0}{-1} \BOX{-1,-1}{-2} \BOX{0,0}{2}\BOX{0,-1}{1} 
			\end{tikzpicture}};
			\node at  (112.5 - 3*45:1.5) {
			\begin{tikzpicture}[xscale = .3, yscale=-.3]  
				\BOX{-1,0}{1} \BOX{-1,-1}{-2} \BOX{0,0}{2}\BOX{0,-1}{-1}  
			\end{tikzpicture}};
			\node at  (112.5 - 4*45:1.6) {
			\begin{tikzpicture}[xscale = .3, yscale=-.3]
				\BOX{0,1}{2} \BOX{-1,-1}{-1} \BOX{0,0}{1}\BOX{-1,-2}{-2}   
			\end{tikzpicture}
			}; 
			\node at  (112.5 - 45:2) {\small$J = \emptyset$};
			\node at  (0:2) {\small$J = \{\vep_2 -\vep_1\}$};
			\node[] at  (112.5 - 4*45:2.2) {\small$J = \{\vep_2 \pm \vep_1\}$};
		\end{tikzpicture}};
\end{tikzpicture}}
\end{figure}
}

\subsection{Construction of the irreducible $H_2$-modules}\label{H2nonregirredconst}

The group $\cW_0$ acts on $(\CC^\times)^2$ as in \eqref{H2W0action} and
the \emph{central characters} are the $\cW_0$-orbits on $(\CC^\times)^2$.
The \emph{regular central characters} are the $\cW_0$-orbits of $\gamma = (\gamma_1, \gamma_2)\in (\CC^\times)^2$ 
that have $Z(\gamma) = \emptyset$, i.e.\ where the intertwining operators in \eqref{intertwinermaps} are defined.
Let $\CC[W]=\CC[W_1^{\pm1},W_2^{\pm1}] \subseteq H_2$.
By Kato's criterion (see \cite[Proposition 2.11b]{Ra2}),
for central characters $\gamma = (\gamma_1, \gamma_2)$ with $P(\gamma)=\emptyset$ there is 
a single irreducible module of dimension eight given by
$$L_{(\gamma_1,\gamma_2)} = \Ind_{\CC[W]}^H(\CC_{\gamma_1,\gamma_2}),
\qquad\hbox{where $\CC_{\gamma_1,\gamma_2} = \CC v$ with $W_1v = \gamma_1 v$ and $W_2v = \gamma_2 v$.}
$$ 
All irreducible modules with $Z(\gamma)=\emptyset$ 
are calibrated and can be constructed as in Theorem \ref{thm:calibconst}.

Representatives of the $\cW_0$-orbits of $\gamma = (\gamma_1, \gamma_2)\in (\CC^\times)^2$ that have 
$Z(\gamma)\ne \emptyset$ and $P(\gamma)\ne \emptyset$ are as follows:
\begin{equation}
\begin{array}{c|c|c}
\gamma=(\gamma_1,\gamma_2) &Z(\gamma) &P(\gamma) \\\hline
(t^{\frac12}, t^{\frac12}), (-t^{\frac12}, -t^{\frac12}) &\{\vep_2-\vep_1\} &\{\vep_2+\vep_1\} \\
(t_0^{\frac12}t_k^{\frac12},t_0^{\frac12}t_k^{\frac12}),
(-t_0^{-\frac12}t_k^{\frac12},-t_0^{-\frac12}t_k^{\frac12})
&\{\vep_2-\vep_1\} &\{\vep_1,\vep_2\} \\
(1, t), (-1, -t) &\{\vep_1\} &\{\vep_2-\vep_1, \vep_2+\vep_1\} \\
(\pm1, t_0^{\frac12}t_k^{\frac12}),
(\pm1, -t_0^{-\frac12}t_k^{\frac12}),
&\{\vep_1\} &\{\vep_2\}
\end{array}
\label{nonKatononreg}
\end{equation}
This classification is {valid under the genericity assumption on the parameters \eqref{eq:genericcond}}, 
which guarantees that none of these representatives are in the $\cW_0$-orbit of another.

The following analysis of modules of central character $\gamma=(\gamma_1,\gamma_2)$ in \eqref{nonKatononreg}
shows that no irreducible calibrated $H_2$-modules appear at these central characters.
As in \eqref{eq:r1andr2}, the values $r_1$ and $r_2$ are defined by
\begin{equation*}
-t^{r_1} = -t_k^{\frac12}t_0^{-\frac12} \quad\hbox{ and }\quad {-t}^{r_2}=t_k^{\frac12}t_0^{\frac12}.
\end{equation*}

\smallskip\noindent
\textbf{ Case $(\gamma_1, \gamma_2)=(-1,-t^{r_i})$ for $i=1$ or $2$:}  Let $H_{\{0\}}$ be the subalgebra of $H_2$ generated by 
$T_0, W_1^{\pm1}, W_2^{\pm1}$.  For each of $i=1$ and $i=2$, there are two irreducible modules 
of central character $\cc=(0,r_i)$:
$$L_{(0,r_i)}^{+} = \Ind_{H_{\{0\}}}^{H_2}(\CC_{(r_i,0)}),
\qquad\hbox{where $\CC_{(r_i,0)}=\CC v$ with}\quad
\begin{array}{l}
W_1 v = -t^{r_i}v, \\
W_2 v = -v, \\
T_0 v = t_0^{\frac12} v,
\end{array}
$$
and
$$L_{(0,r_i)}^{-} = \Ind_{H_{\{0\}}}^{H_2}(\CC_{(-r_i,0)}),
\qquad\hbox{where $\CC_{(-r_i,0)}=\CC v$ with}\quad
\begin{array}{l}
W_1 v = -t^{-r_i}v, \\
W_2 v = -v, \\
T_0 v = -t_0^{-\frac12} v.
\end{array}
$$
With $M=L_{(0,r_i)}^{+}$, the generalized weight space decomposition is
\begin{equation}
M = M_{(r_i, 0)}^{\mathrm{gen}} \oplus M_{(0,r_i)}^{\mathrm{gen}},
\qquad\hbox{with}\quad
\dim(M_{(r_i, 0)}^{\mathrm{gen}})=\dim(M_{(0,r_i)}^{\mathrm{gen}})=2.
\label{eq:weight-space0r2}
\end{equation}
The element $W_1W_2^{-1}$ acts on $M_{(r_i,0)}^{\mathrm{gen}}$ with eigenvalues $t^{r_i}$.
Since the parameters are generic (see \eqref{eq:genericcond}), $t^{r_i}\ne t^{\pm1}$ and thus, 
by \eqref{tauisq}, $\tau_1^2$ has no kernel.  Thus the intertwiner
$\tau_1\colon M_{(r_i, 0)}^{\mathrm{gen}}\to M_{(0,r_i)}^{\mathrm{gen}}$ is
invertible and $M=L_{(0,r_i)}^{+}$ is irreducible.
Replacing $r_i$ with $-r_i$ in \eqref{eq:weight-space0r2} yields the decomposition of $M=L_{(0,r_i)}^-$ analogously.

\smallskip\noindent
\textbf{ Case $(\gamma_1,\gamma_2)=(-t^{\frac12}, -t^{\frac12})$:}  Let $H_{\{1\}}$ be the subalgebra of $H_2$ generated by 
$T_1, W_1^{\pm1}, W_2^{\pm1}$.  There are two irreducible modules 
of central character $\cc=(\frac12,\frac12)$:
$$L_{(\frac12,\frac12)}^{+} = \Ind_{H_{\{1\}}}^{H_2}(\CC_{(-\frac12,\frac12)}),
\qquad\hbox{where $\CC_{(-\frac12,\frac12)}=\CC v$ with}\quad
\begin{array}{l}
W_1 v = -t^{-\frac12}v, \\
W_2 v = -t^{\frac12}v, \\
T_1 v = t^{\frac12} v,
\end{array}
$$
and
$$L_{(\frac12,\frac12)}^{-} = \Ind_{H_{\{1\}}}^{H_2}(\CC_{(\frac12,-\frac12)}),
\qquad\hbox{where $\CC_{(\frac12,-\frac12)}=\CC v$ with}\quad
\begin{array}{l}
W_1 v = -t^{\frac12}v, \\
W_2 v = -t^{-\frac12}v, \\
T_1 v = -t^{-\frac12} v.
\end{array}
$$
With $M=L_{(\frac12,\frac12)}^{+}$, the generalized weight space decomposition is
\begin{equation}
M = M_{(\frac12,\frac12)}^{\mathrm{gen}} \oplus M_{(-\frac12,\frac12)}^{\mathrm{gen}},
\qquad\hbox{with}\quad
\dim(M_{(\frac12,\frac12)}^{\mathrm{gen}})=\dim(M_{(-\frac12,\frac12)}^{\mathrm{gen}})=2.
\label{eq:weight-spacehalfhalf}
\end{equation}
The element $W_1^{-1}$ acts on $M_{(\frac12,\frac12)}^{\mathrm{gen}}$ with eigenvalues $-t^{\frac12}$.
Since the parameters are generic (see \eqref{eq:genericcond}), $-t^{\frac12}\not\in \{ -t^{\pm r_1}, -t^{\pm r_2} \}$ 
and thus, by \eqref{tau0sq}, $\tau_0^2$ has no kernel.  Thus the intertwiner
$\tau_0\colon M_{(\frac12,-\frac12)}^{\mathrm{gen}}\to M_{(-\frac12,-\frac12)}^{\mathrm{gen}}$ is invertible
and $M=L_{(\frac12,\frac12)}^{+}$ is irreducible.
Similarly, the structure of $M=L_{(\frac12,\frac12)}^-$ is given by 
swapping $\frac12$ and $-\frac12$ in \eqref{eq:weight-spacehalfhalf}.

\smallskip\noindent
\textbf{ Case $(\gamma_1,\gamma_2)=(-t^{r_i},-t^{r_i})$ for $i=1$ or $2$:}  Let $H_{\{0\}}$ be the subalgebra of $H_2$ generated by 
$T_0, W_1^{\pm1}, W_2^{\pm1}$.  For each of $i=1$ and $i=2$, there are two irreducible modules 
of central character $\cc=(r_i,r_i)$:
$$L_{(r_i,r_i)}^{+} = \Ind_{H_{\{0\}}}^{H_2}(\CC_{(r_i,-r_i)}),
	\qquad
	\hbox{where $\CC_{(r_i,-r_i)}=\CC v$ with}\quad
	\begin{array}{l}
		W_1 v = -t^{r_i}v, \\
		W_2 v = -t^{-r_i}v, \\
		T_0 v = t_0^{\frac12} v,
	\end{array}
$$
and
$$L_{(r_i,r_i)}^{-} = \Ind_{H_{\{0\}}}^{H_2}(\CC_{(-r_i,r_i)}),
	\qquad
	\hbox{where $\CC_{(-r_i,r_i)}=\CC v$ with}\quad
	\begin{array}{l}
		W_1 v = -t^{-r_i}v, \\
		W_2 v = -t^{r_i}v, \\
		T_0 v = -t_0^{-\frac12} v.
	\end{array}
$$
The irreducibility of $L_{(r_i,r_i)}^{+}$ and $L_{(r_i,r_i)}^{-}$ is not immediate. We will show that 
$M = L_{(r_i,r_i)}^{+}$ is irreducible; the irreducibility of  $L_{(r_i,r_i)}^{-}$ is proved analogously. 

The generalized weight space decomposition of $M = L_{(r_i,r_i)}^{+}$ is 
$$M = M_{(r_i,-r_i)}^{\mathrm{gen}}
\oplus
M_{(-r_i,r_i)}^{\mathrm{gen}}
\oplus M_{(r_i,r_i)}^{\mathrm{gen}}
\qquad\hbox{with}\quad
\begin{array}{l}
\dim(M_{(r_i,-r_i)}^{\mathrm{gen}})=\dim(M_{(-r_i,r_i)}^{\mathrm{gen}})=1, \\
\dim(M_{(r_i,r_i)}^{\mathrm{gen}})=2.
\end{array}$$
The element $W_1W_2^{-1}$ acts on $M_{(r_i,-r_i)}^{\mathrm{gen}}$ with eigenvalue $t^{r_i-(-r_i)}$.
Since the parameters are generic (see \eqref{eq:genericcond}), $t^{2r_i}\ne t^{\pm1}$ and thus, 
by \eqref{tauisq}, $\tau_1^2$ has no kernel.  Thus the intertwiner
$\tau_1\colon M_{(r_i,-r_i)}^{\mathrm{gen}}\to M_{(-r_i,r_i)}^{\mathrm{gen}}$
is invertible.
As a $H_{\{0\}}$-module, $M_{(r_i,r_i)}^{\mathrm{gen}}$ is irreducible (2-dimensional). 
So either $N=M_{(r_i,r_i)}^{\mathrm{gen}}$ is an $H_2$-submodule or $M$ is irreducible.  

For the purpose of deriving a contradiction, assume that $N=M_{(r_i,r_i)}^{\mathrm{gen}}$ is an $H_2$-submodule of $M$.
The space $N$ has a basis
$$\{n_\gamma, T_1n_\gamma\}
\qquad\hbox{with}\qquad
W_1 n_\gamma = -t^{r_i}n_\gamma, \quad\hbox{and}\quad W_2n_\gamma = -t^{r_i} n_\gamma.$$
By \eqref{eq:TiW},
$W_1^{-1}T_1n_\gamma = T_1W_2^{-1}n_\gamma+(t^{\frac12}-t^{-\frac12})W_1^{-1}n_\gamma 
= T_1(-t^{-r_i})n_\gamma+(t^{\frac12}-t^{-\frac12})(-t^{-r_i})n_\gamma$ and
the action of $W_1^{-1}$ and 
$W_1^{-2}$ on the basis $\{n_\gamma, T_1n_\gamma\}$ are given by the matrices
$$
\rho(W_1^{-1}) = (-t^{-r_i})\begin{pmatrix}
1 &(t^{\frac12}-t^{-\frac12}) \\
0 &1
\end{pmatrix}
\qquad\hbox{and}\qquad
\rho(W_1^{-2}) = \rho(W_1^{-1})^2 = t^{-2r_i}\begin{pmatrix}
1 &2(t^{\frac12}-t^{-\frac12}) \\
0 &1
\end{pmatrix}.
$$
Thus
\begin{align*}
\rho(1-W_1^{-2}) 
&= (1-t^{-2r_i})\begin{pmatrix} 1 &\frac{-2(t^{\frac12}-t^{-\frac12})t^{-2r_i}}{1-t^{-2r_i}} \\
0 & 1
\end{pmatrix}
\qquad\hbox{and} \\
\rho(1-W_1^{-2})^{-1} 
&= \frac{1}{(1-t^{-2r_i})}\begin{pmatrix} 1 &\frac{ 2(t^{\frac12}-t^{-\frac12})t^{-2r_i} }{1-t^{-2r_i} } \\
0 &1
\end{pmatrix}.
\end{align*}
Since $N$ is a submodule of $M$, we have
$\displaystyle{ 0=\tau_0 = T_0 - \frac{(t_0^{1/2}-t_0^{-1/2})+(t_k^{1/2}-t_k^{-1/2})W_1^{-1}}{1-W_1^{-2}} }$
(see \eqref{intertwinerdefs} for the formula for $\tau_0$),
and so
\begin{align*}
\rho(T_0) 
	&=((t_0^{1/2}-t_0^{-1/2})+(t_k^{1/2}-t_k^{-1/2})W_1^{-1})
		(1-W_1^{-2})^{-1}\\
&=\frac{(t_0^{1/2}-t_0^{-1/2}) + (t_k^{1/2}-t_k^{-1/2})(-t^{-r_i}) }{1-t^{-2r_i}} 
	\begin{pmatrix}
		1 
			&\frac{(t_k^{1/2}-t_k^{-1/2})(t^{\frac12}-t^{-\frac12})(-t^{-r_i})}
					{(t_0^{1/2}-t_0^{-1/2}) + (t_k^{1/2}-t_k^{-1/2})(-t^{-r_i}) } \\
		0 &1
	\end{pmatrix}
 	\begin{pmatrix} 
		1&\frac{2(t^{\frac12}-t^{-\frac12})t^{-2r_i}}{1-t^{-2r_i}} \\
		0 &1
	\end{pmatrix}\\
&=\frac{(t_0^{1/2}-t_0^{-1/2}) + (t_k^{1/2}-t_k^{-1/2})(-t^{-r_i}) }{1-t^{-2r_i}} 
	\begin{pmatrix}
		1 &
			\frac{t^{\frac12}-t^{-\frac12}}{-t^{r_i}}\left(\frac{2(-t^{-r_i})}{1-t^{-2r_i}}
			+ \frac{(t_k^{1/2}-t_k^{-1/2})}
					{(t_0^{1/2}-t_0^{-1/2}) + (t_k^{1/2}-t_k^{-1/2})(-t^{-r_i}})\right)\\
0 &1
\end{pmatrix}.
\end{align*}
Recall, from \eqref{eq:r1andr2}, that $-t^{r_i} = \pm t_k^{\pm\frac12} t_0^{\frac12}$, so that
\begin{align*}
\frac{(t_0^{1/2}-t_0^{-1/2}) + (t_k^{1/2}-t_k^{-1/2})(-t^{-r_i}) }{1-t^{-2r_i}}
&=\frac{(t_0^{1/2}-t_0^{-1/2}) + (t_k^{1/2}-t_k^{-1/2})(\pm t_k^{\mp\frac12}t_0^{-\frac12}) }{1-t_k^{\mp1}t_0^{-1}} 
= t_0^{\frac12}.
\end{align*}
The eigenvalues of $\rho(T_0)$ are $t_0^{\frac12}$
and, since $(T_0-t_0^{\frac12})(T_0+t_0^{-\frac12})=0$,
the Jordan blocks of $\rho(T_0)$ are of size 1,  forcing
 \begin{align*}
0
&= \frac{2(-t^{-r_i})}{1-t^{-2r_i}}
			+ \frac{(t_k^{1/2}-t_k^{-1/2})}
					{(t_0^{1/2}-t_0^{-1/2})+(t_k^{1/2}-t_k^{-1/2})(-t^{-r_i})} 
= \frac{2(-t^{-r_i})}{1-t^{-2r_i}}
			+ \frac{(t_k^{1/2}-t_k^{-1/2})}
					{(1-t^{-2r_i})t_0^{1/2}} \\
&= \frac{2(-t^{-r_i}) t_0^{\frac12} + (t_k^{1/2}-t_k^{-1/2})}{(1-t^{-2r_i})t_0^{1/2}} 
= \frac{2(\pm t_k^{\mp\frac12}t_0^{-\frac12}) t_0^{\frac12} + (t_k^{1/2}-t_k^{-1/2})}{(1-t^{-2r_i})t_0^{1/2}}
= \frac{\pm(t_k^{1/2}+t_k^{-1/2})}{(1-t^{-2r_i})t_0^{1/2}}.
\end{align*}
This is a contradiction since, by the generic condition on parameters in \eqref{eq:genericcond},  
$1\ne (-t^{r_1})(-t^{r_2}) = (-t_k^{\frac12}t_0^{-\frac12})(t_k^{\frac12}t_0^{\frac12}) = -(t_k^{\frac12})^2$.
Thus $N$ is not a submodule of $M$, and so $M$ is irreducible.

\smallskip\noindent
\textbf{ Case $(\gamma_1, \gamma_2)=(-1,-t)$:}  Let $H_{\{1\}}$ be the subalgebra of $H_2$ generated by 
$T_1, W_1^{\pm1}, W_2^{\pm1}$.  There are two irreducible modules 
of central character $\cc=(0,1)$:
$$L_{(0,1)}^{+} = \Ind_{H_{\{1\}}}^{H_2}(\CC_{(-1,0)}),
	\qquad
	\hbox{where $\CC_{(-1,0)}=\CC v$ with}\quad
	\begin{array}{l}
		W_1 v = -t^{-1}v, \\
		W_2 v = -v, \\
		T_1 v = t^{\frac12} v,
	\end{array}
$$
and
$$L_{(0,1)}^{-} = \Ind_{H_{\{1\}}}^{H_2}(\CC_{(1,0)}),
	\qquad
	\hbox{where $\CC_{(1,0)}=\CC v$ with}\quad
	\begin{array}{l}
		W_1 v = -tv, \\
		W_2 v = -v, \\
		T_1 v = -t^{-\frac12} v.
	\end{array}
$$
The irreducibility of $L_{(0,1)}^{+}$ and $L_{(0,1)}^-$ is not immediate. We will show that 
$M = L_{(0,1)}^{+}$ is irreducible; the irreducibility of  $L_{(0,1)}^{-}$ is proved analogously. 

The generalized weight space decomposition of $M = L_{(0,1)}^{+}$ is 
$$M = M_{(-1,0)}^{\mathrm{gen}}
	\oplus
	M_{(1,0)}^{\mathrm{gen}}
	\oplus M_{(0,1)}^{\mathrm{gen}}
		\qquad\hbox{with}\quad
	\begin{array}{l}
	\dim(M_{(-1,0)}^{\mathrm{gen}})=\dim(M_{(1,0)}^{\mathrm{gen}})=1,\\
	\dim(M_{(0,1)}^{\mathrm{gen}})=2. \end{array}
$$
The element $W_1^{-1}$ acts on $M_{(-1,0)}^{\mathrm{gen}}$ with eigenvalue $-t$.
Since the parameters are generic (see \eqref{eq:genericcond}), $-t\not\in \{ -t^{\pm r_1}, -t^{\pm r_2}\}$ and thus, 
by \eqref{tau0sq}, $\tau_0^2$ has no kernel.  Thus the intertwiner
$\tau_0\colon M_{(-1,0)}^{\mathrm{gen}}\to M_{(1,0)}^{\mathrm{gen}}$
is invertible.
Since 
$M_{(0,1)}^{\mathrm{gen}}$ is irreducible as a $H_{\{0\}}$-module, 
we have either $N=M_{(0,1)}^{\mathrm{gen}}$ is an $H_2$-submodule or $M$ is irreducible.

For the purpose of deriving a contradiction, assume that $N=M_{(0,1)}^{\mathrm{gen}}$ is an $H_2$-submodule of $M$.
The space $N$ has a basis
$$\{n_\gamma, T_0n_\gamma\}
	\qquad\hbox{with}\qquad
	W_1 n_\gamma = -n_\gamma, \quad\hbox{and}\quad W_2n_\gamma = -t n_\gamma.$$
By \eqref{T0pastW1} and \eqref{T0WcommuteB3},
\begin{align*}
	W_1W_2^{-1}T_0n_\gamma
	&= T_0W_1^{-1}W_2^{-1}n_\gamma
		+ ((t_0^{1/2}-t_0^{-1/2})+(t_k^{1/2}-t_k^{-1/2})W_1^{-1}) 
			\frac{W_1-W_1^{-1}}{1-W_1^{-2}} W_2^{-1} n_\gamma \\
	&= T_0 t^{-1} n_\gamma+ ((t_0^{1/2}-t_0^{-1/2})+(t_k^{1/2}-t_k^{-1/2})(-1))t^{-1}n_\gamma,
\end{align*}
and the action of $W_1W_2^{-1}$ on the basis $\{n_\gamma, T_0n_\gamma\}$ is given by the matrix
$$
\rho(W_1W_2^{-1}) = 
	\begin{pmatrix}
		t^{-1} &((t_0^{1/2}-t_0^{-1/2}) + (t_k^{1/2}-t_k^{-1/2})(-1)) t^{-1} \\
		0 &t^{-1}
	\end{pmatrix}.
$$
Thus
\begin{align*}
	\rho(1-W_1W_2^{-1}) 
		&= \begin{pmatrix} 
			1-t^{-1} &-((t_0^{1/2}-t_0^{-1/2}) + (t_k^{1/2}-t_k^{-1/2})(-1)) t^{-1} \\
			0 & 1-t^{-1}
			\end{pmatrix}\\
		&=(1-t^{-1})
			\begin{pmatrix} 
			1 &\displaystyle{-\frac{((t_0^{1/2}-t_0^{-1/2}) + (t_k^{1/2}-t_k^{-1/2})(-1)) t^{-1}}{1-t^{-1}} } \\
			0 & 1
			\end{pmatrix},
\end{align*}
and
\begin{align*}
	\rho(1-W_1W_2^{-1})^{-1} 
		&= \frac{1}{(1-t^{-1})}
			\begin{pmatrix} 
			1 &\displaystyle{ \frac{((t_0^{1/2}-t_0^{-1/2}) + (t_k^{1/2}-t_k^{-1/2})(-1)) t^{-1}}{1-t^{-1}} } \\
			0 &1
		\end{pmatrix}.
\end{align*}
If $N$ is a submodule of $M$ then $0=\tau_1 = T_1 - \frac{t^{\frac12} + t^{-\frac12}}{1-W_1W_2^{-1}}$
(see \eqref{intertwinerdefs} for the formula for $\tau_1$).
Thus
\begin{align*}
	\rho(T_1) 
		&=t^{\frac12}
			\begin{pmatrix} 
			1 &\displaystyle{ \frac{((t_0^{1/2}-t_0^{-1/2})+(t_k^{1/2}-t_k^{-1/2})(-1)) t^{-1}}{1-t^{-1}} } \\
			0 &1
			\end{pmatrix}.
\end{align*}
Since $(T_1-t^{\frac12})(T_1+t^{-\frac12})=0$ the Jordan blocks of $\rho(T_1)$
are of size one, forcing
$$0= (t_0^{1/2}-t_0^{-1/2})-(t_k^{1/2}-t_k^{-1/2}) = t_0^{-\frac12}(t_0^{1/2}+t_k^{-1/2})(t_0^{1/2}-t_k^{1/2}).$$
This is a quadratic equation in $t_0^{\frac12}$ with two solutions, 
$t_0^{\frac12} = t_k^{\frac12}$ and $t_0^{\frac12} = -t_k^{-\frac12}$.
This is a contradiction since, 
by the generic condition on parameters in \eqref{eq:genericcond}, $-t^{-r_1} = -t_0^{\frac12}t_k^{-\frac12} \ne -1$ 
and $-t^{r_2} = t_0^{\frac12}t_k^{\frac12}\ne -1$.  Thus
$N$ is not a submodule of $M$, and so $M$ is irreducible.

\section{Representations of $\cB_k^\ext$ in tensor space}
\label{sec:braids-on-tensor-space}

In this section we give a Schur-Weyl duality approach to the representations of the two boundary
Hecke algebras $H_k^{\mathrm{ext}}$.
More generally, in Theorem \ref{thm:BraidRep} we show that, for a quantum group or quasitriangular Hopf algebra
$U_q\fg$ and three $U_q\fg$-modules $M$, $N$ and $V$,
there is an action of the two boundary braid group $\cB_k^{\mathrm{ext}}$ on tensor space
$M\otimes N\otimes V^{\otimes k}$ that commutes with the $U_q\fg$-action.  This means that
there is a weak Schur-Weyl duality pairing between $U_q\fg$-modules and $\cB_k^{\mathrm{ext}}$-modules,
so that if $M\otimes N\otimes V^{\otimes k}$ is completely reducible as a $U_q\fg$-module then
$$M\otimes N\otimes V^{\otimes k} \cong \bigoplus_{\lambda} L(\lambda)\otimes B_k^\lambda
\qquad\hbox{as $(U_q\fg, \cB_k^{\mathrm{ext}})$-modules,}$$
where $L(\lambda)$ are irreducible $U_q\fg$-modules and $B_k^\lambda$ are $\cB_k^{\mathrm{ext}}$-modules.
In Section \ref{mainthm} we will explain that when $\fg=\fgl_n$ and $M$ and $N$ and $V$ are appropriately
chosen, the $\cB^{\mathrm{ext}}$-action provides an action of the two boundary Hecke algebra
$H_k^{\mathrm{ext}}$ (where the parameters depend on the choice of $M$ and $N$).  Our main theorem,
Theorem  \ref{thm:partitions-to-SLRs}, 
proves that the $H_k^{\mathrm{ext}}$-modules $B_k^\lambda$ that appear in tensor
space $M\otimes N\otimes V^{\otimes k}$ are irreducible, and identifies them in terms of the classification
of irreducible calibrated $H_k^{\mathrm{ext}}$-modules which is given in Theorem \ref{thm:calibconst}.

\subsection{Quantum groups and $R$-matrices}

Let $\fg$ be a finite-dimensional complex Lie algebra with a symmetric nondegenerate
$\ad$-invariant bilinear form, and let
$U_q\fg$ be the Drinfel'd-Jimbo quantum group corresponding to $\fg$. The quantum group $U_q\fg$ is a ribbon Hopf algebra with invertible $R$-matrix 
$$R=\sum_{R} R_1\otimes R_2
\qquad\hbox{in}\quad U_q\fg\otimes U_q\fg, \qquad \text{ and ribbon element } v = q^{-2\rho}u,$$ 
where $u = \sum_{R} S(R_2)R_1$ and $\rho$ is the staircase weight (see \cite[Corollary (2.15)]{LR}).
For $U_q\fg$-modules $M$ and $N$, the map
\begin{equation}\label{preRMNdefn}
\begin{tikzpicture}[xscale=.8, yscale=.6]
	\node at (-6,1.4) 
		{$\begin{matrix}
		\check R_{MN}\colon &N\otimes M &\longrightarrow &M\otimes N\\
		&n\otimes m &\longmapsto &\displaystyle{
		\sum_{R} R_2 m\otimes R_1 n }
		\end{matrix}
		$};
	\Under[1,1][2,2] 	\Over[1,2][2,1]
	\Nodes[1][2]	\Nodes[2][2]
	\node at (1.5,2.5) {$M \otimes N$};
	\node at (1.5,.5) {$N \otimes M$};
\end{tikzpicture}
\end{equation}
is a $U_q\fg$-module isomorphism.  The quasitriangularity of a ribbon Hopf algebra provides the  relations (see, for example, \cite[(2.9), (2.10), and (2.12)]{OR}),
$$\begin{array}{rl}
\begin{tikzpicture}[xscale=.8, yscale=.6]
	\Under[1,1][2,2] 	\Over[1,2][2,1]  \draw (1,2)--(1,4) (2,2) -- (2,4);
	\node[shape=rectangle,draw, fill=white] at (1,3) {$\varphi$};
	\Nodes[1][2]	\Nodes[4][2]
	\node at (1.5,4.5) {$M \otimes N$};
	\node at (1.5,.5) {$N \otimes M $};
\end{tikzpicture}
~~&
\begin{tikzpicture}[xscale=.8, yscale=.6]
	\node at (0,2.5) {$=$};
	\Under[1,3][2,4] 	\Over[1,4][2,3]  \draw (1,1)--(1,3) (2,1) -- (2,3);
	\node[shape=rectangle,draw, fill=white] at (2,2) {$\varphi$};
	\Nodes[1][2]	\Nodes[4][2]
	\node at (1.5,4.5) {$M \otimes N$};
	\node at (1.5,.5) {$N \otimes M $};
\end{tikzpicture} \\
( \varphi \otimes \id_N)
\check R_{MN}
&=
\check R_{MN}( \id_N \otimes \varphi),
\end{array} \qquad \text{ for any isomorphism $\varphi: M \to M$,}$$

\begin{align*}
\begin{tikzpicture}[xscale=.8, yscale=.6]
	\Under[1,1][2,2] 	\Over[1,2][2,1]	\draw (3,1)--(3,2);
	\Under[2,2][3,3] 	\Over[2,3][3,2]	\draw (1,2)--(1,3);
	\Under[1,3][2,4] 	\Over[1,4][2,3]	\draw (3,3)--(3,4);
	\Nodes[1][3]	\Nodes[4][3]
	\node at (2,4.5) {$M \otimes N \otimes V$};
	\node at (2,.5) {$V \otimes N \otimes M $};
\end{tikzpicture}
~~&
\begin{tikzpicture}[xscale=.8, yscale=.6]
\node at (0,2.5) {$=$};
	\Under[2,1][3,2] 	\Over[2,2][3,1]	\draw (1,1)--(1,2);
	\Under[1,2][2,3] 	\Over[1,3][2,2]	\draw (3,2)--(3,3);
	\Under[2,3][3,4] 	\Over[2,4][3,3]	\draw (1,3)--(1,4); 
	\Nodes[1][3]	\Nodes[4][3]
	\node at (2,4.5) {$M \otimes N \otimes V$};
	\node at (2,.5) {$V \otimes N \otimes M $};
\end{tikzpicture}
\\
(\check R_{MN}\otimes \id_V)
(\id_N\otimes \check R_{MV})
(\check R_{NV}\otimes \id_M)
&=
(\id_M\otimes \check R_{NV})
(\check R_{MV}\otimes \id_N)
(\id_V\otimes \check R_{MN}),
\end{align*}
\begin{align}
\begin{tikzpicture}[xscale=.8, yscale=.6]
	\Under[1,2][2,1]		\Over[1,1][2,2] 
	\Nodes[1][2]	\Nodes[2][2]
	\node at (2,2.5) {$M \otimes (N \otimes V)$};
	\node at (1,.5) {$(N \otimes V) \otimes M$};
\end{tikzpicture}
\begin{tikzpicture}[xscale=.8, yscale=.6]
\node at (0,2.5) {$=$};
	\Under[2,1][3,2] 	\Over[3,1][2,2]	\draw (1,1)--(1,2);
	\Under[1,2][2,3] 	\Over[2,2][1,3]	\draw (3,2)--(3,3); 
	\Nodes[1][3]	\Nodes[3][3]
	\node at (2,3.5) {$M \otimes N \otimes V$};
	\node at (2,.5) {$N \otimes V \otimes M $};
\end{tikzpicture}
\quad&\quad
\begin{tikzpicture}[xscale=.8, yscale=.6]
	\Under[1,1][2,2] 	\Over[1,2][2,1]	
	\Nodes[1][2]	\Nodes[2][2]
	\node at (1,2.5) {$(M \otimes N) \otimes V$};
	\node at (2,.5) {$V \otimes (M \otimes N) $};
\end{tikzpicture}
\begin{tikzpicture}[xscale=.8, yscale=.6]
\node at (0,2.5) {$=$};
	\Under[1,1][2,2] 	\Over[2,1][1,2]	\draw (3,1)--(3,2);
	\Under[2,2][3,3] 	\Over[3,2][2,3]	\draw (1,2)--(1,3); 
	\Nodes[1][3]	\Nodes[3][3]
	\node at (2,3.5) {$M \otimes N \otimes V$};
	\node at (2,.5) {$V \otimes M \otimes N $};
\end{tikzpicture}
\nonumber\\
(\check R_{M\otimes N, V})
=(\id_M\otimes \check R_{NV})
(\check R_{MV}\otimes \id_N)
\quad&\quad
(\check R_{M\otimes N, V})
=(\id_M\otimes \check R_{NV})
(\check R_{MV}\otimes \id_N).\label{Rcabling}
\end{align}

For a $U_q\fg$-module $M$
define
\begin{equation}\label{qcasimir}
\begin{matrix}
C_M\colon &M &\longrightarrow &M \\
&m &\longmapsto &vm 
\end{matrix}
\qquad\hbox{so that}\qquad
C_{M\otimes N} =
(\check R_{MN}\check R_{NM})^{-1}
(C_M\otimes C_N)
\end{equation}
(see \cite[Prop. 3.2]{Dr}).
Let $L(\lambda)$ denote the simple $U_q\fg$-module generated by a highest weight vector 
$v^+_\lambda$ of weight $\lambda$. Then 
\begin{equation}\label{qCasvalue}
C_{L(\lambda)} = q^{-\< \lambda,\lambda+2\rho\>} \id_{L(\lambda)}
\end{equation}
(see \cite[Prop. 2.14]{LR} or \cite[Prop. 5.1]{Dr}).
From \eqref{qCasvalue} and \eqref{qcasimir}, it follows that if $M=L(\mu)$ and 
$N=L(\nu)$ are finite-dimensional irreducible $U_q\fg$-modules of highest weights
$\mu$ and $\nu$ respectively, 
then $\check R_{MN}\check R_{NM}$ acts on the
$L(\lambda)$-isotypic component 
$L(\lambda)^{\oplus c_{\mu\nu}^\lambda}$
of the decomposition
\begin{equation}\label{eq:fulltwist} 
L(\mu)\otimes L(\nu) = \bigoplus_\lambda L(\lambda)^{\oplus c_{\mu\nu}^\lambda}
\qquad\hbox{by the scalar}\qquad
q^{\<\lambda,\lambda+2\rho\> 
-\<\mu,\mu+2\rho\> 
-\<\nu,\nu+2\rho\>}.
\end{equation}

\begin{prop} \label{thm:BraidRep}  Let $\fg$ be a finite-dimensional complex Lie algebra  with a symmetric nondegenerate $\ad$-invariant bilinear form, 
let $U_q\fg$ be the corresponding Drinfeld-Jimbo quantum group, and let
$\cZ=Z(U_\fg)$ be the center of $U$.  Let $M$, $N$, and $V$ be $U_\fg$-modules.
Then $M\otimes N \otimes V^{\otimes k}$ is a $\cZ \cB^\ext_k$-module with
action given by
\begin{equation}\label{eq:braidaction}
\begin{matrix}
\Phi\colon &\cZ \cB^\ext_k &\longrightarrow
&\End_{U_q\fg}(M\otimes N\otimes V^{\otimes k}) \\
&T_i &\longmapsto &  \check{R}_i, &\qquad &\hbox{for $i=1,\ldots, k-1$,}\\
&X_1 &\longmapsto &\check{R}_M^2, \\
&Y_1 &\longmapsto &  \check{R}_N^2, \\
&Z_1 &\longmapsto &  \check{R}_0^2, \\
&P &\longmapsto &  (\check R_{MN}\check R_{NM})\otimes \id_V^{\otimes (k)},
\end{matrix}
\end{equation}
where 
$$\check R_0^2 = (\check R_{(M\otimes N)V}\check R_{V(M\otimes N)})\otimes \id_V^{\otimes (k-1)},
\qquad
\check R_i = \id_M\otimes \id_V^{\otimes (i-1)}
\otimes \check R_{VV}\otimes \id_V^{\otimes (k-i-1)}$$
for $i=1,\ldots, k-1$,
$$\check R_M^2 = ((\id_M \otimes \check R_{NV})((\check R_{MV} \check R_{VM})\otimes \id_N)(\id_M \otimes \check R_{NV}^{-1}))\otimes \id_V^{\otimes k-1}, \quad\hbox{and} $$
$$\check R_N^2 = \id_M \otimes (\check R_{NV}\check R_{VN})\otimes \id_V^{\otimes (k-1)},
$$
with $\check R_{MV}$ as in \eqref{preRMNdefn}.
Moreover, this $\cZ\cB^\ext_k$ action commutes with the $U_q\fg$-action on $M\otimes N\otimes V^{\otimes k}$.
\end{prop}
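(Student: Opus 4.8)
The plan is to realize each operator in \eqref{eq:braidaction} as the value of the Reshetikhin--Turaev functor for the ribbon Hopf algebra $U_q\fg$ on the corresponding braid diagram in the two-pole box used in Section~2 to define $\cB_k^\ext$. Color the $M$-pole by $M$, the $N$-pole by $N$, and the $k$ strands by $V$. Applying the functor to the pictures \eqref{LRpics}, \eqref{DefnTiY1}, \eqref{DefnX1}, \eqref{BraidMurphy}, \eqref{DefnPhalf} sends $\bar T_i\mapsto \check R_i$, $\sigma\bar T_0\sigma^{-1}=Y_1\mapsto \check R_N^2$, $X_1\mapsto\check R_M^2$ (the conjugating factors $\id_M\otimes\check R_{NV}^{\pm1}$ are exactly the pictured ``passes behind the $N$-pole''), $Z_1\mapsto \check R_0^2$ (via the cabling identities \eqref{Rcabling} applied to the cable $A=M\otimes N$), and $P=(P^{1/2})^2\mapsto\check R_{MN}\check R_{NM}$; the pictorial identities \eqref{PYP} and \eqref{PXP} become operator identities under the functor. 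Since, by the discussion following \eqref{eq:Z0iscentral}, $\cB_k^\ext$ is precisely the group of such colored diagrams modulo planar isotopy, monoidal functoriality of the Reshetikhin--Turaev construction immediately gives that $\Phi$ is a well-defined group homomorphism on the $\cB_k^\ext$-part. Finally $\cZ=Z(U_q\fg)$ acts on $M\otimes N\otimes V^{\otimes k}$ through the $U_q\fg$-module structure; since each $\check R_{AB}$ is a $U_q\fg$-module isomorphism \eqref{preRMNdefn}, so is every composite of them, hence $\Phi(\cB_k^\ext)\subseteq\End_{U_q\fg}(M\otimes N\otimes V^{\otimes k})$ commutes with $\Phi(\cZ)$, and $\Phi$ extends to $\cZ\cB_k^\ext$. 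This last point also proves the ``Moreover'' clause.

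If one prefers not to invoke the Reshetikhin--Turaev functor as a black box, one verifies directly that the operators $\check R_i,\ \check R_M^2,\ \check R_N^2,\ \check R_0^2,\ \check R_{MN}\check R_{NM}$ satisfy the relations of presentation~(a) of Theorem~\ref{thm:BraidPres} together with \eqref{Pcomm1} and \eqref{Pcomm2}. The braid relations among the $\check R_i$ are the quantum Yang--Baxter equation for $\check R_{VV}$; the commutations of $\check R_M^2,\check R_N^2,\check R_0^2$ with $\check R_i$ for $i\ge 2$, and of $\check R_{MN}\check R_{NM}$ with all $\check R_i$, hold because those operators act on disjoint tensor legs; the identity $\Phi(Z_1)=\Phi(X_1)\Phi(Y_1)$, i.e.\ \eqref{eq:BraidRels-A}, is exactly a cabling identity \eqref{Rcabling}; and the conjugation relations \eqref{Pcomm1}, \eqref{Pcomm2} reduce, after the algebra of Theorem~\ref{thm:BraidPres}, to the two half-twist identities $P^{1/2}Y_1P^{-1/2}=Y_1^{-1}X_1Y_1$ and $P^{1/2}X_1P^{-1/2}=Y_1$, which are the operator forms of \eqref{PYP} and \eqref{PXP} proved by naturality of $\check R$. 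This leaves only the ``boundary'' braid relations: that $\check R_N^2$ (resp. $\check R_M^2$, resp. $\check R_0^2$) together with $\check R_1$ satisfies the length-four relation $G\check R_1 G\check R_1=\check R_1 G\check R_1 G$, together with $(\check R_1\check R_M^2\check R_1^{-1})\check R_N^2=\check R_N^2(\check R_1\check R_M^2\check R_1^{-1})$, which is relation \eqref{eq:BraidRels-B}.

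The main obstacle is precisely these boundary relations. They assert that the monodromy $\check R_{AV}\check R_{VA}$ of a pole $A$ with the first strand, together with $\check R_{VV}$ on adjacent strands, generates a representation of the affine braid group of type $C$ --- equivalently, of the braid group of the annulus with one marked point. The proof is a sequence of applications of naturality of the braiding to ``slide the loop around the pole past the crossing'', which is exactly what the string-diagram computations in the proof of Theorem~\ref{thm:BraidPres} record (one simply reads those pictures through the functor); the mixed case with two poles uses in addition the cabling identities \eqref{Rcabling} for $A=M\otimes N$. Alternatively, the relations for $\check R_N^2$ and $\check R_1$ are the one-boundary (cylinder braid group) case treated in \cite{OR}, and the two-pole refinements are obtained from it by the same conjugation by $\id_M\otimes\check R_{NV}$ that defines $\check R_M^2$. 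The ribbon structure enters only through \eqref{qcasimir} and \eqref{qCasvalue}, which are not needed for the relations above but explain why $\Phi$ of the full twist $Z_0=PZ_1\cdots Z_k$ lies in $\Phi(\cZ)$, consistent with \eqref{eq:Z0iscentral}.
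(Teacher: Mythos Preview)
Your proposal is correct and, in its second paragraph, is essentially the paper's own proof: the paper verifies directly that the operators $\check R_i,\check R_M^2,\check R_N^2,\check R_0^2,\check R_{MN}\check R_{NM}$ satisfy the relations of presentation~(a) of Theorem~\ref{thm:BraidPres} together with \eqref{Pcomm1}--\eqref{Pcomm2}, using the Yang--Baxter equation for the $T_iT_{i+1}T_i$ relation, a single string-diagram computation (drawn uniformly for a pole $L=M,N,M\otimes N$) for the length-four boundary relation $\Phi(A)\Phi(T_1)\Phi(A)\Phi(T_1)=\Phi(T_1)\Phi(A)\Phi(T_1)\Phi(A)$, the cabling identity \eqref{Rcabling} for $\Phi(Z_1)=\Phi(X_1)\Phi(Y_1)$, and naturality of $\check R$ (in the spirit of \eqref{PYP}--\eqref{PXP}) for the $P$-relations.

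Your first paragraph, invoking the Reshetikhin--Turaev functor as a black box, is a legitimate and more conceptual packaging that the paper does not use explicitly; it has the advantage of making all the relation-checking automatic once one accepts that $\cB_k^\ext$ is the group of colored tangles in the two-pole box. One small caution: your reduction of \eqref{Pcomm1}--\eqref{Pcomm2} to the half-twist identities \eqref{PYP}--\eqref{PXP} requires interpreting $\Phi(P^{1/2})$ as the non-endomorphism $\check R_{NM}\colon M\otimes N\otimes V^{\otimes k}\to N\otimes M\otimes V^{\otimes k}$, so those identities live as morphism equalities rather than endomorphism equalities; this is fine, but you should say so. The paper sidesteps this by checking the $P$-relations directly from naturality of $\check R$ ``by the same braid computation by which the identities \eqref{PYP} were derived.''
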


\begin{proof}
This proof follows the proof of \cite[Prop.\ 3.1]{OR}, checking that the images of the generators $T_i$, $X_1$, $Y_1$, and $Z_1$ 
under the map $\Phi$ satisfy the
relations of presentation (a) of the two boundary braid group in Theorem \ref{thm:BraidPres}, as well as relations \eqref{Pcomm1} and \eqref{Pcomm2} for the extended two boundary braid group.
%
For $i\in \{1, \ldots, k-2\}$,
$$
{\def\XUNIT{.4} 	\def\YUNIT{.4}
\Phi(T_i)\Phi(T_{i+1})\Phi(T_i) = \check R_i\check R_{i+1}\check R_i =
\begin{matrix}\begin{tikzpicture}[xscale=\XUNIT, yscale=\XUNIT]
	\Under[1,1][2,2] 	\Over[1,2][2,1]	\draw (3,1)--(3,2);
	\Under[2,2][3,3] 	\Over[2,3][3,2]	\draw (1,2)--(1,3);
	\Under[1,3][2,4] 	\Over[1,4][2,3]	\draw (3,3)--(3,4);
	\Nodes[1][3]	\Nodes[4][3]
\end{tikzpicture}\end{matrix}
=
\begin{matrix}
\begin{tikzpicture}[xscale=\XUNIT, yscale=\XUNIT]
	\Under[2,1][3,2] 	\Over[2,2][3,1]	\draw (1,1)--(1,2);
	\Under[1,2][2,3] 	\Over[1,3][2,2]	\draw (3,2)--(3,3);
	\Under[2,3][3,4] 	\Over[2,4][3,3]	\draw (1,3)--(1,4); 
	\Nodes[1][3]	\Nodes[4][3]
\end{tikzpicture}\end{matrix}
= \check R_{i+1}\check R_i \check R_{i+1}=\Phi(T_{i+1})\Phi(T_i)\Phi(T_{i+1}).
}$$
Using the notation $\check R_{M\otimes N}$ for the endomorphism $\check R_0$, we have that, for $L = M, N,$ or $M \otimes N$,
$$
{\def\XUNIT{.3} 	\def\YUNIT{.35}
\check R_L^2\check R_1\check R_L^2\check R_1=
\begin{matrix}\begin{tikzpicture}[xscale=\XUNIT, yscale=\YUNIT]
	\Under[1,1][2,2] 	\Over[1,2][2,1]  \draw (3,1)--(3,2);
	\pgftransformyshift{-3cm};
		\Under[1,1][2,2] 	\Over[1,2][2,1]	\draw (3,1)--(3,2);
		\Under[2,2][3,3] 	\Over[2,3][3,2]	\draw (1,2)--(1,3);
		\Under[1,3][2,4] 	\Over[1,4][2,3]	\draw (3,3)--(3,4);
	\pgftransformyshift{-3cm};
		\Under[2,2][3,3] 	\Over[2,3][3,2]	\draw (1,2)--(1,3);
		\Under[1,3][2,4] 	\Over[1,4][2,3]	\draw (3,3)--(3,4);
	\Nodes[2][3]	\Nodes[8][3]
\end{tikzpicture}\end{matrix}
=
\begin{matrix}\begin{tikzpicture}[xscale=\XUNIT, yscale=\YUNIT]
	\Under[1,1][2,2] 	\Over[1,2][2,1]  \draw (3,1)--(3,2);
	\pgftransformyshift{-3cm};
	\Under[2,1][3,2] 	\Over[2,2][3,1]	\draw (1,1)--(1,2);
	\Under[1,2][2,3] 	\Over[1,3][2,2]	\draw (3,2)--(3,3);
	\Under[2,3][3,4] 	\Over[2,4][3,3]	\draw (1,3)--(1,4); 
	\pgftransformyshift{-3cm};
	\Under[2,2][3,3] 	\Over[2,3][3,2]	\draw (1,2)--(1,3);
	\Under[1,3][2,4] 	\Over[1,4][2,3]	\draw (3,3)--(3,4);
\draw[gray] (.7,4)--(3.3,4) (.7,7)--(3.3,7); 
\Nodes[2][3]	\Nodes[8][3]
\end{tikzpicture}\end{matrix}
=
\begin{matrix}\begin{tikzpicture}[xscale=\XUNIT, yscale=\YUNIT]
	\Under[2,1][3,2] 	\Over[2,2][3,1]	\draw (1,1)--(1,2);
	\Under[1,2][2,3] 	\Over[1,3][2,2]	\draw (3,2)--(3,3);
	\Under[2,3][3,4] 	\Over[2,4][3,3]	\draw (1,3)--(1,4); 
	\pgftransformyshift{-3cm};
	\Under[1,1][2,2] 	\Over[1,2][2,1]	\draw (3,1)--(3,2);
	\Under[2,2][3,3] 	\Over[2,3][3,2]	\draw (1,2)--(1,3);
	\Under[1,3][2,4] 	\Over[1,4][2,3]	\draw (3,3)--(3,4);
	\draw[gray] (.7,4)--(3.3,4);
	\Nodes[1][3]	\Nodes[7][3]
\end{tikzpicture}\end{matrix}
=
\begin{matrix}\begin{tikzpicture}[xscale=\XUNIT, yscale=\YUNIT]
	\Under[1,2][2,3] 	\Over[1,3][2,2]	\draw (3,2)--(3,3);
	\Under[2,3][3,4] 	\Over[2,4][3,3]	\draw (1,3)--(1,4); 
	\pgftransformyshift{-2cm};
	\Under[1,1][2,2] 	\Over[1,2][2,1]	\draw (3,1)--(3,2);
	\Under[2,2][3,3] 	\Over[2,3][3,2]	\draw (1,2)--(1,3);
	\Under[1,3][2,4] 	\Over[1,4][2,3]	\draw (3,3)--(3,4);
	\pgftransformyshift{-1cm};
	\Under[1,1][2,2] 	\Over[1,2][2,1]	\draw (3,1)--(3,2);
	\draw[gray] (.7,2)--(3.3,2) (.7,5)--(3.3,5);
	\Nodes[1][3]	\Nodes[7][3]
\end{tikzpicture}\end{matrix}
= \check R_1\check R_L^2\check R_1\check R_L^2,
}$$
which establishes 
$$\Phi(A)\Phi(T_1)\Phi(A)\Phi(T_1)=\Phi(T_1)\Phi(A)\Phi(T_1)\Phi(A)
\qquad\hbox{for $A= X_1, Y_1$, and $Z_1$, respectively.}$$
The formula
$$\Phi(Z_1)=\check R_0^2 = \check R_{M}^2\check R_{N}^2=\Phi(X_1)\Phi(Y_1)$$
is a consequence of the third set of relations (cabling relations) in \eqref{Rcabling}.
Finally, the relations
$$\Phi(P)\Phi(Y_1)\Phi(P)=\Phi(Z_1^{-1})\Phi(Y_1)\Phi(Z_1)
\qquad\hbox{and}\qquad
\Phi(P)\Phi(X_1)\Phi(P)=\Phi(Z_1^{-1})\Phi(X_1)\Phi(Z_1)$$
follow from the first and second sets of relations for $\check R$-matrices in \eqref{Rcabling} by the same braid computation by
which the identities \eqref{PYP} were derived.  The remainder of the relations (commuting generators) follow directly from 
the definitions of $\Phi(T_i)$, $\Phi(X_1)$, $\Phi(Y_1)$, $\Phi(Z_1)$, and $\Phi(P)$.
\end{proof}

\subsection{The $\cB_k^{\mathrm{ext}}$-modules $B_k^\lambda$} 
Assume that $M$, $N$, and $V$ are finite-dimensional $U_q\fg$-modules and that $\omega$ is the highest weight of $V$ so that
$$V=L(\omega)
\qquad\hbox{is irreducible of highest weight $\omega$.}
$$
Let $\cP^{(j)}$ be an index set for the irreducible $U_q\fg$-modules that appear in $M\otimes N\otimes V^{\otimes j}$ 
and let $\cP^{(-1)}$ be an index set for the irreducible $U_q\fg$-modules in $M$.
The \emph{Bratteli diagram} for the sequence of $U_q\fg$-modules
\begin{equation}
M, \quad M\otimes N, \quad M\otimes N\otimes V,\quad
M\otimes N\otimes V\otimes V ,\quad \cdots
\label{Brattelidefn}
\end{equation}
is the graph with
\begin{enumerate}[\quad]
\item vertices on level $j$ labeled by $\mu\in \cP^{(j)}$, for $j\in \ZZ_{\ge -1}$,
\item $m_{\mu\lambda}$ edges $\mu\to \lambda$ for $\mu\in \cP^{(j)}$ and $\lambda\in \cP^{(j+1)}$, and
where $L(\mu)\otimes V \cong \bigoplus_{\lambda\in \cP^{(j+1)}} L(\lambda)^{\oplus m_{\mu\lambda}}$,
\item  each edge $\mu\to \lambda$ labeled with 
$\frac12(\langle \lambda, \lambda+2\rho \rangle - \langle \omega, \omega+2\rho \rangle - \langle \mu, \mu+2\rho \rangle)$.
\end{enumerate}
A specific example in the case where $\fg=\fgl_n$ is given in Figure \ref{fig:Brat_diagram_with_contents}.

If $M$ and $N$ are finite-dimensional then $M\otimes N \otimes V^{\otimes k}$ is completely decomposable as a $U_q\fg$-module. If 
$B_k^\lambda$ is the space of highest weight vectors of weight $\lambda$ in $M\otimes N\otimes V^{\otimes k}$, then
\begin{equation}
M\otimes N\otimes V^{\otimes k} \cong \bigoplus_{\lambda\in \cP^{(k)}} L(\lambda)\otimes B_k^\lambda,
\qquad\hbox{as $(U_q\fg, \cB_k^{\mathrm{ext}})$-bimodules.}
\label{Bklambdadefn}
\end{equation}
The $\cB_k^{\mathrm{ext}}$-modules $B_k^\lambda$ are not necessarily irreducible and not necessarily nonisomorphic, 
though they will be in the (mostly rare but very important) settings where 
$\Phi(\CC \cB_k^{\mathrm{ext}}) = \End_{U_q\fg}(M\otimes N\otimes V^{\otimes k})$.

Recall from \eqref{BraidMurphy} that
$$Z_i = T_{i-1}\cdots T_1Z_1T_1\cdots T_{i-1}
\qquad\hbox{for $i=1,\ldots, k$.}$$
The following proposition shows that, as operators on $B_k^\lambda$, the $Z_i$ are simultaneously diagonalizable and
have eigenvalues determined by the edges on the Bratteli diagram.  The proof follows the same
schematic that is used, for example, in the proof of \cite[Proposition 3.2]{OR}.

\begin{prop}\label{prop:calibrated} Assume $M$, $N$, and $V$ are finite-dimensional $U_q\fg$ modules
with $V$ irreducible.  For $\lambda \in \cP^{(k)}$, let
$B_k^\lambda$ be the $\cB^{\mathrm{ext}}_k$-module in \eqref{Bklambdadefn}
and let
$$\cT_k^\lambda 
= \{ \hbox{paths $S = (S^{(-1)}\stackrel{e_0}{\to} S^{(0)} \stackrel{e_1}{\to} \ldots \stackrel{e_k}{\to} S^{(k)}=\lambda)$
in the Bratteli diagram} \}.
$$
Then
$$B_k^\lambda\quad\hbox{has a basis}\qquad \{ v_S\ |\ S\in \cT_k\}$$
of simultaneous eigenvectors for the action of $P, Z_1, \ldots, Z_k$,
with
$$Pv_S = q^{2e_0}v_S
\qquad\hbox{and}\qquad
Z_i v_S= q^{2e_i}v_S,\quad\hbox{for $i=1,\ldots, k$,}
$$
so that the eigenvalues of $P$ and $Z_1, \ldots, Z_k$ on $v_S$ are determined by the labels on the edges of the path $S$.
\end{prop}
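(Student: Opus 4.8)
The plan is to induct on $k$, using that under the homomorphism $\Phi$ of Proposition~\ref{thm:BraidRep} the elements $P,Z_1,\dots,Z_k$ are represented by $\check R$-matrix full twists, together with the branching of $B_k^\lambda$ recorded by the Bratteli diagram. Write $M_j=M\otimes N\otimes V^{\otimes j}$ (so $M_{-1}=M$ and $M_0=M\otimes N$), and for $\mu\in\cP^{(j)}$ let $B_j^\mu$ be the highest weight space of weight $\mu$ in $M_j$, so that $M_j\cong\bigoplus_\mu L(\mu)\otimes B_j^\mu$ as a $(U_q\fg,\cB_j^\ext)$-bimodule. By Proposition~\ref{thm:BraidRep} the operators $\Phi(P),\Phi(Z_1),\dots,\Phi(Z_k)$ commute with the $U_q\fg$-action on $M_k$, so they preserve the $L(\lambda)$-isotypic component and hence restrict to $B_k^\lambda$. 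The inductive input is the isomorphism of $\cB_{k-1}^\ext$-modules
$$B_k^\lambda\ \cong\ \bigoplus_{\mu\in\cP^{(k-1)}}U_{\mu\lambda}\otimes B_{k-1}^\mu,\qquad U_{\mu\lambda}:=\Hom_{U_q\fg}\bigl(L(\lambda),\,L(\mu)\otimes V\bigr),$$
obtained by tensoring $M_{k-1}\cong\bigoplus_\mu L(\mu)\otimes B_{k-1}^\mu$ with $V$ and regrouping by the decompositions of the $L(\mu)\otimes V$. Here $\cB_{k-1}^\ext\subseteq\cB_k^\ext$ contains $P,Z_1,\dots,Z_{k-1}$, whose images act on each summand as $\id_{U_{\mu\lambda}}\otimes(\,\cdot\,)$ by $U_q\fg$-equivariance; so choosing a basis of $U_{\mu\lambda}$ labelled by the $m_{\mu\lambda}$ edges $\mu\to\lambda$ and splicing it onto the inductively supplied eigenbasis $\{v_{S'}\mid S'\in\cT_{k-1}^\mu\}$ of $B_{k-1}^\mu$ gives a basis $\{v_S\mid S\in\cT_k^\lambda\}$ of $B_k^\lambda$, namely $v_S=\phi_{e_k}\otimes v_{S'}$ for $S=(S'\xrightarrow{\,e_k\,}\lambda)$, which is at once a simultaneous eigenbasis for $P,Z_1,\dots,Z_{k-1}$ with eigenvalues $q^{2e_0},q^{2e_1},\dots,q^{2e_{k-1}}$.

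The heart of the argument is the action of the one remaining operator $Z_k$. The key step, which I expect to do the real work, is the identification
$$\Phi(Z_i)=\bigl(\check R_{M_{i-1},V}\,\check R_{V,M_{i-1}}\bigr)\otimes\id_V^{\otimes(k-i)}\qquad\text{for }1\le i\le k,$$
i.e.\ $\Phi(Z_i)$ is the full twist of the $i$th copy of $V$ around the combined pole $M\otimes N$ together with the $i-1$ copies of $V$ to its left. For $i=1$ this is the definition of $\Phi(Z_1)$ in \eqref{eq:braidaction}; for $i>1$ I would induct on $i$ using the relation $Z_i=T_{i-1}Z_{i-1}T_{i-1}$ (a reindexing of~\eqref{eq:BraidRels-C4}): since $\Phi(T_{i-1})=\check R_{i-1}$ braids the copies of $V$ in positions $i-1$ and $i$, the cabling relations~\eqref{Rcabling} rewrite $\check R_{i-1}\,\Phi(Z_{i-1})\,\check R_{i-1}$ as the asserted full twist around $M_{i-1}=M\otimes N\otimes V^{\otimes(i-1)}$. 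Granting this, naturality of $\check R$ along the inclusion $L(\mu)\hookrightarrow M_{k-1}$, with $B_{k-1}^\mu$ an inert multiplicity space, shows $\Phi(Z_k)$ acts on $L(\mu)\otimes V\otimes B_{k-1}^\mu$ as $(\check R_{L(\mu),V}\check R_{V,L(\mu)})\otimes\id_{B_{k-1}^\mu}$; and by \eqref{eq:fulltwist} the full twist $\check R_{L(\mu),V}\check R_{V,L(\mu)}$ acts on the $L(\lambda)$-isotypic component $L(\lambda)\otimes U_{\mu\lambda}$ of $L(\mu)\otimes V$ by the scalar $q^{\langle\lambda,\lambda+2\rho\rangle-\langle\mu,\mu+2\rho\rangle-\langle\omega,\omega+2\rho\rangle}=q^{2e_k}$. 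Restricting to highest weight spaces, $Z_kv_S=q^{2e_k}v_S$, which closes the induction.

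For the base case I would take $k=0$: there $B_0^\lambda$ is the highest weight space of weight $\lambda$ in $M\otimes N$, the only operator is $P$, and $\Phi(P)=\check R_{MN}\check R_{NM}$ is the full twist of $M$ with $N$, so the same naturality-plus-\eqref{eq:fulltwist} computation (with $N$ now in the role of $V$ and the level $-1$-to-$0$ edges of the Bratteli diagram defined accordingly) produces $\{v_S\mid S\in\cT_0^\lambda\}$ with $Pv_S=q^{2e_0}v_S$. I expect the main obstacle to be the identification $\Phi(Z_i)=(\check R_{M_{i-1},V}\check R_{V,M_{i-1}})\otimes\id_V^{\otimes(k-i)}$: although the pictures in \eqref{BraidMurphy} make it visually evident that $T_{i-1}\cdots T_1Z_1T_1\cdots T_{i-1}$ is the monodromy of the $i$th strand around both poles and the intervening strands, turning this into an algebraic equality requires careful bookkeeping with the cabling/hexagon identities \eqref{Rcabling}, precisely as in the one-pole case of \cite[Proposition~3.2]{OR}. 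A secondary, clerical point is to organize the level $-1$ layer of the Bratteli diagram and the edges $e_0$ so that, when $M$ or $N$ is not irreducible, the edge multiplicities and labels correctly encode the branching $M\rightsquigarrow M\otimes N$, ensuring $\dim B_0^\lambda=\Card(\cT_0^\lambda)$ and that the $P$-eigenvalues come out as claimed.
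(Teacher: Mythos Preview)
Your proposal is correct and follows essentially the same approach as the paper: both construct the eigenbasis inductively along the Bratteli diagram and reduce the computation of the $Z_i$-eigenvalues to the identification $\Phi(Z_i)=(\check R_{M_{i-1},V}\check R_{V,M_{i-1}})\otimes\id_V^{\otimes(k-i)}$ via the cabling relations, exactly as in \cite[Proposition~3.2]{OR}. The only cosmetic difference is that the paper rewrites this full twist via the ribbon element as $(C_{M_{i-1}}\otimes C_V)C_{M_i}^{-1}\otimes\id_V^{\otimes(k-i)}$ using \eqref{qcasimir} and then reads off the eigenvalues from \eqref{qCasvalue}, whereas you invoke \eqref{eq:fulltwist} directly together with naturality of $\check R$---but \eqref{eq:fulltwist} is itself the consequence of \eqref{qcasimir} and \eqref{qCasvalue}, so the two routes are the same.
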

\begin{proof}  
The basis $\{ v_S\ |\ S\in \cT_k^\lambda\}$ is constructed inductively.
For the initial case, choose any basis $\hat B_{-1}$ of the highest weight vectors in $M$,
and let $\hat B_{-1}^\nu$ be the set of basis elements in $\hat B_{-1}$ of weight $\nu$.
For the inductive step, assume that $\hat B_{k-1}^\mu=\{ v_T \ |\ T\in \cT_{k-1}^\mu\}$  has been constructed so that
$$M\otimes N \otimes V^{\otimes(k-1)}
= \bigoplus_{\mu\in P^{(k-1)}} L(\mu)\otimes B_{k-1}^\mu
= \bigoplus_{\mu\in P^{(k-1)}} L(\mu)\otimes \left(\sum_{T \in \cT_{k-1}^\mu} \CC v_T\right),
$$
The set $\hat B_{k-1}^\mu = \{ v_T \ |\ T\in \cT_{k-1}^\mu\}$ is a basis of the vector space of highest weight vectors
of weight $\mu$ in $M\otimes N\otimes V^{\otimes(k-1)}$ that is indexed by the
paths $T=(T^{(-1)}\to \cdots \to T^{(k-1)}=\mu)$ of length $k$ in the Bratteli diagram that end at $\mu$.  In this form
$L(\mu)\otimes \CC v_T$ denotes the irreducible $U_q\fg$-submodule
of $M\otimes N\otimes V^{\otimes(k-1)}$ with highest weight vector $v_T$ of weight $\mu$.
 
Then, for each $T=(T^{(-1)}\to \cdots \to T^{(k-1)}=\mu)$ in $\cT_{k-1}^\mu$, choose a basis 
$$\hat B_k^{T\to \lambda} = \{v_S\ |\ \hbox{$S = (T^{(-1)}\to \cdots \to T^{(k-1)}=\mu \to \lambda)$}\}$$
of highest weight vectors in the submodule of $M\otimes N\otimes V^{\otimes k}$ given by
$$(L(\mu)\otimes \CC v_T)\otimes V = L(\mu)\otimes V \otimes \CC v_T
= \sum_{\mu\to \lambda} L(\lambda)\otimes \CC v_S.$$
The basis $\hat B_k^{T\to \lambda}$ is indexed by the edges in the Bratteli diagram from $\mu$ to a partition $\lambda$ on level $k$.
Then 
$$\hat B_k^\lambda = \bigsqcup_{\mu}\  \bigsqcup_{T\in \cT_{k-1}^\mu} \cT_k^{T\to\lambda}
\qquad\hbox{is a basis of $B_k^\lambda$.}$$
The central element $q^{-2\rho}u$
in $U_q\fg$ acts on the submodule $L(\mu)\otimes \CC v_T$ of
$M\otimes N\otimes V^{\otimes (k-1)}$
by the constant $q^{-\langle \mu,\mu+2\rho\rangle}$.
From \eqref{Rcabling}, \eqref{qcasimir}, and \eqref{qCasvalue} it follows that $Z_i$ acts on
$M\otimes N\otimes V^{\otimes k}$ by
\begin{align}
	\Phi(Z_i) = \check R_{i-1}\cdots \check R_1
				&\check R_0^2\check R_1\cdots \check R_{i-1}
	=\check R_{M\otimes N\otimes V^{\otimes (i-1)},V}
		\check R_{V,M\otimes N\otimes V^{\otimes (i-1)}}
		\otimes \id_V^{\otimes (k-i)} \nonumber \\
	&=(C_{M\otimes N\otimes V^{\otimes (i-1)}}\otimes C_V)
			C^{-1}_{M\otimes N\otimes V^{\otimes i}}
			\otimes \id_V^{\otimes (k-i)} \nonumber \\
	&= \sum_{\lambda,\mu,\nu}
			q^{\langle \lambda,\lambda+2\rho\rangle
			-\langle \mu,\mu+2\rho\rangle-\langle \omega,\omega+2\rho\rangle}
			\pi_{\mu\omega}^\lambda\otimes \id_V^{\otimes (k-i)},
\label{Zieigenval} 
\end{align}
where $\pi_{\mu\nu}^\lambda\colon M\otimes N\otimes \id_V^{\otimes i}\to
M\otimes N\otimes \id_V^{\otimes i}$ is the projection onto the $L(\lambda)$ isotypic component
of $(L(\mu)\otimes B_{i-1}^{\mu})\otimes V$.
Thus $Z_i$ acts diagonally on the basis $\hat B_k^\lambda$ and, by the definition of the
labels of edges in the Bratteli diagram in \eqref{Brattelidefn},
the eigenvalues of $Z_iv_S = q^{2e_i}v_S$ where $e_i$ is the label on the edge $S^{(i)}\to S^{(i+1)}$ in 
the Bratteli diagram.

\end{proof}

\subsection{Some tensor products for $\fg = \fgl_n$}

The finite-dimensional irreducible 
polynomial representations $L(\lambda)$ of $U_q\fgl_n$ are indexed by elements of 
 $$P_{poly}^+ = \left\{ \lambda = \lambda_1 \vep_1 + \cdots + \lambda_{n} \vep_{n}, 
	~|~  \lambda_i \in \ZZ, ~ \lambda_1 \geq \cdots \geq \lambda_{n}\ge 0\right\}.$$
Use
	 \begin{equation}\label{eq:delta}
	 \rho = (n-1)\vep_1 + (n-2)\vep_2 + \cdots +\vep_{n-1} = \sum_{i=1}^{n} (n-i)\vep_i,
	 \end{equation}
as in \cite[I\  (1.13)]{Mac1}. 
Identify each element $\lambda= \lambda_1 \vep_1 + \cdots + \lambda_{n} \vep_{n}$ in
$P_{poly}^+$ with the corresponding partition having $\lambda_i$ boxes in row $i$ so that,
for example,
$$\lambda = 3 \vep_1 + 2 \vep_2 + 2 \vep_3 =
	\begin{matrix}\begin{tikzpicture}[xscale=.3, yscale=-.3]
		\Part{3,2,2}
	\end{tikzpicture}\end{matrix}.
$$
The \emph{content} of the box in row $i$ and column $j$ of a partition $\lambda$ is
\begin{equation}\label{content}
c(\mathrm{box}) = j-i = (\hbox{diagonal number of box}),
\end{equation}
where the diagonals are numbered by the elements of $\ZZ$ from southwest to northeast, with the northwest corner box of a partition being in diagonal 0. 

The representation $L(\vep_1) = L(\square)$ is the standard $n$-dimensional representation of $U_q\fgl_n$.
When $\nu = \vep_1$, the decompsition in \eqref{eq:fulltwist} is given by
\begin{equation}
L(\mu)\otimes L(\square) \cong \bigoplus_{\lambda\in \mu^+} L(\lambda),
\label{tensorbyV}
\end{equation}
where $\mu^+$ is the set of partitions obtained by adding a box to $\mu$. If $\lambda \in \mu^+$ and $\lambda/\mu$ is the box added to $\mu$ to obtain $\lambda$,
then the action in \eqref{eq:fulltwist} is given by
\begin{align}
\langle \lambda,&\lambda+2\rho \rangle - \langle \mu,\mu+2\rho \rangle - \langle \vep_1,\vep_1+2\rho \rangle \nonumber \\
&=\langle \mu+\vep_i,\mu+\vep_i+2\rho \rangle - \langle \mu,\mu+2\rho \rangle - \langle \vep_1,\vep_1+2\rho \rangle 
=2\mu_i +1+ 2\rho_i  - 1-2\rho_1 \nonumber \\
& =2\mu_i + 2(n-i)-2(n-1) = 2\mu_i - 2i +2 
= 2c(\lambda/\mu) \label{eq:contentAdded}
\end{align}
(see \cite[I\ (5.16) and (8.4)]{Mac1}). 
Since $\< \vep_1, \vep_1+2\rho\> = 2(n-1)+1=2n-1$,
it follows by induction on the number of boxes in a partition $\lambda$ that
\begin{equation}
\< \lambda, \lambda+2\rho\> = (2n-1)|\lambda|+\sum_{\mathrm{box}\in \lambda} 2c(\mathrm{box}).
\label{contentCasval}
\end{equation}

For $\mu,\nu\in P_{poly}^+$, the decomposition of the tensor product 
$L(\mu)\otimes L(\nu)$ can be calculated using the Littlewood-Richardson rule (see \cite[Ch.\ I\ (9.2)]{Mac1}). 
When $\mu$ and $\nu$ are rectangles the decomposition is multiplicity free by the following theorem.
In equation \eqref{eq:P0drawings}, 
$\cA$ consists of the boxes that are in the union of the rectangles $(a^c)$ and $(b^d)$ 
(placed with northwest corner at $(1,1)$), and
the dashed rectangular regions are the $\min(a,b) \times d$ rectangle $\cB$
with northwest corner box at $(\max(a,b)+1,1)$, and
the $d\times \min(a,b)$ rectangle $\cB'$ with northwest corner at $(1,c+1)$.


\begin{prop} \label{prop:rectangles}
\emph{(See \cite[Lem. 3.3]{St}, \cite[Thm 2.4]{Ok})}
Let $a,b,c,d\in \ZZ_{\ge 0}$ such that $c \geq d$. 
For $\mu\subseteq (\min(a,b)^d)$ let
\begin{equation}\label{eq:P0drawings}
\begin{matrix}
\begin{tikzpicture}[xscale=.3, yscale=-.3]
\filldraw[black!10] (0,0)--(11,0)--(11,2)--(9,2)--(9,3)--(8,3)--(8,4)--(7,4)--(7,6)--(4,6)--(4,7)--(3,7)--(3,8)--(1,8)--(1,10)--(0,10)--(0,0);
\draw (0,0)--(7,0)--(7,6)--(0,6)--(0,0);
\draw[dashed] (7,0)--(12,0)--(12,4)--(7,4) (5,6)--(5,10)--(0,10)--(0,6);
\node at (3.5,3) {$\cA$};
\draw[->](9.5,5) node[below] {$\cB$} to  (9.5,3.5);
\draw[->] (6,8) node[right] {$\cB'$} to (4.5,8);
\node[above] at (3.5,0) {\small $a$};
\node[above] at (9.5,0) {\small$b$};
\node[below] at (2.5,10) {\small$b$};
\node[right] at (12,2) {\small$d$};
\node[left] at (0,3) {\small$c$};
\node[left] at (0,8) {\small$d$};
\node at (9,1) {\small$\mu$};
\node at (2,7) {\small$\mu^c$};
\draw (7,0) -- (11,0)--(11,2)--(9,2)--(9,3)--(8,3)--(8,4)--(7,4);
\draw (4,6)--(4,7)--(3,7)--(3,8)--(1,8)--(1,10)--(0,10)--(0,6);
\node[above] at (6, -2) {if $a \geq b$:};
\node at (-3,5) {$\mathring{\mu}=$};
\end{tikzpicture} 
\qquad 
\begin{tikzpicture}[xscale=.3, yscale=-.3]
\filldraw[black!10] (0,0)--(12,0)--(12,2)--(10,2)--(10,3)--(9,3)--(9,4)--(5,4)--(5,6)--(4,6)--(4,7)--(3,7)--(3,8)--(1,8)--(1,10)--(0,10)--(0,0);
\draw (0,0)--(8,0)--(8,4)--(5,4)--(5,6)--(0,6)--(0,0);
\draw[dashed] (8,0)--(13,0)--(13,4)--(8,4) (5,6)--(5,10)--(0,10)--(0,6);
\node at (4,2) {$\cA$};
\draw[->](10.5,5) node[below] {$\cB$} to  (10.5,3.5);
\draw[->] (6,8) node[right] {$\cB'$} to (4.5,8);
\node[above] at (4,0) {\small $b$};
\node[above] at (10,0) {\small$a$};
\node[below] at (2.5,10) {\small$a$};
\node[right] at (13,2) {\small$d$};
\node[left] at (0,3) {\small$c$};
\node[left] at (0,8) {\small$d$};
\node at (10,1) {\small$\mu$};
\node at (2,7) {\small$\mu^c$};
\draw (8,0) -- (12,0)--(12,2)--(10,2)--(10,3)--(9,3)--(9,4)--(8,4);
\draw (4,6)--(4,7)--(3,7)--(3,8)--(1,8)--(1,10)--(0,10)--(0,6);
\node[above] at (6, -2) {if $a \leq b$:};
\node at (-3,5) {$\mathring{\mu}=$};
\end{tikzpicture}
\end{matrix}
\end{equation}
so that $\mu^c$ is the $180^\circ$ rotation of the complement of $\mu$ in a $\min(a,b)\times d$ rectangle.
Denote the rectangular partition with $c$ rows of length $a$ by $(a^c)$. 
Then 
\begin{equation}
L((a^c)) \otimes L((b^d))
\cong \bigoplus_{\mu\subseteq (\min(a,b)^d) } L(\mathring{\mu})
\cong \bigoplus_{\nu\in \cP^{(0)} } L(\nu),
\label{rectprod}
\end{equation}
where $\cP^{(0)} = \{ \mathring{\mu}\ |\ \mu\subseteq ((\min(a,b)^d)\}$.
\end{prop}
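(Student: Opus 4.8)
The strategy is to transport the statement to the classical representation ring of $\fgl_n$, where it becomes a statement about products of rectangular Schur functions, and then to invoke the known multiplicity-free rule of Stembridge and Okada.

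\textbf{Step 1 (reduction to the classical case).} First I would record that, since $q$ is not a root of unity, the category of finite-dimensional polynomial $U_q\fgl_n$-modules is semisimple and the formal character of $L(\lambda)$ is the Schur polynomial $s_\lambda(x_1,\dots,x_n)$; equivalently, the Grothendieck ring, with basis $\{[L(\lambda)]\}$, is isomorphic as a ring to that of $\fgl_n$. Hence in the decomposition $L((a^c))\otimes L((b^d))\cong\bigoplus_\nu L(\nu)^{\oplus c^\nu_{(a^c),(b^d)}}$ the multiplicity $c^\nu_{(a^c),(b^d)}$ is the Littlewood--Richardson coefficient, i.e. the coefficient of $s_\nu$ in $s_{(a^c)}s_{(b^d)}$, exactly as for $\fgl_n$. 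It therefore suffices to prove that $c^\nu_{(a^c),(b^d)}\in\{0,1\}$ for all $\nu$ and that the shapes with $c^\nu_{(a^c),(b^d)}=1$ are exactly the partitions $\mathring{\mu}$, $\mu\subseteq(\min(a,b)^d)$, described in \eqref{eq:P0drawings}.

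\textbf{Step 2 (the rectangular product).} This is precisely \cite[Lem.\ 3.3]{St} and \cite[Thm.\ 2.4]{Ok}: the product of two rectangular Schur functions is multiplicity-free. The combinatorial mechanism, which one could also read off directly from the Littlewood--Richardson rule, is that when one factor is the rectangle $(b^d)$ every Littlewood--Richardson filling of a skew shape $\nu/(a^c)$ of content $(b^d)$ is rigid except for the placement of a single ``staircase window'': the lattice condition together with the weak increase along rows forces the filling to be determined once one specifies a partition $\mu\subseteq(\min(a,b)^d)$. Reading off $\nu$ from such a $\mu$ produces exactly the gluing in \eqref{eq:P0drawings} — the overlap of the two rectangles is the region $\cA$, the cells of $\nu$ lying past the first $\max(a,b)$ columns form $\mu$ in the window $\cB$, and the cells lying below the first $c$ rows form the rotated complement $\mu^c$ in the window $\cB'$ — and the assignment $\mu\mapsto\mathring{\mu}$ is a bijection onto $\cP^{(0)}$. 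In particular $c^\nu_{(a^c),(b^d)}\le1$, which is \eqref{rectprod}.

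\textbf{Where the work is.} There is no substantive obstacle here; the only care needed is bookkeeping, namely checking that the three regions $\cA$, $\cB$, $\cB'$ drawn in \eqref{eq:P0drawings} (in the two cases $a\ge b$ and $a\le b$, handled separately as in the figure) really do reproduce the shapes output by the Littlewood--Richardson rule and by \cite{St,Ok}, whose coordinate conventions differ slightly from the ones used here. Once the parametrization $\mu\leftrightarrow\mathring{\mu}$ is matched, the equality $\cP^{(0)}=\{\nu:c^\nu_{(a^c),(b^d)}\neq0\}$, and hence the proposition, follows at once.
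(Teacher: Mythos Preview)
Your proposal is correct and aligns with the paper's treatment: the paper does not give a proof of this proposition at all, merely stating it with the citations to \cite[Lem.~3.3]{St} and \cite[Thm.~2.4]{Ok} (and noting just before the statement that the Littlewood--Richardson rule governs such decompositions). Your Step~1 reduction to the classical Grothendieck ring followed by invoking those same references is exactly the intended argument, with the bookkeeping you describe being the only remaining content.
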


\noindent
For an example of the decomposition in \eqref{rectprod}, see Figure \ref{fig:Brat_diagram_with_contents}, where
the decomposition of $L(a^c)\otimes L(2^2)$ for $a,c \geq 2$ is indicated in level $0$ of the Bratteli diagram 
(see the description following \eqref{BklambdaHeckedefn} for explanation of the Bratteli diagram).

The value in \eqref{eq:fulltwist} for the product in \eqref{rectprod}
is given by using \eqref{contentCasval} to compute
\begin{align}
\langle \mathring{\mu},\, &\mathring{\mu}+2\rho\rangle-\langle (a^c), (a^c)+2\rho\rangle - \langle (b^d), (b^d)+2\rho\rangle \nonumber \\
&=(2n-1)\big( \vert \mathring{\mu} \vert - \vert (a^c)\vert - \vert (b^d) \vert\big)
+\left( \sum_{\mathrm{box}\in \mathring{\mu}} 2c(\mathrm{box})  \right)
- \sum_{\mathrm{box}\in (a^c)} 2c(\mathrm{box})  
- \sum_{\mathrm{box}\in (b^d)} 2c(\mathrm{box})  \nonumber \\
&=0 + \sum_{\mathrm{box}\in \mathring{\mu}} 2c(\mathrm{box})  
-ac(a-c)-bd(b-d).
\label{eq:0edgelabel}
\end{align}

\subsection{Irreducible $H_k^\ext$-modules in $M\otimes N\otimes V^{\otimes k}$}\label{mainthm}

In this subsection we provide, for $\fg = \fgl_n$, specific highest weight modules $M$, $N$, and $V$ such that the 
$\cB_k^\ext$-action factors through the extended two boundary Hecke algebra $H_k^{\mathrm{ext}}$.  
In these cases the $\cB_k^\ext$-modules $B_k^\lambda$ in \eqref{Bklambdadefn} are calibrated $H_k^\ext$-modules.  
Theorem \ref{thm:partitions-to-SLRs} identifies
the $B_k^\lambda$ for these cases explicitly in terms of the indexings of calibrated $H_k^\ext$-modules
given in Theorem \ref{thm:calibconst} and Proposition \ref{prop:stdtabbijection}.

Recall that, as defined in Section \ref{Heckedefnsubsection},
the \emph{extended two boundary Hecke algebra} $H_k^\ext$ is the quotient of the group algebra of the extended two boundary 
braid group $\CC \cB_k^\ext$ by the relations 
\begin{equation}\label{HeckedefnREPEAT}
(X_1 - a_1)(X_1-a_2) = 0,
\qquad
(Y_1 - b_1)(Y_1-b_2) = 0,
\quad\hbox{and}\quad 
(T_i - t^{\half})(T_i+ t^{-\half}) = 0,
\end{equation}
$i = 1, \dots, k-1$, for fixed $a_1, a_2, b_1, b_2, t^{\frac12} \in \CC^\times$.

\begin{thm} \label{thm:HeckeActionOnTensorSpace}
If $\fg = \fgl_n$, $M = L((a^c))$, $N=L((b^d))$, and $V = L(\square)$, 
\begin{equation}
a_1=q^{2a}, \qquad
a_2 = q^{-2c}, \qquad
b_1=q^{2b}, \qquad
b_2=q^{-2d},\qquad \text{ and } \qquad t^{\frac12} = q,
\label{eq:rectparams}
\end{equation}
then the map $\Phi$ from Proposition \ref{thm:BraidRep} gives an action of  $H_k^\ext$ on $M \otimes N \otimes V^{\otimes k}$ commuting with that of $U_q\fgl_n$.
\end{thm}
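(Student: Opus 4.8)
The plan is to invoke Proposition~\ref{thm:BraidRep}, which already gives an action of $\cZ\cB_k^\ext$ on $M\otimes N\otimes V^{\otimes k}$ commuting with $U_q\fgl_n$, and then to check that under the specific choices $M=L((a^c))$, $N=L((b^d))$, $V=L(\square)$ the images of the Hecke generators satisfy the quadratic relations \eqref{HeckedefnREPEAT}, so that the action descends to $H_k^\ext$. By the definition of $H_k^\ext$ as the quotient of $\CC\cB_k^\ext$ by \eqref{HeckedefnREPEAT}, it suffices to verify that $\Phi(T_i)=\check R_i$ satisfies $(\check R_i - q)(\check R_i + q^{-1})=0$ for $i=1,\dots,k-1$, that $\Phi(X_1)=\check R_M^2$ satisfies $(\check R_M^2 - q^{2a})(\check R_M^2 - q^{-2c})=0$, and that $\Phi(Y_1)=\check R_N^2$ satisfies $(\check R_N^2 - q^{2b})(\check R_N^2 - q^{-2d})=0$. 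Each of these is an eigenvalue computation for a full-twist operator acting on a two-term tensor product decomposition, using the formula \eqref{eq:fulltwist}.

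First I would handle $\check R_i$. Since $V=L(\square)$ is the standard representation and $V\otimes V \cong L(\square\square)\oplus L((1^2))$ for $\fgl_n$, the operator $\check R_{VV}$ has exactly two eigenvalues. Using \eqref{eq:contentAdded} (or the standard normalization of the $R$-matrix on $L(\square)\otimes L(\square)$) one computes that, with $t^{1/2}=q$, the eigenvalues are $t^{1/2}=q$ on the symmetric part and $-t^{-1/2}=-q^{-1}$ on the antisymmetric part; hence $(\check R_i-t^{1/2})(\check R_i+t^{-1/2})=0$, which is the last relation in \eqref{HeckedefnREPEAT}. This is the routine type-A Hecke computation and I would cite the content formula \eqref{content}--\eqref{eq:contentAdded} rather than rederive it.

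Next I would handle $X_1$ and $Y_1$. The operator $\check R_M^2$ is, up to the conjugation by $\id_M\otimes\check R_{NV}^{\pm1}$ built into its definition, the full twist $\check R_{MV}\check R_{VM}$ acting on $M\otimes V$; by \eqref{eq:fulltwist} and \eqref{tensorbyV} its eigenvalues on the summand $L(\lambda)\subseteq L((a^c))\otimes L(\square)$, where $\lambda=(a^c)+\vep_i$, are $q^{\langle\lambda,\lambda+2\rho\rangle-\langle(a^c),(a^c)+2\rho\rangle-\langle\vep_1,\vep_1+2\rho\rangle}=q^{2c(\lambda/(a^c))}$ by \eqref{eq:contentAdded}. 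For the rectangle $(a^c)$ one can only add a box in row $1$ (content $a$) or in row $c+1$ (content $-c$), so the only eigenvalues are $q^{2a}$ and $q^{-2c}$; since conjugation does not change eigenvalues, $\check R_M^2$ satisfies $(\check R_M^2-q^{2a})(\check R_M^2-q^{-2c})=0$, matching \eqref{HeckedefnREPEAT} with $a_1=q^{2a}$, $a_2=q^{-2c}$. The same argument with $N=L((b^d))$ gives $(\check R_N^2-q^{2b})(\check R_N^2-q^{-2d})=0$, i.e.\ $b_1=q^{2b}$, $b_2=q^{-2d}$. Note that the cabling relation $\Phi(Z_1)=\Phi(X_1)\Phi(Y_1)$ and centrality of $P$ carry over automatically from Proposition~\ref{thm:BraidRep}, so no additional quadratic relation on $Z_1$ or $P$ is needed; $H_k^\ext$ is by definition the quotient by only the three families in \eqref{HeckedefnREPEAT}.

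The main obstacle is the bookkeeping for $\check R_M^2$: because its definition involves the conjugating factors $\id_M\otimes\check R_{NV}^{\pm1}$, one must be careful that these are genuine $U_q\fgl_n$-isomorphisms (they are, as in \eqref{preRMNdefn}) so that conjugation preserves eigenvalues, and that the relevant two-term decomposition really is $L((a^c))\otimes L(\square)=\bigoplus_{\lambda\in(a^c)^+}L(\lambda)$ with $(a^c)^+$ having exactly two elements — this is precisely where the rectangular shape of $M$ and $N$ is essential, and it is the crux of why these particular modules work. Once the two-eigenvalue property is in place, the conclusion that $\Phi$ factors through $H_k^\ext$ is immediate from the quotient definition, and the commutation with $U_q\fgl_n$ is inherited verbatim from Proposition~\ref{thm:BraidRep}.
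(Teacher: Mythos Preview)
Your proposal is correct and follows essentially the same approach as the paper: verify the three quadratic relations \eqref{HeckedefnREPEAT} by computing the eigenvalues of the relevant full twists on $M\otimes V$, $N\otimes V$, and $V\otimes V$ via \eqref{eq:fulltwist} and \eqref{eq:contentAdded}, using that a rectangle has exactly two addable boxes. You are slightly more explicit than the paper about why the conjugation by $\id_M\otimes\check R_{NV}^{\pm1}$ in the definition of $\check R_M^2$ is harmless (it is a $U_q\fgl_n$-isomorphism, so eigenvalues are preserved), which the paper leaves implicit.
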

\begin{proof}
The module $M \otimes V$ decomposes as 
\begin{equation}\label{MVdecomp}M \otimes V  = L
\begin{pmatrix}\begin{tikzpicture}[xscale=.15, yscale=-.15, every node/.style={inner sep=2pt}]
\draw(0,0)--(5,0)--(5,4)--(0,4)--(0,0) (5,0)--(6,0)--(6,1)--(5,1);
\node[above] at (2.5,0) {\scriptsize $a$};
\node[right] at (0,2) {\scriptsize $c$};
\end{tikzpicture}\end{pmatrix}
\oplus L
\begin{pmatrix}\begin{tikzpicture}[xscale=.15, yscale=-.15, every node/.style={inner sep=2pt}]
\draw(0,0)--(5,0)--(5,4)--(0,4)--(0,0) (0,4)--(0,5)--(1,5)--(1,4);
\node[above] at (2.5,0) {\scriptsize $a$};
\node[right] at (0,2) {\scriptsize $c$};
\end{tikzpicture}\end{pmatrix}.
\end{equation}By \eqref{eq:fulltwist} and \eqref{eq:contentAdded}, $\check R_{MV} \check R_{VM}$ acts on
the first summand by the constant $q^{2a}$ and on the second summand by the constant $q^{-2c}$.
So 
$$(\Phi(X_1) - q^{2a})(\Phi(X_1) - q^{-2c}) = 0;
\qquad\hbox{similarly}\qquad
(\Phi(Y_1) - q^{2b})(\Phi(Y_1) - q^{-2d}) = 0
$$
by replacing $(a^c)$ with $(b^d)$.  The relation 
$$(\Phi(T_i) - q)(\Phi(T_i)+q^{-1})=0$$
follows similarly by considering the tensor product $V\otimes V = L(\square)\otimes L(\square)$.
\end{proof}

From \eqref{eq:ab-to-t0tk}, \eqref{eq:rectparams}, and \eqref{eq:r1andr2},
\begin{equation}
\begin{array}{c}
	a_1=q^{2a}, \qquad a_2 = q^{-2c}, \qquad
	b_1=q^{2b}, \qquad b_2=q^{-2d},	\qquad  t^{\frac12} = q, \\
	t_k^{\frac12} =  a_1^{\frac12}(-a_2)^{-\frac12} 
				= -i q^{a+c}  \quad \text{ and }\quad 
	t_0^{\frac12} = b_1^{\frac12}(-b_2)^{-\frac12} 
				= -i q^{b+d}, \\
-t^{r_1} = -t_k^{\frac12}t_0^{-\frac12} 
=-q^{(a+c)-(b+d)},
\quad\hbox{ and } \qquad
{-t}^{r_2} = t_k^{\frac12}t_0^{\frac12} 
=-q^{a+c+b+d}.
\end{array}
\label{eq:ttoqconversions}
\end{equation}
Using these conversions, the genericity conditions in \eqref{eq:genericcond} become requirements that
$q$ is not a root of unity and
\begin{equation*}
-q^{(a+c)-(b+d)}, -q^{a+c+b+d}\not\in \{ 1, -1, q^{\pm1}, -q^{\pm1}, q^{\pm2}, -q^{\pm2}\}
\quad\hbox{and}\quad -q^{(a+c)-(b+d)}\ne -q^{\pm(a+c+b+d)}.
\end{equation*}
In the context of Theorem \ref{thm:HeckeActionOnTensorSpace}, these genericity conditions are  
\begin{equation}
\hbox{$q$ is not a root of unity,}\quad
\hbox{$a,b,c,d\in \ZZ_{>0}$}
\quad\hbox{and}\quad
(a+c)-(b+d)\not\in \{0,\pm1, \pm2\}.
\label{newgenconds}
\end{equation}

In the setting of Theorem \ref{thm:HeckeActionOnTensorSpace}, equation \eqref{Bklambdadefn} provides
$H_k^{\mathrm{ext}}$-modules $B_k^\lambda$ with
\begin{equation}
M\otimes N\otimes V^{\otimes k} \cong \bigoplus_{\lambda\in \cP^{(k)}} L(\lambda)\otimes B_k^\lambda,
\qquad\hbox{as $(U_q\fg, \cH_k^{\mathrm{ext}})$-bimodules.}
\label{BklambdaHeckedefn}
\end{equation}
Theorem \ref{thm:partitions-to-SLRs} below will accomplish our primary goal for this paper by identifying
the module $B_k^\lambda$ explicitly as a calibrated $H_k^{\ext}$-module $H_k^{(z,\cc,J)}$ as constructed in 
Theorem \ref{thm:calibconst}.
The results of \eqref{tensorbyV}, \eqref{eq:contentAdded}, and Proposition \ref{prop:rectangles}
show that the Bratteli diagram of \eqref{Brattelidefn} has
$\cP^{(-1)} = \{ (a^c)\}$,  $\cP^{(0)} = \{\ring{\mu} ~|~ \mu \subseteq  ((\mathrm{min}(a,b))^d)\}$ as in Proposition \ref{prop:rectangles} and, for $j\in \ZZ_{\ge 0}$,
$$\hbox{$\cP^{(j)} = \{$ partitions obtained by adding $j$ boxes to a partition in $\cP^{(0)}\ \}$.}$$
By \eqref{eq:0edgelabel}, if $\mathring{\mu}\in\cP^{(0)}$ then there is an edge
\begin{equation}
\hbox{$(a^c)\xrightarrow{e_0(\mathring{\mu})} \mathring{\mu}$ with label}\qquad
\displaystyle{ e_0(\mathring{\mu}) =  -\frac{ac}{2}(a-c)-\frac{bd}{2}(b-d)+\sum_{\mathrm{box}\in \mathring{\mu}} c(\mathrm{box}) }.
\label{c0edgelabel}
\end{equation}
For $j\ge0$, the edges $\mu\to \lambda$ from level $j$ to level $j+1$ correspond to adding a single box to $\mu$ to get $\lambda$, and are labeled by 
$c(\lambda/\mu)$, the content of the box $\lambda/\mu$:
\begin{equation}
\mu\xrightarrow{c(\lambda/\mu)}\lambda
\qquad\hbox{for edges from level $j$ to level $j+1$.}
\label{cjedgelabel}
\end{equation}
The case when $M = L(a^c)$ and $N = L(2^2)$ with $a,c>2$ is illustrated in Figure \ref{fig:Brat_diagram_with_contents}.

\newcommand{\BoxPart}[2]{
\draw (0,0)--(5,0)--(5,4)--(0,4)--(0,0);
\coordinate (up) at (5,0); \coordinate (down) at (0,4);
\begin{scope}[shift=(up)]\Part{#1}\end{scope}
\begin{scope}[shift=(down)]\Part{#2}\end{scope}
}
\newcommand{\BoxPART}[2]{
\begin{tikzpicture}[xscale=\PartUNIT, yscale=-\PartUNIT]
	\node at (8,0) {}; \node at (0,7){};
	\BoxPart{#1}{#2}
\end{tikzpicture}
}
\newcommand{\BoxPARTs}[2]{
\begin{tikzpicture}[xscale=\PartUNIT, yscale=-\PartUNIT]
	\node at (8,0) {}; \node at (0,6){};
	\BoxPart{#1}{#2}
\end{tikzpicture}
}
\tikzstyle{EdgeLabel}=[blue!70!black, inner sep=2pt, fill=white]

\begin{figure}
$$\begin{tikzpicture}[xscale=14*\PartUNIT, yscale=10*\PartUNIT]
\node[right] at (6.5,1) {\small level $-1$};
\node[right] at (6.5,-1) {\small level 0};
\draw[|-|] (6.5,-2.5)--(6.5,-4.5);
\node[right] at (6.5,-3.5) {\small level 1};
\coordinate (01) at (3.5,1);
\foreach \x in {1, ..., 6}{ \coordinate (1\x) at (\x,-1);}
\foreach \x in {1, ..., 6}{ \coordinate (2\x1) at (\x,-2.5);}
\foreach \x in {1, ..., 6}{ \coordinate (2\x2) at (\x,-3.5);}
\foreach \x in {1, ..., 6}{ \coordinate (2\x3) at (\x,-4.5);}
\draw (01) to [bend right=25]  node[sloped, EdgeLabel]{\scriptsize$4a$} (11);
\draw (01) to [bend right=10]  node[sloped, EdgeLabel]{\scriptsize$3a$-$c$} (12);
\draw (01) to [bend right=0]  node[sloped, EdgeLabel]{\scriptsize$2(a$-$c$+$1)$} (13);
\draw (01) to [bend left=0] node[sloped, EdgeLabel]{\scriptsize$2(a$-$c$-$1)$} (14);
\draw (01) to [bend left=10] node[sloped, EdgeLabel]{\scriptsize$a$ -$3c$} (15);
\draw (01) to [bend left=25] node[sloped, EdgeLabel]{\scriptsize-$4c$} (16);
\draw (11)--(211) node[midway,EdgeLabel] {\tiny -$c$}
	(12)--(211) node[pos=.6, EdgeLabel] {\tiny $a$}
	(12)--(221) node[midway, EdgeLabel] {\tiny -$c$+$1$}
	(12)--(231) node[pos=.35, EdgeLabel] {\tiny -$c$-$1$}
	(13)--(221) node[pos=.6, EdgeLabel] {\tiny $a$-$1$}
	(13)--(241) node[pos=.35, EdgeLabel] {\tiny -$c$-$1$}
	(14)--(231) node[pos=.6, EdgeLabel] {\tiny $a$+$1$}
	(14)--(251) node[pos=.35, EdgeLabel] {\tiny -$c$+$1$}
	(15)--(241) node[pos=.6, EdgeLabel] {\tiny $a$+$1$}
	(15)--(251) node[midway, EdgeLabel] {\tiny $a$-$1$}
	(15)--(261) node[pos=.4, EdgeLabel] {\tiny -$c$}
	(16)--(261) node[midway, EdgeLabel] {\tiny $a$};
\begin{scope}[every node/.style={fill=white, inner sep=1pt}]
	\node at (01) {\begin{tikzpicture}[xscale=\PartUNIT, yscale=-\PartUNIT]
		\draw (0,0)--(5,0)--(5,4)--(0,4)--(0,0);
		\node[right] at (0,2) {\tiny $c$};
		\node[above] at (2.5,0) {\tiny $a$};
		\end{tikzpicture}
};
	\node at (211) {\BoxPART{2,2}{1}};
	\node at (221) {\BoxPART{2,1}{2}};
	\node at (231) {\BoxPART{2,1}{1,1}};
	\node at (241) {\BoxPART{1,1}{2,1}};
	\node at (251) {\BoxPART{1,1}{2,1}};
	\node at (261) {\BoxPART{1}{2,2}};
\draw (11) .. controls (-.55+1,-2) and (-.55+1,-2.5) .. (212) node[pos=.8,EdgeLabel]{\tiny $a$+$2$};
\draw (12) .. controls (-.55+2,-2) and (-.55+2,-2.5) .. (222) node[pos=.8,EdgeLabel]{\tiny $a$+$2$};
\draw (13) .. controls (-.55+3,-2) and (-.55+3,-2.5) .. (232) node[pos=.8,EdgeLabel]{\tiny $a$+$2$};
\draw (14) .. controls (-.55+4,-2) and (-.55+4,-2.5) .. (242) node[pos=.8,EdgeLabel]{\tiny $a$-$2$};
\draw (15) .. controls (-.55+5,-2) and (-.55+5,-2.5) .. (252) node[pos=.8,EdgeLabel]{\tiny -$c$+$2$};
\draw (16) .. controls (-.55+6,-2) and (-.55+6,-2.5) .. (262) node[pos=.8,EdgeLabel]{\tiny -$c$+$2$};
	\node at (212) {\BoxPART{3,2}{}};
	\node at (222) {\BoxPART{3,1}{1}};
	\node at (232) {\BoxPART{3}{2}};
	\node at (242) {\BoxPART{1,1,1}{1,1}};
	\node at (252) {\BoxPART{1}{3,1}};
	\node at (262) {\BoxPART{}{3,2}};
\draw (11) .. controls (-.8+1,-2) and (-.8+1,-3.5) .. (213) node[pos=.8,EdgeLabel]{\tiny $a$-$2$};
\draw (12) .. controls (-.8+2,-2) and (-.8+2,-3.5) .. (223) node[pos=.8,EdgeLabel]{\tiny $a$-$2$};
\draw (13) .. controls (-.8+3,-2) and (-.8+3,-3.5) .. (233) node[pos=.8,EdgeLabel]{\tiny -$c$+$2$};
\draw (14) .. controls (-.8+4,-2) and (-.8+4,-3.5) .. (243) node[pos=.8,EdgeLabel]{\tiny -$c$-$2$};
\draw (15) .. controls (-.8+5,-2) and (-.8+5,-3.5) .. (253) node[pos=.8,EdgeLabel]{\tiny -$c$-$2$};
\draw (16) .. controls (-.8+6,-2) and (-.8+6,-3.5) .. (263) node[pos=.8,EdgeLabel]{\tiny -$c$-$2$};
	\node at (213) {\BoxPART{2,2,1}{}};
	\node at (223) {\BoxPART{2,1,1}{1}};
	\node at (233) {\BoxPART{2}{3}};
	\node at (243) {\BoxPART{1,1}{1,1,1}};
	\node at (253) {\BoxPART{1}{2,1,1}};
	\node at (263) {\BoxPART{}{2,2,1}};
	\node at (11) {\BoxPARTs{2,2}{}};
	\node at (12) {\BoxPARTs{2,1}{1}};
	\node at (13) {\BoxPARTs{2}{2}};
	\node at (14) {\BoxPARTs{1,1}{1,1}};
	\node at (15) {\BoxPARTs{1}{2,1}};
	\node at (16) {\BoxPARTs{}{2,2}};
\end{scope}
\end{tikzpicture}
$$
\caption{Levels $-1$, $0$, and $1$ of a Bratteli diagram encoding isotypic components of $M \otimes N \otimes V$ where $a,c >2$ and $b=d=2$. The edges from level $-1$ to level $0$ are labeled by $e_0(T^{(0)})$ as in \eqref{eq:0edgelabel}; the edges from level $0$ to $1$ are labeled by the content of the box added. }
\label{fig:Brat_diagram_with_contents}
\end{figure}
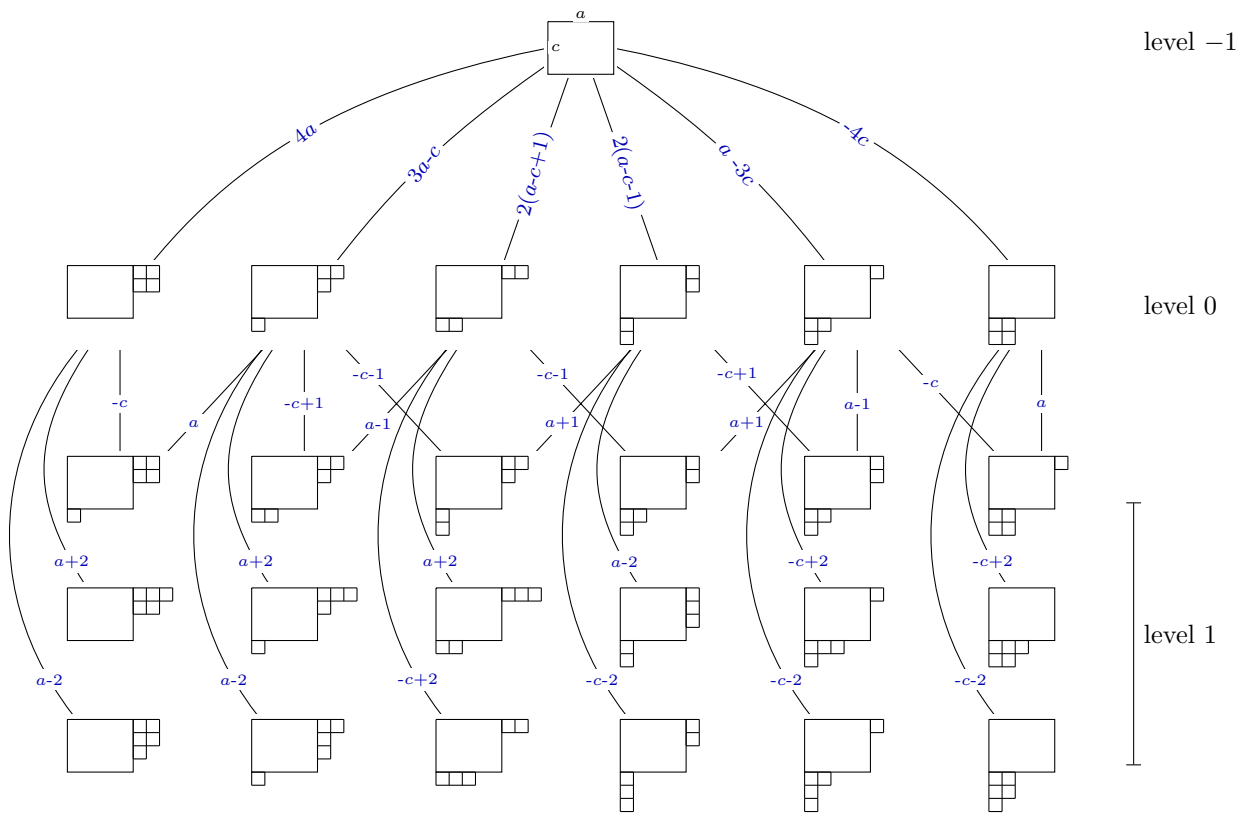

Let $\lambda\in \cP^{(k)}$.  
Define 
\begin{equation}
c_0 = -\half(k(a-c+b-d) 
+ac(a-c)+bd(b-d))+ \sum_{\mathrm{box}\in \lambda} c(\mathrm{box})
\quad\hbox{and}\quad
z = (-1)^k q^{2c_0}.
\label{eq:zetavalue}
\end{equation}
Using notation as in \eqref{eq:P0drawings}, let 
\begin{equation}
\mu^c = \lambda \cap \cB' \quad\hbox{and let 
$S_{\mathrm{max}}^{(0)}$ be the corresponding $\ring{\mu}$}.
\label{S0maxdefn}
\end{equation}
Define the \emph{shifted content} of a $\mathrm{box}$ by
\begin{equation}
\tilde{c}(\mathrm{box}) = c(\mathrm{box}) - \half(a-c+b-d),
\quad \hbox{and let}\ 
\cc= (c_1, \ldots, c_k)\hbox{ with } 0\le c_1\le c_2\le \cdots \le c_k
\label{ccdefn}
\end{equation}
be the sequence of absolute values of the shifted contents of the boxes in $\lambda/S_{\mathrm{max}}^{(0)}$
arranged in increasing order.
Index the boxes of $\lambda/S^{(0)}_{\mathrm{max}}$ with $1,2\ldots, k$ so that 
\begin{equation*}
\begin{array}{l}
\hbox{(a) if $i<j$ then $|\tilde c(\mathrm{box}_i)|\le |\tilde c(\mathrm{box}_j)|$,} \\
\hbox{(b) if $i<j$ and $\tilde c(\mathrm{box}_i) = \tilde c(\mathrm{box}_j)<0$ then $\mathrm{box}_i$ is SE of $\mathrm{box}_j$,} \\
\hbox{(c) if $i<j$ and $\tilde c(\mathrm{box}_i) = \tilde c(\mathrm{box}_j)\ge0$ then $\mathrm{box}_i$ is NW of $\mathrm{box}_j$,}\\
\hbox{(d) if $i<j$ and $\tilde c(\mathrm{box}_i) = -\tilde c(\mathrm{box}_j)$, then $\tilde c(\mathrm{box}_i) \leq 0 \leq \tilde c(\mathrm{box}_j)$,}
\end{array}
\label{boxindexing}
\end{equation*}
and define
\begin{align}
J = 
&\left\{ \vep_i ~|~ \tilde c(\mathrm{box}_i) \in \{-r_1, -r_2\} \right\}\nonumber\\
&\sqcup \left\{\vep_j - \vep_i ~\left|~ 
\begin{array}{l}
\hbox{$\tilde c(\mathrm{box}_j) = \tilde c(\mathrm{box}_i)+1>0$ and $\mathrm{box}_j$ is NW of $\mathrm{box}_i$, or} \\
\hbox{$\tilde c(\mathrm{box}_j) = \tilde c(\mathrm{box}_i)-1<0$ and $\mathrm{box}_j$ is SE of $\mathrm{box}_i$, or} \\
\hbox{$\tilde c(\mathrm{box}_j) = -\tilde c(\mathrm{box}_i)-1<0<\tilde c(\mathrm{box}_i)$}
\end{array}
\right.\right\} \label{eq:J-from-partition}\\
&\sqcup \left\{\vep_j+\vep_i ~\left|~ 
\begin{array}{l}
\hbox{$\tilde c(\mathrm{box}_j) = -1$ and $\tilde c(\mathrm{box}_i)=0$ and
$\mathrm{box}_j$ is SE of $\mathrm{box}_i$, or} \\
\hbox{$\tilde c(\mathrm{box}_j) = \frac12$ and $\tilde c(\mathrm{box}_i)=-\frac12$ and $\mathrm{box}_j$ is NW of $\mathrm{box}_i$, or} \\
\hbox{$\tilde c(\mathrm{box}_j) = -\frac12$ and $\tilde c(\mathrm{box}_i)=-\frac12$}
\end{array}
\right.\right\},\nonumber
\end{align}
so that $J$ is a subset of $P(\cc)$, where $P(\cc)$ is as defined in \eqref{P(c)origdefn}. See Examples \ref{ex:two-row-SW-modules} and \ref{ex:partitionstoSLRs} following the proof of Theorem \ref{thm:partitions-to-SLRs}.

\begin{thm}  \label{thm:partitions-to-SLRs}
Let $\fg = \fgl_n$ and let $M=L(a^c)$, $N= L(b^d)$ and $V= L(\square)$ so that
$H_k^{\mathrm{ext}}$ acts on $M\otimes N\otimes V^{\otimes k}$ as in 
Theorem \ref{thm:HeckeActionOnTensorSpace}. 
Assume that the genericity conditions of \eqref{newgenconds} hold so that
$q$ is not a root of unity, $a,b,c,d\in \ZZ_{>0}$ and $(a+c)-(b+d)\not\in \{0, \pm1, \pm2\}$.
For $\lambda\in \cP^{(k)}$, let $B_k^\lambda$ be the $H_k^{\mathrm{ext}}$-module of \eqref{BklambdaHeckedefn}
and define $z$, $\cc$ and $J$ as in \eqref{eq:zetavalue}, \eqref{ccdefn}, and \eqref{eq:J-from-partition}.  Then
\begin{equation}
B_k^\lambda \cong H_k^{(z,\cc,J)}\ \hbox{as $H_k^\ext$-modules}.
\end{equation}
\end{thm}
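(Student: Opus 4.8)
## Proof Proposal

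The plan is to combine the eigenvalue calculation of Proposition \ref{prop:calibrated} with the classification of calibrated irreducibles in Theorem \ref{thm:calibconst}, using the genericity conditions \eqref{newgenconds} to guarantee that the weights we produce actually label a skew local region. First I would note that by Proposition \ref{prop:calibrated} the operators $P, Z_1, \ldots, Z_k$ act diagonally on $B_k^\lambda$ with a basis $\{v_S ~|~ S \in \cT_k^\lambda\}$ indexed by paths in the Bratteli diagram, with eigenvalues $q^{2e_0}, q^{2e_1}, \ldots, q^{2e_k}$ read off the edge labels. Translating through the normalizations \eqref{eq:normalized_gens} and the conversions \eqref{eq:ab-to-t0tk}, \eqref{eq:rectparams}, \eqref{eq:ttoqconversions}, one sees that $W_i$ acts on $v_S$ by $-q^{2 c_i(S)}$ where $c_i(S)$ is (up to the shift in \eqref{ccdefn}) the content of the $i$-th box added along the path $S$; the key bookkeeping is that the shift $\frac12(a-c+b-d)$ in \eqref{ccdefn} is exactly what is needed so that $W_i$ has the form $-t^{c_i}$ in the sense of \eqref{eq:gamma-to-c}, and that $W_0$ acts by the scalar $z$ of \eqref{eq:zetavalue} coming from the level $-1$ to level $0$ edge label \eqref{c0edgelabel}. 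Thus $B_k^\lambda$ is a calibrated $H_k^\ext$-module in the sense of \eqref{calibrateddefn}.

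Next I would identify the weights. The weight $\gamma$ attached to the path $S$ has $\gamma_i$'s that are the contents of the boxes of $\lambda/S^{(0)}$ shifted by \eqref{ccdefn}. Because each $S^{(0)} \in \cP^{(0)}$ is one of the rectangle-decomposition shapes $\ring{\mu}$ of Proposition \ref{prop:rectangles} and each subsequent step adds a single box, the multiset of shifted contents of $\lambda/S^{(0)}$ is the same for every path $S \in \cT_k^\lambda$ — it depends only on $\lambda$ and on which $\ring{\mu}$ one started from; I would check that in fact the starting shape $S^{(0)}$ is forced to be $S_{\mathrm{max}}^{(0)}$ of \eqref{S0maxdefn} (the boxes of $\lambda$ lying in the region $\cB'$ must have come from $\mu^c$), so the central character is the single $\cW_0$-orbit of $\cc$ in \eqref{ccdefn}. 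This means $B_k^\lambda$ has central character $(z,\cc)$ and its set of weights is contained in $\cW_0 \cdot (z,\cc)$. Then I would verify that the set of paths $\cT_k^\lambda$ is in bijection with $\cF^{(\cc,J)}$, with $J$ as in \eqref{eq:J-from-partition}: a path $S$ corresponds to the element $w \in \cW_0$ recording, via the box-indexing conditions (a)--(d), which box of $\lambda/S^{(0)}_{\mathrm{max}}$ was added at which step and with which sign (positive content added as a box, negative content as a box in the conjugate/reflected copy). This is the content of Proposition \ref{prop:stdtabbijection} together with the standard-tableau combinatorics of \cite[\S 8]{Ra2}, adapted to the two-rectangle shape; conditions (S1)--(S4) on standard fillings translate precisely into the legality of adding boxes one at a time in the Bratteli diagram, and the marking data on diagonals $\pm r_1, \pm r_2$ encodes the sign choice at the boundary governed by $\tau_0$.

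Having matched weight spaces and index sets, I would invoke Theorem \ref{thm:calibconst}: once we know $(\cc, J)$ is a \emph{skew} local region and $z \in \CC^\times$, the module $H_k^{(z,\cc,J)}$ is the \emph{unique} irreducible calibrated $H_k^\ext$-module with central character $(z,\cc)$ and weight structure $\cF^{(\cc,J)}$, so $B_k^\lambda \cong H_k^{(z,\cc,J)}$ provided $B_k^\lambda$ is irreducible and $(\cc,J)$ is skew. Irreducibility of $B_k^\lambda$ follows because $\dim B_k^\lambda = |\cT_k^\lambda| = |\cF^{(\cc,J)}| = \dim H_k^{(z,\cc,J)}$ and every generalized weight space is one-dimensional (Proposition \ref{prop:calibrated} gives a weight basis), so a proper submodule would have to be a proper sub-$\cF^{(\cc,J)}$ that is $\tau$-stable, which the intertwiner maps \eqref{intertwinermaps} rule out; alternatively one checks directly that the operators $\tau_0, \tau_i$ are invertible on the relevant weight spaces, which is exactly where genericity enters. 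The main obstacle — and the step I expect to be most delicate — is verifying that $(\cc, J)$ is a skew local region, i.e. the inequalities \eqref{skewlocalregiondefn}: for each $w \in \cF^{(\cc,J)}$ one needs $(w\cc)_1, (w\cc)_2 \ne 0$, $(w\cc)_1 \ne -(w\cc)_2$, $(w\cc)_i \ne (w\cc)_{i+1}$, $(w\cc)_i \ne (w\cc)_{i+2}$. These fail precisely when two boxes of $\lambda/S^{(0)}_{\mathrm{max}}$ share a row or column in an illegal way, or when a box sits on one of the forbidden diagonals; ruling this out is exactly what the hypothesis $(a+c)-(b+d) \notin \{0,\pm1,\pm2\}$ (equivalently $r_1 \notin \{0, \pm\frac12, \pm 1\}$, $r_2$ large) is designed to do, via the rank-two analysis of Section \ref{sec:H2Classification} — the point is that the "bad" central characters enumerated in \eqref{nonKatononreg} never occur as restrictions of $\cc$ to any pair of adjacent coordinates. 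I would carry out this skewness verification as a careful case check mirroring \cite[\S 8]{Ra2}, using the box-configuration picture of Section \ref{sec:LocalRegions} to make the geometric constraints transparent, and the numerics of \eqref{eq:ttoqconversions} to confirm the forbidden-diagonal conditions.
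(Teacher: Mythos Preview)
Your outline has a genuine error: the starting shape $S^{(0)}$ is \emph{not} forced to be $S_{\mathrm{max}}^{(0)}$. Different paths $S\in\cT_k^\lambda$ can, and generally do, begin at distinct $S^{(0)}\in\cP^{(0)}$; see the paths $S$ and $T$ in Example \ref{ex:partitionstoSLRs}, and the construction of $s_0S$ in \eqref{sjLdefn}--\eqref{eq:move-a-box}. What is true is subtler: when one passes from $S^{(0)}$ to $(s_0S)^{(0)}$ a box moves between the regions $\cB$ and $\cB'$, and the \emph{shifted} content of the corresponding added box changes sign (this is exactly \eqref{eq:move-a-box}). Hence the multiset of shifted contents of $\lambda/S^{(0)}$ is \emph{not} constant over paths, but the multiset of their absolute values is. That is why the central character is the $\cW_0$-orbit of $\cc$, and why the signed permutation $w_S$ genuinely uses sign changes to record which $S^{(0)}$ the path started from. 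Without this you cannot correctly set up the map $S\mapsto w_S$, and the identification of $J$ via \eqref{eq:J-from-partition} would be incomplete.

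There is also a strategic difference worth noting. You plan to prove directly that $(\cc,J)$ is a skew local region and that $B_k^\lambda$ is irreducible, and then invoke the uniqueness part of Theorem \ref{thm:calibconst}. The paper takes a shorter route: since $B_k^\lambda$ is calibrated it has a composition series with irreducible calibrated factors, and by Theorem \ref{thm:calibconst}(b) each factor is automatically of the form $H_k^{(z,\cc',J')}$ with $(\cc',J')$ skew. The case analysis then shows that $J=R(w_S)\cap P(\cc)$ is \emph{independent of $S$}, so all composition factors are isomorphic; finally, distinct $S$ give distinct weights (an easy induction on the path), forcing the weight spaces to be one-dimensional and hence the number of factors to be one. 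This bypasses any direct check of the skewness conditions \eqref{skewlocalregiondefn}; skewness comes for free from the mere existence of the irreducible factor. Your proposed direct verification of skewness would work in principle, but it is the harder road.
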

\begin{proof}
By Proposition \ref{prop:calibrated}, $B_k^\lambda$ is a calibrated $H_k^\ext$ module.
Therefore $B_k^\lambda$ has a composition series
with factors that are irreducible calibrated $H_k^\ext$-modules.
By  Theorem \ref{thm:calibconst}, each factor is isomorphic to some $H_k^{(z, \cc, J)}$ where $(\cc, J)$ is a skew local region, and $(z, \cc, J)$ is determined by the eigenvalues of the action of $W_0, W_1, \dots, W_k$.
By Proposition \ref{prop:calibrated}, the simultaneous eigenbasis $\{ v_S\ |\ S\in \cT_k^\lambda\}$
$B_k^\lambda$ is indexed by 
\begin{align}\label{eq:tableaux-paths}
\cT_k^\lambda &= \{ \hbox{paths $S=((a^c) \to S^{(0)}
\to S^{(1)}\to \cdots \to S^{(k)}=\lambda)$ in the Bratteli diagram}\}.
\end{align}
To determine which $H_k^{(z, \cc, J)}$ appear as composition factors of $B_k^\lambda$ it is necessary
to compute the eigenvalues of the action of the $W_i$'s on the basis vectors $v_S$ as follows.

By \eqref{c0edgelabel}, \eqref{cjedgelabel},
and the formulas in Proposition \ref{prop:calibrated},
$$\Phi(P)v_S = q^{2e_0(S^{(0)})}v_S
\qquad\hbox{and}\qquad
\Phi(Z_i)v_S = q^{2c(S^{(i)}/S^{(i-1)})} v_S\quad\hbox{for $i=1,\ldots, k$.}
$$
Using \eqref{eq:normalized_gens} and \eqref{eq:rectparams}, 
$W_i = -(a_1a_2b_1b_2)^{-\frac12}Z_i$ with
$a_1=q^{2a}$,
$a_2 = q^{-2c}$,
$b_1=q^{2b}$, and 
$b_2=q^{-2d}$, and thus
\begin{equation}
\Phi(W_i)v_S
=-(a_1a_2b_1b_2)^{-\frac12}\Phi(Z_i)v_S
= -q^{-(a-c+b-d)}q^{2c(S^{(i)}/S^{(i-1)}}v_S
= -q^{2\tilde{c}(S^{(i)}/S^{(i-1)})}v_S.
\label{Weigenvals}
\end{equation}
Then
$
\Phi(PW_1\cdots W_k)v_S
=(-1)^k q^{2\left(e_0(S^{(0)})+c(S^{(1)}/S^{(0)})+\cdots +c(S^{(k)}/S^{(k-1)})\right)-k(a-c+b-d)} v_S
$
so that, with $c_0$ and $z$ as in \eqref{eq:zetavalue},
\begin{equation}
\Phi(W_0) = \Phi(PW_1\cdots W_k)v_S = (-1)^k q^{2c_0} v_S= zv_S.
\label{W0eigenvals}
\end{equation}

Let $S=((a^c) \to S^{(0)} \to S^{(1)}\to \cdots \to S^{(k)}=\lambda)$ be a path to $\lambda$ in the Bratteli diagram.
In the context of the diagrams in \eqref{eq:P0drawings}, the partitions $S^{(0)}$ and $S_{\mathrm{max}}^{(0)}$ differ by moving 
some boxes from $\mu$ to $\mu^c$ (from the NW border of $\lambda/S_{\mathrm{max}}^{(0)}$ in $\cB$
to the NW border of $\lambda/S^{(0)}$ in $\cB'$).  
Thus the sequence $\cc=(c_1,\ldots,c_k)$, where 
$$\hbox{$c_1,\ldots, c_k$ are the values
$|\tilde c(S^{(1)}/S^{(0)})|, \ldots, |\tilde c(S^{(k)}/S^{(k-1)})|$ arranged in increasing order,}
$$
coincides with $\cc$ as defined in \eqref{ccdefn}.
Let $w_S\in \cW_0$ be the minimal length element such that 
\begin{equation}
w_S\cc = w_S(c_1, \ldots, c_k) = (c_{w_S^{-1}(1)}, \ldots, c_{w_S^{-1}(k)}) 
= (\tilde c(S^{(1)}/S^{(0)}), \ldots, \tilde c(S^{(k)}/S^{(k-1)})),
\label{findwS}
\end{equation}
where $c_{-i}=-c_i$ for $i\in \{1,\ldots, k\}$.
The signed permutation $w_S$ is the unique signed
permutation such that 
$$w_S\cc = (\tilde c(S^{(1)}/S^{(0)}), \ldots, \tilde c(S^{(k)}/S^{(k-1)}))
\qquad\hbox{and}\qquad
R(w_S)\cap Z(\cc) = \emptyset,$$
where $Z(\cc)$ is
as in \eqref{Z(c)origdefn}.
If the boxes of $\lambda/S^{(0)}$ are indexed according to the same conditions as 
just before \eqref{eq:J-from-partition},  
then $w_S$ is the signed permutation given by
$$w_S(i) = \sgn(\tilde{c}(\mathrm{box}_i))(\hbox{entry in $\mathrm{box}_i$ of $S$}),$$
where the path $S$ is identified with the standard tableau of shape $\lambda/S^{(0)}$ that has $S^{(j)}/S^{(j-1)}$ filled with $j$. 

The basis vector $v_S$ appears in a composition factor isomorphic to $H_k^{(z,\cc,J)}$ where 
$$J = R(w_S)\cap P(\cc),
\qquad\hbox{where}\quad
R(w_S) = R_1\sqcup R_2\sqcup R_3
\quad\hbox{and}\quad
P(\cc) = P_1\sqcup P_2\sqcup P_3,
$$
as defined in \eqref{R(w)origdefn} and \eqref{P(c)origdefn}, are given by
$$
\begin{array}{ll}
R_1 = \{ \vep_i\ |\ \hbox{$i>0$ and $w_S(i)<0$}\}, 
&P_1 = \{ \vep_i\ |\ c_i\in \{r_1,r_2\}\}, \\
R_2 =  \{ \vep_j - \vep_i\ |\ \hbox{$i<j$ and $w_S(i)>w_S(j)$}\}, 
&P_2 = \{\vep_j-\vep_i\ |\ \hbox{$0<i<j$, $c_j=c_i+1$}\}, \\
R_3 = \{ \vep_j+\vep_i\ |\ \hbox{$i<j$ and $-w_S(i)>w_S(j)$}\},
&P_3 =  \{\vep_j+\vep_i\ |\ \hbox{$0<i<j$, $c_j=-c_i+1$}\}.
\end{array}
$$
To describe $J = (R_1\cap P_1)\sqcup (R_2\cap P_2) \sqcup (R_3\cap P_3)$ in terms of the boxes in $\lambda$, first record that
$$R_1\cap P_1
= \{ \vep_i \ |\ \hbox{$i>0$ and $w_S(i)<0$}\}\cap \{ \vep_i\ |\ c_i\in \{r_1,r_2\}\}\\
= \{ \vep_i\ |\ \hbox{$\tilde c(\mathrm{box}_i)=\{-r_1,-r_2\}$}\}.
$$
Next analyze
$$R_2\cap P_2
=
\{ \vep_j - \vep_i\ |\ \hbox{$i<j$ and $w(i)>w(j)$}\}
\cap
\{\vep_j-\vep_i\ |\ \hbox{$0<i<j$, $c_j=c_i+1$}\}.
$$
Since $0\le c_i$ and $c_j = c_i+1$, we have $c_j\ge 1$.
$$\begin{array}{l}
\hbox{Case 1: $\tilde c(\mathrm{box}_i)\ge 0$, so that $\tilde c(\mathrm{box}_j)  = \pm (\tilde c(\mathrm{box}_i)+1)$.} \\
\qquad\hbox{Case 1a: $\tilde c(\mathrm{box}_j)  = \tilde c(\mathrm{box}_i)+1$.} \\
\qquad\qquad\hbox{If $\mathrm{box}_j$ is NW of $\mathrm{box}_i$ then
$w(j) < w(i)$ and $\vep_j-\vep_i\in J$.} \\
\qquad\qquad\hbox{If $\mathrm{box}_j$ is SE of $\mathrm{box}_i$ then
$w(j) > w(i)$ and $\vep_j-\vep_i\not\in J$.} \\
\qquad\hbox{Case 1b: $\tilde c(\mathrm{box}_j)  = -(\tilde c(\mathrm{box}_i)+1)$.} \\
\qquad\qquad\hbox{Then $w(j)<0< w(i)$ so that $w(j)<w(i)$ and $\vep_j-\vep_i\in J$.} \\
\hbox{Case 2: $\tilde c(\mathrm{box}_i)< 0$, so that $\tilde c(\mathrm{box}_j)  = \pm (-\tilde c(\mathrm{box}_i)+1)$.} \\
\qquad\hbox{Case 2a: $\tilde c(\mathrm{box}_j) = \tilde c(\mathrm{box}_i)-1<\tilde c(\mathrm{box}_i)<0$.} \\
\qquad\qquad\hbox{If $\mathrm{box}_j$ is NW of $\mathrm{box}_i$ then $-w(j)<-w(i)$ so that $w(i)<w(j)$ and $\vep_j-\vep_i\not\in J$.} \\
\qquad\qquad\hbox{If $\mathrm{box}_j$ is SE of $\mathrm{box}_i$ then $-w(j)>-w(i)$ so that $w(i)>w(j)$ and $\vep_j-\vep_i\in J$.} \\
\qquad\hbox{Case 2b: $\tilde c(\mathrm{box}_j)  = -\tilde c(\mathrm{box}_i)+1>0>\tilde c(\mathrm{box}_i)$.} \\
\qquad\qquad\hbox{Then $w(i)<0$ and $0< w(j)$ so that $\vep_j-\vep_i\not\in J$.} 
\end{array}
$$
Finally, analyze
$$R_3\cap P_3 = 
\{ \vep_j + \vep_i\ |\ \hbox{$i<j$ and $-w(i)>w(j)$}\}
\cap
\{\vep_j+\vep_i\ |\ \hbox{$0<i<j$, $c_j=-c_i+1$}\}.
$$
Since $0\le c_i$ and $c_j = -c_i+1\ge c_i$, we have $0\le c_i\le 1/2$.  Since the entries of $\cc$ are
in $\ZZ$ or in $\frac12 + \ZZ$, the possibilities for $(c_i,c_j)$ are $(0,1)$ and $(\frac12, \frac12)$,
and the possibilities for $(\tilde c(\mathrm{box}_i), \tilde c(\mathrm{box}_j))$ are 
$(0,1)$, $(0, -1)$, $(\frac12, \pm\frac12)$, or $(-\frac12, \pm\frac12)$.
$$\begin{array}{l}
\hbox{Case 1: $\tilde c(\mathrm{box}_j)  = 1$ and $\tilde c(\mathrm{box}_i)  = 0$.} \\
\qquad\hbox{If $\mathrm{box}_j$ is NW of $\mathrm{box}_i$ then
$0<w(j) < w(i)$ so that $-w(i)<0<w(j)$ and $\vep_j+\vep_i\not\in J$.} \\
\qquad\hbox{If $\mathrm{box}_j$ is SE of $\mathrm{box}_i$ then
$0<w(i)<w(j)$ so that $-w(i)<0<w(j)$ and $\vep_j+\vep_i\not\in J$. } \\ 
\hbox{Case 2: $\tilde c(\mathrm{box}_j)  = -1$ and $\tilde c(\mathrm{box}_i)  = 0$.} \\
\qquad\hbox{If $\mathrm{box}_j$ is NW of $\mathrm{box}_i$ then $-w(j)<w(i)$ so that $-w(i)<w(j)$ and 
$\vep_j+\vep_i\not\in J$.} \\
\qquad\hbox{If $\mathrm{box}_j$ is SE of $\mathrm{box}_i$ then $-w(j)>w(i)$ so that
$-w(i)>w(j)$ and $\vep_j+\vep_i\in J$.  } \\
\hbox{Case 3: $\tilde c(\mathrm{box}_j)  = \frac12$ and $\tilde c(\mathrm{box}_i)  = \frac12$.} \\
\qquad\hbox{Then $0<w(i)<w(j)$ so that $-w(i)<0<w(j)$ and $\vep_j+\vep_i\not\in J$.} \\
\hbox{Case 4: $\tilde c(\mathrm{box}_j)  = -\frac12$ and $\tilde c(\mathrm{box}_i)  = \frac12$.} \\
\qquad\hbox{
This case cannot occur since, when indexing the boxes of $\lambda/S^{(0)}$,} \\
\qquad\hbox{ the boxes of shifted content $-\frac12$ are numbered before the boxes of shifted content $\frac12$.} \\
\hbox{Case 5: $\tilde c(\mathrm{box}_j)  = \frac12$ and $\tilde c(\mathrm{box}_i)  = -\frac12$.} \\
\qquad\hbox{If $\mathrm{box}_j$ is NW of $\mathrm{box}_i$ then $w(i)<0$ and $w(j)<-w(i)$ so that 
$\vep_j+\vep_i\in J$.} \\
\qquad\hbox{If $\mathrm{box}_j$ is SE of $\mathrm{box}_i$ then $w(i)<0$ and $-w(i)<w(i)$ so that
$\vep_j+\vep_i\not\in J$.  } \\
\hbox{Case 6: $\tilde c(\mathrm{box}_j)  = -\frac12$ and $\tilde c(\mathrm{box}_i)  = -\frac12$.} \\
\qquad\hbox{Then $0<-w(j)<-w(i)$ and $w(j)<0<-w(i)$ so that $\vep_j+\vep_i\in J$.}
\end{array}
$$
This analysis shows that $J = R(w_S)\cap P(\cc)  = (R_1\cap P_1) \sqcup (R_2\cap P_2)\sqcup (R_3\cap P_3)$ is
as given in \eqref{eq:J-from-partition}.  

A consequence of the description of $J$ in \eqref{eq:J-from-partition} is that
$J=R(w_S)\cap P(\cc)$ is independent of the choice of $S\in \cT_k^\lambda$.  It follows that all composition factors
of $B_k^\lambda$ are isomorphic to $H_k^{(z,\cc,J)}$.

Let $S,T\in \cT_k^\lambda$ such that $v_S$ and $v_T$ have the same eigenvalues for $W_0, \ldots, W_k$.  
By definition of $\cT_k^\lambda$, $S^{(k)}=T^{(k)}=\lambda$. 
Since $W_kv_S = -q^{\tilde c(S^{(k)}/S^{(k-1)})}v_S=-q^{\tilde c(\lambda/S^{(k-1)})}v_S$ 
and $W_kv_T = -q^{\tilde c(T^{(k)}/T^{(k-1)})}v_T=-q^{\tilde c(\lambda/T^{(k-1)})}v_T$, we have
$\tilde c(\lambda/T^{(k-1)}) = \tilde c(\lambda/S^{(k-1)})$ which implies that $T^{(k-1)}=S^{(k-1)}$.  Using this and
the fact that the eigenvalues of $W_{k-1}$ on $v_S$ and $v_T$ are the same,  implies similarly that $T^{(k-2)}=S^{(k-2)}$.
Induction gives that 
$$S^{(0)}=T^{(0)},\quad \ldots,\quad S^{(k)}=T^{(k)} \qquad\hbox{so that}\qquad
S=T.$$  
Thus $\dim((B_k^\lambda)_{\gamma})\le 1$ (in the notation of \eqref{calibrateddefn}) 
and $B_k^\lambda\cong H_k^{(z,\cc,J)}$ as $H_k^{\mathrm{ext}}$-modules.
\end{proof}

\noindent
In the course of the proof of Theorem  \ref{thm:partitions-to-SLRs} we have also established the following result, which deserves mention.

\begin{cor}  \label{onStdTableaux} Keeping the notations of Theorem \ref{thm:partitions-to-SLRs},
let $\lambda\in \cP^{(k)}$ and $S \in \cT_k^\lambda$, and let $w_S$ be the signed permutation defined in \eqref{findwS}.
Then
\begin{equation*}
\begin{matrix}
\cT_k^\lambda &\longrightarrow &\cF^{(\cc,J)} \\
S &\longmapsto &w_S \\
\end{matrix}
\quad\hbox{is a bijection.}
\label{oldpartitionstolocalregions}
\end{equation*}
\end{cor}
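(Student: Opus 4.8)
The plan is to prove this by combining the explicit dimension count established in the proof of Theorem~\ref{thm:partitions-to-SLRs} with the combinatorics of Proposition~\ref{prop:stdtabbijection}. Recall that in that proof each path $S\in\cT_k^\lambda$ was assigned a signed permutation $w_S\in\cW_0$ via \eqref{findwS}, characterized uniquely by $w_S\cc=(\tilde c(S^{(1)}/S^{(0)}),\ldots,\tilde c(S^{(k)}/S^{(k-1)}))$ together with $R(w_S)\cap Z(\cc)=\emptyset$; and it was shown that $R(w_S)\cap P(\cc)=J$ for every $S$. Thus the map $S\mapsto w_S$ indeed lands in $\cF^{(\cc,J)}=\{w\in\cW_0\mid R(w)\cap Z(\cc)=\emptyset,\ R(w)\cap P(\cc)=J\}$, so the map is well-defined; it remains to check that it is injective and surjective.

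First I would establish injectivity. If $w_S=w_T$, then in particular $w_S\cc=w_T\cc$, which reads $(\tilde c(S^{(1)}/S^{(0)}),\ldots,\tilde c(S^{(k)}/S^{(k-1)}))=(\tilde c(T^{(1)}/T^{(0)}),\ldots,\tilde c(T^{(k)}/T^{(k-1)}))$. By \eqref{Weigenvals} this says exactly that $v_S$ and $v_T$ have the same $W_1,\ldots,W_k$ eigenvalues, and by \eqref{W0eigenvals} they also share the $W_0$ eigenvalue $z$. The descending induction at the end of the proof of Theorem~\ref{thm:partitions-to-SLRs} then forces $S^{(j)}=T^{(j)}$ for all $j$, i.e. $S=T$. (One should note that the shape $S^{(0)}$ is also recovered: the sequence of $\tilde c$-values together with the ambient partition $\lambda$ and the constraint that the $S^{(j)}$ form a path in the Bratteli diagram pins down $S^{(0)}=S_{\mathrm{max}}^{(0)}$ shifted by the boxes moved from $\mu$ to $\mu^c$, exactly as in \eqref{findwS} and the paragraph preceding it.)

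Next I would establish surjectivity by a counting argument. On the one hand, $|\cT_k^\lambda|=\dim B_k^\lambda$ by Proposition~\ref{prop:calibrated}, since $\{v_S\mid S\in\cT_k^\lambda\}$ is a basis. On the other hand, Theorem~\ref{thm:partitions-to-SLRs} gives $B_k^\lambda\cong H_k^{(z,\cc,J)}$, and by \eqref{HgammaJ} the module $H_k^{(z,\cc,J)}$ has dimension $|\cF^{(\cc,J)}|$. Hence $|\cT_k^\lambda|=|\cF^{(\cc,J)}|$, and an injective map between finite sets of equal cardinality is a bijection. Alternatively — and this is the route that avoids any circularity if one prefers the corollary to be logically prior — surjectivity can be seen directly: given $w\in\cF^{(\cc,J)}$, the permutation $w$ determines a standard filling $S_w$ of the box configuration $\kappa$ attached to $(\cc,J)$ via Proposition~\ref{prop:stdtabbijection}, and reading the entries of $\kappa$ along increasing diagonals (equivalently, reconstructing the chain $S^{(0)}\subset S^{(1)}\subset\cdots\subset S^{(k)}=\lambda$ by adding the box numbered $j$ at step $j$, with the sign of $\tilde c$ dictating whether a box sits in $\cB$ or $\cB'$) produces a path $S\in\cT_k^\lambda$ with $w_S=w$. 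The main obstacle is precisely this last reconstruction: one must verify that the standardness conditions (S1)--(S4) on $S_w$ translate exactly into the assertion that the successive shapes $S^{(j)}$ are genuine partitions forming a path in the Bratteli diagram, i.e. that at each step one is adding an addable box and that $S^{(0)}\in\cP^{(0)}$. This is the same bookkeeping as the case analysis (Cases 1--6) in the proof of Theorem~\ref{thm:partitions-to-SLRs}, run in reverse, so I would simply invoke that analysis together with Proposition~\ref{prop:stdtabbijection} rather than repeat it; the cardinality argument of the previous sentence is the cleanest way to finish.
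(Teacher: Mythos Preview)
Your proposal is correct and matches the paper's approach: the paper simply states that the corollary was ``established in the course of the proof of Theorem~\ref{thm:partitions-to-SLRs},'' and your argument spells out exactly those ingredients---well-definedness from the case analysis showing $R(w_S)\cap P(\cc)=J$, injectivity from the descending-induction eigenvalue argument, and surjectivity from the dimension count $|\cT_k^\lambda|=\dim B_k^\lambda=\dim H_k^{(z,\cc,J)}=|\cF^{(\cc,J)}|$. Your alternative direct surjectivity argument via Proposition~\ref{prop:stdtabbijection} is a bonus the paper does not provide, and your remark about recovering $S^{(0)}$ is handled in the paper by the same descending induction (a partition has at most one removable box on any given diagonal).
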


\begin{example}\label{ex:two-row-SW-modules} Let $M=L(a^c) = L(6)$ and $N= L(b^d) = L(3)$ so that
$$a = 6, \quad c=1, \quad b = 3, \quad d=1, \quad r_1 = \hbox{$\frac32$}, \quad\hbox{and}\quad
r_2 = \hbox{$\frac{11}{2}$}.
$$
The partition $\lambda = (10,8)$ is in $\cP^{(k)}$ with $k=9$.  Then we draw $\lambda$ as the (marked) partition
$$
\lambda = (10,8) = \begin{matrix}
\begin{tikzpicture}[xscale=.4, yscale=-.4] 
	\filldraw[black!40] (0,0) to (6,0) to (6,1) to (3,1) to (3,2) to (0,2) to (0,0);
	\Part{10,8}
	 \node[bV,red] at (9,0){}; \node[bV,red!50!blue] at (6,1){}; \node[bV,red!50!blue] at (3,1){}; \node[bV,red] at (0,2){};
\end{tikzpicture}
\end{matrix}.
\qquad\hbox{Here, \quad $S^{(0)}_{\mathrm{max}} = (6,3)$}
$$
is indicated by the shaded boxes. 
The boxes of $\lambda/S^{(0)}_{\mathrm{max}}$ have
$$\text{indexing } 
\begin{matrix}\begin{tikzpicture}[xscale=.5,yscale=-.5]
	\draw (6,2) to (6,0) to (10,0) to (10,1) to (3,1) to (3,2) to (8,2) to (8,0) (7,2) to (7,0) (4,2) to (4,1) (5,1) to (5,2) (9,0) to (9,1);
	\Cont{3.5,1.5}{$3$}
	\Cont{4.5,1.5}{$1$}
	\Cont{5.5,1.5}{$2$}
	\Cont{6.5,1.5}{$4$}
	\Cont{6.5,.5}{$5$}
	\Cont{7.5,1.5}{$6$}
	\Cont{7.5,.5}{$7$}
	\Cont{8.5,.5}{$8$}
	\Cont{9.5,.5}{$9$}
	\begin{scope}[->, rounded corners=5pt, black!50!white, shorten >= 3pt]
	\draw (4, 1) to ++(-.5,-.5) to ++(1,0) to ++(.5,.5);
	\draw (6, 2)  to ++(.5,.5) to ++(-2,0) to ++(-.5,-.5);
	\end{scope}
	\begin{scope}[->, rounded corners=6pt, black!50!white, shorten >= 3pt]
	\draw (3, 1) to ++(-1,-1) to ++(3,0) to ++(1,1);
	\draw (7, 2)  to ++(1,1) to ++(-4,0) to ++(-3.5,-3.5) to ++(5,0) to ++(.5,.5);
	\draw (8, 2)  to ++(1.5,1.5) to ++(-6,0) to ++(-4.5,-4.5) to ++(7,0) to ++(1,1);
	\draw (8, 1)  to ++(3,3) to ++(-8,0) to ++(-5.5,-5.5) to ++(9,0) to ++(1.5,1.5);
	\draw (9, 1)  to ++(3.5,3.5) to ++(-10,0) to ++(-6.5,-6.5) to ++(11,0) to ++(2,2);
	\end{scope}
	  \node[bV,red] at (9,0){}; \node[bV,red!50!blue] at (6,1){}; \node[bV,red!50!blue] at (3,1){};  \node[bV,red] at (0,2){};
\end{tikzpicture}\end{matrix},$$
$$\text{and shifted contents } 
\begin{matrix}\begin{tikzpicture}[xscale=.5,yscale=-.5]
	\draw (6,2) to (6,0) to (10,0) to (10,1) to (3,1) to (3,2) to (8,2) to (8,0) (7,2) to (7,0) (4,2) to (4,1) (5,1) to (5,2) (9,0) to (9,1);
	\Cont{3.5,1.5}{-$\frac32$}
	\Cont{4.5,1.5}{-$\frac12$}
	\Cont{5.5,1.5}{$\frac12$}
	\Cont{6.5,1.5}{$\frac32$}
	\Cont{6.5,.5}{$\frac52$}
	\Cont{7.5,1.5}{$\frac52$}
	\Cont{7.5,.5}{$\frac72$}
	\Cont{8.5,.5}{$\frac92$}
	\Cont{9.5,.5}{$\frac{11}2$}
	 \node[bV,red] at (9,0){}; \node[bV,red!50!blue] at (6,1){}; \node[bV,red!50!blue] at (3,1){}; \node[bV,red] at (0,2){};
\end{tikzpicture}\end{matrix}.
$$
\end{example}

\begin{example}
\label{ex:partitionstoSLRs}
\tikzstyle{C}=[draw, solid, circle, inner sep=0pt, minimum size=4pt]
\def\ExA{
\Part{9,9,6,6,6,2,1,1,1}
\draw[blue, very thick](0,0)--(0,4)--(5,4)--(5,0)--(0,0);
\draw[dashed, blue!50, thick] (5,0)--(8,0)--(8,3)--(5,3);
\draw[dashed, blue!50, thick] (0,4)--(0,7)--(3,7)--(3,4);
}
Let $M=L(a^c)=L(5^4)$ and $N = L(b^d)=L(3^3)$ so that
$$
\hbox{$a=5$,\quad $c=4$,\quad $b=3$\quad $d=3$,\quad $r_1 = \frac{3}{2}$,\quad and\quad $r_2 = \frac{15}{2}$.}
$$
The partition  $\lambda = (9,9,6,6,6,2,1,1,1)$ is in $\cP^{(k)}$ with $k = 12$.  For this partition $S_{\mathrm{max}}^{(0)} = (7,6,5,5,3,2,1)$; and one tableau $S \in \cT_k^\lambda$ with $S^{(0)} = S^{(0)}_{\mathrm{max}}$ (where the shaded portion of $\lambda$ corresponds to $S^{(0)}$)
is 
$$
S=
\begin{matrix} 
\begin{tikzpicture}[xscale=.4,yscale=-.4]
\draw[densely dotted, thick, red!50!blue] (1.5, -.5) node[above left] {\scriptsize$r_1$} to (6.5, 4.5);
\draw[densely dotted, thick, red!50!blue] (-.5, .5) node[above left] {\scriptsize$-r_1$} to (4.5, 5.5);
\draw[densely dotted, thick,, red] (-.5, 6.5) node[above left] {\scriptsize$-r_2$} to (1.5, 8.5);
\draw[densely dotted, thick, red] (7.5, -.5) node[above left] {\scriptsize$r_2$} to (9.5, 1.5);
\filldraw[white] (0,0)--(9,0)--(9,2)--(6,2)--(6,5)--(2,5)--(2,6)--(1,6)--(1,9)--(0,9);
\filldraw[gray!70] (0,0)--(7,0)--(7,1)--(6,1)--(6,2)--(5,2)--(5,4)--(3,4)--(3,5)--(2,5)--(2,6)--(1,6)--(1,7)--(0,7);
\node at (7.5,.5) {$1$};
\node at (8.5,.5) {$2$};
\node at (6.5,1.5) {$3$};
\node at (7.5,1.5) {$4$};
\node at (8.5,1.5) {$5$};
\node at (5.5,2.5) {$6$};
\node at (5.5,3.5) {$7$};
\node at (3.5,4.5) {$8$};
\node at (4.5,4.5) {$9$};
\node at (5.5,4.5) {$10$};
\node at (.5,7.5) {$11$};
\node at (.5,8.5) {$12$};
\ExA
\node[bV,red] at (8,0){};
\node[bV,red] at (0,7){};
\node[bV,red!50!blue] at (5,3){};
\node[bV,red!50!blue] at (3,4){};
\end{tikzpicture}
\end{matrix};
\qquad\hbox{and \quad }
\begin{matrix} 
\begin{tikzpicture}[xscale=.4,yscale=-.4]
\draw[densely dotted, thick, red!50!blue] (1.5, -.5) node[above left] {\scriptsize$r_1$} to (6.5, 4.5);
\draw[densely dotted, thick, red!50!blue] (-.5, .5) node[above left] {\scriptsize$-r_1$} to (4.5, 5.5);
\draw[densely dotted, thick, red] (-.5, 6.5) node[above left] {\scriptsize$-r_2$} to (1.5, 8.5);
\draw[densely dotted, thick, red] (7.5, -.5) node[above left] {\scriptsize$r_2$} to (9.5, 1.5);
\filldraw[white] (0,0)--(9,0)--(9,2)--(6,2)--(6,5)--(2,5)--(2,6)--(1,6)--(1,9)--(0,9);
\filldraw[gray!70] (0,0)--(7,0)--(7,1)--(6,1)--(6,2)--(5,2)--(5,4)--(3,4)--(3,5)--(2,5)--(2,6)--(1,6)--(1,7)--(0,7);
\node at (7.5,.5) {$8$};
\node at (8.5,.5) {$11$};
\node at (6.5,1.5) {$6$};
\node at (7.5,1.5) {$7$};
\node at (8.5,1.5) {$9$};
\node at (5.5,2.5) {$5$};
\node at (5.5,3.5) {$4$};
\node at (3.5,4.5) {$3$};
\node at (4.5,4.5) {$1$};
\node at (5.5,4.5) {$2$};
\node at (.5,7.5) {$10$};
\node at (.5,8.5) {$12$};
\ExA
\node[bV,red] at (8,0){};
\node[bV,red] at (0,7){};
\node[bV,red!50!blue] at (5,3){};
\node[bV,red!50!blue] at (3,4){};
\end{tikzpicture}
\end{matrix}
$$
indicates the indexing of the boxes in $\lambda/S_{\mathrm{max}}^{(0)}$.  The contents of the boxes $S^{(i)}/S^{(i-1)}$ for $i = 1, \dots, k$ are $7,8,5,6,7,3,2,-1,0,1,-7,-8$; and since $-\hbox{$\frac12$}(a-c+b-d)=-\hbox{$\frac12$}$, the shifted contents $\tilde{c}(S^{(i)}/S^{(i-1)})$ for $i=1, \dots, k$ are
$$\frac{13}{2},\ \frac{15}{2},\ \frac{9}{2},\ \frac{11}{2},\ \frac{13}{2},\ \frac{5}{2},\
\frac{3}{2},\ -\frac{3}{2},\ -\frac{1}{2},\ \frac{1}{2},\ -\frac{15}{2},\ -\frac{17}{2},$$
respectively.
The sum of the contents of the boxes in $S_{\mathrm{max}}^{(0)}$ is 1, the sum of the 
contents of the boxes in $\lambda$ is 23, 
$c_0 = - \half(12(5-4+3-3) + 5 \cdot 4(5-4) + 3 \cdot 3 (3-3)) + 24
		= 8$,
$$z=q^{16},
\qquad\hbox{and}\qquad
\cc = (\hbox{$\frac{1}{2}, \frac{1}{2}, \frac{3}{2}, \frac{3}{2}, \frac{5}{2}, \frac{9}{2}, \frac{11}{2}, \frac{13}{2}, \frac{13}{2}, \frac{15}{2}, \frac{15}{2}, \frac{17}{2}$})
$$
is the sequence of absolute values of the shifted contents, arranged in increasing order.
Using \eqref{findwS},
\begin{align*}
w_S &= \left( \begin{array}{cccccccccccc} 1 &2 &3 &4 &5 &6 &7 &8 &9 &10 &11 &12 \\ 
-9 &10 &-8 &7 &6 &3 &4 &1 &5 &-11 &2 &-12
\end{array}\right), \\
P(\cc) &= \left\{
\begin{array}{l}
\fbox{$\vep_3$},
\vep_4,
\fbox{$\vep_{10}$},
\vep_{11}, 
\vep_2-\vep_{-1}, \\
\vep_3-\vep_1, \vep_4-\vep_1,
\fbox{$\vep_3-\vep_2$}, \fbox{$\vep_4-\vep_2$},
\vep_5-\vep_3, \fbox{$\vep_5-\vep_4$}, \\
 \vep_7-\vep_6,
\fbox{$\vep_8-\vep_7$}, \vep_9-\vep_7, \\
\fbox{$\vep_{10}-\vep_{8}$}, \vep_{11}-\vep_{8},
\fbox{$\vep_{10}-\vep_{9}$}, \fbox{$\vep_{11}-\vep_{9}$},
\fbox{$\vep_{12}-\vep_{10}$}, \fbox{$\vep_{12}-\vep_{11}$},
\end{array}
\right\}, \\
R(w_S) &= \left\{ 
\begin{array}{l}
\vep_1, \fbox{$\vep_3$}, \fbox{$\vep_{10}$}, \vep_{12} \\
\vep_{10}-\vep_1, 
\vep_{12}-\vep_1, 
\fbox{$\vep_3-\vep_2$},
\fbox{$\vep_4-\vep_2$},
\vep_5-\vep_2,
\vep_6-\vep_2,
\vep_7-\vep_2,
\vep_8-\vep_2, 
\vep_9-\vep_2,
\vep_{10}-\vep_2,\\
\vep_{11}-\vep_2,
\vep_{12}-\vep_2, 
\vep_{10}-\vep_3,
\vep_{12}-\vep_3, 
\fbox{$\vep_5-\vep_4$},
\vep_6-\vep_4,
\vep_7-\vep_4,
\vep_8-\vep_4,
\vep_9-\vep_4,
\vep_{10}-\vep_4,\\
\vep_{11}-\vep_4,
\vep_{12}-\vep_4, 
\vep_6-\vep_5,
\vep_7-\vep_5,
\vep_8-\vep_5,
\vep_9-\vep_5,
\vep_{10}-\vep_5,
\vep_{11}-\vep_5,
\vep_{12}-\vep_5, 
\vep_8-\vep_6,\\
\vep_{10}-\vep_6,
\vep_{11}-\vep_6,
\vep_{12}-\vep_6, 
\fbox{$\vep_8-\vep_7$},
\vep_{10}-\vep_7,
\vep_{11}-\vep_7,
\vep_{12}-\vep_7, 
\fbox{$\vep_{10}-\vep_8$},
\vep_{12}-\vep_8, \\
\fbox{$\vep_{10}-\vep_9$},
\fbox{$\vep_{11}-\vep_9$},
\vep_{12}-\vep_9, 
\fbox{$\vep_{12}-\vep_{10}$},
\fbox{$\vep_{12}-\vep_{11}$}, \\
\vep_3+\vep_1,
\vep_4+\vep_1,
\vep_5+\vep_1,
\vep_6+\vep_1,
\vep_7+\vep_1,
\vep_8+\vep_1,
\vep_9+\vep_1,
\vep_{10}+\vep_1,
\vep_{11}+\vep_1,
\vep_{12}+\vep_1,\\ 
\vep_{10}+\vep_2,
\vep_{12}+\vep_2, 
\vep_4+\vep_3,
\vep_5+\vep_3,
\vep_6+\vep_3,
\vep_7+\vep_3,
\vep_8+\vep_3,
\vep_9+\vep_3,
\vep_{10}+\vep_3,
\vep_{11}+\vep_3,\\
\vep_{12}+\vep_3, 
\vep_{10}+\vep_4,
\vep_{12}+\vep_4, 
\vep_{10}+\vep_5,
\vep_{12}+\vep_5, 
\vep_{10}+\vep_6,
\vep_{12}+\vep_6, 
\vep_{10}+\vep_7,
\vep_{12}+\vep_7,\\ 
\vep_{10}+\vep_8,
\vep_{12}+\vep_8, 
\vep_{10}+\vep_9,
\vep_{12}+\vep_9, 
\vep_{11}+\vep_{10},
\vep_{12}+\vep_{10}, 
\vep_{12}+\vep_{11},
\end{array}
\right\},
\end{align*}
and $J = R(w_S)\cap P(\cc)$ consists of the outlined elements of $P(\cc)$ (which are the same as the outlined elements of $R(w_S)$).
Another $T \in \cT_{k}^\lambda$ is (again, with $T^{(0)}$ indicated by the shaded boxes)
$$
T=
\begin{matrix} 
\begin{tikzpicture}[xscale=.4,yscale=-.4]
\draw[densely dotted, thick, red!50!blue] (1.5, -.5) node[above left] {\scriptsize$r_1$} to (6.5, 4.5);
\draw[densely dotted, thick, red!50!blue] (-.5, .5) node[above left] {\scriptsize$-r_1$} to (4.5, 5.5);
\draw[densely dotted, thick,, red] (-.5, 6.5) node[above left] {\scriptsize$-r_2$} to (1.5, 8.5);
\draw[densely dotted, thick, red] (7.5, -.5) node[above left] {\scriptsize$r_2$} to (9.5, 1.5);
\filldraw[white] (0,0)--(9,0)--(9,2)--(6,2)--(6,5)--(2,5)--(2,6)--(1,6)--(1,9)--(0,9);
\filldraw[gray!70] (0,0)--(8,0)--(8,2)--(5,2)--(5,4)--(3,4)--(3,5)--(0,5);
\node at (.5,6.5) {$4$};
\node at (8.5,.5) {$2$};
\node at (1.5,5.5) {$3$};
\node at (.5,5.5) {$1$};
\node at (8.5,1.5) {$10$};
\node at (5.5,2.5) {$7$};
\node at (5.5,3.5) {$8$};
\node at (3.5,4.5) {$6$};
\node at (4.5,4.5) {$9$};
\node at (5.5,4.5) {$12$};
\node at (.5,7.5) {$5$};
\node at (.5,8.5) {$11$};
\ExA
\node[bV,red] at (8,0){};
\node[bV,red] at (0,7){};
\node[bV,red!50!blue] at (5,3){};
\node[bV,red!50!blue] at (3,4){};
\end{tikzpicture}
\end{matrix}.
$$
\end{example}

Keeping the setting of Theorem \ref{thm:partitions-to-SLRs}, Proposition \ref{prop:stdtabbijection} associates a configuration of 
$2k$ boxes to $(\cc,J)$.  This configuration can be described in terms of the data of $\lambda\in \cP^{(k)}$ as follows.
With $S_{\mathrm{max}}^{(0)}$ as defined just before \eqref{ccdefn},
let $\mathrm{rot}(\lambda/S_{\mathrm{max}}^{(0)})$ be the $180^\circ$ rotation of the skew shape $\lambda/S_{\mathrm{max}}^{(0)}$.  
Then
\begin{equation}
\hbox{the configuration of boxes $\kappa$ corresponding to 
		$(\cc,J)$} \quad\hbox{is}\quad
		\kappa=\mathrm{rot}(\lambda/S_{\mathrm{max}}^{(0)})\cup \lambda/S_{\mathrm{max}}^{(0)},
\label{eq:correspndingdoubledshape}
\end{equation}
so that it is the (disjoint) union of two skew shapes $\lambda/S_{\mathrm{max}}^{(0)}$ and $\mathrm{rot}(\lambda/S_{\mathrm{max}}^{(0)})$, 
placed 
with
\begin{enumerate}[\quad]
\item  $\mathrm{rot}(\lambda/S^{(0)})$ northwest of $\lambda/S^{(0)}$,
\item  $\lambda/S^{(0)}$ positioned so that the contents of its boxes are $(\tilde c(S^{(1)}/S^{(0)}), \ldots, \tilde c(S^{(k)}/S^{(k-1)}))$,
\item $\mathrm{rot}(\lambda/S^{(0)})$ positioned so that the contents of its boxes are $(-\tilde c(S^{(k)}/S^{(k-1)}), \ldots, -\tilde c(S^{(1)}/S^{(0)}))$,
\end{enumerate}
and with markings placed at the NE corners of the rectangles $\cB$ and $\cB'$ corresponding to $\lambda/S^{(0)}$ (in the notation of \eqref{eq:P0drawings}).
The resulting doubled skew shape is symmetric under the 180$^\circ$ rotation which sends a box on diagonal $c_i$ to a box on diagonal $-c_i$. 
In the case of Example \ref{ex:partitionstoSLRs} the corresponding configuration of boxes is
$$
\kappa=
\begin{matrix}
\begin{tikzpicture}[xscale=.4,yscale=-.4]
\draw[blue, densely dotted, thick]  (7,6)--(1,0) node[above left]{\scriptsize $\frac12$};
\draw[blue, densely dotted, thick]  (6,6)--(0,0) node[above left]{\scriptsize $-\frac12$};
\draw[blue, densely dotted, thick]  (2,10)--(-3,5) node[left]{\scriptsize $-\frac{17}{2}$};
\draw[blue, densely dotted, thick]  (10,1)--(5,-4) node[above left]{\scriptsize $\frac{17}{2}$};
\draw[blue, densely dotted, thick]  (2,9)--(-3,4) node[above left]{\scriptsize $-\frac{15}{2}$};
\draw[blue, densely dotted, thick]  (10,2)--(5,-3) node[left]{\scriptsize $\frac{15}{2}$};
\filldraw[white] 
		(6,-3) rectangle (7,-1)
		(7,0) to ++(2,0) to ++(0,2) to ++(-3,0) to ++(0,-1) to ++(1,0) to ++(0,-1)
			(5,2) rectangle (6,4)
			(3,4) rectangle (6,5)
			(1,4) rectangle (2,2)
			(1,2) rectangle (4,1)
		(0,7) rectangle ++(1,2)
			(0,6) to ++(-2,0) to ++(0,-2) to ++(3,0) to ++(0,1) to ++(-1,0) to ++(0,1);
\draw (7,0)--(9,0) (6,1)--(9,1) (5,2)--(9,2) (5,3)--(6,3) (3,4)--(6,4) (3,5)--(6,5) (0,7)--(1,7) (0,8)--(1,8) (0,9)--(1,9);
\draw (0,7)--(0,9) (1,7)--(1,9) (3,4)--(3,5) (4,4)--(4,5) (5,2)--(5,5) (6,1)--(6,5) (7,0)--(7,2) (8,0)--(8,2) (9,0)--(9,2);
\node[bV,red] at (8,0){};
\node[bV,red!50!blue] at (5,3){};
\node[bV,red!50!blue] at (3,4){};
\node[bV,red] at (0,7){};
\pgftransformshift{\pgfpoint{-1cm}{-1cm}}
\draw (7,-2)--(8,-2) (7,-1)--(8,-1) (7,0)--(8,0) (2,2)--(5,2) (2,3)--(5,3) (2,4)--(3,4) (-1,5)--(3,5) (-1,6)--(2,6) (-1,7)--(1,7);
\draw (-1,5)--(-1,7) (0,5)--(0,7) (1,5)--(1,7) (2,2)--(2,6) (3,2)--(3,5) (4,2)--(4,3) (5,2)--(5,3)  (7,-2)--(7,0) (8,-2)--(8,0);
\node[bV,red] at (8,0){}; \draw[red,  thick] (8,0)-- +(1,1);
\node[bV,red!50!blue] at (5,3){}; \draw[red!50!blue, thick] (5,3)-- +(1,1);
\node[bV,red!50!blue] at (3,4){}; \draw[red!50!blue, thick] (3,4)-- +(1,1);
\node[bV,red] at (0,7){}; \draw[red, thick] (0,7)-- +(1,1);
\end{tikzpicture}\end{matrix}
\quad  = \quad 
\begin{matrix}\begin{tikzpicture}[xscale=.4,yscale=-.4]
	\ShapeA
\end{tikzpicture}\end{matrix}$$
This configuration of boxes also appeared in Example \ref{ex:stdtabbijection}.

For generically large $a,b,c,d$, there will be examples of $\lambda,\mu\in \cP^{(k)}$ with $\lambda \ne \mu$ and
$B_k^{\lambda}\cong B_k^\mu$ as $H_k^{\mathrm{ext}}$-modules; see Example \ref{ex:isomorphicPartitions}. This is because
the eigenvalues of $P$ on $M\otimes N$ are not sufficient to distinguish the components of 
$M\otimes N$ as a $\fgl_n$-module. It could be helpful
to further extend $H_k^\ext$ and consider an algebra $Z(U_q \fgl_n) \otimes H_k$ acting on $M\otimes N\otimes V^{\otimes k}$.

\begin{example}
\label{ex:isomorphicPartitions}
Let $a=c=6$ and $b=d=4$,
$$\lambda(k)=(11+k,10,8,8,6,6,5,3,3,1)\qquad\hbox{and}\qquad
\mu(k)=(11+k,9,9,8,7,6,4,3,2,2), \text{ i.e.}$$
$$
\lambda(k) = 
\begin{matrix}
\begin{tikzpicture}[xscale= .25, yscale= -.25]
\Part{12,10,8,8,6,6,5,3,3,1}
\draw (12,0)--(17,0)--(17,1)--(12,1) (16,0)--(16,1);
\draw[|<->|] (11,-.75) to node[fill=white, inner sep=.5pt] {\tiny$k$} (17,-.75);
\node at (14,.5) {$\cdots$};
\draw[blue, very thick](0,0)--(0,6)--(6,6)--(6,0)--(0,0);
\draw[dashed, blue!50, thick] (6,0)--(11,0)--(11,5)--(6,5);
\draw[dashed, blue!50, thick] (0,6)--(0,11)--(5,11)--(5,6);
\end{tikzpicture}\end{matrix}
\qquad 
\text{and}
\qquad
\mu(k) = 
\begin{matrix}
\begin{tikzpicture}[xscale= .25, yscale= -.25]
\Part{12,9,9,8,7,6,4,3,2,2}
\draw (12,0)--(17,0)--(17,1)--(12,1) (16,0)--(16,1);
\draw[|<->|] (11,-.75) to node[fill=white, inner sep=.5pt] {\tiny$k$} (17,-.75);
\node at (14,.5) {$\cdots$};
\draw[blue, very thick](0,0)--(0,6)--(6,6)--(6,0)--(0,0);
\draw[dashed, blue!50, thick] (6,0)--(11,0)--(11,5)--(6,5);
\draw[dashed, blue!50, thick] (0,6)--(0,11)--(5,11)--(5,6);
\end{tikzpicture}\end{matrix}.~
$$
Then $\lambda(k)\ne \mu(k)$ but, as $H_k^\ext$-modules,
$$B_k^{\lambda(k)} \cong B_k^{\mu(k)} \cong H_k^{(z,\cc,\emptyset)}, 
\qquad\hbox{where $\cc=(11, 12, \ldots, 11+k-1)$ and $z=q^{28 + k(k+21)}$.}
$$
\end{example}

Recall from \eqref{BklambdaHeckedefn} that
\begin{equation*}
M\otimes N\otimes V^{\otimes k} \cong \bigoplus_{\lambda\in \cP^{(k)}} L(\lambda)\otimes B_k^\lambda
\qquad\hbox{as $(U_q\fg, \cH_k^{\mathrm{ext}})$-bimodules.}
\end{equation*}
A consequence of Theorem \ref{thm:calibconst}(b) is the following construction of the irreducible $H_k^\ext$-modules
$B_k^\lambda$.  Keeping the setting and notation of \eqref{eq:tableaux-paths}, 
for $\lambda\in \cP^{(k)}$ and $S \in \cT_k^\lambda$, let
\begin{equation}
\hbox{$s_jS$ be the path from $(a^c)$ to $\lambda$ that differs from $S$ only at $S^{(j)}$.}
\label{sjLdefn}
\end{equation}
The path $s_jS$ is unique if it exists:  if $S = ((a^c)\to S^{(0)}\to S^{(1)}
\to \cdots \to S^{(k)})$ then $S^{(j+1)}$ is obtained by adding a box to $S^{(j)}$,  and 
$(s_jS)^{(j)}$ is obtained by moving a box of $S^{(j)}$ to the position of the added box in $S^{(j+1)}$.  In the case that 
$j=0$, the paths $s_0S$ and $S$ satisfy $(s_0S)^{(1)}=S^{(1)}$ and the partitions $(s_0S)^{(0)}$ and $S^{(0)}$ in $\cP^{(0)}$ 
differ by the placement of one box, with 
\begin{equation}\label{eq:move-a-box}
\tilde c((s_0S)^{(1)}/(s_0S)^{(0)}) = -\tilde c(S^{(1)}/S^{(0)}),
\end{equation}
where the shifted content of a box $\tilde c(\mathrm{box})$ is as defined in \eqref{ccdefn}.

\begin{cor} Keep the conditions of 
Theorems \ref{thm:HeckeActionOnTensorSpace} and \ref{thm:partitions-to-SLRs}.
In particular, assume that the genericity conditions of \eqref{newgenconds} hold so that
$q$ is not a root of unity, $a,b,c,d\in \ZZ_{>0}$ and $(a+c)-(b+d)\not\in \{0, \pm1, \pm2\}$.
Let $\lambda\in \cP^{(k)}$.  Then $B_k^\lambda$ has a basis $\{ v_S\ |\ S\in \cT_k^\lambda\}$ 
such that the $H_k^\ext$-action is given by
\begin{align*}
P v_S &= q^{2e_0(T)} v_S, \qquad
Z_i v_S = q^{2c(S^{(i)}/S^{(i-1)})} v_S,  \\
T_i v_S &= [T_i]_{S,S} v_S + \sqrt{-([T_i]_{S,S}-q)([T_i]_{S,S}+q^{-1})}\ v_{s_iS},
\qquad\hbox{for $i=1, \dots,k-1$}, \\
Y_1v_S &= [Y_1]_{S,S}v_S+ \sqrt{-([Y_1]_{S,S}-q^{-2d})([Y_1]_{S,S}-q^{2b})}\ v_{s_0 S}, \\
X_1v_S &= [X_1]_{S,S}v_S + q^{-2c(S^{(1)}/S^{(0)})}q^{(a-c+b-d)} \sqrt{-([X_1]_{S,S}-q^{2a})([X_1]_{S,S}-q^{-2c})}\ v_{s_0S},
\end{align*}
where $v_{s_jS}=0$ if $s_jS$ does not exist, and
\begin{align*}
[T_i]_{S,S} &= \frac{q-q^{-1}}{1-q^{2(c(S^{(i)}/S^{(i-1)})-c(S^{(i+1)}/S^{(i)}))}} , \\
[Y_1]_{S,S} 
&= \frac{ (q^{2b}+q^{-2d}) - (q^{2a}+q^{-2c})q^{2(b-d)}q^{-2c(S^{(1)}/S^{(0)})}} 
{1-q^{2(a-c+b-d)}q^{-4c(S^{(1)}/S^{(0)})}}, \\
[X_1]_{S,S} 
&= 
\frac{ (q^{2a}+q^{-2c}) - (q^{2b}+q^{-2d})q^{2(a-c)}q^{-2c(S^{(1)}/S^{(0)})}} 
							{1-q^{2(a-c+b-d)}q^{-4c(S^{(1)}/S^{(0)})}}.
\end{align*}

\end{cor}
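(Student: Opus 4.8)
The plan is to derive everything from Theorem~\ref{thm:partitions-to-SLRs} together with the explicit construction of calibrated modules in Theorem~\ref{thm:calibconst}, specializing the latter to the parameters of Theorem~\ref{thm:HeckeActionOnTensorSpace}. By Theorem~\ref{thm:partitions-to-SLRs}, $B_k^\lambda \cong H_k^{(z,\cc,J)}$ as $H_k^\ext$-modules, so the basis $\{v_S \mid S \in \cT_k^\lambda\}$ corresponds, under the bijection $\cT_k^\lambda \to \cF^{(\cc,J)}$ of Corollary~\ref{onStdTableaux}, to the basis $\{v_{w_S} \mid w_S \in \cF^{(\cc,J)}\}$ of $H_k^{(z,\cc,J)}$. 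First I would read off the actions of $P$ and $Z_i = T_{i-1}\cdots T_1 Z_1 T_1 \cdots T_{i-1}$ directly from Proposition~\ref{prop:calibrated}: $Pv_S = q^{2e_0(S^{(0)})}v_S$ and $Z_iv_S = q^{2c(S^{(i)}/S^{(i-1)})}v_S$, which are the first two displayed formulas. For the action of $T_i$ ($i=1,\dots,k-1$), I substitute $t^{1/2}=q$ and $\gamma_{w_S^{-1}(i)} = \Phi(W_i)v_S / v_S = -q^{2\tilde c(S^{(i)}/S^{(i-1)})}$ (from \eqref{Weigenvals}) into \eqref{snormActionT}: then $[T_i]_{ww} = (q-q^{-1})/(1 - \gamma_{w_S^{-1}(i)}\gamma_{w_S^{-1}(i+1)}^{-1}) = (q-q^{-1})/(1 - q^{2(c(S^{(i)}/S^{(i-1)}) - c(S^{(i+1)}/S^{(i)}))})$, since the factor $-1$ cancels and $q^{2\tilde c} \cdot q^{-2\tilde c'} = q^{2(c - c')}$ (the shift $\tfrac12(a-c+b-d)$ cancels in the difference). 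The off-diagonal coefficient is exactly $\sqrt{-([T_i]_{ww}-t^{1/2})([T_i]_{ww}+t^{-1/2})}$ with $t^{1/2}=q$, and $v_{s_iw_S}$ corresponds to $v_{s_iS}$ under the bijection, which is $0$ precisely when $s_iw_S \notin \cF^{(\cc,J)}$, i.e.\ when $s_iS$ does not exist.

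Next I would handle $Y_1$ and $X_1$, which requires translating the generator $T_0$ of Theorem~\ref{thm:calibconst} back into the Hecke-algebra generators $Y_1$ and $X_1$. Recall $T_0 = b_1^{-1/2}(-b_2)^{-1/2}Y_1$ from \eqref{eq:normalized_gens}, so with $b_1 = q^{2b}$, $b_2 = q^{-2d}$ we get $Y_1 = b_1^{1/2}(-b_2)^{1/2}T_0 = iq^{b-d}\,T_0$ (up to the sign/branch conventions fixed in \eqref{eq:ab-to-t0tk} and \eqref{eq:ttoqconversions}), whence the diagonal entry $[Y_1]_{S,S} = iq^{b-d}[T_0]_{ww}$ and, plugging $[T_0]_{ww} = ((t_0^{1/2}-t_0^{-1/2}) + (t_k^{1/2}-t_k^{-1/2})\gamma_{w_S^{-1}(1)}^{-1})/(1-\gamma_{w_S^{-1}(1)}^{-2})$ with $t_0^{1/2} = -iq^{b+d}$, $t_k^{1/2} = -iq^{a+c}$, $\gamma_{w_S^{-1}(1)} = -q^{2\tilde c(S^{(1)}/S^{(0)})}$, and simplifying, should yield the stated $[Y_1]_{S,S} = ((q^{2b}+q^{-2d}) - (q^{2a}+q^{-2c})q^{2(b-d)}q^{-2c(S^{(1)}/S^{(0)})})/(1 - q^{2(a-c+b-d)}q^{-4c(S^{(1)}/S^{(0)})})$; here I'd use $2\tilde c(S^{(1)}/S^{(0)}) = 2c(S^{(1)}/S^{(0)}) - (a-c+b-d)$ repeatedly. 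The off-diagonal term follows since $(Y_1 - b_1)(Y_1 - b_2) = 0$ and the matrix is conjugate (via the same normalization making $[T_0]$ symmetric) to a symmetric one, so by the $2\times2$ matrix argument in the proof of Theorem~\ref{thm:calibconst} the off-diagonal coefficient is $\sqrt{-([Y_1]_{S,S}-q^{-2d})([Y_1]_{S,S}-q^{2b})}$, with $v_{s_0S}$ again the image of $v_{s_0w_S}$. For $X_1$, I use $X_1 = Z_1Y_1^{-1} = a_1^{1/2}(-a_2)^{1/2}W_1T_0^{-1}$ from \eqref{X1renorm}, or more directly $X_1 = Z_1 Y_1^{-1}$ together with the already-established actions of $Z_1$ (diagonal, eigenvalue $q^{2c(S^{(1)}/S^{(0)})}$) and $Y_1$: since $Z_1$ is a scalar on each $v_S$ and $Y_1$ is upper-triangular on the two-dimensional span $\mathrm{span}\{v_S, v_{s_0S}\}$, $X_1 = Z_1 Y_1^{-1}$ is computed by a $2\times2$ inversion, and the extra factor $q^{-2c(S^{(1)}/S^{(0)})}q^{a-c+b-d}$ in the off-diagonal coefficient for $X_1$ arises exactly as the ratio of the $Z_1$-eigenvalues on $v_S$ versus $v_{s_0S}$, which by \eqref{eq:move-a-box} is $q^{2c(S^{(1)}/S^{(0)}) - 2c((s_0S)^{(1)}/(s_0S)^{(0)})} = q^{2c(S^{(1)}/S^{(0)}) + 2\tilde c(S^{(1)}/S^{(0)}) - 2c(S^{(1)}/S^{(0)}) }\cdots$; I would just track this factor carefully. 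One also checks $(X_1 - q^{2a})(X_1 - q^{-2c}) = 0$ directly from Theorem~\ref{thm:HeckeActionOnTensorSpace} to fix the two eigenvalues appearing under the square root.

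The main obstacle, I expect, will be bookkeeping of signs and square-root branches: the definitions \eqref{eq:gamma-to-c}, \eqref{Weigenvals}, \eqref{W0eigenvals} carry deliberate factors of $-1$ and of $i=\sqrt{-1}$ (in $t_0^{1/2}$, $t_k^{1/2}$, and in the identification $\gamma_i = -t^{c_i}$), and the normalization of $v_{s_0S}$ that symmetrizes $[T_0]$ need not be the one that symmetrizes $[Y_1]$ or $[X_1]$, so I will have to verify that the rescalings $Y_1 = iq^{b-d}T_0$ and $X_1 = Z_1 Y_1^{-1}$ do not introduce spurious factors into the off-diagonal entries beyond those displayed. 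Concretely, I would fix once and for all a basis $\{v_S\}$ normalized so that $[T_i]$ and $[T_0]$ (equivalently $[Y_1]$) are symmetric — this is possible since $B_k^\lambda$ is calibrated and the relevant $2\times 2$ blocks are the same as in Theorem~\ref{thm:calibconst} — and then simply record that in this basis the $X_1$-block, being $Z_1$ (scalar-diagonal) times $Y_1^{-1}$, has off-diagonal entry equal to the $Y_1^{-1}$ off-diagonal entry times the $Z_1$-eigenvalue, and use the identity $[X_1]_{S,S}[X_1]_{s_0S,s_0S} - (\text{off-diag})^2 = q^{2a}q^{-2c}$ (the determinant, from the quadratic relation) to match the claimed closed form. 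The diagonal formula $[X_1]_{S,S}$ then follows from $[Y_1]_{S,S}$ and the eigenvalue of $Z_1$ via $[X_1]_{S,S} = q^{2c(S^{(1)}/S^{(0)})}/[Y_1]_{S,S}$ after clearing denominators — which, after substituting the expression for $[Y_1]_{S,S}$, is a short algebraic simplification yielding precisely $((q^{2a}+q^{-2c}) - (q^{2b}+q^{-2d})q^{2(a-c)}q^{-2c(S^{(1)}/S^{(0)})})/(1 - q^{2(a-c+b-d)}q^{-4c(S^{(1)}/S^{(0)})})$. All remaining relations hold automatically because $B_k^\lambda \cong H_k^{(z,\cc,J)}$ as modules, so no independent verification of the defining relations of $H_k^\ext$ is needed.
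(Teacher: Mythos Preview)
Your overall strategy matches the paper's: transport the explicit formulas of Theorem~\ref{thm:calibconst} through the bijection of Corollary~\ref{onStdTableaux}, and convert $T_0$ back to $Y_1$ via $Y_1 = iq^{b-d}T_0$. Your treatment of $P$, $Z_i$, $T_i$, and the diagonal and off-diagonal entries of $Y_1$ is essentially what the paper does.

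There is, however, a genuine error in your computation of $[X_1]_{S,S}$. You write $[X_1]_{S,S} = q^{2c(S^{(1)}/S^{(0)})}/[Y_1]_{S,S}$, but this treats the $(S,S)$ entry of $[Z_1][Y_1]^{-1}$ as $[Z_1]_{S,S}/[Y_1]_{S,S}$, which is false: in the basis where $[T_0]$ (hence $[Y_1]$) is symmetric, $[Y_1]$ is a full $2\times 2$ matrix on $\mathrm{span}\{v_S,v_{s_0S}\}$, not diagonal or upper-triangular, and $([Y_1]^{-1})_{S,S} = [Y_1]_{s_0S,s_0S}/\det[Y_1]$, not $1/[Y_1]_{S,S}$. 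A direct check shows the resulting expression does not simplify to the stated $[X_1]_{S,S}$.

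The paper avoids this by not inverting $[Y_1]$ at all: it uses the quadratic relation $(X_1-a_1)(X_1-a_2)=0$ to rewrite $X_1 = (a_1+a_2) - a_1a_2 X_1^{-1} = (a_1+a_2) - a_1a_2 Y_1 Z_1^{-1}$, which is \emph{linear} in $Y_1$. Since $Z_1^{-1}$ is diagonal, this gives $[X_1]_{S,S} = q^{2a}+q^{-2c} - q^{2(a-c)}q^{-2c(S^{(1)}/S^{(0)})}[Y_1]_{S,S}$ directly, and the claimed closed form drops out. For the off-diagonal entry the paper then observes that $[Y_1]$ symmetric and $[Z_1]$ diagonal imply $[Z_1][X_1]^t[Z_1]^{-1}=[X_1]$, so $[X_1]_{s_0S,S}^2 = [Z_1]_{S,S}^{-1}[Z_1]_{s_0S,s_0S}\cdot [X_1]_{S,s_0S}[X_1]_{s_0S,S}$, and combines this with the determinant relation $-[X_1]_{S,s_0S}[X_1]_{s_0S,S} = ([X_1]_{S,S}-q^{2a})([X_1]_{S,S}-q^{-2c})$ and the identity $c((s_0S)^{(1)}/(s_0S)^{(0)}) = -c(S^{(1)}/S^{(0)})+(a-c+b-d)$ from \eqref{eq:move-a-box} to produce the extra factor $q^{-2c(S^{(1)}/S^{(0)})+(a-c+b-d)}$. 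Your sketch of the off-diagonal argument is in the right direction but would benefit from this cleaner setup.
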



\begin{proof} The appropriate basis of $B_k^\lambda$ is the one given in Proposition \ref{prop:calibrated}
and used also in the proof of Theorem \ref{thm:partitions-to-SLRs}.  It is only necessary to convert from the notation $v_w$ in 
Theorem \ref{thm:calibconst} to the notation $v_S$ using the bijection in Corollary \ref{onStdTableaux}.
Recall from \eqref{eq:ttoqconversions} that
$$\begin{array}{c}
	a_1=q^{2a}, \qquad a_2 = q^{-2c}, \qquad
	b_1=q^{2b}, \qquad b_2=q^{-2d},	\qquad \\
	t^{\frac12} = q, \qquad 
	t_k^{\frac12} =  a_1^{\frac12}(-a_2)^{-\frac12} 
				= -i q^{a+c},  \quad \text{ and }\quad 
	t_0^{\frac12} = b_1^{\frac12}(-b_2)^{-\frac12} 
				= -i q^{b+d}. \\
\end{array}
$$
From \eqref{snormActionPW} and \eqref{Weigenvals}, 
$$\gamma_{w^{-1}(i)} v_w = \Phi(W_i)v_S 
=-q^{-(a-c+b-d)}q^{2c(S^{(i)}/S^{(i-1)})}v_S.
$$
From \eqref{eq:normalized_gens}, \eqref{BraidMurphy}, and \eqref{Heckedefn},
$Y_1 = b_1^{\frac12}(-b_2)^{\frac12}T_0=iq^{b-d}T_0$ 
and $X_1 = (a_1 + a_2) - a_1a_2 Y_1 Z_1^{-1} = q^{2a} + q^{-2c} - q^{2(a-c)}  Y_1 Z_1^{-1}$.
With these conversions, the formulas
from \eqref{snormActionT} and \eqref{snormActionX} become
\begin{align*}
T_i v_S &= T_i v_w = [T_i]_{S,S} v_S + [T_i]_{s_iS,S} v_{s_iS},
\qquad\hbox{for $i=1, \dots,k-1$, } \\
Y_1 v_S &= i q^{b-d} T_0v_w = [Y_1]_{S,S}v_S+ [Y_1]_{s_0S,S}  v_{s_0 S} \ , \qquad\hbox{and} \\
X_1 v_S & = \left(q^{2a} + q^{-2c} - q^{2(a-c)}  Y_1 Z_1^{-1}\right)v_S 
= \left(q^{2a} + q^{-2c} - q^{2(a-c)}q^{-2c(S^{(1)}/S^{(0)})}  Y_1 \right)v_S \\
&= [X_1]_{S,S} v_S - [X_1]_{s_0S,S} v_{s_0 S},
\end{align*}
with
\begin{align*}
[T_i]_{S,S} &= [T_i]_{ww} = \frac{t^{\frac12}-t^{-\frac12}}{1-\gamma_{w^{-1}(i)}\gamma^{-1}_{w^{-1}(i+1)}} 
	=\frac{q-q^{-1}}{1-q^{2(c(S^{(i)}/S^{(i-i)}) - c(S^{(i+1)}/S^{(i)}))}},\\
[Y_1]_{S,S} &= iq^{b-d}[T_0]_{ww} = iq^{b-d}\frac{ (t_0^{\frac12}-t_0^{-\frac12}) + (t_k^{\frac12} -t_k^{-\frac12})\gamma^{-1}_{w^{-1}(1)}} 
{1-\gamma^{-2}_{w^{-1}(1)}} \\
	&= iq^{b-d}(-i)\frac{ (q^{(b+d)}+q^{-(b+d)}) - (q^{(a+c)}+q^{-(a+c)})q^{a-c+b-d}q^{-2c(S^{(1)}/S^{(0)})}} 
{1-q^{2(a-c+b-d)}q^{-4c(S^{(1)}/S^{(0)})}}
	\\
	&= \frac{ (q^{2b}+q^{-2d)}) - (q^{2a}+q^{-2c})q^{2(b-d)}q^{-2c(S^{(1)}/S^{(0)})}} 
{1-q^{2(a-c+b-d)}q^{-4c(S^{(1)}/S^{(0)})}},  \text{ and }\\
[X_1]_{S,S} &= q^{2a} + q^{-2c} - q^{2(a-c)}q^{-2c(S^{(1)}/S^{(0)})}[Y_1]_{S,S} \\
&= q^{2a} + q^{-2c} - q^{2(a-c)} q^{-2c(S^{(1)}/S^{(0)})} 
\frac{ (q^{2b}+q^{-2d}) - (q^{2a}+q^{-2c})q^{2(b-d)}q^{-2c(S^{(1)}/S^{(0)})}} 
{1-q^{2(a-c+b-d)}q^{-4c(S^{(1)}/S^{(0)})}}  \\
&=\frac{ (q^{2a}+q^{-2c}) - (q^{2b}+q^{-2d})q^{2(a-c)}q^{-2c(S^{(1)}/S^{(0)})}} 
{1-q^{2(a-c+b-d)}q^{-4c(S^{(1)}/S^{(0)})}}.
\end{align*}

On the two-dimensional subspace $\mathrm{span}_\CC\{ v_S, v_{s_0S}\}$ the action of $T_0$ in the basis $\{v_S, v_{s_0S}\}$ is a symmetric matrix $[T_0]$,
and so the matrix of $Y_1$ in this basis
is $[Y_1] = iq^{b-d}[T_0]$ is also symmetric.  The action of $Z_1$ is by a diagonal matrix $[Z_1]$, so $[Z_1]^t=[Z_1]$.
Therefore, using $X_1 = Z_1 Y_1^{-1}$ from \eqref{BraidMurphy} and $X_1 = (a_1 + a_2) - a_1a_2 X_1^{-1}$ from \eqref{Heckedefn},  we have $([X_1]^{-1})^t = ([Y_1][Z_1]^{-1})^t = ([Z_1]^{-1})^t[Y_1]^t=[Z_1]^{-1}[Y_1]$ and so 
$$[Z_1][X_1]^t[Z_1]^{-1}
=[Z_1]((a_1+a_2)-a_1a_2 [Z_1]^{-1} [Y_1]) [Z_1]^{-1} = [X_1].$$
Thus
$$
[Z_1]_{S,S}[X_1]_{s_0S,S}[Z_1^{-1}]_{s_0L,s_0S} = [X_1]_{S,s_0S}
\ \ \hbox{and}\ 
-[X_1]_{S,s_0S}[X_1]_{s_0S,S}=([X_1]_{S,S}-a_1)([X_1]_{S,S}-a_2),
$$
since $[X_1]$ is a $2\times 2$ matrix with eigenvalues $a_1$ and $a_2$ (as in the proof of Theorem \ref{thm:calibconst}).
Thus
\begin{align*}
[X_1]_{s_0S,S}
&=\sqrt{([X_1]_{s_0S,S})^2}
=\sqrt{[X_1]_{S,s_0S}[Z_1]_{S,S}^{-1}[X_1]_{s_0S,S}[Z_1]_{s_0S,s_0S}} \\
&=\sqrt{[Z_1]_{S,S}^{-1}[Z_1]_{s_0Ss_0S}}\sqrt{-([X_1]_{S,S}-q^{2a})([X_1]_{S,S}-q^{-2c})}.
\end{align*}
By \eqref{eq:move-a-box}, $c((s_0S)^{(1)}/(s_0S)^{(0)}) =- c(S^{(1)}/S^{(0)}) + (a-c+b-d)$, so that 
\begin{align*}
\sqrt{[Z_1]_{S,S}^{-1}[Z_1]_{s_0S,s_0S}}
&=q^{-c(S^{(1)}/S^{(0)})}q^{c((s_0S)^{(1)}/(s_0S)^{(0)})}
=q^{-2c(S^{(1)}/S^{(0)}) + (a-c+b-d) }.
\end{align*}
Thus
$$[X_1]_{s_0S,S}
=q^{-2c(S^{(1)}/S^{(0)})}q^{(a-c+b-d)} \sqrt{-([X_1]_{S,S}-q^{2a})([X_1]_{S,S}-q^{-2c})}.$$
\end{proof}

%

\end{document}